\documentclass[12pt]{amsart}

\usepackage{amsfonts,amsthm,amssymb}
\usepackage{latexsym,amsmath,setspace}
\usepackage{verbatim,amscd,graphics}
\usepackage{mathrsfs}
\usepackage[pdftex]{graphicx}
\usepackage[matrix,arrow]{xy}
\usepackage[active]{srcltx}
\usepackage{transparent}

\numberwithin{equation}{section}

\usepackage[utf8]{inputenc}
\usepackage{amssymb}
\usepackage{amsmath}
\usepackage{graphicx}
\usepackage{fullpage}
\usepackage{subfig}
\usepackage{url}
\usepackage{enumerate}
\usepackage{mathrsfs}  
\usepackage{mathtools} 
\usepackage{pdflscape} 
\usepackage{multirow} 
\usepackage{listings}
\usepackage{bbding}
\usepackage[font=small,labelfont=bf]{caption}
\usepackage{hyperref}
\usepackage{array}
\usepackage{mathtools}
\usepackage{amssymb}
\usepackage{tikz-cd}
\usepackage{slashed}
\usepackage{physics}
\usepackage{xcolor}
\usepackage[english]{babel}
\usepackage{graphicx}
\graphicspath{ {./images/}}
\usepackage{amsmath}
\usepackage{amsfonts}
\makeatletter
\def\amsbb{\use@mathgroup \M@U \symAMSb}
\makeatother

\newcommand{\CZ}{{\rm CZ}}

\newcommand{\Homeo}{\operatorname{Homeo}}
\newcommand{\Diffeo}{\operatorname{Diffeo}}
\newcommand{\FHomeo}{\operatorname{FHomeo}}
\newcommand{\Cal}{\operatorname{Cal}}

\newtheorem{theorem}{Theorem}[section]
\newtheorem{proposition}[theorem]{Proposition}
\newtheorem{lemma}[theorem]{Lemma}
\newtheorem{corollary}[theorem]{Corollary}
\newtheorem{conjecture}[theorem]{Conjecture}

\newtheorem{question}[theorem]{Question}

\newtheorem{prop}[theorem]{Proposition}

\theoremstyle{definition}

\newtheorem{remark}[theorem]{Remark}
\newtheorem{example}[theorem]{Example}

\newtheorem{exe}[theorem]{Exercise}

\begin{document}

\title[Low-dimensional topology and symplectic dynamics]{Low-dimensional topology and symplectic dynamics}

\begin{abstract}

%By previous work, every Reeb flow on a closed connected three-manifold has at least two simple periodic orbits, with the case of two orbits occurring only when the manifold is a lens space.  
These are notes to accompany my lectures at the $2024$ ``Current Developments in Mathematics" conference hosted by Harvard/MIT.  The lectures were about some recent progress in our understanding of two and three dimensional dynamical systems, using in part some tools from low-dimensional topology.   In these notes, I try to give a sense for how this works by discussing a few examples from problems I have worked on and surveying some  related developments.  Some symplectic variants of Weyl's law play a key role.   I also briefly comment on some other developments and on the relationship with what is known in higher dimensions.   The lectures themselves are available online.
% youtube.
%can be viewed at https://www.youtube.com/watch?v=Ztxk6R7MMKU\&t=3s
%  at the $2024$ ``Current Developments in Mathematics" conference hosted by Harvard/MIT.  
%The lectures were about some recent progress in our understanding of two and three dimensional dynamical systems, using in part some tools from low-dimensional topology.   In these notes, I try to give a sense for how this works by discussing a few examples from problems I have worked on and surveying some  related developments.  Some symplectic variants of Weyl's law play a key role.   I also briefly comment on some other developments and on the relationship with what is known in higher dimensions.   

\end{abstract}

\author{Dan Cristofaro-Gardiner}
%\author{Umberto Hryniewicz}
%\author{Michael Hutchings}
%\author{Hui Liu}
%\address{}

\maketitle

\tableofcontents

\section{Introduction}
\label{sec:intro}

\subsection{Overview}

Recent years have seen rapid progress in our understanding of low dimensional symplectic dynamics.   Many longstanding problems have been resolved, powerful new techniques have been developed, and surprising new phenomena have been discovered.  Certain ideas from low-dimensional topology, in particular various three-manifold invariants, have been at the heart of many of these developments.  In these notes, I want to give a sense for how this works.  The notes are meant to be accessible to a broad audience and give some additional detail beyond what I will be able to cover in my lectures.     

%\vspac

\vspace{2 mm}

{\em Disclaimer:}  Low-dimensional symplectic dynamics is a rich and broad area.  For various reasons, these notes are primarily about problems I have worked on 
%and the selection of topics is certainly a biased one.  In particular, the bulk of these notes consist of a discussion of three examples (two in depth and one more briefly) --- all three of these are from problems I have worked on. 
(though I have attempted to briefly survey some other developments from across the field).  In particular, they focus on certain themes  --- Weyl laws, connections with Seiberg-Witten theory, etc. --- and on recent developments.  I hope that others in the area of low-dimensional symplectic dynamics will write further accounts of parts of the area where these notes are not too detailed.  
% the many  

\vspace{2 mm}
{\em What is in these notes:}  The heart of the notes is a fairly detailed account of the resolution of two longstanding problems, the ``Simplicity Conjecture" and Hofer-Wysocki-Zehnder's ``two or infinity" conjecture.   I try to discuss these problems in a fair amount of depth, giving a sense of the key ideas that go into the proofs and at least sketching the proofs of many of the key propositions.  I also discuss somewhat more briefly two other topics: the proofs of the two symplectic Weyl laws which play a central role in the argument, and a more recent work about finding invariant sets beyond periodic orbits.   I also highlight some open questions, mention some other recent developments, and try to give some comparison with the situation in higher dimensions.   

%\vspace{2 mm}
%e{2 mm}

%\vspace{2 mm}

%{\em How to use these notes:} My hope is that these notes will serve two purposes.  One goal is that they should be useful for the reader who just wants to get a sense for the area; for such a reader, I would suggest skimming the three more detailed examples, and having a look at Section 6.  

% the three examples I have chosen to discuss in some detail are from problems I have worked on.

%o somewhat beyond what I expect to be abl

%In these notes, I sometimes have titled a section with an asterisk.  This should be viewed as ``bonus material".  This is material that I think is important, but that, for time reasons, I will only be able to touch upon briefly, if at all, in my lectures.  A reader can skip these sections without missing preliminaries for later sections.     

\subsection{Terminology}

Let us start by introducing our setting.   A {\em symplectic manifold} is a pair $(M,\omega)$, where $M$ is a  
%on a 
$2n$-dimensional 
manifold
and $\omega$ is a differential $2$-form such that $d \omega = 0$ and $\omega^n$ is a volume form.  Symplectic manifolds are the natural setting for classical mechanics.  The symplectic form gives the manifold the structure of a ``phase space": one can think of points in $M$ as corresponding to states of some system, and we will return to this below.  As the topic of these notes is the low-dimensional situation, we will be interested primarily in the cases where $n =1$ or $n = 2$.

Symplectic geometry is inherently even-dimensional, but it has an odd-dimensional cousin that will also be of interest to us.  Namely, a {\em contact form} on a $(2n+1)$-dimensional manifold is a differential $1$-form $\lambda$ such that $\lambda \wedge d\lambda^n$ is nowhere vanishing.  Our interest will be in the case $n = 1$.  There is a canonical vector field $R$ associated to a contact form, called the {\em Reeb vector field}, defined by the equations
\[ d \lambda(R, \cdot) = 0, \quad \lambda(R) = 1,\]
and this will also be important for our purposes; we will be particularly interested in the periodic orbits of $R$, called {\em Reeb orbits}.   There is also a canonical hyperplane field $\xi := Ker(\lambda)$ called the {\em contact structure} that will also play an important role.  

%The periodic orbits of $R$ are called {\em Reeb orbits} and will also be of interest.
%be intere

\subsection{Two old problems}

We now introduce two concrete and accessible problems.  Explaining how to solve these problems will form the heart of the lectures.
%try to bring the above definitions to life by discussing two concrete and accessible examples.    

%The first is
%We start with 
%a problem about dynamics in dimension $2$ that, a priori, does not necessarily seem to belong to the realm of symplectic geometry.  

\subsubsection{Mysteries about the group of area-preserving homeomorphisms}

Given a manifold, possibly equipped with some additional structure, an old principle seeks relationships between the relevant group of transformations and the underlying geometry.  
%\begin{example}
For example, to a topological manifold $M$, one can associate the group $\Homeo_0(M)$, of homeomorphisms in the connected component of the identity.  When $M = S^n$, Ulam asked, in the famous Scottish book, whether this group is simple.  After contributions by many authors, it was known by the $1960$s that for a closed manifold $M$, this group is simple if and only if $M$ is connected.   In other words, the topological property of connectedness is encoded in the algebraic structure of $\Homeo_0$. 
%\end{example}

In the $70s$, there was a flurry interest in question of simplicity of other groups of transformations.  For example, if $M$ is a smooth manifold, one can ask whether the group of smooth diffeomorphisms is simple; if $M$ is equipped with a volume form, one can ask about volume-preserving transformations; etc.  By the end of that decade, much was known, see for example the summary in \cite{simp}; a typical picture in these results --- we will see some examples below --- is that either the relevant group is simple, or there are some natural homomorphisms out of the group, leading eventually to a simple kernel.  

%We will see various examples of 

% We will see various examples of this kind of story below.

%Let us illustrate some typical theorems about the simplicity problem proved during this time:
%in the $70s$:

%\begin{example}
%\item 
%\item When $n \ge 3$, the group $Homeo_0(S^n,\mu_{std})$ of volume-preserving homeomorphisms is simple.  More generally, for a (not necessarily compact) $n$-manifold $M$ with [whatever] measure (with $n \ge 3$),%\item  
% when $n \ge 3$ gives 
%, an 

% whose kernels are simple. 

However, a curiously stubborn case that had remained open after these works was the case of area-preserving homeomorphisms of surfaces.  For example, the following was asked by Fathi in the 70s \cite{fathi}:

\begin{question}
\label{que:fathi}
Is the group $Homeo_0(S^2,\mu_{std})$ of area and orientation preserving homeomorphisms of the two-sphere simple?
\end{question}

Indeed, when $n \ge 3$, Fathi had shown that the analogous group $\Homeo_0(S^n,\mu_{std})$ of volume-preserving homeomorphisms is simple.
% as proved by Fathi.  The group $Homeo_c(D^n,\mu_{std})$ of compactly supported volume preserving homeomorphisms in the component of the identity is also simple.  
More generally, for any (not necessarily compact) $n$-manifold with suitable measure, Fathi had shown that there is a {\em mass-flow homomorphism}, taking values in a certain abelian group, whose kernel is simple whenever $n \ge 3$ \cite{fathi}.  However, his methods did not extend to dimension $2$.  

%We will explain the following resolution of Question~\ref{que:fathi} in these notes:

%\begin{theorem}

%\end{theorem}
%One of the %main 

%goals of these notes is to explain a resolution 
\subsubsection{Two or infinity}

Now let us switch to a dynamical problem of a rather different nature, in dimensions $3$ and $4$.  We start with a discussion valid in any dimension before switching to some special conjectures about the low-dimensional situation.

A {\em Hamiltonian} on a symplectic manifold $(M,\omega)$ (of any dimension) is a smooth function $H: M \to \mathbb{R}$.  To any Hamiltonian, there is an associated vector field $X_H$, called the {\em Hamiltonian vector}, defined by the equation
\[ \omega(X_H,\cdot) = dH(\cdot).\]
The dynamics of $X_H$ are of major interest: the flow $\phi^t_H$ of $X_H$ recovers Hamilton's equations of motion, which encode the evolution of a system in classical mechanics.   An easy calculation using Cartan's formula implies the ``conservation of energy" principle that any Hamiltonian flow $\phi^t_H$ must preserve $H$.  Thus, the dynamics occur along level sets $H^{-1}(c)$, which we call {\em energy levels}.

A particularly beautiful dynamical structure is a {\em periodic orbit}; this is a flow line of the system that returns to its initial position.  Thus we are led to a natural question: when does a level set $H^{-1}(c)$ have a periodic orbit?  It is known that examples exist without periodic orbits.
%the answer is not ``always".  
For example, Zehnder \cite{zehnder} has constructed a Hamiltonian flow on $T^{2n}$, $n \ge 2$ (with an irrational symplectic structure) such that the energy levels $H^{-1}(c)$ have no periodic orbits for an entire open interval of energies $c$.  On the other hand, when one imposes the following additional geometric criteria, the situation changes completely, at least conjecturally.  Define a hypersurface $Y$ in $(M,\omega)$ to be of {\em contact type} if there is a contact form $\lambda$ on $Y$ such that $\omega|_Y = d \lambda$.  The following is foundational to the subject:

%a fundamental conjecture:

\begin{conjecture}[``Weinstein conjecture"]
\label{conj:wein}
For any Hamiltonian on any symplectic manifold,
the Hamiltonian flow has a periodic orbit on any compact energy level of contact type.
%re is a periodic orbit  
\end{conjecture}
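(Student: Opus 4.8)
Since this is stated as a conjecture, let me describe the strategy in the three-dimensional case, where $Y$ is a closed $3$-manifold; this is the setting most relevant to these notes and the only case where the full statement is known (in higher dimensions I would expect to recover only partial results, such as Viterbo's theorem for hypersurfaces of contact type in $\R^{2n}$, by variational or Floer-theoretic methods). \textbf{First, I would reduce to a purely contact-geometric statement.} If $Y \subset (M,\omega)$ is compact of contact type, with $\omega|_Y = d\lambda$, then a short computation shows that along $Y$ the Hamiltonian vector field $X_H$ is a nowhere-vanishing positive multiple of the Reeb vector field of $\lambda$; hence a periodic orbit of $X_H$ on $Y$ is, up to reparametrization, exactly a closed Reeb orbit of $(Y,\lambda)$. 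So it suffices to prove that every contact form on a closed oriented $3$-manifold admits a closed Reeb orbit.

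\textbf{Next, I would bring in a nonvanishing Floer-theoretic invariant and deform the Seiberg--Witten equations.} To $Y$ one associates its Seiberg--Witten Floer homology $\widehat{HM}(Y)$ of Kronheimer--Mrowka, and the crucial algebraic input is that, summed over all spin-c structures, it is never zero --- visible for instance through its isomorphism with Heegaard Floer homology, or through the exact triangle relating it to the reducible solutions. Following Taubes, I would then introduce a one-parameter family of perturbations of the $3$-dimensional Seiberg--Witten equations on $Y$ whose perturbing $2$-form is essentially $r\, d\lambda$ for a large real parameter $r$. Because $\widehat{HM}(Y) \neq 0$, for every $r$ there is a solution $(A_r,\Psi_r)$ representing a fixed nonzero Floer class, and the plan is to let $r \to \infty$ and extract from the zero locus of $\Psi_r$ --- regarded as a closed $1$-current --- a nonempty finite union of closed Reeb orbits.

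\textbf{The hard part is the compactness and concentration analysis as $r \to \infty$.} One needs a package of a priori estimates showing that the curvature $F_{A_r}$ concentrates along integral curves of the Reeb field, that the limiting current is genuinely nontrivial rather than empty, and --- crucially --- that the one competing scenario, in which $(A_r,\Psi_r)$ converges to a reducible solution everywhere, is incompatible with its representing the chosen nonzero class in $\widehat{HM}(Y)$ for all large $r$. This delicate study of the Seiberg--Witten equations under the deformation --- essentially the same analysis that later underlies the isomorphism between Seiberg--Witten Floer homology and embedded contact homology --- is the real content of the proof; the reduction to Reeb orbits and the nonvanishing of $\widehat{HM}(Y)$ are comparatively formal. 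The main obstacle is thus analytic: controlling solutions of a nonlinear elliptic system in a singular limit and ruling out the degenerate reducible limit.
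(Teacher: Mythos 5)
This statement is an open conjecture, and the paper does not prove it: it only records the reduction of the contact-type case to Reeb dynamics via $X_H|_Y = fR$ and cites Taubes' Seiberg--Witten proof of the three-dimensional case, which is exactly the strategy you outline (nonvanishing of $\widehat{HM}$ from Kronheimer--Mrowka, deformation of the equations by $r$ times the contact form, and concentration of solutions along Reeb orbits as $r \to \infty$). Your proposal therefore matches the paper's treatment in essentially the same way, correctly flagging that only the $3$-dimensional case is known and that the analytic concentration/compactness argument is the real content there.
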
  

When $c$ is a regular value and $Y:= H^{-1}(c)$ is of contact type, with contact form $\lambda$ and corresponding Reeb vector field $R$, an easy computation shows that 
%t is easy to see [ref] that 
\[ X_H|_Y = f R,\]
 for some positive function $f$.  Hence, the flow of $X_H$ agrees with the Reeb flow up to reparametrization in time.  In particular, Weinstein's conjecture is equivalent to the conjecture that every Reeb flow on a closed manifold with contact form has a closed orbit 
 % a conjecture about Reeb flows 
 and it is often stated this way.   When $n = 2$, in other words when $Y$ is a three-manifold, this was proved by Taubes in \cite{Taubes_Weinstein_Conjecture} 
and this can be viewed as a starting point for many of the results we will present here; we will return to Taubes' proof soon.

  % %dynamics  

\begin{remark}
There are an abundance of examples of flows, even volume-preserving ones, with no closed orbits.  Thus, Weinstein's conjecture, if true, gives an example of what can be called ``symplectic rigidity", whereby the symplectic nature of the problem is forcing considerable extra structure. 
\end{remark}

% significant 

%volume vector fields
%compare Conjecture~\ref{conj:wein} to work on the ``Seifert conjecture", 

Let us now give an example of all of this that has attracted much interest and that was part of what motivated Weinstein to make his conjecture in the first place.

Let $M = \mathbb{R}^{2n}$, with co-ordinates $(x_1,y_1,\ldots,x_n,y_n)$, and consider the symplectic form $\sum_i dx_i \wedge dy_i.$
% + dx_2 \wedge dy_2$.  
(This is called the {\em standard symplectic form}.)  Call a hypersurface {\em star-shaped} when it is smooth and transverse to the radial vector field.  Any star-shaped hypersurface is of contact type: it is an exercise that the one-form $\frac{1}{2}( \sum_i x_i dy_i - y_i dx_i)$ restricts to such a hypersurface as a contact-form.  
The fact that a closed star-shaped hypersurface always has a periodic orbit was first proved by Rabinowitz \cite{rabinowitz} around '78; around the same time, Weinstein also proved this in the special case of boundaries of convex domains \cite{weinstein2}.  Later, Viterbo  \cite{viterbo} proved that any closed contact type hypersurface in $\mathbb{R}^{2n}$ always has a periodic orbit.  Another important result \cite{Hofer93}, by Hofer, proved the Weinstein conjecture for any contact form on $S^3$, or for any contact form on a three-manifold with $\pi_2 \ne 0$, or for any contact form whose associated contact structure is ``overtwisted": this result introduced the method of pseudoholomorphic curves in symplectizations for studying Reeb dynamics, and such pseudoholomorphic curves play a prominent role in our arguments.

%Let $M = \mathbb{R}^4$, with co-ordinates $(x_1,y_1,x_2,y_2)$, and consider the symplectic form $dx_1 \wedge dy_1 + dx_2 \wedge dy_2$.  (This is called the {\em standard symplectic form}.)  Call a hypersurface {\em star-shaped} when it is smooth and transverse to the radial vector field.  Any star-shaped hypersurface is of contact type: it is an exercise that the one-form $\frac{1}{2}( \sum_i x_i dy_i - y_i dx_i)$ restricts to such a hypersurface as a contact-form.  

%Thus, Weinstein's conjecture predicts the existence of a periodic orbit along any closed star-shaped hypersurface, for any Hamiltonian, and this was proved for such hypersurfaces by Viterbo \cite{viterbo}.

One would then like to know to what degree the lower bound of one can be improved.   Once one has a single periodic orbit, one can go around the orbit arbitrarily many times to produce more orbits, so to rule this out we consider {\em simple} orbits i.e. orbits that are not multiply covered.
It is not hard to produce examples with exactly two simple periodic orbits: 

\begin{example}
\label{exe:two}
Let $Y$ be the boundary of the ellipsoid
\[ E(a,b) := \left \lbrace \pi \frac{|z_1|^2}{a} + \pi \frac{|z_2|^2}{b}  \le 1\right \rbrace.\]
This is star-shaped and so inherits a contact form as explained above.  One can compute that, in polar co-ordinates, the Reeb vector field is given by $\frac{2 \pi}{a} \partial_{\theta_1} + \frac{2 \pi}{b} \partial_{\theta_2}$.  Hence when $a/b$ is irrational, there are exactly two simple periodic orbits.
% flow rotates the $(r_1,\theta_1)$ plane with speed 
\end{example}

Similarly, one can define irrational ellipsoids in $\mathbb{R}^{2n}$ and these carry precisely $n$ periodic orbits.

From this, it might seem that many other multiplicities are possible.  Returning now to low-dimensions, there is in fact a quite precise prediction from $2001$ contrasting this expectation sharply:

% contrast, 

%fact, much more has been predicted.  

%ce one knows that at least one orbit exists,  

%In fact, much more has been predicted.  

\begin{conjecture}[``Hofer-Wysocki-Zehnder's two or infinity conjecture"]    
\label{conj:hwz}
 
A Hamiltonian flow along any star-shaped hypersurface in $\mathbb{R}^4$ has either two or infinitely many simple periodic orbits.
\end{conjecture}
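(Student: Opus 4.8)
The plan is to reduce Conjecture \ref{conj:hwz} to a statement about Reeb dynamics on the three-sphere and then attack it using embedded contact homology (ECH) and, crucially, a Weyl law for the ECH spectrum. As noted in the excerpt, a star-shaped hypersurface $Y \subset \mathbb{R}^4$ inherits a contact form $\lambda$, and since $Y$ is diffeomorphic to $S^3$ we are studying Reeb flows of contact forms on $S^3$ whose contact structure is the standard tight one. So it suffices to prove: any nondegenerate or degenerate contact form on $S^3$ with standard $\xi$ has either exactly two or infinitely many simple Reeb orbits. The first step is to dispatch the degenerate case by a limiting argument: if a contact form has finitely many simple Reeb orbits, perturb it; one shows that having ``exactly two'' is preserved under suitable $C^\infty$-small perturbations using the structure of the two orbits (they must be elliptic and ``linked'' in a specific way, behaving like the Hopf fibration), so it is enough to treat nondegenerate $\lambda$.

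Next, assuming $\lambda$ is nondegenerate with only finitely many simple Reeb orbits, I would bring in ECH. The ECH of $(S^3,\xi_{std})$ is isomorphic to the ECH of the ellipsoid, hence (via Taubes' isomorphism with Seiberg--Witten Floer cohomology) has one generator in each even grading $0, 2, 4, \ldots$ and none in odd gradings. Each nonzero ECH class $\sigma$ has a well-defined spectral number $c_\sigma(\lambda) \geq 0$, realized as the total action $\sum_i m_i \mathcal{A}(\gamma_i)$ of some ECH generator, i.e. a finite set of Reeb orbits with multiplicities whose Conley--Zehnder-type indices sum to the grading of $\sigma$. The key analytic input is the \emph{ECH Weyl law}: writing $c_k(\lambda)$ for the spectral number of the class in grading $2k$, one has
\[ \lim_{k \to \infty} \frac{c_k(\lambda)^2}{k} = 2 \operatorname{vol}(Y,\lambda), \]
where the volume is $\int_Y \lambda \wedge d\lambda$. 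This is one of the ``symplectic Weyl laws'' referred to in the introduction, and I would invoke it as a black box.

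The heart of the argument is a combinatorial/dynamical contradiction. Suppose there are exactly $N$ simple Reeb orbits $\gamma_1, \ldots, \gamma_N$ with $3 \leq N < \infty$ (the case $N \leq 1$ is ruled out separately, $N=1$ being impossible by an Euler-characteristic or ECH parity argument, and $N=2$ being the allowed case). Then every ECH generator is a choice of nonnegative multiplicities $(m_1,\ldots,m_N)$ subject to the hyperbolic-orbit constraint, so the set of possible actions $\{\sum m_i \mathcal{A}(\gamma_i)\}$ is a finitely generated numerical semigroup-like set, and the spectral numbers $c_k(\lambda)$ are drawn from it. Counting: the number of such generators with action $\leq L$ and with total ECH index $\leq 2k$ grows polynomially in a way incompatible with the Weyl asymptotics unless $N$ is small; more precisely, a refined asymptotic analysis --- using that consecutive spectral numbers $c_{k+1} - c_k$ must be small and that the index of a generator and its action are linked through the behavior of the orbits --- forces the rotation numbers of the $\gamma_i$ to satisfy rigid arithmetic relations that, combined with the linking/intersection constraints encoded in the ECH index (via the relative intersection pairing and the writhe bound), can only hold when $N=2$. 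I expect this last step --- extracting the precise rigidity from the interplay between the ECH index inequality, the Weyl law, and the finiteness hypothesis --- to be the main obstacle: it requires controlling not just the leading-order volume asymptotics but the finer distribution of the spectrum, and showing that an ``excess'' simple orbit would either create a contradiction with the one-generator-per-even-grading structure or violate the subleading term in the Weyl law. The degenerate limiting argument and the $N \leq 1$ exclusions are comparatively routine once the nondegenerate dichotomy is in hand.
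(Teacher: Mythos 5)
The decisive gap is your treatment of degenerate contact forms. You propose to dispatch the degenerate case by a limiting argument, claiming that having ``exactly two'' simple orbits is preserved under suitable $C^\infty$-small perturbation, so that it suffices to prove the dichotomy for nondegenerate $\lambda$. This is both false as stated and not the implication you need. It is false because perturbing a degenerate form typically creates clusters of new simple orbits near covers of the old ones (this is exactly the content of Lemma~\ref{lem:pert}), so ``exactly two'' is not perturbation-stable. More importantly, the logical direction is wrong: what must be shown is that a possibly degenerate $\lambda$ with finitely many simple orbits has exactly two, and knowing the dichotomy for every nondegenerate form yields no contradiction, since the approximating forms $\lambda_n$ may each have infinitely many simple orbits while $\lambda$ itself has, say, five; nothing transfers back to $\lambda$ without substantial extra structure. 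The nondegenerate case was already known (under extra hypotheses by Hofer--Wysocki--Zehnder, for torsion $c_1(\xi)$ in \cite{CGHP}, and in general in \cite{CDR}); the degenerate case is the entire content of Theorem~\ref{thm:hwz}, and the paper's proof goes through producing low-action $J_n$-holomorphic cylinders for the perturbations $\lambda_n$ (via $U$-towers, the $J_0$ index, partition conditions and the score), proving $n$-independent action and asymptotic-eigenvalue bounds, passing to a limit with an exponentially weighted Fredholm/compactness theory to obtain an annular global-surface-of-section-type structure for $\lambda$ itself, and then invoking Franks' theorem for area-preserving annulus maps. Your perturbation remark replaces none of this.

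Even granting the reduction, your nondegenerate argument is not complete. The sketch is in the spirit of \cite{CGHP} (Weyl law plus finitely many orbits forces rigidity), but the step you yourself flag as ``the main obstacle'' --- extracting arithmetic rigidity of the rotation numbers from the spectrum --- is where the actual work lies, and the known arguments do not run by counting generators of bounded action against the Weyl asymptotics alone: they convert the Weyl law into existence of low-action $U$-map holomorphic curves and then exploit their geometry (partition conditions, intersection theory, or in the paper's route a genuine annular global surface of section plus Franks' theorem). So the core mechanism is missing in both halves of the proposal.
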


In fact, generically, it had been known for some time that there were always infinitely many simple periodic orbits for star-shaped hypersurfaces.  Hofer, Wysocki and Zehnder proved their
%in the generic case\footnote{More precisely, they prove it 
two or infintiy conjecture
assuming two conditions: 
%that are generic: 
that the flow is nondegenerate --- we will define this term soon, but it is analogous to a function being Morse --- and assuming that the stable and unstable manifolds of all hyperbolic orbits intersect transversally
%.} 
\cite{fols}.  So, the challenge in proving it is to develop methods that do not require these conditions.  In fact, one might perhaps think that there are counterexamples if one does not assume these conditions.  For example, for functions on a manifold, one does not generally expect the function with the minimal number of critical point to be Morse. 
%is often not Morse.      

%any genericity for the flow.

 \subsection{Density, periodic orbits, and other invariant sets}
 
 Let us mention some additional results that also relate to this circle of ideas.  
 %We will not focus on the proofs of these in the lectures.
 
 %can also be established 
 
\subsubsection{Generic density of periodic orbits}

In the previous section, we discussed the question of how many periodic orbits must exist on contact type energy surfaces.  % for .  
Another natural question is where such orbits can be found.  In the generic case, there is a beautiful answer to this.  Recall that periodic orbits of the Reeb vector field are called Reeb orbits.

%Call a periodic orbit of a Reeb vector field $R$ a {\em Reeb orbit}.

\begin{theorem}\cite{irie}
%[Irie, ref]
\label{thm:irie}
A $C^{\infty}$-generic contact form on a closed three-manifold has the property that its Reeb orbits are dense.  %In fact, 
\end{theorem}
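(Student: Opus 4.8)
The plan is to derive the generic density statement from a suitable "Weyl law" relating the length spectrum of a contact form to the contact volume, combined with a Baire category argument. This is exactly the strategy of Irie's proof, which in turn builds on Taubes' embedded contact homology (ECH) and the ECH volume formula of Cristofaro-Gardiner--Hutchings--Ramos.

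\textbf{Step 1: Reformulate density as a countable intersection of open dense sets.} Fix a countable basis $\{U_i\}$ of nonempty open sets for the three-manifold $Y$. For each $i$, let $\mathcal{D}_i$ be the set of contact forms $\lambda$ (in the $C^\infty$ topology) possessing a Reeb orbit that meets $U_i$. Then the set of $\lambda$ with dense Reeb orbits is $\bigcap_i \mathcal{D}_i$, so by Baire it suffices to show each $\mathcal{D}_i$ is open and $C^\infty$-dense. Openness is the easy part: a Reeb orbit passing through the open set $U_i$ is a compact, transversally cut-out object under a small perturbation (or one argues via continuity of the flow), so it persists; some care is needed if the orbit is degenerate, but one can always arrange persistence after an arbitrarily small preliminary perturbation. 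The real content is density.

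\textbf{Step 2: The key mechanism --- perturb to create a short orbit through $U_i$.} Given an arbitrary contact form $\lambda$ and a target set $U_i$, I want to find an arbitrarily $C^\infty$-small perturbation $\lambda' = f\lambda$ (with $f$ close to $1$, supported in $U_i$) whose Reeb flow has an orbit through $U_i$. The trick is to choose $f$ so that, inside $U_i$, the contact volume $\int_Y \lambda' \wedge d\lambda'$ changes. Now invoke the ECH Weyl law / volume property: the ECH spectrum $\{c_k(Y,\lambda')\}$ satisfies $c_k(Y,\lambda')^2 / k \to \mathrm{vol}(Y,\lambda')$ as $k\to\infty$, and each $c_k$ is, roughly, realized by a finite collection of Reeb orbits whose total symplectic action equals $c_k$. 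If the Reeb flow of $\lambda'$ had \emph{no} orbit through $U_i$, then the flow of $\lambda'$ would agree with that of $\lambda$ outside $U_i$ on all orbits, forcing all $c_k(Y,\lambda')$ to coincide with $c_k(Y,\lambda)$ --- but this contradicts the volume formula, since $\mathrm{vol}(Y,\lambda') \neq \mathrm{vol}(Y,\lambda)$ by our choice of $f$. Hence some orbit must enter $U_i$.

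\textbf{Step 3: Assemble.} Combining Steps 1 and 2, each $\mathcal{D}_i$ is open and dense, so $\bigcap_i \mathcal{D}_i$ is residual (comeager), which is the desired genericity.

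\textbf{The main obstacle} I anticipate is making Step 2 fully rigorous: one must (a) have the ECH spectrum and volume formula available for all contact forms on any closed three-manifold, not just nondegenerate ones (this requires the full strength of the Cristofaro-Gardiner--Hutchings--Ramos theorem, which handles degenerate $\lambda$ by a limiting argument), and (b) carefully argue that ``no Reeb orbit of $\lambda'$ meets $U_i$'' really does force $c_k(Y,\lambda') = c_k(Y,\lambda)$ for all $k$ --- this uses that the ECH spectral invariants are determined by actions of Reeb orbit sets, and that a conformal change $f\lambda$ supported in $U_i$ changes actions of exactly those orbit sets that pass through $U_i$. The $C^\infty$-closeness of the perturbation, while requiring the volume to change, is genuinely delicate because one needs the change to be small in \emph{every} $C^k$ norm while still being nonzero in the (order-zero) volume; this is arranged by taking $f - 1$ small in $C^0$ but allowing it to oscillate, or more simply by scaling. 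I would present the volume-change construction explicitly but leave the ECH background as a cited black box from the earlier discussion of symplectic Weyl laws.
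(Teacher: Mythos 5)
Your overall route is exactly Irie's, which is also the argument this paper sketches in Remark~\ref{rmk:closing}, with the ECH Weyl law (Theorem~\ref{thm:weyl}, discussed in \S\ref{sec:weyl}) as the black-box input; so the strategy is the intended one. However, the central implication in your Step 2 has a genuine gap as stated. Spectrality only tells you that $c_k(Y,\lambda')$ lies in the action spectrum of $\lambda'$, and---if no Reeb orbit of $\lambda'$ meets $U_i$---that this spectrum is contained in the action spectrum of $\lambda$. That containment does \emph{not} force $c_k(Y,\lambda')=c_k(Y,\lambda)$: the invariant could a priori jump to a different point of that measure-zero set. Your proposed fix in obstacle (b) (``a conformal change supported in $U_i$ changes actions of exactly those orbit sets passing through $U_i$'') only reproves the containment of spectra, not equality of the minimax values, so the contradiction with the volume formula is not yet reached.

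The mechanism that closes this gap is a one-parameter argument, which is what Remark~\ref{rmk:closing} alludes to with ``arguments similar to the ones above'': set $\lambda_t=(1+tf)\lambda$ for $t\in[0,1]$, with $f\ge 0$ a small bump supported in $U_i$, and suppose for contradiction that for \emph{every} $t$ no Reeb orbit of $\lambda_t$ meets $\operatorname{supp} f$. Then every orbit of every $\lambda_t$ is an orbit of $\lambda$ with the same action, so each $c_k(\lambda_t)$ takes values in the fixed closed measure-zero set $\operatorname{Spec}(\lambda)$; since $t\mapsto c_k(\lambda_t)$ is continuous (indeed monotone), its image is an interval contained in a measure-zero set, hence a single point, so $c_k(\lambda_1)=c_k(\lambda_0)$ for all $k$, contradicting the Weyl law because $\operatorname{vol}(\lambda_1)>\operatorname{vol}(\lambda_0)$. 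Note what this actually produces: some $\lambda_t$ in the family (not necessarily the endpoint $\lambda'$) acquires an orbit through $U_i$; that is all the density of $\mathcal{D}_i$ requires, but it is a different statement from the one you asserted. Two smaller points: the ``oscillation/scaling'' worry at the end is a non-issue---take $f=\varepsilon g$ with $g\ge 0$ a fixed bump, so the volume strictly increases while $\lambda_t\to\lambda$ in $C^\infty$---and for openness in Step 1 one should define the open sets via \emph{nondegenerate} orbits through $U_i$; your ``preliminary perturbation'' remark is the right idea, made precise by a further localized perturbation that keeps the created orbit and renders it nondegenerate.
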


In fact, Irie shows in \cite{irie2} that in the $C^{\infty}$-generic case the Reeb orbits even {\em equidistribute}, in the following sense:  there is a sequence of (not necessarily distinct) simple Reeb orbits $\gamma_i$, with corresponding periods $T_i$, such that
\[ \lim_{k \to \infty} \frac{ \gamma_1 + \ldots + \gamma_k}{T_1 + \ldots + T_k} = \frac{d\lambda}{\int \lambda \wedge d \lambda}\]
in the sense of currents.

Sometime after Theorem~\ref{thm:irie}, there was a similar story worked out in the case of surfaces:

\begin{theorem}\cite{cgpz, cgppz, eh}
\label{thm:closing}
%\label{thm:closing}
A $C^{\infty}$-generic area-preserving diffeomorphism of a closed surface has the property that its periodic points are dense.  %In fact, 
\end{theorem}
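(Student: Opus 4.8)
\emph{Reduction to a closing lemma.} The plan is to imitate the strategy by which Irie proved Theorem~\ref{thm:irie}, with periodic Floer homology (PFH) of mapping tori playing the role of embedded contact homology. Fix a closed surface $\Sigma$ with area form $\omega$ and a countable basis $\{B_i\}$ of nonempty open subsets, and let $\mathrm{Diff}(\Sigma,\omega)$ denote the group of $\omega$-preserving diffeomorphisms with its $C^\infty$ topology. For each $i$ let $\mathcal{O}_i$ be the set of $\phi$ admitting a \emph{nondegenerate} periodic point in $B_i$, i.e.\ some $p\in B_i$ with $\phi^n(p)=p$ and $1\notin\mathrm{spec}\,D_p\phi^n$ for some $n\geq 1$. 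Such a point persists, and stays in $B_i$, under $C^1$-small (hence $C^\infty$-small) perturbations, so each $\mathcal{O}_i$ is $C^\infty$-open; and any $\phi\in\bigcap_i\mathcal{O}_i$ has dense periodic points, so by the Baire category theorem it suffices to show that each $\mathcal{O}_i$ is $C^\infty$-dense. This reduces to two points: a standard local (Kupka--Smale-type) perturbation, supported in a small disc inside $B_i$, that makes a given periodic orbit nondegenerate without pushing it out of $B_i$; and the \emph{smooth closing lemma} --- for every $\phi\in\mathrm{Diff}(\Sigma,\omega)$ and every nonempty open $B\subset\Sigma$ there exist area-preserving diffeomorphisms $C^\infty$-arbitrarily close to $\phi$ with a periodic point in $B$.

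\emph{Set-up for the closing lemma.} Pass to the mapping torus $Y_\phi=(\R\times\Sigma)/\!\sim$, $(t,x)\sim(t-1,\phi(x))$, which fibers over $S^1$ with fiber $(\Sigma,\omega)$ and carries the suspension vector field $R=\partial_t$ and a closed fiberwise symplectic $2$-form $\omega_\phi$; periodic points of $\phi$ correspond to closed orbits of $R$, and $B$ determines an open set $\widehat B\subset Y_\phi$ meeting every fiber. For each positive integer $d$ there is the PFH chain complex, generated by finite sets of closed orbits of $R$ whose total class has intersection number $d$ with a fiber, with an ECH-type index grading, a $U$-map, and an action filtration induced by $\omega_\phi$; from its filtered homology one extracts spectral invariants $c_d(\phi)\in\R$. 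Two properties are needed. (i) \emph{Locality and continuity}: $c_d$ is continuous in $\phi$ and monotone under enlarging the fiberwise area; and if $\phi$ is modified only inside a small open disc $V\subset\Sigma$ in such a way that neither the original nor the modified map has a periodic point in $V$, then $c_d$ is unchanged for every $d$ --- the PFH generators are then common orbit sets disjoint from $\widehat V$, carry the same gradings, and have actions computable by $2$-chains avoiding $\widehat V$, where the two fiberwise forms agree. (ii) The \emph{PFH Weyl law}: as $d\to\infty$ one has $c_d(\phi)=d\!\int_\Sigma\omega+\kappa(\phi)+o(1)$, where the subleading term $\kappa(\phi)$ is a symplectic invariant of $(Y_\phi,\omega_\phi)$ sensitive to the cohomology class $[\omega_\phi]$ and to the monodromy --- a flux/Calabi-type quantity, which a Hamiltonian modification of $\phi$ changes by (a multiple of) the integral of the Hamiltonian over $\Sigma$.

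\emph{The perturbation, and the contradiction.} We may assume $B$ is a small embedded disc. Perturb $\phi$ by inserting, in a flow box of $Y_\phi$ lying over $B$, a Hamiltonian twist generated by a nonnegative function of positive integral, obtaining for $s\geq 0$ area-preserving diffeomorphisms $\phi_s$ with $\phi_0=\phi$, with $\phi_s\to\phi$ in $C^\infty$ as $s\to 0$, and with $\phi_s$ equal to $\phi$ outside that flow box. Fix $\delta>0$ small. If some $\phi_s$, $s\in(0,\delta]$, has a periodic point in $B$, then we have produced the desired perturbation. Otherwise, for every $s\in[0,\delta]$ neither $\phi$ nor $\phi_s$ has a periodic point in $B$, so property (i) gives $c_d(\phi_s)=c_d(\phi)$ for all $d$ and all $s\in[0,\delta]$; since the total area is preserved, the leading terms in (ii) cancel and letting $d\to\infty$ forces $\kappa(\phi_\delta)=\kappa(\phi)$. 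But inserting the Hamiltonian twist strictly changes $\kappa$, so $\kappa(\phi_\delta)\neq\kappa(\phi)$ --- a contradiction. Hence for every $\delta>0$ some $\phi_s$ with $s\in(0,\delta]$ has a periodic point in $B$; this proves the closing lemma, and with the reduction above, the theorem.

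\emph{Main obstacle.} The category-theoretic scaffolding and the dichotomy are routine; the real content is the construction of PFH spectral invariants with properties (i) and (ii) for a \emph{completely general} --- possibly degenerate, possibly non-Hamiltonian --- area-preserving $\phi$, and, above all, the PFH Weyl law. The latter is one of the ``symplectic Weyl laws'' of the title: as with Taubes's proof of the three-dimensional Weinstein conjecture and the ECH Weyl law behind Theorem~\ref{thm:irie}, its proof runs through an isomorphism of PFH with a version of Seiberg--Witten Floer cohomology and a delicate analysis of the solutions of the Seiberg--Witten equations on $Y_\phi$ as the perturbation parameter tends to infinity. Isolating the subleading term $\kappa$, controlling the flux when $\phi$ is not Hamiltonian, verifying the locality property at the level of $2$-chains, and arranging the perturbation $\phi_s$ to be simultaneously area-preserving, localized over $B$, and to move $\kappa$, are the further points requiring care. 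Several implementations of this program are now available (cf.\ \cite{cgpz,cgppz,eh}), all resting on PFH spectral invariants together with a PFH Weyl law.
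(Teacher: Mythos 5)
Your overall scheme is the same as the one the paper sketches in Remark~\ref{rmk:closing} and in the cited works \cite{cgpz, cgppz, eh}: a Baire category reduction to a smooth closing lemma on a fixed open set, a Hamiltonian twist supported over that set that ``adds Calabi invariant,'' and a contradiction between invariance of the PFH spectral invariants (when no periodic point is created) and the PFH Weyl law. However, there is a concrete problem with the key analytic input as you state it. Your Weyl law, $c_d(\phi) = d\int_\Sigma \omega + \kappa(\phi) + o(1)$ with the dynamical information carried by a convergent $O(1)$ subleading constant $\kappa(\phi)$, is not what is proven in \cite{cgpz, cgppz, eh}, and no such $o(1)$-precise expansion is known in this generality (subleading asymptotics are a genuinely harder question, studied separately, e.g.\ in \cite{cgetal2}, and there only at the level of boundedness). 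The actual Weyl law puts the Calabi-type quantity at \emph{leading} order: in the form used here, $\lim_{d\to\infty} c_d(\phi)/d = \Cal(\phi)$ for the disc, and in the closed-surface version the spectral invariants of $\phi$ composed with a Hamiltonian perturbation differ from those of $\phi$ by roughly $d$ times the Calabi invariant of the perturbation. So your step ``since the total area is preserved, the leading terms cancel and letting $d\to\infty$ forces $\kappa(\phi_\delta)=\kappa(\phi)$'' rests on an unavailable statement; the contradiction should instead be run at leading order, where it is in fact easier: constancy of $c_d$ in $s$ for all $d$ is incompatible with $(c_d(\phi_\delta)-c_d(\phi))/d \to \Cal > 0$.

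A second, smaller gap is your justification of the ``locality'' property (i). Having the same generators, gradings, and actions for $\phi$ and $\phi_s$ does not by itself give $c_d(\phi_s)=c_d(\phi)$: the differentials (and hence the filtered homologies and the minimax values) can a priori differ, since holomorphic curves may pass over the perturbed region. The argument actually used — and the reason the paper emphasizes the ``trick'' from the $C^0$-continuity proof — is that the action spectrum $\mathcal{A}_d$ is a fixed closed measure-zero subset of $\mathbb{R}$ unchanged along the whole isotopy (this is where your hypothesis ``no periodic point in $B$ for every $s\in[0,\delta]$'' enters), while $s\mapsto c_d(\phi_s)$ is continuous by the Hofer Lipschitz property; a continuous function taking values in a measure-zero set is constant. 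You have all the ingredients for this (you do quantify over all $s$), but as written the ``common generators, hence $c_d$ unchanged'' step is a leap. With the Weyl law corrected and locality argued via the continuity--plus--measure-zero-spectrum trick, your proof becomes essentially the argument of the cited papers.
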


The question of generic density of periodic points of area-preserving surface diffeomorphisms had in fact previously attracted considerable interest; see, for example, \cite{fc, xia}.  More generally, questions of this sort are called {\em closing lemmas}.  In the $C^1$-topology, the situation is well-understood due to celebrated work of Pugh and others \cite{pugh, pughrobin}.  However, in higher regularity, the problem is generally considered very hard.  In particular, finding $C^r$ closing lemmas for $r \ge 2$ is the topic of Smale's 10th problem in his list \cite{smale} of problems for the new century; and, if one restricts to Hamiltonian perturbations, counter examples to the smooth closing lemma are known, see \cite{herman2} and the references therein, even for Hamiltonian flows, so it is natural to speculate about what one even expects to be true.  There is now also an equidistribution result \cite{prasad} for area-preserving diffeomorphisms as well.  Theorem~\ref{thm:closing}
% builds on 
adds to 
groundbreaking work of Asaoka-Irie \cite{ai}, who proved the result in the Hamiltonian case.

Irie's work has by now been extensively surveyed, in particular there is a wonderful Bourbaki seminar \cite{bourbaki}.  Thus, we will not say much more here about Irie's work directly --- but we do give a new account of some of the key ideas required to prove the ``contact Weyl law", which is crucial for his arguments; see \S\ref{sec:weyl}.  We also try to give a sense for the Weyl law in the case of surfaces that is required for
%xpository contribution here is 
%about how the proof of Theorem~\ref{thm:irie} goes here, referring the reader to \cite{bourbaki} instead.  On the other hand,
the proof of Theorem~\ref{thm:closing} in the same section; we also say a few words about how the arguments in proving generic density go, after one has the appropriate Weyl law, in Remark~\ref{rmk:closing}.

%.xx.

%For a survey of Irie's work, we refer the reader to the Bourbaki seminar [ref].  We will explain some of the ideas of the proof in the surface case in the (optional) section.xx.
%[ref].

%provide some (optional) 
%b
%\subsubsection{ }

%\subsubsection{ }

% due to breakthrough 

%multi 

\subsubsection{The Le Calvez - Yoccoz property} 

In the non-generic situation, the periodic orbits need not be dense; see Example~\ref{exe:two}. 

 However, it turns out that if one considers more general invariant sets, there is a very rich picture.  Recall that a flow is called {\em minimal} if every trajectory is dense.  If a flow is not minimal, then it contains a closed {\em non-trivial invariant set}, i.e. a closed proper nonempty set which is invariant under the flow; to construct this set, one takes the closure of a non-dense trajectory.  For example, a Reeb flow on a closed three-manifold is never minimal, since by the Weinstein conjecture it always has a periodic orbit.  As another important example, a compact energy surface of a Hamiltonian on $\mathbb{R}^4$ need not have a periodic orbit \cite{gg}; however, a breakthrough result of Fish-Hofer \cite{fh} that will feature in some of our later discussion shows that nevertheless it can not be minimal\footnote{The regularity here deserves comment.  The counterexamples in \cite{gg} are for $C^2$ Hamiltonians, while the Fish-Hofer result is for smooth Hamiltonians.  It was explained to me by Hofer that the Fish-Hofer arguments should work for $C^{2+\alpha}$.  It seems that the counterexamples in \cite{gg} are expected to hold for $C^{2+\alpha}$ as well}..
 
 The following very recent result guarantees an abundance of closed non-trivial invariant sets without any genericity needed.

%need not    

\begin{theorem}\cite{cgp}
\label{thm:ley}
Let $Y$ be a closed three-manifold and let $\lambda$ be a contact form on $Y$.  Assume that $c_1(\xi) \in H^2(Y;\mathbb{Z})$ is torsion. The complement of any closed non-trivial invariant set for the Reeb flow is never minimal.  
\end{theorem}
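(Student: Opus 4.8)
The plan is to argue by contradiction. So suppose $K \subseteq Y$ is closed, invariant, nonempty, and proper, set $U := Y \setminus K$, and suppose \emph{for contradiction} that the Reeb flow of $\lambda$ restricted to $U$ is minimal. The first, elementary, observation is that $U$ then contains no periodic Reeb orbit: such an orbit $\gamma$ is compact, hence closed in $Y$, so if it were dense in $U$ we would have $U \subseteq \gamma$, which is impossible since $U$ is open in the three-manifold $Y$ while $\gamma$ is one-dimensional. Consequently \emph{every} periodic Reeb orbit of $\lambda$ lies in $K$. The goal of the rest of the argument is to contradict this by manufacturing a periodic Reeb orbit of $\lambda$ inside $U$.

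To that end I would fix a ball $B$ with $\overline{B} \subseteq U$ together with a function $f \geq 0$ supported in $B$ with $f \not\equiv 0$, and consider the family of contact forms $\lambda_s := (1 + sf)\lambda$, $s \geq 0$. Since $f$ vanishes on a neighbourhood of $K$, each $\lambda_s$ agrees with $\lambda$ near $K$, so $K$ — and therefore $U$ — is invariant for every $\lambda_s$; and a short computation using $\lambda \wedge df \wedge \lambda = 0$ gives $\int_Y \lambda_s \wedge d\lambda_s = \int_Y (1+sf)^2\, \lambda \wedge d\lambda > \int_Y \lambda \wedge d\lambda$ for $s > 0$, i.e.\ the contact volume strictly increases. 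Now suppose, for some fixed $s > 0$, that the Reeb flow of $\lambda_s$ had no periodic orbit meeting $B$. Then, since $R_{\lambda_s} = R_\lambda$ off $B$ and (by the previous paragraph) $\lambda$ itself has no periodic orbit meeting $B \subseteq U$, the forms $\lambda$ and $\lambda_s$ would have exactly the same set of Reeb orbits with exactly the same actions; by standard properties of the ECH spectral invariants this forces $c_k(Y, \lambda_s) = c_k(Y, \lambda)$ for all $k$, which, via the contact Weyl law $c_k(Y, \cdot)^2 / k \to 2\,\mathrm{vol}(Y, \cdot)$ from \S\ref{sec:weyl} (valid because $c_1(\xi)$ is torsion), contradicts the strict volume increase. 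Hence, for every $s > 0$, $\lambda_s$ has a periodic Reeb orbit $\gamma_s$ meeting $B$, and since $\overline{B} \subseteq U$ and $U$ is $\lambda_s$-invariant, $\gamma_s \subseteq U$.

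The final step is to let $s \to 0$. The $\lambda_s$ converge to $\lambda$ in $C^\infty$, so if the periods $T(\gamma_s)$ were uniformly bounded a compactness argument (Arzel\`a--Ascoli together with ODE regularity) would produce a periodic Reeb orbit of $\lambda$ meeting $\overline{B} \subseteq U$, hence contained in $U$ — contradicting the first paragraph and completing the proof. I expect the main obstacle to be precisely this uniform control of periods: a priori the orbits $\gamma_s$ extracted from the Weyl law could have $T(\gamma_s) \to \infty$ as $s \to 0$, which is the familiar non-uniformity already present in quantitative closing lemmas. Overcoming it — extracting, for perturbations of arbitrarily small size, periodic orbits through $B$ of \emph{a priori} bounded action — is the technical heart of the matter; I would attack it by a more quantitative use of the Weyl law together with the internal structure of ECH (tracking, for each small $s$, the smallest $k$ with $c_k(Y, \lambda_s) \neq c_k(Y, \lambda)$ and the orbit set realizing it, and playing the growth rate of $c_k$ off against the rate at which the ``freezing'' threshold degrades as $s \to 0$). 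It is here, and in the Weyl law itself, that the hypothesis that $c_1(\xi)$ is torsion enters. A different route to the same conclusion, in the spirit of the Fish--Hofer theorem \cite{fh} mentioned above, would be to bypass the perturbation entirely and produce the obstructing structure inside $U$ directly, via finite-energy pseudoholomorphic (``feral'') curves in the symplectization asymptotic to $K$.
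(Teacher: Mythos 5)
There is a genuine gap, and it sits exactly where you placed your ``technical heart'': the uniform bound on the periods $T(\gamma_s)$ as $s \to 0$ is not a technicality to be recovered by a more quantitative Weyl law --- it is false in the generality in which you invoke it. The only place minimality of $U$ enters your outline is to guarantee that $\lambda$ has no periodic orbit meeting $B$; everything after that uses nothing more. But consider the irrational ellipsoid of Example~\ref{exe:two}, take $K$ to be the union of its two simple orbits and $B$ a small ball with $\overline{B} \subseteq Y \setminus K$. Then $\lambda$ has no periodic orbit meeting $B$, your step 2 (Irie-type argument: volume increase versus frozen action spectrum, with the minor correction that it produces an orbit through $B$ for some $\lambda_t$, $t \in (0,s]$, rather than for $\lambda_s$ itself) still produces orbits through $B$ for perturbed forms, and yet the periods of those orbits \emph{must} blow up as the perturbation shrinks, since a bounded subsequence would converge to a closed orbit of $\lambda$ through $\overline{B}$, which does not exist. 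The ECH spectral machinery sees only the action spectrum, so it cannot distinguish this situation from yours; this is why the quantitative closing lemmas of \cite{eh} give bounds that degrade as the perturbation size goes to zero, as they must. More structurally, your contradiction target overshoots: the theorem cannot be proved by manufacturing a periodic orbit in the complement, because (as the same ellipsoid example shows) the complement of a closed non-trivial invariant set may contain no periodic orbit at all even though it is not minimal --- the whole content of the theorem is the existence of more general invariant sets there. To salvage your scheme one would have to feed the minimality hypothesis into the analytic argument itself, and there is no visible mechanism for the spectral invariants to do that.

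The paper's proof is built around precisely this obstruction: it never looks for periodic orbits in $U$, but detects general closed invariant sets as limits of holomorphic curves. Theorem~\ref{thm:limit} (a compactness theorem building on the Fish--Hofer feral curve theory \cite{fh}) shows that Hausdorff limits of height-two slices of $J$-holomorphic curves with actions tending to $0$ and Euler characteristic bounded below are of the form $(-1,1) \times K$ with $K$ invariant; Proposition~\ref{prop:ECHcurves} supplies such curves through any marked point $(0,z)$ with $z$ off the periodic orbits, using the ECH $U$-tower, the Weyl law and the torsion hypothesis to control both the action and the topology of the curves; and Lemma~\ref{lem:conn} (connectedness of the limit set), combined with the fact that the curves are asymptotic to periodic orbits at large heights, yields Theorem~\ref{thm:nontriv}: any closed non-trivial invariant set containing all periodic orbits is not isolated. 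Non-minimality of the complement follows from that. Your closing sentence gestures at exactly this feral-curve route, but in \cite{cgp} it is the entire proof rather than an optional alternative, and none of its ingredients are supplied by your outline.
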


It follows from Theorem~\ref{thm:ley} that under the same hypotheses the union of the closed non-trivial invariant sets is dense and so one obtains a kind of version of Irie's theorem without any genericity.  Theorem~\ref{thm:ley} is inspired by an older theorem, proved by Le Calvez-Yoccoz, proving a similar result in the case of homeomorphisms of $S^2$; see \cite{ley}.

There is a version of Theorem~\ref{thm:ley} for area-preserving maps of surfaces that for brevity we will not state here. 

To bring this more to life, let us consider the example of geodesic flows on closed surfaces.  As has been known for a long time, and as explained in e.g. \cite{cgp}, there is a Reeb flow on the unit tangent bundle whose flow lines project to geodesics.  One can compute \cite[App. A]{cgp} that $c_1$ is torsion for the corresponding contact structure.  Thus, Theorem~\ref{thm:ley} applies.  From this, we can deduce an interesting statement about how much of the surface is seen by the various geodesics.  One could hypothetically imagine that there are situations where essentially every geodesic explores the entire surface.  In fact, we show this is far from the case

\begin{corollary}
\label{cor:nondense}
Any (Riemannian or Finsler) geodesic flow on a closed surface has the property that a dense set of points have a non-dense geodesic passing through them.  Moreover, these geodesics can be assumed to have distinct closures.
\end{corollary}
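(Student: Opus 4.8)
The plan is to lift the problem to the unit tangent bundle and apply Theorem~\ref{thm:ley}. As recalled above, and in \cite[App.~A]{cgp}, the geodesic flow of a Riemannian or Finsler metric on a closed surface $M$ is conjugate to the Reeb flow of a contact form $\lambda$ on $SM$ whose contact structure $\xi=\ker\lambda$ has $c_1(\xi)$ torsion in $H^2(SM;\Z)$, so Theorem~\ref{thm:ley} applies to this flow. It then suffices to produce, over a dense set of points $p\in M$, a direction $v\in S_pM$ whose geodesic is non-dense in $M$, and to arrange that distinct choices have distinct closures.

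The first step is to show that the union $\mathcal U$ of all closed non-trivial invariant sets of the Reeb flow is dense in $SM$. It is nonempty, since a closed geodesic exists on every closed surface and is such a set. If its closure $\Lambda:=\overline{\mathcal U}$ were a proper subset of $SM$, then $\Lambda$ would itself be a closed non-trivial invariant set, so by Theorem~\ref{thm:ley} the Reeb flow restricted to the nonempty open invariant set $SM\setminus\Lambda$ would be non-minimal; but then the closure in $SM$ of a trajectory that is not dense in $SM\setminus\Lambda$ is a closed non-trivial invariant set not contained in $\Lambda$, contradicting $\Lambda=\overline{\mathcal U}$. Hence $\Lambda=SM$. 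Since the bundle projection $\pi\colon SM\to M$ is an open surjection, $\pi(\mathcal U)$ is dense in $M$, and each $p\in\pi(\mathcal U)$ carries a direction $v$ for which the orbit of $(p,v)$ is trapped in some closed invariant set $K\subsetneq SM$; this already proves the weaker assertion in which ``non-dense'' is read inside $SM$.

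To upgrade this to non-density of the geodesic as a curve in $M$ --- and, as a bonus, to the distinctness of closures --- one should run the descent using only those closed non-trivial invariant sets $K$ whose image $\pi(K)$ is \emph{nowhere dense} in $M$: the geodesic through such a $(p,v)$ then has closure contained in $\pi(K)$, hence is non-dense in $M$; and since, by the Baire category theorem, a finite union of nowhere dense closed subsets of $M$ has open dense complement, one can, working through a countable basis of $M$, keep choosing over a dense set of base points geodesics whose closures avoid all those chosen so far, making them pairwise distinct. Everything therefore reduces to the single claim that the union of those closed non-trivial invariant sets of the Reeb flow whose image in $M$ is nowhere dense itself projects onto a dense subset of $M$.

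I expect this claim to be the main obstacle. The natural attempt mirrors the argument of the second step: if that projection missed a nonempty open $W\subseteq M$, the closure of the relevant union would be a proper closed invariant set, and Theorem~\ref{thm:ley} would furnish a non-dense trajectory in its complement --- but the closure of such a trajectory may itself surject onto $M$ (as, for instance, the irrational linear foliations on $ST^2$ do), so the argument does not close directly. Completing it seems to require passing to minimal subsets and using the structure of minimal sets of geodesic flows on surfaces --- they are closed geodesics, or, in favorable cases, geodesic laminations, both of which have nowhere dense image --- supplemented where necessary by an iterated application of Theorem~\ref{thm:ley} together with the volume-invariance of the Reeb flow. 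Granting this input, the remaining steps are routine point-set topology.
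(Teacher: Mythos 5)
Your reduction to the unit tangent bundle and your deduction from Theorem~\ref{thm:ley} that the union of the closed non-trivial invariant sets is dense in $SM$ are correct, and they match the paper's starting point. But this only gives what you yourself label the weaker assertion: a dense set of basepoints carrying a geodesic whose \emph{lift} is non-dense in $SM$. The corollary, as the surrounding discussion makes explicit (``how much of the surface is seen by the various geodesics''), is about non-density of the geodesic in the surface $M$ itself, and that is exactly the part you leave open. Moreover, the bridge you sketch does not work as stated: it is false that minimal sets of geodesic flows on surfaces are closed geodesics or geodesic laminations with nowhere dense projection --- for the flat two-torus the orbit closure of any irrational direction is a minimal invariant two-torus in $ST^2$ whose projection is all of $T^2$, and in general proper closed invariant sets, even minimal ones, can project onto $M$. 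So the main content of the corollary is missing, and the proposed input for supplying it is unavailable.

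The missing idea is not the structure theory of minimal sets but the stronger non-isolation statement, Theorem~\ref{thm:nontriv} (the main theorem of \cite{cgp}, from which Theorem~\ref{thm:ley} is deduced), whose whole point is that orbit closures can be trapped inside an \emph{arbitrary prescribed} neighborhood $U$ of a closed invariant set containing all periodic orbits; choosing $U$ fibered over the surface converts this into information about $M$. Concretely: fix a nonempty open $W\subseteq M$ and suppose every geodesic meeting $W$ were dense in $M$. Let $A\subseteq SM$ be the union of all orbits whose projection to $M$ is non-dense. Then $A$ is invariant, contains every periodic orbit (a closed geodesic is never dense in $M$), and under the hypothesis $A\subseteq \pi^{-1}(M\setminus W)$, so $\overline A$ is a closed non-trivial invariant set containing the periodic orbits. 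Applying Theorem~\ref{thm:nontriv} with $U=\pi^{-1}(M\setminus \overline B)$, where $B$ is a small ball with $\overline B\subset W$, produces a point $q\in U\setminus\overline A$ whose orbit closure stays in $U$; its geodesic avoids $B$, hence is non-dense in $M$, hence $q\in A$ --- a contradiction. This shows the union of the non-dense geodesics is dense in $M$; the pairwise-distinct-closure claim requires a further refinement of the same trick (prescribing which balls the new geodesic must avoid), and note that your Baire-category bookkeeping for distinctness assumes the closures are nowhere dense in $M$, which is neither asserted by the statement nor provided by the construction (such closures can have interior), so that step would need to be reworked along these lines as well.
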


In negative curvature, Corollary~\ref{cor:nondense} follows from the known fact that in this case, the closed geodesics are themselves dense.  However, 
%to the best of our knowledge it is 
it seems that this was 
unknown for metrics without curvature assumptions, even in the Riemannian case.

In a different direction, recall from the previous section that there are Hamiltonains on $\mathbb{R}^4$ with 
%smooth compact energy levels with 
no periodic orbits at all.  On the other hand, Prasad has shown \cite{prasadnew} that when one works with more general closed non-trivial invariant sets, the dense existence result still holds, on any smooth compact energy surface in $\mathbb{R}^4$.

In these lectures, we will not have a chance to explain the proof of Theorem~\ref{thm:ley} in depth due to time constraints, but we will try to give a flavor for what goes into it in \S\ref{sec:ley}.

% in this situation.
%of invariant sets remains true for hypersurfaces in $\mathbb[R}^4$  
 %It holds, for example, for 

%be a new result even in the Riemannian case.

%the above 

% Here, 

\subsection*{Acknowledgements}  I thank the National Science Foundation for
their support under agreements DMS-2227372 and DMS-2238091.  I would also like to thank the organizers of the $2024$ Current Developments in Mathematics conference for inviting me to give these talks.  In these notes, I discuss joint works with a range of collaborators and I would like to thank all of them for numerous crucial conversations.  In particular, the  two works at the heart of these lectures are joint works with V. Humili\`{e}re and S. Seyfaddini; and, with U. Hryniewicz, M. Hutchings, and H. Liu.  Another work that is also an important part of these notes is joint with R. Prasad.  I also thank C. Y. Mak,  D. Pomerleano, I. Smith and B. Zhang for extremely valuable discussions around related joint work.  I would also like to thank G. Forni and S. Newhouse for helping me better understand the context for many of these results.  

I am also extremely grateful to Alberto Abbondandolo, Mohammed Abouzaid, Barney Bramham, Giovanni Forni, Viktor Ginzburg, Helmut Hofer, Umberto Hryniewicz, Vincent Humiliere, Hui Liu, Rohil Prasad, Sobhan Seyfaddini, and Shaoyang Zhou for very helpful comments on an earlier draft of these notes.

 %{\em closed proper invariant set}  

%In the discrete case

% nevertheless there is an abundance of interesting invariant sets.  

%here is nevertheless   

\section{The algebraic structure of homeomorphism groups}

We now explain the resolution of Question~\ref{que:fathi}.  We aim to highlight the key ideas.  There is also an excellent Bourbaki seminar \cite{bourbakighys} on this topic that gives another perspective on the work and that we highly recommend.
%complements 

\subsection{Statement of results}

Recall from the introduction that the group $\Homeo_0(S^n,\mu_{std})$ of volume-preserving homeomorphisms of the $n$-sphere, in the component of the identity, is simple when $n \ge 3$; recall also that the case $n = 2$ had remained a mystery.  Here, $\mu_{std}$ denotes the standard volume measurement.

\begin{theorem}\cite{simp2}
\label{thm:s2case}
The group $\Homeo_0(S^2,\mu_{std})$ of area and orientation preserving homeomorphisms of the two-sphere is not simple.

\end{theorem}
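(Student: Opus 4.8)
The plan is to produce a proper, nontrivial normal subgroup of $G := \Homeo_0(S^2,\mu_{std})$. Since $H_1(S^2)=0$, Fathi's mass-flow homomorphism is useless here, and $G$ has no other evident homomorphism to a ``smaller'' group; so instead I would reduce, following Le~Roux's reformulation of Fathi's question, to a statement about \emph{fragmentation}: if $G$ were simple then, fixing any open disk $U\subsetneq S^2$ with $\mu_{std}(U)<\tfrac12\mu_{std}(S^2)$, every element of $G$ would admit a fragmentation as a product of conjugates (in $G$) of homeomorphisms supported in $U$, with the number of factors controlled — ``uniformly'' in an appropriate sense — by $C^0$-data of the element. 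Thus it suffices to exhibit elements of $G$ whose fragmentation norm with respect to such a (necessarily displaceable) disk $U$ is arbitrarily large.

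To detect this fragmentation norm I would construct a homogeneous quasimorphism $\mu\colon G\to\R$ that (a) is continuous in the $C^0$ topology, (b) vanishes on every homeomorphism supported in a displaceable open subset — in particular on all homeomorphisms supported in $U$ — and (c) is not identically zero, hence unbounded. On the smooth subgroup $\mathrm{Ham}(S^2,\mu_{std})=\mathrm{Diff}_0(S^2,\mu_{std})$, which is $C^0$-dense in $G$, the candidate is the Calabi quasimorphism of Entov--Polterovich; the real content is to realize it \emph{$C^0$-continuously}. This is where periodic Floer homology enters. For a smooth area-preserving $\phi$ I would use the PFH spectral invariants $c_d(\phi)$, $d\in\N$, defined from the Seiberg--Witten/ECH-type Floer homology of the mapping torus of $\phi$, together with three inputs: (i) each $c_d$ is monotone and $C^0$-continuous, with constants independent of $d$; (ii) the \emph{symplectic Weyl law}, $c_d(\phi)/d$ converges as $d\to\infty$ to a Calabi-type mean value of $\phi$, whose subadditivity properties make the homogenized limit a genuine homogeneous quasimorphism $\mu$; and (iii) $c_d$ vanishes on Hamiltonian diffeomorphisms generated inside displaceable sets, which passes to the limit to give (b). By (i) the resulting $\mu$ extends $C^0$-continuously to $G$, retaining property (b) and the quasimorphism identity, and (c) is checked on an explicit model such as a rotation or an irrational-ellipsoid map.

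Putting the two halves together: if $f=h_1\psi_1h_1^{-1}\cdots h_k\psi_kh_k^{-1}$ with each $\psi_i$ supported in $U$, then the defect inequality and conjugation-invariance of homogeneous quasimorphisms give $|\mu(f)|\le\sum_i|\mu(\psi_i)|+(k-1)D(\mu)=(k-1)D(\mu)$ by (b), so $\mu$ is bounded on every set of elements of bounded fragmentation length. If Le~Roux's simplicity criterion held this would force $\mu$ to be bounded on $G$, contradicting (c); hence $G$ is not simple. (One can equivalently phrase the obstruction on the disk, where the Calabi invariant is an honest homomorphism: the analogous construction extends $\mathrm{Cal}$ from $\mathrm{Diff}_c(D^2,\mu_{std})$ to a strictly larger normal subgroup of $\Homeo_c(D^2,\mu_{std})$ with proper kernel, and the $S^2$ statement can be bootstrapped from this.) The main obstacle is unambiguously the construction and analysis of the $c_d$: defining the PFH spectral invariants, proving the $C^0$-continuity bounds uniform in $d$ that are needed to survive passage to homeomorphisms, and above all establishing the Weyl law $c_d(\phi)/d\to(\text{Calabi-type mean of }\phi)$ — this is the technical heart, resting on Taubes' ``Seiberg--Witten $=$ Gromov'' circle of ideas. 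Assembling the $c_d$ into a quasimorphism and invoking the fragmentation criterion is comparatively formal.
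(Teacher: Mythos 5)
Your plan has two genuine gaps, and the more serious one is the quasimorphism step. The known properties of the PFH spectral invariants $c_d$ are exactly the axioms listed in \S\ref{sec:calpfh} (Spectrality/Continuity, Monotonicity, Identity, Hofer Lipschitz, Weyl law); there is no subadditivity or triangle inequality $c_d(\phi\psi)\le c_d(\phi)+c_d(\psi)$ among them, and such an inequality would require a product (``quantum''/TQFT) structure on PFH which, as noted in the discussion of $C^0$-continuity, is only conjectural. So your step (ii) — that the homogenized limit of the $c_d$ is a genuine homogeneous quasimorphism — is not available from this technology. Your step (iii) is false as stated: the $c_d$ do \emph{not} vanish on maps supported in displaceable sets; by Monotonicity and the Weyl law, $c_d(\phi)/d\to \Cal(\phi)$, which is typically nonzero for a diffeomorphism supported in a small disc (this ``Calabi property,'' rather than vanishing, is precisely what the actual proof exploits), and the Entov--Polterovich quasimorphism likewise restricts to the Calabi homomorphism, not to zero, on displaceable-supported diffeomorphisms. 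Consequently your defect estimate acquires uncontrolled terms $\sum_i|\Cal(\psi_i)|$ and gives no contradiction; one would need, e.g., differences of Calabi-property quasimorphisms, which is the later route of \cite{cgetal} via link spectral invariants (quantitative Heegaard Floer homology), not PFH. Finally, the Le~Roux fragmentation equivalence you invoke is established for $\Homeo_c(D^2,\mu_{std})$; the uniform statement ``simplicity of $\Homeo_0(S^2,\mu_{std})$ implies fragmentation into a bounded number of conjugates of elements supported in a fixed displaceable disc'' is itself a nontrivial claim on the sphere (one would have to run a Baire-category/Polish-group argument), and you do not supply it.

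By contrast, the paper's proof (for $S^2$ as for the disc, Theorem~\ref{thm:simpconjecture}) needs neither a quasimorphism nor any simplicity-implies-fragmentation criterion: it exhibits the normal subgroup $\FHomeo$ of finite Hofer energy homeomorphisms directly (normality is immediate from bi-invariance of Hofer's metric) and proves properness by evaluating the $c_d$ on an explicit infinite twist $T$ supported in a disc: Monotonicity plus the Weyl law force $c_d(T)/d\to\infty$, while Hofer Lipschitz, Identity and Continuity bound $c_d(g)/d$ for every $g\in\FHomeo$. If you want to salvage your route, you must either prove a subadditivity for the $c_d$ (open), or switch to the link spectral invariants of \cite{cgetal} and additionally prove the uniform fragmentation criterion on $S^2$; otherwise the direct normal-subgroup argument of \cite{simp2} is the way to go.
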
  

There is a parallel statement for the open two-disc $D^2$.  Let $\Homeo_c(D^2,\mu_{std})$ denote the group of {\em compactly supported} area-preserving homeomorphisms, namely those that are the identity near the boundary.  (The simplicity question is not interesting for the full group of area-preserving homeomorphism, since $\Homeo_c$ is a normal subgroup.)

\begin{theorem}\cite{simp}
\label{thm:simpconjecture}
The group $Homeo_c(D^2,\mu_{std})$ is not simple.
%of area and orientation preserving homeomorphisms of the two-sphere is not simple.
\end{theorem}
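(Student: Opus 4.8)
The plan is to exhibit a proper, nontrivial normal subgroup of $\Homeo_c(D^2,\mu_{std})$; producing one such subgroup is exactly what is needed, since a group fails to be simple precisely when it has one. The candidate I would use is the group $\FHomeo_c(D^2,\mu_{std})$ of \emph{finite-energy homeomorphisms}: those $h$ admitting compactly supported Hamiltonian diffeomorphisms $\varphi_i\to h$ in $C^0$ with $\sup_i\|\varphi_i\|_{\mathrm{Hofer}}<\infty$. This set is a subgroup (energies add under composition, and $\|\varphi_i^{-1}\|_{\mathrm{Hofer}}=\|\varphi_i\|_{\mathrm{Hofer}}$), it contains all of $\Diffeo_c(D^2,\mu_{std})$ (take a constant sequence), and it is normal: given $g\in\Homeo_c(D^2,\mu_{std})$, approximate $g$ in $C^0$ by area-preserving diffeomorphisms $g_j$ (such approximations exist by classical smoothing/Moser-type results), observe $\|g_j\varphi_i g_j^{-1}\|_{\mathrm{Hofer}}=\|\varphi_i\|_{\mathrm{Hofer}}$, and run a diagonal argument to see $ghg^{-1}\in\FHomeo_c(D^2,\mu_{std})$. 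Thus the theorem reduces to the single assertion that $\FHomeo_c(D^2,\mu_{std})$ is \textbf{proper} --- equivalently, that some area-preserving homeomorphism of the disc has ``infinite energy.''

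To detect this I would use the periodic Floer homology (PFH) spectral invariants. Extending a compactly supported area-preserving diffeomorphism $\varphi$ of $D^2$ by the identity yields an area-preserving --- hence, as $H^1(S^2)=0$, Hamiltonian --- diffeomorphism of $S^2$; from the $U$-module structure on the PFH of its mapping torus (after the standard monotone normalization of the areas) one extracts, for each degree $d\ge 1$, a spectral number $c_d(\varphi)\in\mathbb{R}$, the minimal action of a PFH class carrying the fundamental class at level $d$. The properties I would establish are: (a) the formal spectral-invariant package --- a Hofer--Lipschitz bound $|c_d(\varphi)-c_d(\psi)|\le d\,\|\varphi\psi^{-1}\|_{\mathrm{Hofer}}$, subadditivity under composition, monotonicity (composing with the time-one map of a nonnegative Hamiltonian does not decrease $c_d$), and $c_d(\mathrm{id})=O(d)$; (b) \emph{$C^0$-continuity} of each $c_d$ on $\Diffeo_c(D^2,\mu_{std})$, with a quantitative modulus, so that $c_d$ extends continuously (still written $c_d$) to all of $\Homeo_c(D^2,\mu_{std})$ using $C^0$-density of the diffeomorphisms; and (c) a \emph{Weyl law}: $c_d(\varphi)/d \to \tfrac12\Cal(\varphi)$ as $d\to\infty$, valid at least when $\varphi$ is supported in a disc of small enough area, where $\Cal$ is the Calabi invariant.

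These combine as follows. If $\varphi_\infty\in\FHomeo_c(D^2,\mu_{std})$, say $\varphi_i\to\varphi_\infty$ in $C^0$ with $\|\varphi_i\|_{\mathrm{Hofer}}\le E$, then by (b) and (a) one gets $c_d(\varphi_\infty)=\lim_i c_d(\varphi_i)\le c_d(\mathrm{id})+dE=O(d)$. Now construct an explicit \emph{infinite twist}: choose pairwise disjoint discs $D_k$ clustering at a single point $p$, all contained in one fixed disc of small area and with $\mathrm{diam}(D_k)\to 0$, and nonnegative Hamiltonians $H_k$ supported in $D_k$ with $\Cal(\phi_{H_k})\to\infty$; the partial compositions $\Psi_K:=\phi_{H_1}\circ\cdots\circ\phi_{H_K}$ then converge in $C^0$ to a homeomorphism $\varphi_\infty$, since their tails are supported in shrinking neighbourhoods of $p$. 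Monotonicity in (a) gives $c_d(\varphi_\infty)=\lim_K c_d(\Psi_K)\ge c_d(\Psi_{K_0})$ for every $K_0$, while the Weyl law (c), applied to the diffeomorphism $\Psi_{K_0}$, gives $c_d(\Psi_{K_0})/d\to \tfrac12\sum_{k\le K_0}\Cal(\phi_{H_k})$, a quantity tending to $\infty$ with $K_0$. Hence $c_d(\varphi_\infty)/d\to\infty$, contradicting the $O(d)$ bound; so $\varphi_\infty\notin\FHomeo_c(D^2,\mu_{std})$, the subgroup is proper, and $\Homeo_c(D^2,\mu_{std})$ is not simple. (Combined with Fathi's structure theory one can moreover locate a simple kernel, but non-simplicity already follows from the above.)

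The hard part will be property (b), the $C^0$-continuity of the PFH spectral invariants. The Hofer--Lipschitz bound and the monotonicity are formal consequences of action estimates for continuation maps, but $C^0$-continuity is genuinely delicate: $c_d$ is manufactured from counts of $J$-holomorphic curves in the symplectization of the mapping torus and has no evident robustness under $C^0$-small but Hofer-large perturbations. This is precisely where input from low-dimensional topology is essential --- one exploits the structure of embedded contact/periodic Floer homology, in particular Taubes's isomorphism with Seiberg--Witten theory, to compare $c_d$ with quantities (ECH-type capacities, extrema of generating Hamiltonians on small discs) that \emph{are} manifestly $C^0$-controlled. The Weyl law (c) is the other substantial pillar: I would prove it in the spirit of the ECH volume/Weyl law, via ``$SW=ECH$'' and the asymptotics of the Seiberg--Witten spectral flow, and it is one of the ``symplectic variants of Weyl's law'' emphasized in these notes. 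Once (b) and (c) are in hand, the normality of $\FHomeo_c(D^2,\mu_{std})$ and the analysis of the infinite twist are comparatively soft.
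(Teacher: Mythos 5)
Your proposal is correct and follows essentially the same route as the paper: the normal subgroup $\FHomeo_c(D^2,\mu_{std})$ of finite Hofer energy homeomorphisms, the PFH spectral invariants with the Hofer Lipschitz, Monotonicity, Identity, Continuity and Weyl law properties, an $O(d)$ bound on $c_d$ for finite-energy elements, and an ``infinite Calabi'' homeomorphism whose $c_d/d$ diverges by Monotonicity plus the Weyl law. The only differences are cosmetic: you build the infinite-energy element from Hamiltonians on disjoint shrinking discs accumulating at a point rather than the paper's radial infinite twist $T(r,\theta)=(r,\theta+f(r))$, and your normalization of the Weyl law and of $c_d(\mathrm{id})$ differs harmlessly from the paper's.
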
  

That the group $\Homeo_c(D^2,\mu_{std})$ is not simple had been called the {\em simplicity conjecture}.  Theorem~\ref{thm:simpconjecture} resolves it.  The proofs of Theorem~\ref{thm:s2case} and Theorem~\ref{thm:simpconjecture} are related.  We will focus on the proof of Theorem~\ref{thm:simpconjecture}, because conceptually it is slightly easier to digest; the argument in the case of the disc also came first.

% it is conceptually slightly mosimpler; it also came first.

\subsection{The diffeomorphism case}

The starting point for the proof of Theorem~\ref{thm:simpconjecture} is the analysis in the smooth case.  More precisely, let $\Diffeo_c(D^2,dx\wedge dy)$ denote the group of compactly supported diffeomorphisms, preserving the area form $dx \wedge dy$.  

The following is well-known:

%, and we will explain the proof:

\begin{proposition}
\label{prop:smooth}
The group $\Diffeo_c(D^2,dx\wedge dy)$ is not simple.
\end{proposition}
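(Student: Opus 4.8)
The plan is to exhibit a nontrivial homomorphism out of $\Diffeo_c(D^2, dx\wedge dy)$, the so-called \emph{Calabi homomorphism}, whose kernel is a proper normal subgroup; this immediately shows the group is not simple. Concretely, for $\phi \in \Diffeo_c(D^2, dx\wedge dy)$ one writes $\phi$ as the time-one map of a compactly supported (possibly time-dependent) Hamiltonian isotopy $\phi^t_H$, where $H_t: D^2 \to \mathbb{R}$ has support in a fixed compact subset of the interior, and defines
\[
\Cal(\phi) := \int_0^1 \!\! \int_{D^2} H_t \, (dx\wedge dy) \, dt.
\]
The first step is to recall why every such $\phi$ is Hamiltonian: since $D^2$ is simply connected and $\phi$ is compactly supported and area-preserving, the flux obstruction vanishes, so $\phi$ lies in the identity component of the compactly supported symplectomorphism group and is generated by a Hamiltonian isotopy.

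The main work is in checking that $\Cal$ is well-defined — i.e.\ independent of the chosen Hamiltonian generating $\phi$ — and that it is a group homomorphism. For well-definedness, one shows that if $\phi^1_H = \mathrm{id}$ then the integral above vanishes; the cleanest route is to observe that $\Cal$ depends only on the isotopy class of the path $\{\phi^t_H\}$ rel endpoints in the group of compactly supported Hamiltonian diffeomorphisms, and that on $D^2$ this group is contractible (so loops are null-homotopic and contribute nothing). Alternatively one can give the direct computation: the variation of $\Cal$ under a change of Hamiltonian by $dG_t$ and under homotopy of the path is a boundary term that vanishes by compact support. For the homomorphism property, if $H$ generates $\phi$ and $K$ generates $\psi$, then the concatenated/composed Hamiltonian $H \# K$ defined by $(H\#K)_t = H_t + K_t \circ (\phi^t_H)^{-1}$ generates $\phi\psi$, and one computes $\Cal(\phi\psi) = \Cal(\phi) + \Cal(\psi)$ using that $(\phi^t_H)^{-1}$ preserves $dx\wedge dy$ to change variables in the integral of $K_t\circ(\phi^t_H)^{-1}$.

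Finally, one checks $\Cal$ is neither identically zero nor surjective-with-trivial-kernel in a way that trivializes the conclusion: pick any $\phi$ supported in a small disc that is, say, a nontrivial rotation-like map with nonzero Calabi invariant (e.g.\ generated by a bump function $H$ depending only on the radius, for which the integral is manifestly positive), so $\Cal \not\equiv 0$; and note $\Cal$ certainly has nontrivial kernel since, e.g., a map supported in one small disc conjugated and composed to cancel its Calabi invariant lies in $\ker\Cal$ but need not be the identity — more simply, $\ker\Cal$ is a proper nonzero normal subgroup because $\Cal$ is a surjection onto $(\mathbb{R},+)$ (rescaling the bump function scales the invariant) with nontrivial kernel. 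Hence $\ker(\Cal)$ is a proper nontrivial normal subgroup and $\Diffeo_c(D^2, dx\wedge dy)$ is not simple. The step I expect to be the main obstacle is the well-definedness of $\Cal$: one must be careful that the contractibility of the group (or equivalently the vanishing of the relevant secondary obstruction) is invoked correctly, and that all the integration-by-parts boundary terms really do vanish under the compact-support hypothesis rather than merely under a closed-manifold hypothesis.
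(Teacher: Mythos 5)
Your proposal is correct and follows essentially the same route as the paper: define the Calabi homomorphism $\operatorname{Cal}(\psi^1_H)=\int_0^1\int_{D^2}H_t\,\omega\,dt$, verify well-definedness and the homomorphism property via the composed Hamiltonian $H\#K$, and conclude that $\ker(\operatorname{Cal})$ is a proper nontrivial normal subgroup. The paper's sketch simply asserts well-definedness and deduces nontriviality of the kernel from non-abelianness of the group, whereas you additionally sketch how well-definedness is established and exhibit kernel elements directly; these are only presentational differences.
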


\begin{proof} [Proof sketch]

We start with the following preliminary from symplectic geometry.  Recall from the introduction that any Hamiltonian $H$ on a symplectic manifold $M$ gives rise to a canonical vector field $X_H$.   If we let our Hamiltonian depend on time, i.e. consider $H: [0,1] \times M \to \mathbb{R}$, we can use the same construction to get a possibly time varying vector field on $D^2$; we will continue to denote it by $X_H$.  An exercise using Cartan's formula shows that the flow $\psi^t_H$ of the vector field $X_H$ (whether time-varying or not) preserves the symplectic form.

Applying this to the case $M = D^2$ thus gives an element of $\Diffeo_c$, for any Hamiltonian.  In fact, it turns out that the converse is true: we can write 
\[ g = \psi^1_H,\]
for any $g \in \Diffeo_c(D^2,dx\wedge dy)$ as the time-$1$ flow of the vector field associated to some (possibly time-varying) Hamiltonian $H$.  Moreover, since $g$ is compactly supported, we can assume in addition that $H = 0$ near $\partial D^2$.

We now define the map
\[ \operatorname{Cal}:    \Diffeo_c(D^2,dx\wedge dy) \to \mathbb{R}, \quad g = \psi^1_H \to \int^1_0 \int_{D^2} H(x,t) dx dt.\] 
A beautiful fact is that this is well-defined.  A priori, this is not clear,
since there are many different Hamiltonians that all have the same time-$1$ flow; however, in fact the relevant integral does not depend on the choice of $H$.
% see ref.  

Once we know that $\operatorname{Cal}$ is well-defined, it is not hard to see that it is a homomorphism.  Indeed, there is an easy construction of a Hamiltonian $H \# G$ such that $\phi^1_{H \# G} = \phi^1_H \circ \phi^1_G$ and $\int H \#G = \int H + \int G$, see \cite[\S2]{simp}.    The homomorphism $\operatorname{Cal}$  can not have trivial kernel, since $\Diffeo_c(D^2,dx\wedge dy)$ is not abelian, and by construction it can not be identically zero.  Thus, its kernel gives a non-trivial proper normal subgroup.
%To show that $\Diffeo_c(D^2,dx\wedge dy)$ is not simple, it remains to show that 
\end{proof}

The homomorphism $\operatorname{Cal}$ is called the {\em Calabi homomorphism} and it will play a key role in our story.  It turns out that it measures the ``average rotation" of the map.
% see Figure.xx. 
As far as the simplicity of $\Diffeo_c(D^2,dx\wedge dy)$, it is the key obstruction: Banyaga has shown \cite{banyaga}
%[ref] 
that its kernel is simple, although we will not use that fact here.

\subsection{Difficulties in extending Calabi}
\label{sec:difficulty}

We have an inclusion of groups
\[  \Diffeo_c(D^2,dx\wedge dy) \subset \Homeo_c(D^2,dx\wedge dy.\]
To attempt to prove Theorem~\ref{thm:simpconjecture}, it would now be natural to try to extend 
$\operatorname{Cal}$ from $\Diffeo_c$ to $\Homeo_c$.  In fact, it is well-known that $\Diffeo_c$ sits densely in $\Homeo_c$ in the $C^0$-topology, so one could attempt to extend $\operatorname{Cal}$ by continuity.

However, this can not work:

\begin{lemma}
$\operatorname{Cal}:  \Diffeo_c(D^2,dx\wedge dy) \to \mathbb{R}$ is not continuous in the  $C^0$-topology.  
\end{lemma}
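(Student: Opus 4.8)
The plan is to exhibit an explicit sequence of compactly supported area-preserving diffeomorphisms of $D^2$ that converges to the identity in the $C^0$-topology but whose Calabi invariants do not converge to $\operatorname{Cal}(\id) = 0$ (in fact, stay bounded away from $0$, or even diverge). Since $\operatorname{Cal}$ is a homomorphism, continuity at the identity would force continuity everywhere, so it suffices to break continuity at $\id$.

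Here is the construction I would use. Fix a small disc $B_\epsilon$ of radius $\epsilon$ inside $D^2$, and inside it consider a Hamiltonian that generates a rotation by a full turn (or many turns) supported in $B_\epsilon$: concretely, take $H_\epsilon$ a function of the radius alone, equal to a constant $c_\epsilon$ on a slightly smaller disc and tapering to $0$ near $\partial B_\epsilon$, chosen so that the time-$1$ flow $g_\epsilon = \psi^1_{H_\epsilon}$ is supported in $B_\epsilon$. Because $g_\epsilon$ is supported in a set of diameter $2\epsilon$, it moves every point by at most $2\epsilon$, so $g_\epsilon \to \id$ in $C^0$ as $\epsilon \to 0$. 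On the other hand, by the description of $\operatorname{Cal}$ as measuring average rotation — or simply by computing $\int_0^1\int_{D^2} H_\epsilon$ — one finds $\operatorname{Cal}(g_\epsilon) \approx c_\epsilon \cdot \pi\epsilon^2$. The key point is that the rotation speed $c_\epsilon$ needed to produce a full turn scales like $\epsilon^{-2}$ (the angular velocity of the flow of a radial Hamiltonian at radius $r$ is $H_\epsilon'(r)/r$, and producing a net rotation forces $H_\epsilon'$ to be of size $\sim 1$ over a radial interval of length $\sim\epsilon$, i.e. $c_\epsilon \sim 1$ but distributed so that the area integral is order $\epsilon^2 \cdot \epsilon^{-2}$...). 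More carefully: rescale so that $g_\epsilon$ is the conjugate, by the dilation $z \mapsto \epsilon z$, of a fixed compactly supported diffeomorphism $g_1$ of the unit disc with $\operatorname{Cal}(g_1) \ne 0$; since $\operatorname{Cal}$ transforms by the square of the dilation factor under this conjugation but we are free to also precompose with many iterates, arrange $\operatorname{Cal}(g_\epsilon) = \operatorname{Cal}(g_1) \ne 0$ for all $\epsilon$ while $\operatorname{dist}_{C^0}(g_\epsilon,\id) \le 2\epsilon \to 0$.

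So the steps, in order, are: (1) reduce to showing discontinuity at $\id$ using that $\operatorname{Cal}$ is a group homomorphism; (2) write down the model diffeomorphism $g_1 \in \Diffeo_c(D^2)$ with nonzero Calabi invariant, e.g. generated by a radial bump Hamiltonian; (3) define $g_\epsilon$ by conjugating $g_1$ into the disc $B_\epsilon$ via a dilation, together with enough repetitions/rescaling of the Hamiltonian to keep $\operatorname{Cal}(g_\epsilon)$ constant; (4) check the two estimates — $\|g_\epsilon\|_{C^0} \le 2\epsilon$ from the support condition, and $\operatorname{Cal}(g_\epsilon) = \operatorname{Cal}(g_1) \ne 0$ from the scaling behavior of the Calabi integral; (5) conclude $g_\epsilon \to \id$ in $C^0$ but $\operatorname{Cal}(g_\epsilon) \not\to 0$.

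The main obstacle, such as it is, is purely bookkeeping in step (4): one must pin down precisely how $\operatorname{Cal}$ behaves under conjugation by a dilation of $\mathbb{R}^2$ (it picks up the square of the scaling factor, since the Hamiltonian rescales and the area form rescales) and then compensate — either by composing $g_\epsilon$ with itself a growing number $\sim \epsilon^{-2}$ of times, or equivalently by choosing the model Hamiltonian to have a correspondingly large $C^0$-norm — so that the Calabi values stay fixed (or diverge) while the $C^0$-size still shrinks. There is no conceptual difficulty: the mechanism is exactly that the Calabi invariant sees ``area times rotation amount'', and one can pack an arbitrarily large rotation amount into an arbitrarily tiny region, which the $C^0$-topology cannot detect. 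This is also morally why extending $\operatorname{Cal}$ to $\Homeo_c$ by naive continuity is hopeless and a genuinely new idea (the link invariants/PFH spectral invariants) is needed.
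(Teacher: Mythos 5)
Your proposal is correct and is essentially the paper's proof: the paper simply takes nonnegative radial Hamiltonians $H_n$ supported in discs of radius $2/n$ with $\int H_n = 1$ fixed, so $\operatorname{Cal}(\psi^1_{H_n}) = 1$ while $\psi^1_{H_n} \to \id$ in $C^0$ because the support shrinks. Your extra scaffolding (reducing to continuity at the identity, conjugating by dilations and iterating — where, incidentally, the Calabi invariant scales by the \emph{fourth} power of the dilation factor, not the square) is unnecessary; fixing $\int H_n$ directly, as you also suggest, does the job immediately.
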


\begin{proof}
Fix $n \le 1/2$.  Let $(r,\theta)$ be polar coordinates on the disc.  Let $H_n$ be a non-negative (time-independent) Hamiltonian on $D^2$ that vanishes for $r \ge 2/n$ and has $\int H_n = 1$ (see Figure~\ref{hi}). Then $\operatorname{Cal}(\psi^1_{H_n}) = 1$.  On the other hand, as $n \to \infty$, $\psi^1_{H_n} \to id$ in the $C^0$-topology, where $id$ denotes the identity.  However, $\operatorname{Cal}(id) = 0$.   
\end{proof}

The challenges of extending Calabi have been well-known for a long time.  Indeed, Fathi's paper \cite{fathi}  asks if it is possible to extend it. 

\begin{figure}[h!]
\caption{The Calabi invariant is not $C^0$-continuous.} 
\label{hi}
\centering
\def\svgwidth{,5 \textwidth}
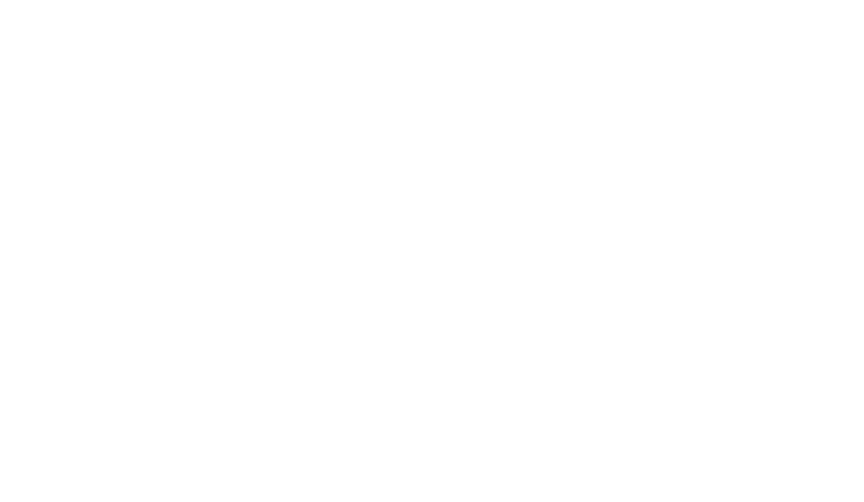
%\caption{$f - f_i$ supported in $[0, \frac{1}{i}]$}
\end{figure}

\subsection{A normal subgroup via Hofer geometry}

If we could extend the Calabi homomorphism, then Theorem~\ref{thm:simpconjecture} would be proved, similarly to the proof of Proposition~\ref{prop:smooth}.  However, as we explained in \S\ref{sec:difficulty}, a priori it is not clear how to do this, so we instead take a more direct approach: we will write down a normal subgroup, containing $\Diffeo_c(D^2,dx \wedge dy)$, and show that it is proper.

To define the right normal subgroup, we make use of a remarkable bi-invariant metric discovered by Hofer.  Let $M$ be any symplectic manifold.  For a compactly supported Hamiltonian $H:C^{\infty}( \mathbb{R}/\mathbb{Z} \times M)$ we define the {\em one-infinity norm}
%$1$-$\infty$ {\em norm}
\[ ||H||_{1,\infty} := \int^1_0 (\operatorname{max}_{x \in M} H(x,t) - \operatorname{min}_{x \in M} H(x,t)) dt\]
and we then define, for a compactly supported (non-autonomous) Hamiltonian diffeomorphism $\phi$ of $M$, the {\em Hofer norm}
\[ ||\phi|| = \operatorname{inf} \lbrace ||H||_{1,\infty} | \hspace{2 mm} \phi = \psi^1_H \rbrace,\]
where the infimum ranges over all compactly supported Hamiltonians.  We now define the {\em Hofer distance} between two compactly supported Hamiltonian diffeomorphisms $\phi_1$ and $\phi_2$ by the formula
\[ d(\phi_1,\phi_2) = || \phi_1 \phi^{-1}_2 ||.\]
It is a fact that this defines a bi-invariant metric on the group of Hamiltonian diffeomorphisms of $M$.  (One should pause for a moment to appreciate this: in finite dimensions such a bi-invariant metric on a non-compact group like this is essentially impossible to come by.). We will not prove this here --- the only part we need is the bi-invariance, which is an exercise; the hardest part is nondegeneracy  

With these preliminaries understood, we now define our normal subgroup.  We define a {\em finite Hofer energy homeomorphism} to be a homeomorphism 
\[ h \in \Homeo_c(D^2,\mu_{std})\] 
such that there exists smooth $g_n \in \Diffeo_c(D^2,dx \wedge dy)$ and a positive constant $C$ with 
\[ g_n \to_{C^0} h, \quad \quad d(g_n,id) \le C.\]
In other words, $h$ is a finite Hofer energy homeomorphism if it can be approximated in $C^0$ by diffeomorphisms, without the Hofer norm blowing up.  We let $\FHomeo_c(D^2,\mu_{std})$ 
denote the set of finite Hofer energy homeomorphisms.

The following is an easy consequence of the bi-invariance of Hofer's metric, whose proof we will not include here.

\begin{proposition} \cite{simp}
$\FHomeo_c(D^2,\mu_{std})$ is a normal subgroup of $\Homeo_c(D^2,\mu_{std})$
\end{proposition}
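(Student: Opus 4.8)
The plan is to verify the subgroup axioms and normality directly, treating the bi-invariance of Hofer's metric $d$ and the $C^0$-continuity of the group operations on $\Homeo_c(D^2,\mu_{std})$ as known (the former stated above, the latter standard), together with the $C^0$-density of $\Diffeo_c(D^2,dx\wedge dy)$ in $\Homeo_c(D^2,\mu_{std})$ recalled earlier. Since on the disc every compactly supported area-preserving diffeomorphism is the time-one flow of a compactly supported Hamiltonian, $d$ is defined on all of $\Diffeo_c(D^2,dx\wedge dy)$, and bi-invariance yields $d(g h g^{-1},id)=d(h,id)$ for all $g,h\in\Diffeo_c(D^2,dx\wedge dy)$. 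For the subgroup property: $id\in\FHomeo_c$ via the constant sequence $g_n=id$; if $h_1,h_2\in\FHomeo_c$ are witnessed by sequences $g_n^{(i)}\to_{C^0}h_i$ with $d(g_n^{(i)},id)\le C_i$, then $g_n^{(1)} g_n^{(2)}\to_{C^0}h_1h_2$ by continuity of composition, while the triangle inequality and right-invariance give $d(g_n^{(1)} g_n^{(2)},id)\le d(g_n^{(1)} g_n^{(2)},g_n^{(2)})+d(g_n^{(2)},id)=d(g_n^{(1)},id)+d(g_n^{(2)},id)\le C_1+C_2$, so $h_1h_2\in\FHomeo_c$; similarly $g_n^{-1}\to_{C^0}h^{-1}$ and, since $d$ is symmetric, $d(g_n^{-1},id)=d(g_n,id)\le C$, so $h^{-1}\in\FHomeo_c$.

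For normality, fix $\phi\in\Homeo_c(D^2,\mu_{std})$ and $h\in\FHomeo_c$ witnessed by $g_n\to_{C^0}h$ with $d(g_n,id)\le C$; note that $\phi h\phi^{-1}$ is compactly supported and $\mu_{std}$-preserving because $\phi$ and $h$ are, so it indeed lies in $\Homeo_c(D^2,\mu_{std})$. First I would use density to choose $\psi_m\in\Diffeo_c(D^2,dx\wedge dy)$ with $\psi_m\to_{C^0}\phi$; then for each fixed $n$ one has $\psi_m g_n\psi_m^{-1}\to_{C^0}\phi g_n\phi^{-1}$ as $m\to\infty$, so I can choose $m(n)$ so that $G_n:=\psi_{m(n)}g_n\psi_{m(n)}^{-1}$ lies within $1/n$ of $\phi g_n\phi^{-1}$ in the $C^0$-distance. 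Each $G_n$ lies in $\Diffeo_c(D^2,dx\wedge dy)$, and $d(G_n,id)=d(g_n,id)\le C$ by bi-invariance. Since conjugation by the fixed homeomorphism $\phi$ is $C^0$-continuous, $\phi g_n\phi^{-1}\to_{C^0}\phi h\phi^{-1}$, hence $G_n\to_{C^0}\phi h\phi^{-1}$ by the triangle inequality. Therefore $\phi h\phi^{-1}\in\FHomeo_c$, as desired.

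The argument is almost entirely formal — given bi-invariance of $d$ and the topological facts, every step is a direct verification — so there is no substantial obstacle here, which is why the statement can be recorded without proof. The one place I would be careful is the $C^0$-topology on $\Homeo_c(D^2,\mu_{std})$: to keep composition, inversion, and conjugation by a fixed element continuous one should work with the topology that also controls inverses, and one should ensure supports do not escape to $\partial D^2$ — for instance by confining all the maps in play to a fixed compact sub-disc, or by compactifying — before invoking uniform continuity. The genuinely nontrivial ingredient, the bi-invariance (and nondegeneracy) of Hofer's metric, is being taken for granted, as indicated above.
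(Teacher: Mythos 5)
Your proof is correct, and it is exactly the argument the paper has in mind: the paper omits the proof precisely because it is this routine verification, with conjugation-invariance of the Hofer distance (from bi-invariance) controlling $d(\psi_{m(n)} g_n \psi_{m(n)}^{-1}, id)$ and $C^0$-density of $\Diffeo_c$ supplying the smooth conjugators. Your closing caveats (using a $C^0$-topology that controls inverses, and keeping supports in a fixed compact set) are the right points to be careful about, and they are handled the same way in \cite{simp}.
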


%\subsection{The infinite twist}

%\begin{proof}

%\end{proof}

%write
%\[ \FHomeo_c(D^2, dx \wedge dy)  

%To put this in perspective, in finite dimensions such a bi-invariant metric is essentially impossible to come by. be extremely rare.

%  on 

\subsection{Recovering Calabi with the Weyl law}
\label{sec:calpfh}

We can now explain the basic idea behind the proof of Theorem~\ref{thm:simpconjecture}.
Let $\varphi \in \Diffeo_c(D^2,dx\wedge dy)$.  For each $d \in \mathbb{N}$, we define {\em PFH spectral invariants}
\[ c_d(\varphi) \in \mathbb{R},\]
satisfying the following properties:

\vspace{2 mm}

\begin{itemize}
\item {\em Continuity.} Each $c_d$ is $C^0$ continuous and extends continuously to $\Homeo_c(D^2,dx\wedge dy)$.
\item {\em Montonicity.} For any (possibly time-dependent) Hamiltonians $H, K$, if $H \le K$, then 
\[ c_d(\phi^1_H) \le c_d(\phi^1_K).\] 
\item {\em Identity.} $c_d(id) = 0$, where $id$ is the identity.
\item {\em Hofer Lipschitz.}   For any (possibly time-dependent) Hamiltonians $H, K$
\[ | c_d(\phi^1_H) - c_d(\phi^1_K) | \le d ||H - K||_{1,\infty} .\]
\item {\em Weyl law.} We have
\[ \lim_{d \to \infty} \frac{ c_d(\phi) }{ d} = Cal(\phi).\]
\end{itemize}

We will explain the construction of the PFH spectral invariants in \S\ref{sec:pfh}.  The definition is due to Hutchings.  It is in the construction of these invariants that the connection with three-manifold invariants comes up. 
  
The idea is now to apply the above properties to show that $\FHomeo$ is proper.  More precisely, our strategy is as follows.  We will define a particular homeomorphism  $T$ --- an ``infinite twist" with ``infinite Calabi invariant".  By the Continuity property, each $c_d$ is defined for homeomorphisms as well, and the idea is then to study the asymptotics of $c_d(T)$.  We will use the Weyl Law and Monotonicity properties to show that 
\[ c_d(T)/d \to \infty.\]   
On the other hand, the Hofer Lipschitz property and the Continuity property will be used to show that for any $g \in \FHomeo$, $c_d(g)/d$ remains bounded in $d$.  

\subsection{The infinite twist and the proof of Theorem~\ref{thm:simpconjecture}}
\label{sec:proof}

Let us now make the strategy in the previous section precise.    That is, we explain the construction of the infinite twist and the proof of Theorem~\ref{thm:simpconjecture}, assuming the properties of the PFH spectral invariants from \S\ref{sec:calpfh}.  

We start by defining the ``infinite twist".  We consider radial co-ordinates $(r,\theta)$ on the disc.  We fix a smooth function $f(r):(0,1] \to [0,\infty)$ and define the map so that $T(0) = 0$ and, for $ r > 0$, 
\[ T(r,\theta) = (r, \theta + f(r) ).\]
We now choose $f$ as follows:
\begin{itemize}
\item $f(r) = 0$ for $r$ close to $1$.
\item $f$ is non-increasing.
\item $\int^1_0 \int^\infty_r s f(s) ds dr = \infty.$  
\end{itemize}   
See Figure~\ref{fig:inftw}.  The reason for the first bullet point is that it ensures that $T$ is compactly supported.  The second will be useful in combination with the Monotonicity Axiom.  The point of the third bullet point is to make precise that $T$ has ``infinite Calabi invariant".  Indeed, if $f$ was such that the corresponding twist map was smooth, then one can check that the integral   $\int^1_0 \int^\infty_r s f(s) ds dr$ is precisely the Calabi invariant.

\begin{figure}[h!]
\caption{The infinite twist}
\label{fig:inftw}
\centering
\def\svgwidth{.8\textwidth}
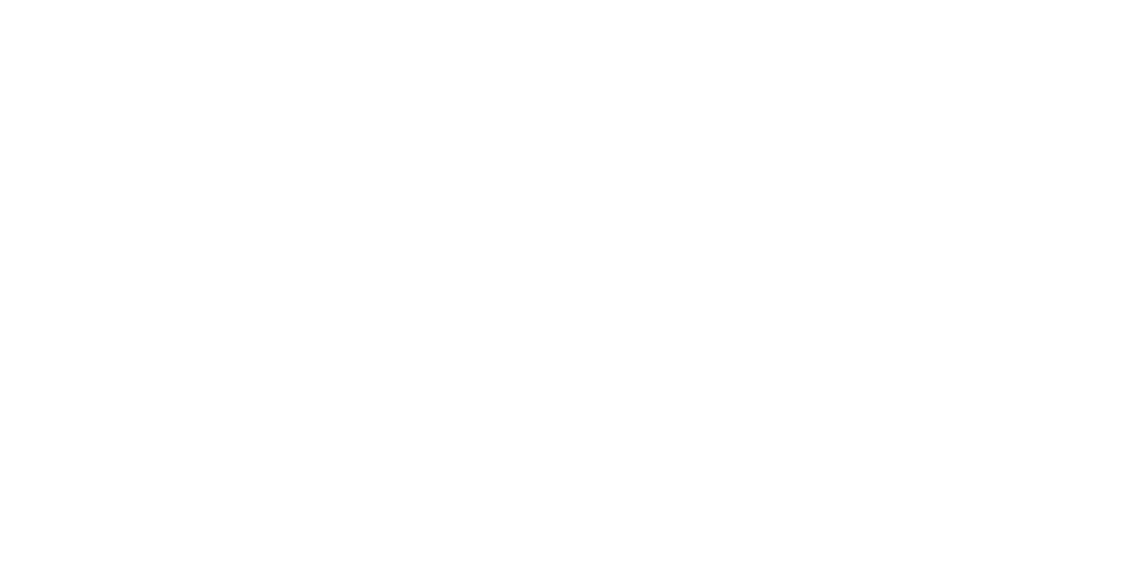

%\label{fig:canonical1}
\end{figure}

\begin{proof}[Proof of Theorem~\ref{thm:simpconjecture}, assuming the PFH spectral invariant axioms]

Let $T$ be the infinite twist defined above.  
%(We will be a little brief with some details, but we will not omit any key points.)

\vspace{3 mm}

{\em Step 1.  $c_d/d$ diverges:}  Define smooth maps $T_{f_i}$ by $T_{f_i}(r,\theta) = (r, \theta + f_i(r) ).$  That is, the $T_{f_i}$ are twist maps, but unlike the infinite twist, they are smooth.  The twists $T_{f_i}$ depend on the choice of $f_i$ and we choose these $f_i$ as follows: we let 
\[f_i = f\] 
for $r$ in $[1/i, 1]$, and we demand that 
\begin{equation}
\label{eqn:mon}
f_i \le f_{i+1}.
\end{equation}
See Figure~\ref{fig:approx}.
Then:
\begin{itemize}
\item $T_{f_i} \to_{C^0} T$,
\item $\Cal(T_{f_i}) \to \infty$
\item $c_d(T_{f_i}) \le c_d(T_{f_{i+1}}).$
\end{itemize}
The proof of the above three bullet points is straightforward and we leave it as an exercise to the reader.  (For the third point, one needs to combine the Monotonicity Axiom with \eqref{eqn:mon}.)
For any $i$, we now have 
%We now write
\[ \frac{ c_d(T_{f})}{d} \ge \frac{ c_d(T_{f_i})}{d}\]
as a consequence of the Continuity property of the $c_d$ and the third bullet point above.  Hence, for any $i$, we have
\[ \lim_{d \to \infty} \frac{ c_d(T_{f})}{d} \ge \lim_{d \to \infty} \frac{ c_d(T_{f_i})}{d} = \Cal(T_{f_i}),\]
where in the final equality we have used the Weyl law.  Thus, by the second bullet point, $\frac{ c_d(T_{f})}{d}$ is unbounded in $d$, as desired.

\vspace{2 mm}
 
 {\em Step 2.  For any $g \in \FHomeo, c_d/d$ does not diverge.}  Note that this step, in combination with the previous one, implies Theorem~\ref{thm:simpconjecture}.  
 
 To prove it, we use the Hofer Lipschitz property.  Let $g \in \FHomeo$, and take $f_i \to_{C^0} g$ with $|f_i| \le C$.  Write $f_i = \psi^1_{H_i}$ with 
 \[ ||H_i||_{1,\infty} \le 2C.\]
 Then, by the Hofer Lipschitz and Identity axioms
 \[ | c_d(f_i) | \le 2d C.\]
 By the Continuity Axiom, it now follows that $| c_d(g) | \le 3d C$, as desired.

\end{proof}

\begin{figure}[h!]
\caption{Approximating the infinite twist by smooth twists}
\label{fig:approx}
\centering
\def\svgwidth{.8\textwidth}
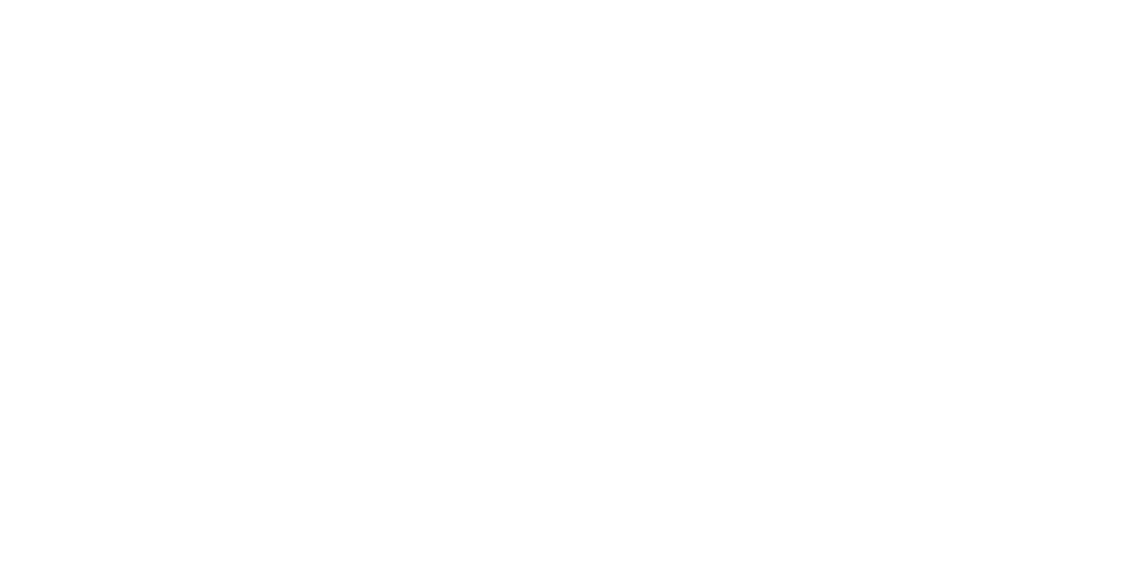
%\caption{$f - f_i$ supported in $[0, \frac{1}{i}]$}
\end{figure}
%$f - f_i$ supported in $[0, \frac{1}{i}] \Longrightarrow \Cal(\varphi_{f_i}) \to \infty, \; \varphi_{f_i} \xrightarrow{C^0} \varphi_f$. 

%We now give the proof of Theorem~\ref{thm:simpconjecture}, assuming the axioms from XX.

\section{The PFH spectral invariants and their properties}
\label{sec:pfh}

In this section, we explain the construction of the PFH spectral invariants used in the previous section.  We also give some idea of their properties.  %The construction of the spectral invariants will make use of a certain three-manifold invariant.

\subsection{Periodic Floer Homology (PFH)}

Let $(\Sigma,\omega)$ be a closed surface, and $\phi$ an area-preserving map.  For convenience, we will assume $\Sigma = S^2$ --- this suffices for the proof of Theorem~\ref{thm:simpconjecture} --- but what we write has an analogue for any orientable surface (see e.g. \cite{cgpz}).

To explain the construction of the PFH spectral invariants, we first need to explain the construction of PFH.  Let $d \in \mathbb{Z}_{\ge 0}$.  The {\em Periodic Floer Homology} $PFH(S^2,d)$ is a vector space over\footnote{One can define PFH over $\mathbb{Z}$-coefficients, but we will not need this.} $\mathbb{Z}_2$.  It is the  homology of a chain complex $PFC(\phi)$.  Its definition is due to Hutchings.  To define this chain complex, we start by forming the mapping torus 
\[ Y_{\phi} = S^2 \times [0,1]_t/ \sim,  \quad (x,1) \sim (\phi(x), 0). \]
This has a canonical vector field 
\[ R := \partial_t,\]
and a canonical two-form $\omega_\phi$ induced by $\omega$.  We can think of $R$ as analogous to the Reeb vector field, except that it will not be a Reeb vector field associated to a contact structure.  (It can be regarded as the ``Reeb vector field" associated to the pair $(dt,\omega_\phi).$)  In analogy with the contact case, we call a periodic orbit of $R$ a {\em Reeb orbit}.  The Reeb orbits are in bijection with the periodic points of $\phi$.  

Now assume that $\phi$ is {\em nondegenerate}: this means that $1$ is not an eigenvalue of the linearization of $\phi^m$ at any $m$-periodic point.   We call a periodic point {\em hyperbolic} if these eigenvalues are real, and {\em elliptic} if they lie on the unit circle, and we classify the corresponding Reeb orbits as hyperbolic or elliptic accordingly.  We define an {\em orbit set} to be a finite set $\lbrace (\alpha_i,m_i) \rbrace$ where the $\alpha_i$ are distinct simple Reeb orbits and the $m_i$ are positive integers.  We call an orbit set a {\em PFH 
generator} if $m_i = 1$ whenever $\alpha_i$ is hyperbolic.  The {\em degree} of a PFH generator is its image under the natural map $H_1(Y_\phi, \mathbb{Z}) \to H_1(S^1,\mathbb{Z}) = \mathbb{Z}.$  This corresponds to the sum of the periods of the relevant orbits.

The chain complex $PFC(\phi,d)$ is freely generated over $\mathbb{Z}_2$ by degree $d$ PFH generators.  To define the chain complex differential $\partial$, we make use of Gromov's theory of pseudoholomorphic curves.  Namely, let $X = \mathbb{R}_s \times Y_\phi$; we call this the {\em symplectization} of $Y_{\phi}$.  Let $V$ denote the vertical tangent bundle for the fibration $Y_\phi \to S^1$.  Recall that an {\em almost complex structure} $J$ on $X$ is a smooth bundle map $J: TX \to TX$ such that $J^2$.  We call an almost complex structure {\em admissible} if $J$ preserves $V$, 
rotating vectors positively 
%compatibly 
with respect to $\omega_\phi$, and sends $\partial_s$ to $R$.  Let $\mathcal{M}_J(X,\alpha,\beta)$ denote the space of $J$-holomorphic curves in $X$, namely maps
\[ u : (\Sigma,j) \to (X,J), \quad du \circ j = J \circ du,\]
where $(\Sigma,j)$ is a compact Riemann surface minus finitely many punctures, that are asymptotic to $\alpha$ at $+\infty$ and asymptotic to $\beta$ at $-\infty$; see Figure~\ref{pfhd}. We regard two $J$-holomorphic curves as equivalent if they are equivalent as currents, and we refer to an element of $\mathcal{M}$ as a $J$-{\em holomorphic current}.  
We now choose $J$ generically, so that all relevant moduli spaces we want to consider are manifolds, and define
\[ \partial \alpha = \sum_\beta \langle \alpha, \beta \rangle \beta,\]
where 
\[ \langle \alpha, \beta \rangle = \# \mathcal{M}^{I=1}_J(X,\alpha,\beta)/\mathbb{R}.\]
Here, $\#$ denotes the $\mathcal{M}^{I=1}_J(X,\alpha,\beta) \subset \mathcal{M}_J(X,\alpha,\beta)$ denotes the subspace of $I = 1$ curves, where $I$ denotes the {\em embedded contact homology} index, which we call the {\em ECH index}.  We will not need the precise details of the ECH index in these notes (we refer the reader to \cite{ECHlecture} for more details) but the key points are as follows:
\begin{itemize}
 \item An $I = 1$ $J$-holomorphic current is rigid, modulo translation in the $\mathbb{R}$ direction, so that the count $\# \mathcal{M}^{I=1}_J(X,\alpha,\beta)/\mathbb{R}$ is plausible to consider.  Indeed, one can show that $\mathcal{M}^{I=1}_J(X,\alpha,\beta)/\mathbb{R}$ is a compact $0$-manifold and so consists of a finite number of points. \vspace{1 mm}
 \item An $I = 1$ $J$-holomorphic current is (mostly) embedded.  (More precisely, it has a non-trivial component, which is embedded, and then some other components consisting of covers of $\mathbb{R}$-invariant cylinders.)  This is the reason for the phrase ``embedded contact homology".
 \item The ECH index depends only on the relative homology class of the curve.  We write $H_2(Y_\phi;\alpha,\beta)$ for the space of relative homology classes from $\alpha$ to $\beta$.  
 \end{itemize}
 
 \begin{figure}[h!]
 \caption{The kind of curves counted by the PFH differential}
 \label{pfhd}
\centering
\def\svgwidth{.9\textwidth}
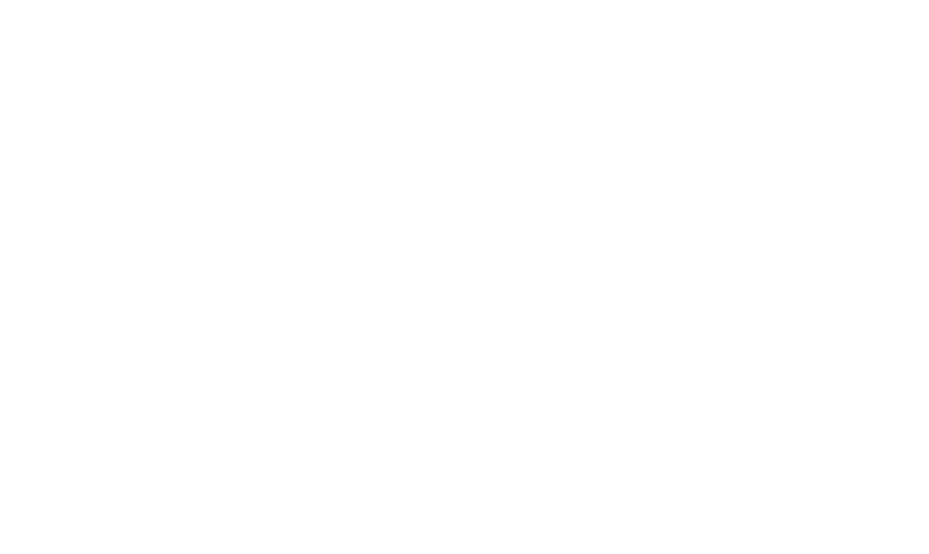
%\caption{A $J$-hol curve contributing to $\langle \partial \alpha, \beta \rangle$.}
\end{figure}
 
 It is shown in \cite{HTJSG1, HTJSG2} that $\partial^2 = 0$.  Thus, the homology $PFH(S^2,\phi,d,J)$ is defined.  In principle, the homology could depend on the choice of admissible almost complex structure $J$, or on the choice of $\phi$.  However, it is shown in \cite{leetaubes} that in fact $PFH(S^2,\phi,d,J)$ recovers the Seiberg-Witten invariant of the mapping torus; as motivation, one can think of this as a kind of analogue for mapping torii of Taubes' ``Seiberg-Witten = Gromov" theorem from the '90s.  More precisely, there is a canonical isomorphism
\begin{equation}
\label{eqn:echsw}
PFH(S^2,\phi,d,J) \simeq \widehat{HM}(Y_\phi,\mathfrak{s}_d), 
\end{equation}
where $\widehat{HM}(Y_\phi,\mathfrak{s}_d)$ denotes a version of the Seiberg-Witten Floer cohomology (the ``negative monotone" version) for the mapping torus $Y_\phi$, in a particular spin-c structure $\mathfrak{s}_d$ determined by $d$.  As $\widehat{HM}$ is known to depend only on the pair $(Y_\phi,\mathfrak{s}_d)$ we obtain a well-defined invariant $PFH(S^2,d)$.  We will not need to know much about Seiberg-Witten Floer theory here beyond its formal properties and we refer the reader to \cite{KMbook} for more detail.

\subsubsection*{Twisted PFH}

For our purposes --- i.e. to define spectral invariants --- we require a slight variant of $PFH$, called the {\em twisted PFH}, which allows one to keep track of the relative homology class of the relevant pseudoholomorphic curves.  This requires a choice of {\em reference cycle} $\gamma$, namely an embedded degree $1$ loop in $Y_\phi$, together with a choice of (homotopy class of) trivialization of $V$ over $\gamma$.   Now define a {\em twisted PFH generator} of degree $d$ to be a pair $(\alpha,Z)$ where $\alpha$ is a degree $d$ PFH generator and $Z \in H_2(Y_\phi,\alpha,d\gamma)$.  We define the {\em twisted PFH chain complex} $\widetilde{PFC}(S^2,\phi,d)$ to be freely generated over $\mathbb{Z}_2$ by degree $d$ twisted PFH generators.  The differential is defined analogously to in the PFH case, except that the coefficients are given by the formula
%given by
\[ \langle \partial (\alpha,Z), (\beta,Z') \rangle = \# \lbrace C \in \mathcal{M}^{I=1}_J(X,\alpha,\beta), [C] + Z' = Z \in H_2(Y_\phi,\alpha,d \gamma) \rangle.\]
As before, by the argument in \cite{HTJSG1, HTJSG2} $\partial^2 = 0$, and as before the homology is an invariant because it agrees with the relevant Seiberg-Witten invariant\footnote{The twisting on the PFH side corresponds to quotienting out by a smaller group of gauge transformations on the Seiberg-Witten side, namely those in the component of the identity.} of the mapping torus.  We denote the homology $\widetilde{PFH}(S^2,d,\gamma).$  The twisted PFH has a $\mathbb{Z}$-{\em grading} $gr$ induced by the grading on twisted PFH generators given by
\[ gr(\alpha,Z) = I(Z)\]
and we let $\widetilde{PFH}_*(S^2,d,\gamma)$ denote the grading = * piece.   

In the case of $S^2$, one can compute that: 
\begin{equation}
\label{eqn:grad}
\widetilde{PFH}_*(S^2,d,\gamma) = \mathbb{Z}_2, \quad * - d = 0 \hspace{2 mm} mod \hspace{2 mm} 2,
\end{equation}
\[ \widetilde{PFH}_*(S^2,d,\gamma) = 0, \quad * - d = 1 \hspace{2 mm} mod \hspace{2 mm} 2.\]
To compute this, one uses the fact that it is an invariant, and takes as $\phi$ an irrational rotation.  Then, an index calculation shows that all curves would have to have even ECH index, thus the differential vanishes, and one just needs to compute the ECH index, which is topological.

%and $0$ otherwise.

\subsection{PFH spectral invariants}
\label{sec:pfhspec}

The twisted PFH chain complex has a canonical filtration, and we can use this to construct the promised spectral invariants.  We now explain this.  We continue with the setup from the previous section.  A twisted PFH generator $(\alpha,Z)$ has an {\em action} defined by
\[ \mathcal{A}(\alpha,Z) = \int_Z \omega_\varphi.\]
We now let $\widetilde{PFC}^L(S^2,d,\varphi,\gamma)$ denote the subspace generated by twisted PFH generators of action $\le L$.  By the admissibility condition on $J$, the two-form $\omega_\varphi$ is pointwise nonnegative on any $J$-holomorphic curve $C$, and so $\widetilde{PFC}^L(S^2,d,\varphi,\gamma)$ is in fact a subcomplex.  We let  $\widetilde{PFH}^L(S^2,d,\varphi,\gamma)$ denote its homology.  There is an inclusion induced map
\begin{equation}
\label{eqn:incind}
\widetilde{PFH}^L(S^2,d,\varphi,\gamma) \to \widetilde{PFH}(S^2,d,\varphi,\gamma).
\end{equation}
Now let $\sigma$ be any nonzero class in $\widetilde{PFH}(S^2,d,\varphi,\gamma)$ and let $\varphi$ be nondegenerate.  We define the corresponding {\em PFH spectral invariant} $c_\sigma(\phi)$ to be the minimum, over $L$, such that $\sigma$ is in the image of the map.  We can think of this as a minmax: any given representative of $\sigma$ has a certain amount of action required to represent it, and we take the action of the most efficient representative.  This definition then extends to degenerate $\varphi$ by approximating by nondegenerate $\varphi_n$ and taking the limit of $c_\sigma(\varphi_n)$; it follows from either the Hofer Lipschitz or Continuity Axioms that this does not depend on the choice of approximating $\varphi_n$.
% is by applying the Hofer Lipschitz axiom

%, we can approximate 

%Now fix a nonzero class $

%A straightforward computation, using the definition of admissible, shows that 

%We now explain the construction of the PFH spectral invariants.

We can now explain the definition of the PFH spectral invariants for a map 
\[ \phi \in \Diffeo_c(D^2,dx\wedge dy).\]  
First, we embed $(D^2,dx\wedge dy)$ as the northern hemisphere in $(S^2,\omega_{std})$.  Then, any $\phi \in \Diffeo_c(D^2,dx\wedge dy)$ extends via the identity to an area-preserving diffeomorphism of $S^2$.  Next, we choose as our reference cycle $\gamma$ the simple Reeb orbit corresponding to the south pole $p_-$; this inherits a homotopy class of trivializations from a trivialization of $T_{p_+} S^2$.  
We next want to find some nonzero classes $\sigma$.  We use the calculation \eqref{eqn:grad}: it is convenient to take $\sigma_d$ to be the unique class in grading $d$.  Finally, at long last we define the promised {\em PFH spectral invariant}
\[ c_d(\phi) = c_{\sigma_d}(\varphi).\]
In other words, $c_d(\phi)$ is the PFH spectral invariant associated to the unique class of degree $d$ and grading $d$.  %Strictly spe

\subsection{The PFH spectral invariant axioms}

We will now say a bit about the proofs of the PFH spectral invariant axioms.  Full proofs are  beyond the scope of these notes, but we will try to give the general idea behind the arguments.

%at least a flavor.

\subsubsection*{The Identity Axiom}

This follows without much difficulty by direct computation: one just approximates the identity by irrational rotations of the two-sphere, for which it is not hard to compute the spectral invariants.

\subsubsection*{The Monotonicity and Hofer Lipschitz Axioms}

The proofs of these axioms are similar.  It suffices to prove them when $\psi^1_H$ and $\psi^1_K$ are nondegenerate.  

To prove them, one uses the fact that $\widetilde{PFH}$ is an invariant.   More precisely, by invariance, there is an  isomorphism 
\[ \Psi_{H,K}: \widetilde{PFH}(\varphi^1_H,\gamma) \simeq \widetilde{PFH}(\varphi^1_K,\gamma).\]
One might hope that this isomorphism is induced by counting pseudoholomorphic curves, similarly to the definition of the chain complex differential $\partial$.  For example, one can form a symplectic manifold $X = \mathbb{R}_s \times S^1_t \times S^2$, with an almost complex structure $J$, such that for $s$ sufficiently large $(X,J)$ agrees with the symplectization of $Y_{\varphi^1_H}$, and for $s$ sufficiently negative $(X,J)$ agrees with the symplectization of $Y_{\varphi^1_K}$.  One might then hope that one can define $\Psi_{H,K}$ by defining a chain map counting $I = 0$ $J$-holomorphic curves in $X$.   

If this were true --- i.e. that there is a chain map inducing the isomorphism $\Psi_{H,K}$, defined by counting $I = 0$ curves --- then one could prove the Hofer Lipschitz property similarly to the argument in \S\ref{sec:pfhspec} that the chain complex differential does not increase action.  Namely, if one assumes this for the moment then, one finds, after a little bit of algebraic manipulation (see \cite{simp}), that
\[ c_{\sigma}(\varphi^1_H) - c_\sigma(\varphi^1_K) \ge \int_C \omega_X + \int_C G' ds \wedge dt,\]
for some $J$-holomorphic curve $C$.  Here, $\omega_X$ is a certain two-form that we do not write out for brevity (see \cite{simp} for the formula), but that it is manifestly pointwise nonnegative along $C$ due to the fact that $C$ is $J$-holomorphic.  The function $G$ interpolates between $H$ and $K$: we have $G = K + \beta(s) (H - K)$, where $\beta:\mathbb{R} \to [0,1]$ is some function which is $0$ for $s$ sufficiently negative and $1$ for $s$ sufficiently positive.  Thus, by pointwise nonnegativity 
\[ c_{\sigma}(\varphi^1_H) - c_\sigma(\varphi^1_K) \ge \int_C G' ds \wedge dt.\]
Now if $H \ge K$, then $G'$ is nonnegative, and so we obtain the Monotonicity Axiom.  As for the Hofer Lipschitz property, the crux of what remains to be shown is the bound 
\[ |\int_C G' ds \wedge dt| \le d ||H-K||_{1,\infty},\]
and the key step for this is to estimate the integral by projecting to the $(s,t)$ co-ordinates; this projection has degree $d$.

In the above argument, we assumed that the map $\Psi_{H,K}$ comes from counting $I = 0$ curves.  For technical reasons, it is very difficult to try to actually define $\Psi_{H,K}$ that way.  Instead, one uses the isomorphism with Seiberg-Witten theory as a workaround.  Namely,  recall that $\widetilde{PFH}$ is isomorphic to a version $\widehat{HM}$ of the Seiberg-Witten Floer cohomology.   It is known by work of Kronheimer-Mrowka \cite{KMbook} that the cobordism $X$ induces a map on $\widehat{HM}$.  Thus, the cobordism $X$ induces a map $\Psi_{H,K}$ on PFH.  As defined this way, the map counts Seiberg-Witten solutions; in the above argument, one wants to use pseudoholomorphic curves.  However, it is shown in \cite{chen}, building on ideas of \cite{cc2}, that the map induced on PFH does satisfies a {\em holomorphic curve axiom}, asserting that if $\langle \Psi_{H,K} \alpha, \beta \rangle \ne 0$, there is some (possibly broken) ECH index $0$ $J$-holomorphic curve from $\alpha$ to $\beta$; one can regard this as a variant of Taubes' Seiberg-Witten to Gromov theorem, for the manifold $X$.  This broken curve suffices for the arguments in the previous paragraph. 

\subsubsection*{$C^0$-continuity}

The starting point for the proof of the $C^0$-continuity Axiom is that the spectral invariants $c_d(\varphi)$ are not just arbitrary real numbers: they take values in the {\em action spectrum} 
\[ \mathcal{A}_d(\varphi) := \lbrace \mathcal{A}(\alpha,Z) \rbrace \subset \mathbb{R},\]
where $\alpha$ is an orbit set.  Importantly, 
%this is a One first proves that
$\mathcal{A}_d(\varphi)$ is a measure zero subset; the idea for proving this is to use Sard's theorem.
The next key point is that the proof uses only the other axioms of the PFH spectral invariants, rather than any particular details about PFH.  In particular, the Hofer-Lipschitz property, which implies the bound
\begin{equation}
\label{eqn:hlp}
|c_d(\psi) - c_d(\varphi) | \le d ||\psi^{-1} \circ \varphi ||
\end{equation}
plays a crucial role.

For brevity, let us now restrict to proving $C^0$-continuity at the identity.  The general case proceeds by broadly similar ideas and is not too much more involved.  In fact, it is expected that there is a ``quantum product" structure on PFH, see \cite{hs}, and
 %if this was rigorously established then 
via this
one should be able to reduce the general case to continuity at the identity.

Thus, we are given $\epsilon > 0$ and we want to find a $\delta > 0$ such that if $d_{C_0}(\varphi,id) < \delta$, then $c_d(\varphi) < \epsilon$.

To get the desired bound, we use the following trick, which is a variant of an idea from \cite{sey}.  Let us assume for a moment that we can find some auxiliary diffeomorphism $f$ such that
\begin{equation}
\label{eqn:trick}
c_d(\varphi \circ f) = c_d(f).
\end{equation}
Then, by the Hofer Lipschitz property \eqref{eqn:hlp}, and the Identity Axiom we get on the one hand that
\begin{equation}
\label{eqn:int}
|c_d(\varphi \circ f) | = |c_d(f)| = |c_d(f) - c_d(id)| \le d ||f||.
\end{equation}
On the other hand, by again applying \eqref{eqn:hlp}, we get that
\[ |c_d(\varphi)| \le | c_d(\varphi \circ f) | + d ||f|| ,\]
and so combining this inequality with \eqref{eqn:int} gives the crucial bound
\[ |c_d(\varphi)| \le 2d ||f||.\]

Thus, the problem is transformed to finding an $f$, with $||f||$ small, such that left composition with $\varphi$ does not change $c_d$ when $d_{C^0}(\varphi,id) < \delta$.  It is here that the fact that the $c_d$ take values in a measure zero set is crucial.  

To elaborate, let us explain how to find such an $f$.  Recall that in our construction, any $\varphi$ is supported in the northern hemisphere $S^+$.  We now take $F$ to be a function on $S^2$ with all critical points in the southern hemisphere, such that the associated Hamiltonian diffeomorphism $\psi^1_F$ has the property that its only periodic points of period $\le d$ are the critical points of $f$, and such that
\[ max(F) - min(F) < \frac{\epsilon}{2d}.\]
It is easy to construct such a function $F$; for example a $C^2$-small function with all critical points in the southern hemisphere suffices.  We can assume in addition that the support of $F$ is disjoint from the south pole $p_-$.  We then take $f = \psi^1_F$.

It is immediate from the construction that $2d ||f|| < \epsilon$, and so it remains to explain why \eqref{eqn:trick} holds.  The point is that one calculates that $f$ and $\varphi \circ f$ have the same periodic points with period $\le d$ when $\varphi$ is $C^0$-close to the identity; one also computes that their actions are also the same for $f$ or $\varphi \circ f$.  Thus
%\[ \mathcal{A}_d(\varphi) = \mathcal{A}_d(\varphi \circ f).\]
\[ \mathcal{A}_d(f) = \mathcal{A}_d(\varphi \circ f).\]
Now write $\varphi = \psi^1_K$ for some (possibly time-dependent Hamiltonian) $K$.  With a little more work, a similar argument shows that in fact
%\[ \mathcal{A}_d(\psi^s_K) = \mathcal{A}_d(\psi^s_K \circ f)\]
\[ \mathcal{A}_d(f) = \mathcal{A}_d(\psi^s_K \circ f)\]
for all $0 \le s \le 1$
Thus, as $s$ varies, $c_d(\psi^s_K \circ f)$ is taking values in a measure zero set that is independent of $s$; on the other hand, it follows from the Hofer-Lipschitz property that this is a continuous function of $s$.  Thus, it must be constant which establishes \eqref{eqn:trick}.

\begin{remark}
\label{rmk:closing}
In proving the closing lemma Theorem~\ref{thm:closing}, given the Weyl law, we use a trick pioneered by Irie \cite{irie} that is similar to the idea at the end of the above proof.  Namely, for that proof, by a Baire category theorem argument, one can reduce to proving the existence of a periodic point in a fixed open set after small perturbation.  So, given an open set $U$, one considers a $C^{\infty}$-small perturbation, supported in $U$, that adds Calabi invariant, with the goal of producing a periodic point in $U$.  The point is that if no new periodic orbits appear in $U$, the action spectrum stays the same, which, by arguments similar to the ones above, shows that the spectral invariants must stay the same, contradicting the Weyl law.
% leads to a contradiction by similar considerations as those in the above proof.
%The proof of the closing lemma
\end{remark}

\subsubsection*{The Weyl Law}

There are several ways to prove this.  

\vspace{1 mm}

%{\bf For monotone twists.}  

To prove Theorem~\ref{thm:simpconjecture}, it is clear from the proof we gave in \S\ref{sec:proof} that one only actually needs to know the Weyl Law for smooth monotone twist maps.  
%This is the proof we will give in this section, noting that it suffices for Theorem~\ref{thm:simpconjecture} but does not prove the Weyl law in general, deferring other proofs of the more general statements to the optional section XX.  We note 
Recall that (taking polar co-ordinates of the disc), there are smooth maps that are the identity near the boundary
\[  T(r,\theta) \to (r, \theta+ f(r))\] 
where $f:[0,1] \to \mathbb{R}$ is a concave non-increasing function.  In this special case, one can actually prove the Weyl Law by direct computation.  That is, the (twisted) PFH chain complex of such a smooth monotone twist $\phi$ can be computed directly.  This is the approach taken in the first proof of Theorem~\ref{thm:simpconjecture} in \cite{simp}.

To say a bit more about this, recall that the generators are certain sets of periodic orbits.  In the case of monotone twists, which preserve the radial co-ordinate, it is easy to work these out explicitly: for any $\lbrace r = constant \rbrace$ circle twisted by an irrational number, there are no orbits, and when the twist is by a rational number $p/q$, there is entire circle of periodic orbits.  Since the orbits in this case come in families, the map $\phi$ can not be nondegenerate; however, it can be regarded as ``{\em Morse-Bott}" nondegenerate, and this is a much studied situation.
% {\em Morse-Bott}.  
In particular, standard theory in this setup says that the map $\phi$ can be perturbed to a map $\phi_d$, such that all orbits of period $\le d$ are non-degenerate, and such that each Morse-Bott circle of orbits corresponding to a rational twist by $p/q$ split into an elliptic orbit $e_{p/q}$ and a hyperbolic orbit $h_{p/q}$.  (In the Morse-Bott picture, the idea is that since we have an $S^1$-family of orbits, we can take a perfect Morse function on $S^1$ and perturb the map using the data of this function  -- then the two critical points correspond to the two new orbits.)  Thus we obtain satisfactory non-degenerate perturbations of $\phi$ whose orbits can be catalogued.  One can similarly compute the action and the grading of any orbit set.  

To understand the chain complex, one has to in addition understand all $I = 1$ $J$-holomorphic curves, for a suitable $J$.  This is more complicated.  Thankfully, Taubes has previously catalogued all the relevant $J$-holomorphic curves in a related situation \cite{beasts}, and some inductive arguments allow a reduction to Taubes' computation; in practice, this goes via relating to work of Choi \cite{choi}.

To put all of this together, one can build a combinatorial model of the filtered PFH chain complex, where twisted PFH generators correspond to certain ``lattice paths" in the plane $\mathbb{R}^2$, taking vertices on the standard integer lattice, and the differential is described by a ``rounding corner" operation, see Figure~\ref{fig:rounding}.
%See Figure.xx and see [ref] for the precise statement.  
This is inspired by, and closely related to, work by Hutchings-Sullivan building similar models, see e.g. \cite{hs}. From here, some elementary algebra and combinatorics reduces the Weyl law to the isoperimetric inequality for possibly nonstandard norms.

If one wants a more general Weyl law, one way is to use Seiberg-Witten invariants.   We give an account in \S\ref{sec:weyl}.
%This is beyond the scope of what we will discuss during the lectures, though we give an account 
%We will not explain this here, but we will sketch how to use Seiberg-Witten theory to prove an analogous Weyl Law for contact $3$-manifolds in [ref].  
Alternatively, one can use a ``ball-packing argument", reducing the computation to  a similar computation for the boundary of the standard symplectic ball; see \cite{eh}.

 \begin{figure}%                 use [hb] only if necceccary!
  \centering
  \includegraphics[width=4cm]{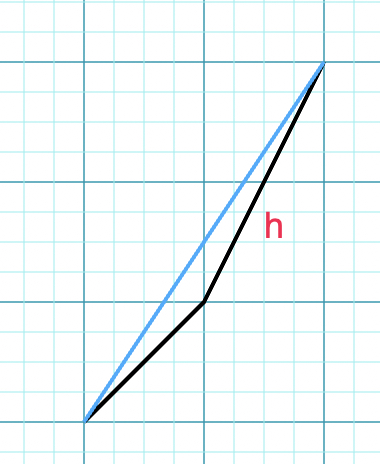}
  \def\svgwidth{.5 \textwidth}
  \caption{An example of the corner rounding operation.  In the combinatorial model, this represents a differential from an orbit set (in this case, consisting of two orbits) to another.}
  %annuluar global surface of section.  A typical flow line hitting the surface is shown}
  \label{fig:rounding}
\end{figure}

\section{Two or infinity}

\subsection{The main theorem}

We now turn our attention to Conjecture~\ref{conj:hwz}.  We will try to illustrate the ideas behind the proof of the following theorem.    Let $\lambda$ be a contact form and recall the contact structure $\xi := Ker(\lambda)$.  
%This is called the {\em contact structure} associated to $\lambda$. 
It has a well-defined first Chern class $c_1(\xi) \in H^2(Y;\mathbb{Z}).$  

\begin{theorem} \cite{CGHHL2}
\label{thm:hwz}
Let $Y$ be a closed connected three-manifold.  The Reeb vector field associated to any contact form $\lambda$ on $Y$ has two or infinitely many simple periodic orbits as long as $c_1(\xi) \in H^2(Y,\mathbb{Z})$ is torsion.
%Assume that $c_1(\xi)$  
\end{theorem}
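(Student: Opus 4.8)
The plan is to argue by contradiction: suppose $\lambda$ is a contact form on the closed connected three-manifold $Y$ whose Reeb flow has at least three but only finitely many simple periodic orbits. By the Weinstein conjecture (Taubes, cited above) there is at least one, so the assumption is that there are exactly $m$ simple orbits with $3 \le m < \infty$. The whole strategy rests on the Weyl law for embedded contact homology: the ECH spectral invariants $c_\sigma(\lambda)$, defined from the filtered ECH of $(Y,\lambda)$ (whose total homology, since $c_1(\xi)$ is torsion, is isomorphic by Taubes to the Seiberg--Witten Floer cohomology $\widehat{HM}^*(Y)$ and in particular is infinite-dimensional in infinitely many gradings), satisfy $c_{\sigma_k}(\lambda)^2 / k \to 2\,\mathrm{vol}(Y,\lambda)$ along the natural exhausting sequence of classes $\sigma_k$. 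In particular infinitely many \emph{distinct} action values appear in the ECH spectrum. The first step is therefore to recall this Weyl law (proved using the Seiberg--Witten picture, as sketched in \S\ref{sec:weyl}) and the fact that every ECH spectral invariant is the total symplectic action $\sum_i m_i \mathcal{A}(\alpha_i)$ of some ECH generator, i.e. an orbit set built out of the (finitely many!) simple Reeb orbits.

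Next I would extract the combinatorial consequence. If there are only $m$ simple orbits with periods (actions) $T_1,\dots,T_m > 0$, then every element of the ECH action spectrum lies in the finitely generated sub-semigroup $\{\,\sum_i n_i T_i : n_i \in \mathbb{Z}_{\ge 0}\,\}$ of $\mathbb{R}$. This is a \emph{discrete} set. Combined with the Weyl law, which forces $c_{\sigma_k}(\lambda) \to \infty$ with $c_{\sigma_k}(\lambda) \sim \sqrt{2k\,\mathrm{vol}}$, one gets a genuinely infinite family of ECH generators, hence of orbit sets, which is already consistent with finitely many simple orbits; so the spectrum alone is not the contradiction. The real input must be an \emph{index/grading} constraint: the ECH index (or equivalently the grading on $\widehat{HM}$) of the generators representing $\sigma_k$ grows linearly in $k$, while a fixed finite collection of simple orbits — after recording their Conley--Zehnder data and whether each is elliptic or hyperbolic — can only produce ECH generators of a controlled, \emph{sub-linear-in-action} range of ECH indices unless there is at least one elliptic orbit and, moreover, unless the number of simple orbits is exactly two. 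This is the classical ``two or infinity'' dichotomy of Hofer--Wysocki--Zehnder and Hutchings--Taubes in disguise: with one orbit ECH is too small; with two elliptic orbits one gets exactly the ellipsoid-like chain complex \eqref{eqn:grad}-style calculation and the spectrum is consistent; with two or more and at least one hyperbolic, a $U$-map / ECH-index parity argument (a hyperbolic orbit has multiplicity $1$, so it can appear in a generator in only boundedly many ways relative to the degree) shows the homology in high gradings cannot be supported. So the step is: \textbf{use the $U$-map on ECH together with the hyperbolic-multiplicity constraint to show that if there are finitely many simple orbits and at least one is hyperbolic, or if there are $\ge 3$ of them, then $\widehat{HM}^*(Y)$ would vanish in all sufficiently high gradings, contradicting Seiberg--Witten Floer theory for $Y$ with $c_1(\xi)$ torsion.}

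I expect the main obstacle to be precisely this last point made completely rigorous in the \emph{degenerate / non-transverse} setting, which is the whole reason the theorem is hard: one cannot assume nondegeneracy or Morse--Smale transversality of stable/unstable manifolds, so the clean ECH-generator counting is unavailable. The workaround, following the structure of the simplicity-conjecture argument in these notes, is to work with spectral invariants and the $U$-map at the level of homology (which is defined regardless of nondegeneracy, via the Seiberg--Witten isomorphism), take nondegenerate approximations $\lambda_n \to \lambda$, and control the limiting behavior of the finitely many orbits (their actions, and a limiting ``Morse--Bott'' index count) using the Hofer-type Lipschitz continuity of $c_\sigma$ and the compactness of holomorphic currents with bounded ECH index. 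Quantitatively, one shows that the number of ECH generators of action $\le L$ built from $m$ fixed simple orbits grows like $L^{?}$ with an exponent strictly smaller than the $L^2$-type growth dictated by the Weyl law counting of gradings unless $m=2$ and both orbits are elliptic — forcing $m = \infty$ or $m = 2$. Making the bookkeeping of Conley--Zehnder indices of iterates (and their parity, and the elliptic/hyperbolic split surviving to the limit) precise enough to beat the Weyl-law growth, while handling the degenerate case by approximation, is where essentially all the work lies.
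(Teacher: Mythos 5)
Your central step does not work, and it is not the route the paper takes. You propose to get a contradiction by showing that finitely many ($\ge 3$) simple orbits cannot supply enough ECH generators, in the right range of gradings, to support $\widehat{HM}$ as forced by the Weyl law, asserting that the generator count grows with an exponent in the action ``strictly smaller than the $L^2$-type growth \dots unless $m=2$.'' This is backwards. If there are $m$ simple orbits of which $e\ge 2$ are elliptic, with actions $T_1,\dots,T_m$, the ECH generators are orbit sets with arbitrary multiplicities on the elliptic orbits, so the number of generators of action $\le L$ grows like $L^{e}$, which for $e\ge 3$ is \emph{faster} than the $\sim L^2/\mathrm{vol}$ count of gradings dictated by Theorem~\ref{thm:weyl}; and the ECH index of these orbit sets is essentially a positive-definite quadratic form in the multiplicities, so they fill out a quadratically growing range of gradings. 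Nothing in the spectrum or the index bookkeeping forces the homology to be unsupported in high gradings, so there is no contradiction to extract. Pure counting of this sort only rules out the ``too few generators'' situations (no elliptic orbit, or a single simple orbit — which is the older ``from one Reeb orbit to two'' argument); it cannot distinguish $m=2$ from $3\le m<\infty$, and this is precisely why the theorem is hard. Likewise, your claim that one hyperbolic orbit plus a parity/multiplicity-one argument kills the homology in high gradings is not true at the level of homology support; even in the nondegenerate case the hyperbolic situation is handled with genuine holomorphic-curve geometry ($U$-map curves, the $J_0$ index and partition conditions), not generator counting.

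The paper's proof is structurally different: assuming finitely many simple orbits, it constructs an \emph{annular global surface of section} whose boundary lies on two of them, and then invokes Franks' theorem (an area-preserving homeomorphism of an open annulus with one periodic point has infinitely many) to produce infinitely many Reeb orbits. The Weyl law enters only quantitatively, to guarantee that in a long $U$-tower most curves have low action and $J_0$ close to $2$; the partition conditions and the ``score'' are then used to force the existence of low-action $U$-map \emph{cylinders}; Proposition~\ref{prop:jsection} (elliptic asymptotics, compactness, intersection positivity) shows these cylinders project to global surfaces of section for nondegenerate approximations $\lambda_n$ as in Lemma~\ref{lem:pert}; and a delicate exponential-weight/asymptotic-eigenvalue analysis passes this structure to the degenerate limit $\lambda$. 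This two-dimensional dynamical reduction via a surface of section and Franks' theorem is indispensable to excluding $3\le m<\infty$ and is entirely absent from your proposal, so the gap is not one of bookkeeping but of the main idea.
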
  

This resolves Conjecture~\ref{conj:hwz} (Hofer-Wysocki-Zehnder's two or infinity conjecture) in the affirmative, since if $Y$ is star-shaped, we must have $Y = S^3$, and then $H^2(S^3; \mathbb{Z}) = 0$.  At the same time, it applies to much more general flows.  For example, it holds for any Reeb flow on any rational homology sphere.  

Our goal in this section is to explain the key ideas behind its proof.

\begin{remark}
In the case where there are two orbits, much can be said.  For example, it was shown in \cite{HT}
that 
if a contact form on a closed connected three-manifold $Y$ has exactly two simple Reeb orbits, then $Y$ is a lens space and the orbits are the core circles of a genus $1$ Heegaard decomposition.
Later, this was extended to all contact forms 
 %it is shown 
 in \cite{CGHHL},
 with the additional conclusion 
 %result
 that in this case
 % that if a contact form on a closed three-manifold $Y$ has exactly two simple Reeb orbits, then $Y$ is a lens space, the orbits are the core circles of a genus $1$ Heegaard decomposition, and
 the contact structure is standard.  (No condition on $c_1(\xi)$ is required for these results.)  Thus, Theorem~\ref{thm:hwz} applies in many situations to guarantee the existence of infinitely many orbits.  
%and t  
%hi
\end{remark}

\subsection{Finsler metrics: an application}

Theorem~\ref{thm:hwz} also has implications for Finsler geometry.  A {\em Finsler metric} is a generalization of a Riemannian metric, where there is a (not necessarily symmetric) norm\footnote{Another way to say this is that we require a smooth family of convex subsets of the tangent bundle, and then the Finsler metric of a tangent vector is the maximum scaling putting it in the corresponding convex subset.} on each tangent space, but the norm is not required to come from an inner-product.  One can define geodesics in this situation, analogously as in the Riemannian case.  A theme of research in the area is to understand what facts about Riemannian geodesic flows continue to hold in the Finsler setting; for more about this, see the perspective in the ICM address of Anosov \cite{anosov}.  A notable example that is {\em unlike} the Riemannian case is the study of closed geodesics on $S^2$.  In the Riemannian case, it is known \cite{Franks, Bangert2} that there are always infinitely many prime closed geodesics.  However, Katok constructed in \cite{Katok} a Finsler metric with exactly two closed geodesics.  The following had been a longstanding \cite{paiva, BurnsMatveev, long}
conjecture related to Conjecture~\ref{conj:hwz}:

\begin{conjecture}
\label{conj:fins}
Every Finsler metric on $S^2$ has either two or infinitely many prime closed geodesics.
\end{conjecture}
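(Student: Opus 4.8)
The plan is to derive the statement directly from Theorem~\ref{thm:hwz} by exhibiting the prime closed geodesics of a Finsler metric on $S^2$ as the simple Reeb orbits of a contact form on the lens space $\mathbb{RP}^3 = L(2,1)$, which is a rational homology sphere and hence automatically satisfies the torsion hypothesis.

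\textbf{Setting up the contact realization.} Given a Finsler metric $F$ on $S^2$, let $F^*$ be the dual co-Finsler norm on $T^*S^2$ and set $\Sigma := \{(q,p)\in T^*S^2 : F^*(q,p)=1\}$. Since $F$ is smooth and fiberwise strictly convex, $\Sigma$ is a smooth hypersurface which in each fiber bounds a convex body containing the origin; in particular $\Sigma$ is transverse to the fiberwise radial Liouville vector field, so the canonical Liouville one-form $\lambda_0 = \sum_i p_i\, dq_i$ restricts to a contact form $\lambda := \lambda_0|_\Sigma$ on $\Sigma$. Its Reeb flow is, up to reparametrization, the Hamiltonian flow of $\tfrac12(F^*)^2$ on $\Sigma$, which under the Legendre transform is the geodesic flow of $F$. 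Consequently the simple closed Reeb orbits of $\lambda$ are in period-preserving bijection with the prime closed geodesics of $F$: a Reeb orbit of period $T$ projects to a closed geodesic of length $T$, and the orbit is simple exactly when the geodesic is prime. Topologically, $\Sigma$ is an $S^1$-bundle over $S^2$ isomorphic to the unit cotangent bundle $ST^*S^2$; since the Euler number of $T^*S^2$ is $\pm 2$, one has $\Sigma \cong \mathbb{RP}^3$, a closed connected rational homology sphere, so $H^2(\Sigma;\mathbb{Z})\cong \mathbb{Z}/2$ and every class — in particular $c_1(\xi)$ — is torsion.

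\textbf{Applying the main theorem and concluding.} By Theorem~\ref{thm:hwz} applied to $(\Sigma,\lambda)$, the Reeb flow has either exactly two or infinitely many simple periodic orbits; by the bijection above, $F$ has either exactly two or infinitely many prime closed geodesics. This resolves the conjecture, and Katok's metric \cite{Katok} shows the value ``two'' is attained, so the dichotomy is sharp. If one instead counts \emph{geometrically distinct} closed geodesics (identifying a geodesic with its orientation reversal), then distinct unoriented prime geodesics still give distinct simple Reeb orbits while a single one gives at most two, so the geometric count $N$ satisfies $N \le \#\{\text{simple Reeb orbits}\}\le 2N$; combined with the above this forces $N\in\{1,2,\infty\}$, and the value $1$ is excluded by the theorem of Bangert and Long that every Finsler $2$-sphere carries at least two prime closed geodesics, again leaving two or infinitely many.

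\textbf{Main obstacle.} The substantive input is Theorem~\ref{thm:hwz}; the reduction itself is essentially routine. The only points needing care are (i) verifying that $\Sigma$ is genuinely of contact type with Reeb flow the cogeodesic flow — here one must use that the definition of a Finsler metric supplies the fiberwise strict convexity, so that the Liouville field is transverse to $\Sigma$ — and (ii) the orientation bookkeeping in the ``geometrically distinct'' formulation together with the appeal to the Bangert--Long lower bound; these are the places where, if the argument had a gap, it would be.
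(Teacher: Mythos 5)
Your argument is correct and is essentially the paper's own: the paper likewise deduces the conjecture by realizing the (co)geodesic flow of the Finsler metric as a Reeb flow on the unit (co)tangent bundle $\cong \mathbb{R}P^3$, noting that $H^2(\mathbb{R}P^3;\mathbb{Z})$ is torsion, and applying Theorem~\ref{thm:hwz}. Your write-up simply supplies the routine details (Liouville transversality, the simple-orbit/prime-geodesic bijection, and the optional unoriented-count bookkeeping via Bangert--Long) that the paper leaves implicit.
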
      

Theorem~\ref{thm:hwz} resolves the above conjecture in the affirmative.  The reason is that, along the lines that we previously explained, 
%has long been known,
there is a contact form on the unit tangent bundle of any Finsler manifold, such that the Reeb flow lines project to geodesics; one can then deduce the resolution of Conjecture~\ref{conj:fins} by applying Theorem~\ref{thm:hwz} to $\mathbb{R}P^3$.

\subsection{Outline of the argument}
 \label{sec:outline}
 
 Let us now outline the proof of Theorem~\ref{thm:hwz} before getting into the details.
  The basic idea of the proof is to find a {\em global surface of section} (GSS) $S$ for the Reeb flow.  This is a compact surface $S$, immersed in $Y$ such that the interior $int(S)$ is embedded and transverse to the Reeb vector field, the boundary of $S$ is on Reeb orbits, and every flow line hits $S$ both forwards and backwards in time.  Given such an $S$, we can define a {\em first return map}
  \[ P: int(S) \to int(S),\]
 by taking a point to the next place at which the flow line through the point hits $int(S)$.  Then, the periodic points of $P$ are in bijection with the Reeb orbits, and so we have achieved a kind of dimensional reduction.  Moreover, it follows from the definitions that $d\lambda$ restricts to $S$ as an area form, preserved by $P$.
 
 We will be interested in the case where $S$ is an annulus, with $\partial S$ on distinct Reeb orbits.  If one can find such an $S$, then Theorem~\ref{thm:hwz} would follow from the following celebrated result of Franks:
 
 \begin{theorem}\cite{Franks}
 Any area-preserving homeomorphism of an open annulus with at least one periodic point has infinitely many.
 \end{theorem}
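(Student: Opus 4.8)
The plan is to argue by contradiction: suppose $f$ is an area-preserving homeomorphism of the open annulus $A$ with at least one periodic point but only finitely many, and derive a contradiction. Replacing $f$ by $f^2$ if necessary (which has the same set of periodic points and is orientation-preserving), we may assume $f$ preserves orientation. Throughout one uses that the invariant area is finite --- as it is in the situation of interest, where the form in question is $d\lambda$ restricted to the interior of a compact annulus --- so that the Poincar\'e recurrence theorem applies and $f$, together with all of its iterates, is non-wandering.

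First I would fix a lift $\tilde f\colon\mathbb{R}^2\to\mathbb{R}^2$ commuting with the deck transformation $T(x,y)=(x+1,y)$, and let $p/q$ in lowest terms be the rotation number of the given periodic orbit, so that $\tilde f^{\,q}(\tilde z_0)=T^{\,p}\tilde z_0$ for some lift $\tilde z_0$ of a point of that orbit. The next step is to rule out any ``rotational spread''. The set of rotation numbers of $f$-invariant Borel probability measures is an interval, since the rotation number is affine in the measure; I claim that, given only finitely many periodic points, this interval is the single point $\{p/q\}$. Indeed, if some invariant measure had rotation number $r\neq p/q$, then Franks' generalization of the Poincar\'e--Birkhoff theorem --- a non-wandering homeomorphism of the open annulus realizes every rational strictly inside its rotation interval as the rotation number of a periodic orbit --- would produce a periodic orbit for each of the infinitely many rationals strictly between $p/q$ and $r$, contradicting finiteness.

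I would then pass to $g:=\tilde f^{\,q}\circ T^{-p}$. This is an area-preserving, orientation-preserving, non-wandering homeomorphism commuting with $T$; it descends to $f^{\,q}$ on $A$; it has the fixed point $\tilde z_0$; its fixed points project to periodic points of $f$ and hence are finite in number modulo $T$; and by the previous step every $g$-invariant measure has rotation number $0$, so the horizontal displacement $\operatorname{pr}_1(g^{\,n}\tilde w)-\operatorname{pr}_1\tilde w$ is sublinear in $n$ and, together with recurrence, bounded along recurrent orbits. This reduces everything to the following statement, the technical heart of the proof: \emph{a non-wandering, orientation- and area-preserving homeomorphism of the open annulus, all of whose orbits have rotation number $0$ and which has at least one fixed point, must have infinitely many fixed points.} Granting it, $g$ --- hence $f^{\,q}$, hence $f$ --- has infinitely many periodic points, the desired contradiction.

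The hard part is the italicized statement, and here I would invoke Brouwer's plane translation theorem. Deleting the (by hypothesis finitely many) fixed points leaves a fixed-point-free orientation-preserving homeomorphism of a punctured annulus; passing to a simply connected cover on which $g$ lifts without fixed points, one wants to conclude via Brouwer's theorem that a non-wandering orientation-preserving homeomorphism of the plane must have a fixed point. The main obstacle is that ``non-wandering'' does not automatically pass to a cover; reconciling it with the covering dynamics is exactly where the boundedness of the horizontal displacement (vanishing rotation) and the recurrence of $f$ must be used, and in practice this is carried out through an analysis of free disc chains around the fixed-point set, either yielding the contradiction directly or producing a fresh fixed point and iterating. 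By comparison the reductions above via the generalized Poincar\'e--Birkhoff theorem are soft; it is this Brouwer-theoretic core, negotiating recurrence, vanishing rotation, and the topology of the punctured annulus, that I expect to require genuine care.
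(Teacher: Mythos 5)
First, a point of orientation: the paper does not prove this statement at all — it is quoted from Franks' paper \cite{Franks} and used as a black box in the proof of Theorem~\ref{thm:hwz} — so your proposal can only be measured against Franks' own argument. Your soft reductions are in the right spirit and broadly consistent with it: passing to an iterate, fixing a lift, noting that finite invariant area (true in the application, where the area form is $d\lambda$ on a surface of section) gives non-wandering via Poincar\'e recurrence, and using the realization of rationals inside the rotation interval (which is indeed Franks' generalized Poincar\'e--Birkhoff theorem for non-wandering annulus homeomorphisms) to conclude that, under the finiteness assumption, every periodic orbit and every invariant measure has the single rotation number $p/q$, after which one normalizes by $\tilde f^{\,q}\circ T^{-p}$. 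Two caveats even here: on an open annulus the displacement function need not be bounded, so "rotation number of an invariant measure" and the affineness argument need care (Franks works with recurrent points and returning disks precisely to avoid this), and your assertion that vanishing measure-theoretic rotation makes the displacement \emph{bounded} along recurrent orbits is unjustified — a one-point rotation set gives at best sublinear information, and bounded-deviation statements are genuinely hard and not available at this level of generality.

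The genuine gap is that everything after the reduction — your italicized claim — \emph{is} the theorem, and the route you indicate for it cannot work as described. After deleting the finitely many fixed points you pass to the universal cover of the punctured annulus, where the lift of $g$ is fixed-point free by construction; Brouwer's plane translation theorem then says that lift has no recurrence, but this produces no contradiction, because non-wandering does not lift and nothing you have established (rotation set $\{0\}$, recurrence downstairs, sublinear displacement) forces recurrence upstairs — you acknowledge the obstacle but do not resolve it, and "an analysis of free disc chains, either yielding the contradiction or producing a fresh fixed point" is exactly the missing content, not a proof. Note also that the statement you reduce to (infinitely many \emph{fixed} points of $g$) is stronger than what the contradiction requires (infinitely many periodic points of $f^{\,q}$ suffice) and correspondingly harder to reach with the tools you name. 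Franks' argument diverges from yours precisely at this point: rather than lifting to a simply connected cover, he works with the open annuli obtained by puncturing at the fixed point (after compactifying the ends), measures rotation numbers \emph{around that fixed point}, and applies his returning-disk/generalized Poincar\'e--Birkhoff machinery there, producing periodic orbits with infinitely many distinct rotation numbers about the puncture, with the remaining degenerate case handled by a separate argument. As written, your proposal correctly isolates where the difficulty sits but does not supply the idea that overcomes it, so it is incomplete.
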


Thus, our problem is now to find the desired global surface of section.  Following ideas of \cite{convex}, we will try to find this GSS by projecting a $J$-holomorphic curve from $\mathbb{R} \times Y$ to $Y$.   To prove Theorem~\ref{thm:hwz} we can assume that there are finitely many Reeb orbits, and this simplifies matters.  To find the right curve, we will use a cousin of the aforementioned periodic Floer homology (PFH).   To implement this, there are essentially two main challenges we need to overcome:
\begin{itemize}
\item Like its cousin PFH, ECH is only defined for nondegenerate contact forms.  However, Theorem~\ref{thm:hwz} has no nondegeneracy hypotheses. 
\item Like its cousin PFH, the relevant $J$-holomorphic curves in ECH can have genus and many punctures.  To get an annular global surfaced of section, one would like to find cylindrical $J$-holomorphic curves.
%they are described in terms of the ECH index
\end{itemize}

To get around the first issue, the idea is to approximate $\lambda$ with nondegenerate contact forms $\lambda_n$.  Then, ECH is defined for the $\lambda_n$ and so we can bring it to bear on the situation.   However, the approximation process immediately gives rise to several new issues.  First of all, although we could assume that $\lambda$ had finitely many simple Reeb orbits, we can no longer assume this for $\lambda_n$.  Certainly, there exist contact forms with no annular global surface of section (there are for example topological obstructions to this), so one has to clarify what kind of additional structure the $\lambda_n$ have that can be used.  The next issue is that even if one can show that the $\lambda_n$ have annular global surfaces of section, it is not at all clear that $\lambda$ itself inherits a GSS in the limit.  

As for the second bullet point, one needs to develop new tools for studying the topology of ECH/PFH curves.  We introduce a ``score" built for this purpose, and this on top of some other new ideas, give the result; it is here that the assumption that $c_1(\xi)$ is torsion is used.

The above represent the major challenges that need to be overcome for this strategy and in this section we will try to give a sense for how all of this works.

\begin{remark}
One might wonder if non-annular global surfaces of section suffice for our purposes.  Certainly, non-annular curves are easier to find with ECH so one might hope that it is easier to find a non-annular GSS.  If we had a non-annular GSS, then one could still deduce the existence of infinitely many simple Reeb orbits: in the genus zero case, one could reduce to Franks' theorem above, and in the higher genus case, one can use the fact that the return map in addition must have zero flux (when its flux is defined).  However, as we will see in our arguments, it seems very hard to show that non-annular $J$-holomorphic curves for the $\lambda_n$ project to global surfaces of section under our hypotheses; and, moreover, even if such curves did exist, we will see in our arguments that they do not seem well-suited to taking limits.  This is why we are looking for annular global surfaces of section.
% they do not seem well-suited to taking 
% (a somewhat different argument is required in the case with genus, but it is still possible.)  
\end{remark}

 \begin{figure}%                 use [hb] only if necceccary!
  \centering
  \includegraphics[width=6cm]{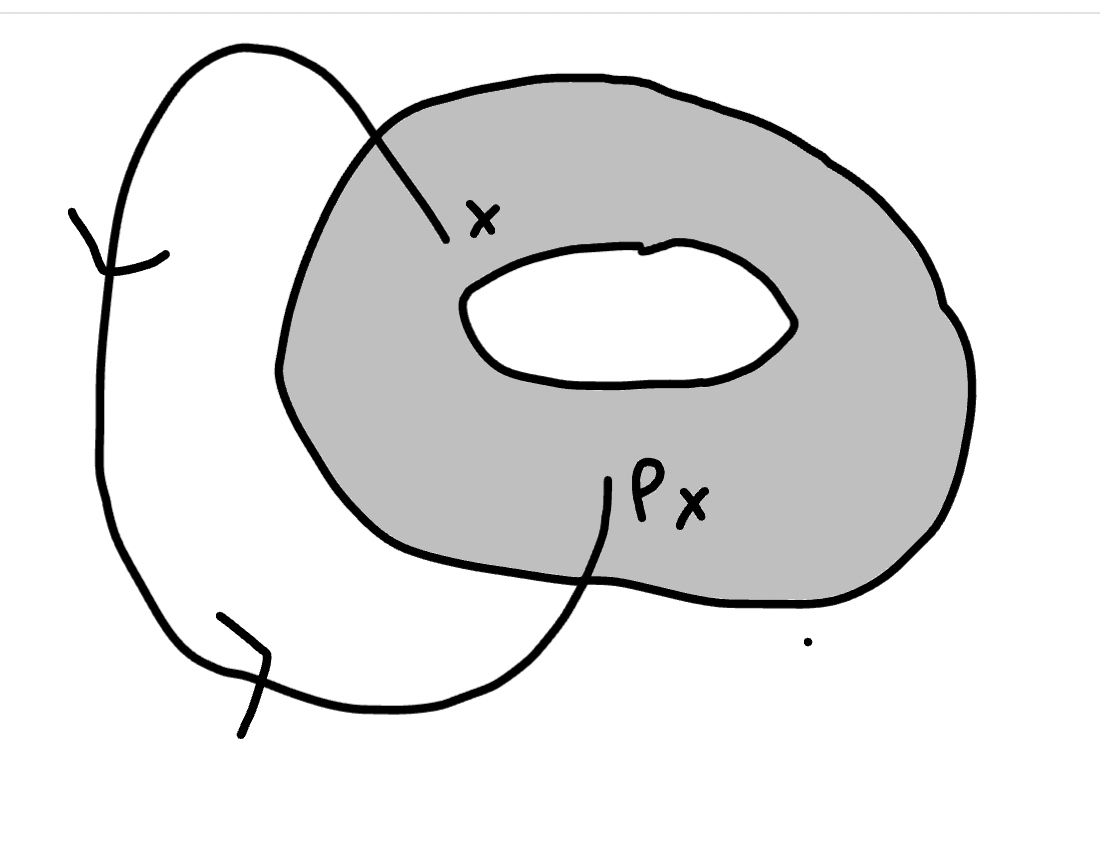}
  \def\svgwidth{,5 \textwidth}
  \caption{An annuluar global surface of section.  A typical flow line hitting the surface is shown.  The map $P$ is the first return map.}
  \label{fig:gss}
\end{figure}

\subsection{Embedded contact homology}
\label{sec:echdef}
We begin by introducing ECH and summarizing the key properties of ECH that we will need.  For our purposes here, we want to use ECH to produce many $J$-holomorphic curves.  Much of this is quite analogous to our discussion of PFH in \S\ref{sec:pfh}.  The definition of ECH is also due to Hutchings.

\subsubsection{Definition of ECH}  
Let $\lambda$ be a nondegenerate contact form on a three-manifold $Y$.   We define orbit sets and ECH generators just as in the PFH case: an ECH generator is a finite set $\lbrace (\alpha_i,m_i) \rbrace$ where the $\alpha_i$ are distinct embedded Reeb orbits, the $m_i$ are positive integers, and $m_i = 1$ if $\alpha_i$ is hyperbolic.  We also define the differential just as in the PFH case: we have
\[ \partial \alpha = \sum_\beta \langle \alpha, \beta \rangle \beta,\]
where $\langle \alpha, \beta \rangle$ counts $I = 1$ $J$-holomorphic currents in the symplectization $X = \mathbb{R} \times Y$, modulo translation, asymptotic to $\alpha$ at $+\infty$ and asymptotic to $\beta$ at $-\infty$, for generic $J$; just as in the PFH case we are assuming that $J$ is admissible, in the sense that it maps $\partial_s$ to the Reeb vector field and preserves $\xi$ compatibly with $d\lambda$; we will assume all almost complex structures are admissible here and below without further comment.  As we explained in \S\ref{sec:pfh}, such curves are mostly embedded, which is why the theory is called embedded contact homology.  (Specifically, an $I = 1$ curve has a unique nontrivial component which is embedded, and all the other components are covers of $\mathbb{R}$-invariant cylinders.)  It is shown in \cite{HTJSG1,HTJSG2} that $\partial^2 = 0$ and so the homology is well-defined.   It has a (relative) grading induced by the ECH index.  If we fix a class  $\Gamma \in H_1(Y;\mathbb{Z})$, then there is a well-defined subcomplex $ECH(Y,\lambda,\Gamma)$ consisting of ECH generators, in class $\Gamma$.  Just as in the PFH case, there is a canonical isomorphism 
\begin{equation}
\label{eqn:echsw}
ECH(Y,\lambda,\Gamma)_* \simeq \widehat{HM}^{-*}(Y,s_\Gamma),
\end{equation}
where $\widehat{HM}$ is the Seiberg-Witten Floer cohomology defined by Kronheimer-Mrowka in \cite{KMbook}, and $s_\Gamma$ is a certain spin-c structure determined by $\Gamma$ and the contact structure $\xi$.   The isomorphism implies that $ECH$ is independent of the contact form $\lambda$; we will therefore sometimes write it as $ECH(Y,\lambda,\xi)$.  It also allows us to convert topological information about the manifold $Y$, encoded by the Seiberg-Witten invariants, into symplectic data.  Here is an illustrative example showcasing the power of this:
\begin{example}
Recall from \S\ref{sec:intro} the ``Weinstein conjecture", stating that a Reeb vector field on a closed manifold always has a Reeb orbit.  The isomorphism \eqref{eqn:echsw} implies this for three-manifolds, given known facts about $\widehat{HM}$.  The reason is that it is known by \cite{KMbook} that $\widehat{HM}$ always has infinite rank, when the first Chern class  $c_1(s_\Gamma)$ is torsion.  In fact, currently, the only known proofs of the three-dimensional Weinstein conjecture use Seiberg-Witten theory.
%in dimension     
\end{example}

\subsubsection{Additional structures on ECH}
\label{sec:add}

We will be interested in an additional structure on ECH, called the ``$U$-map".  This counts $I = 2$ $J$-holomorphic curves, through a marked point, for generic choice of $J$.    This decreases the ECH grading by $2$.  Just as with the differential curves, an ECH $U$-map curve $C$ decomposes into components
\begin{equation}
\label{eqn:ucurves}
C = C_0 \cup C_1
\end{equation}
where $C_1$ is a single component, which is embedded, and $C_0$ consists of covers of $\mathbb{R}$-invariant cylinders; we call such cylinders {\em trivial cylinders}.
Importantly for our purposes, the component $C_1$ has Fredholm index two, implying that it lives in a two-dimensional moduli space.  The basic idea is that we hope that the projection of $C_1$ will be a GSS, and that this can be detected by the curves in the same moduli space component of $C_1$ foliating the symplectization $X$.

The isomorphism \eqref{eqn:echsw} guarantees us an abundance of $U$-map curves.  Indeed, there is a corresponding $U$-map on $\widehat{HM}$, and it is shown in \cite{KMbook} that when $c_1(s_\Gamma)$ is torsion, the $U$-map is an isomorphism in sufficiently negative grading.  One computes that $c_1(s_\Gamma) = c_1(\xi) + 2 \text{PD}(\Gamma)$, so by \eqref{eqn:echsw}, the $U$-map on $ECH(Y,\lambda,\Gamma)$ is an isomorphism in high enough grading whenever $c_1(\xi) + 2 \text{PD}(\Gamma)$ is torsion.  We will always assume from here on that we have chosen $\Gamma$ to satisfy this.  In the context of Theorem~\ref{thm:hwz}, we are assuming that $c_1(\xi)$ is itself torsion, so  we can take $\Gamma = 0$.

Just like with PFH, we can define spectral invariants with ECH.  In fact, it is a little bit simpler with ECH, since no reference cycle $\gamma$ is needed and we do not need to twist the theory.  An orbit set $\alpha = \lbrace (\alpha_i,m_i) \rbrace$ has an {\em action}
\[ \mathcal{A}(\alpha_i,m_i) = \sum m_i \int_{\alpha_i} \lambda.\]
For admissible $J$, the differential $\partial$ decreases the action by the same argument as in the PFH case (see \S\ref{sec:pfhspec}) and so the subspace $ECH^L$ generated by orbit sets of action $\le L$ is a subcomplex.  Given a nonzero class $\sigma \in ECH(Y,\xi,\Gamma)$, we then define $c_{\sigma}(\lambda)$ to be the infimum, over $L$, such that $\sigma$ is in the image of the inclusion induced map
\[ ECH^L(Y,\lambda,\Gamma) \to ECH(Y,\xi,\Gamma).\]
Thus, more concretely, to each representative of $\sigma$, we look at the maximum of the actions of each orbit set in the representatives, and then we minimize this quantity over all possible representatives.
% amount of action required to represent it, and then we minimizes 
We call this the {\em ECH spectral invariant} associated to $\sigma$.  Just as in the PFH case, it enjoys various favorable continuity properties.  In particular, it is $C^0$-continuous with respect to the contact form.

\subsubsection{The ECH Weyl law}

Similarly to the PFH case, there is also an ECH Weyl law.  It in fact predates the PFH one.  To state it, recall that $ECH(Y,\lambda,\Gamma)$ has a relative grading induced by the ECH index.  In general, this takes values in\footnote{The reason for the appearance of $d$ comes from the ``index ambiguity formula": one would like to define the relative grading between two ECH generators to be the ECH index of a relative homology class between them, but one can compute that two such relative homology classes $Z$ and $Z'$ satisfy $I(Z) - I(Z') = \langle c_1(\xi) + 2 PD(\Gamma), Z - Z' \rangle$, see \cite{ECHlecture}. } $\mathbb{Z}/d$, where $d$ is the divisibility of $c_1(\xi) + 2 \text{PD}(\Gamma)$ in $H^2$ modulo torsion.  In our case, $d = 0$ so we have a relative $\mathbb{Z}$-grading.  To state the Weyl law, let us refine this to a (non-canonical) absolute $\mathbb{Z}$-grading $gr$ by arbitrarily fixing a class to have grading $0$.  We can now state the Weyl law:
%then have:

\begin{theorem} \cite{ECHasymptotics}
\label{thm:weyl}
Let $(Y,\lambda,\Gamma)$ be a closed three-manifold such that $c_1(\xi) + 2 PD(\Gamma)$ is torsion.  Then
%\[ \text{lim}_{k \to \infty} \frac{c^2_{\sigma_k}(\lambda)}{gr(\sigma_k)} = \int_Y \lambda \wedge d \lambda\]
\[ \lim_{k \to \infty} \frac{c^2_{\sigma_k}(\lambda)}{gr(\sigma_k)} = \int_Y \lambda \wedge d \lambda\]
whenever $\lbrace \sigma_k \rbrace$ is any sequence of nonzero classes with definite gradings tending to $+\infty$.
\end{theorem}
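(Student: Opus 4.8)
The plan is to prove the two bounds $\limsup_k c^2_{\sigma_k}(\lambda)/gr(\sigma_k)\le \int_Y\lambda\wedge d\lambda$ and $\liminf_k c^2_{\sigma_k}(\lambda)/gr(\sigma_k)\ge \int_Y\lambda\wedge d\lambda$, preceded by two soft reductions. First I would reduce to nondegenerate $\lambda$: the ECH spectral invariants are monotone under $\lambda\mapsto f\lambda$ with $f\ge 1$, and nondegenerate contact forms are $C^\infty$-dense, so one can trap $\lambda$ between nondegenerate forms with $(1-\epsilon)\lambda\le\lambda^-\le\lambda\le\lambda^+\le(1+\epsilon)\lambda$; monotonicity then squeezes the ratio for $\lambda$ between those for $\lambda^\pm$, and $\int_Y\lambda^\pm\wedge d\lambda^\pm\to\int_Y\lambda\wedge d\lambda$ as $\epsilon\to 0$. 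Second, I would reduce to a single distinguished sequence: since $c_1(\xi)+2\,\mathrm{PD}(\Gamma)$ is torsion, the $U$-map is an isomorphism in all sufficiently high gradings, so each class of large grading is an iterated $U$-preimage of a fixed one; combined with the sub-additivity $c_{\sigma+\sigma'}\le\max(c_\sigma,c_{\sigma'})$ and $c_{U\sigma}\le c_\sigma$, this shows that all nonzero classes of a given large grading have the same leading asymptotics, so it suffices to treat one $U$-tower $\{\sigma_k\}$, for which $gr(\sigma_k)=2k+O(1)$.

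For the lower bound I would use ellipsoids together with monotonicity under exact symplectic cobordisms. For an irrational ellipsoid $E(a,b)$ the relevant (twisted) ECH complex is explicitly computable, and $c_k(E(a,b))$ is given, up to reindexing, by the values $ma+nb$, $m,n\in\Z_{\ge0}$, listed in increasing order with multiplicity; counting lattice points under the line $ma+nb=L$ gives $c_k(E(a,b))^2\sim 2abk$, i.e.\ $c_k(E(a,b))^2/(2k)\to ab=\int_{\partial E}\lambda\wedge d\lambda$. The disjoint-union formula $c_k(\sqcup_i E_i)=\max_{\sum k_i=k}\sum_i c_{k_i}(E_i)$ (the maximum being attained by splitting $k$ in proportion to the products $a_ib_i$) then yields the Weyl law for finite disjoint unions of ellipsoids. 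Finally, the region $(-\infty,0)\times Y$ inside the symplectization of $(Y,\lambda)$ has symplectic volume $\frac12\int_Y\lambda\wedge d\lambda$ and can be exhausted by disjoint symplectic ellipsoids up to full volume (pack by Darboux balls), so monotonicity of the ECH spectral invariants under such an embedding gives $c_{\sigma_k}(\lambda)\ge c_k(\sqcup_i E_i)$ in matching gradings; letting the total volume of the ellipsoids tend to $\frac12\int_Y\lambda\wedge d\lambda$ yields $\liminf_k c^2_{\sigma_k}(\lambda)/gr(\sigma_k)\ge\int_Y\lambda\wedge d\lambda$.

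The upper bound is the step I expect to be the main obstacle: the cobordism monotonicity above only bounds $c_{\sigma_k}(\lambda)$ from below, and for a general $Y$ --- which need not bound any symplectic filling --- one cannot simply run the argument in reverse. The route I would take is through Seiberg-Witten theory. Using the isomorphism between ECH and Seiberg-Witten Floer cohomology, together with a filtered refinement of Taubes' comparison (so that $c_{\sigma_k}(\lambda)$ corresponds to a Seiberg-Witten spectral number), one reduces the upper bound to a Weyl law for the Seiberg-Witten equations on $Y$ perturbed by a large multiple $r\lambda$ of the contact form: as $r\to\infty$ the relevant solutions concentrate along Reeb orbits, and the asymptotics of their curvature energy --- the quantity controlling the grading --- recovers $\int_Y\lambda\wedge d\lambda$. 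These concentration and energy estimates are the heart of the matter, in the same circle of elliptic analysis as Taubes' proof of the three-dimensional Weinstein conjecture, and I would expect to import and refine that machinery. For the special $Y$ relevant to Conjecture~\ref{conj:hwz}, which bound Liouville domains, the upper bound can alternatively be obtained within ECH by applying the ellipsoid-packing idea to a ``remove small ellipsoids'' cobordism, or by building a combinatorial lattice-path model of the filtered ECH complex as in \S\ref{sec:pfhspec} and reducing to an isoperimetric inequality; I would run one of these as an independent check.

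Combining the upper and lower bounds with the two reductions --- and recalling that $gr(\sigma_k)=2k+O(1)$ along the chosen $U$-tower, so the limit with $gr(\sigma_k)$ in the denominator equals the limit with $2k$ in the denominator --- completes the proof.
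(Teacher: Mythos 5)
Your overall architecture (soft reductions, then two separate inequalities, with the hard direction pushed into Seiberg--Witten theory) is reasonable, and your packing argument for the lower bound is a genuinely different route from the paper's, which extracts both inequalities at once from a single Seiberg--Witten min-max analysis; for Liouville-fillable $Y$ the packing inequality is indeed the standard ``easy'' direction. But in the general, possibly non-fillable, case your version has a gap: removing ellipsoids from a finite collar $[-T,0]\times Y$ of the symplectization produces a weakly exact cobordism whose concave end is $\sqcup_i\partial E_i\,\sqcup\,(Y,e^{-T}\lambda)$, not just the ellipsoids, and ``monotonicity'' only gives information if the induced ECH cobordism map (itself constructed via Seiberg--Witten) sends $\sigma_k$ to a \emph{nonzero} class whose grading is carried by the ellipsoid factors in the right proportions; making that precise requires the $U$-map compatibility of cobordism maps with movable base points together with a nontriviality argument, none of which is addressed in your sketch.

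The more serious gap is in the upper bound, which you correctly identify as the heart but then describe by a mechanism that is not the one that works. Concentration of solutions along Reeb orbits and their curvature energy is what identifies the energy $E(A(r),\psi(r))$ of a min-max family with $2\pi c_{\sigma_k}(\lambda)$ (Proposition~\ref{prop:echfromsw}); it does not control the grading, and it is not where the volume comes from. In the actual proof the grading is tied to the Chern--Simons functional by Taubes' spectral-flow estimate $|gr+\tfrac{1}{4\pi^2}cs|\le C r^{31/16}$ (Proposition~\ref{prop:gradchsimons}); the volume enters through the \emph{reducible} solutions, for which $E\approx\tfrac{r}{2}\,vol(Y,\lambda)$ and $cs\approx\tfrac{r^2}{4}\,vol(Y,\lambda)$, so that $E^2\approx vol\cdot gr$ precisely at parameter values $r_j$ where the min-max family becomes reducible; and the remaining work is to construct a family of solutions along which the perturbed Chern--Simons--Dirac functional $F$ is continuous and piecewise smooth in $r$ (Proposition~\ref{prop:piecewise}) and to show, via $v=-2F/r$, $v'=cs/r^2$ and an $o(r)$ bound on $cs$, that $E$ drifts negligibly between $r_j$ and $r\to\infty$. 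None of these ingredients --- the grading/Chern--Simons estimate, the anchoring of the value at reducibles, the min-max family and the two-stage control of its variation in $r$ --- appears in your proposal, so the step that actually produces $\int_Y\lambda\wedge d\lambda$ is missing rather than merely deferred.
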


An account of the proof of this is given in \S\ref{sec:weyl}.

In the PFH case, what was important for our purposes was the recovering of the Calabi invariant through Floer-theoretic considerations.  For our purposes in proving Theorem~\ref{thm:weyl}, what will be important is instead the {\em growth rate} in the Weyl law, stating that the action grows like the square root of the grading.  We will see in \S\ref{sec:utowers} that this gives rise to many ``low action" curves.   We should note, though, that there are other applications where the appearance of the volume is essential.  Probably the most spectacular is the aforementioned work of Irie and Asaoka-Irie on the closing lemma \cite{irie, ai}; for another application, see \cite{CGHHL}.  

\subsection{$U$-map curves and the $J_0$ index}
\label{sec:ujo}

To understand the topology of the $U$-map curves, there is a variant of the ECH index called the $J_0$ index which is useful.  For brevity, we will not state the definition of $J_0$, but we will just give the formula for the difference, since this will be important for our purposes.  Namely, we have:
\begin{equation}
\label{eqn:diff}
(I - J_0)(Z) = 2c_{\tau}(Z) + CZ^{top}_{\tau}(Z),
\end{equation}
where $Z \in H_2(Y,\alpha,\beta)$ is any relative homology class between orbit sets $\alpha$ and $\beta$.  Here,  $\tau$ denotes a trivialization of $\xi$ over all Reeb orbits and $c_{\tau}$ denotes the {\em relative first Chern class} of $\xi$ restricted to $Z$: this is a signed count of zeros of a generic section of $\xi$, extending the trivialization $\tau$.  Meanwhile, $CZ^{top}$ is a {\em Conley-Zehnder index} term that depends only on $\partial Z$.  To briefly explain this, let us first say a few words about the Conley-Zehnder index of a simple Reeb orbit $\gamma$.  After choosing a base point $p \in \gamma$, the trivialization $\tau$ allows us to represent the linearized flow as one goes around $\gamma$, restricted to the contact structure, as a path of symplectic matrices.   One can then extract a {\em rotation number} $\theta$ from this path (see \cite{ECHlecture} for the details) and we define 
\[ CZ_{\tau}(\gamma^m) = \lfloor m\theta \rfloor + \lceil m\theta \rceil.\]
Next, if $\alpha = \lbrace (\alpha_i, m_i) \rbrace$ is an orbit set, we define 
\[ CZ^{top}_{\tau}(\alpha) = \sum_i \CZ_{\tau}(\alpha_i^{m_i}).\]
Finally, we define
\[ CZ^{top}_{\tau}(Z) = CZ^{top}_\tau (Z) = CZ_{\tau}^{top}(\alpha) - CZ_{\tau}^{top}(\beta),\]
for $Z \in H_2(Y,\alpha,\beta).$

The following result links the $J_0$ index to the topology of the $U$-map curves.  Recall from \S\ref{sec:add} the decomposition $C = C_0 \cup C_1$ for ECH $U$-map curves.

\begin{prop} \cite{CGHHL2, HT, ir}
\label{prop:j0bound}
Let $C = C_0 \cup C_1$ be a curve counted by the ECH $U$-map.
We have 
\[ J_0([C]) = -2 + 2g(C_1) + e(C).\]
\end{prop}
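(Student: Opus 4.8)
The plan is to compute the $J_0$ index of $[C]$ directly from the formula \eqref{eqn:diff}, exploiting the special structure of $U$-map curves described in \S\ref{sec:add}. Recall that such a curve decomposes as $C = C_0 \cup C_1$, where $C_1$ is the unique nontrivial component (which is embedded, Fredholm index $2$) and $C_0$ consists of covers of trivial cylinders. The starting point is the relation $I([C]) = 2$ for $U$-map curves together with \eqref{eqn:diff}, which gives $J_0([C]) = 2 - 2c_\tau([C]) - CZ^{top}_\tau([C])$. So the task reduces to evaluating $c_\tau$ and $CZ^{top}_\tau$ on $[C]$, and then re-expressing the answer in terms of $g(C_1)$ and the total ECH Euler characteristic $e(C)$.

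First I would invoke the ``adjunction-type'' / index formulas for embedded curves in ECH: for the embedded index-$2$ component $C_1$, one has a relation between its Fredholm index, $c_\tau$, the Conley-Zehnder contributions at its ends, and its genus and normal-bundle/relative-intersection data (this is the ingredient ultimately due to Hutchings, and used in \cite{HT, ir}). The idea is to write $c_\tau([C_1])$ and the writhe/intersection terms in terms of $g(C_1)$ and the Euler characteristic quantities, using that $\text{ind}(C_1) = 2$. Then I would handle the trivial-cylinder components $C_0$ separately: covers of trivial cylinders contribute in a controlled, purely combinatorial way to $c_\tau$ and to $CZ^{top}_\tau$ — indeed their contributions to $I$, to $c_\tau$, and to the Conley–Zehnder sum are all governed by the multiplicities and the rotation numbers $\theta$ at the relevant orbits, and these are exactly the terms that get absorbed into $e(C)$ in the final formula. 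Summing the $C_1$ contribution and the $C_0$ contribution, the $\theta$-dependent (and hence non-topological) pieces should cancel, leaving only $-2 + 2g(C_1) + e(C)$.

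Concretely, the key steps in order are: (1) apply \eqref{eqn:diff} to $[C]$ to reduce $J_0([C])$ to $c_\tau([C])$ and $CZ^{top}_\tau([C])$; (2) decompose $[C] = [C_1] + [C_0]$ and split both $c_\tau$ and $CZ^{top}_\tau$ accordingly, noting $c_\tau$ is additive and $CZ^{top}_\tau$ depends only on the boundary orbit sets; (3) use the embedded-curve index formula for $C_1$ (with $\text{ind}(C_1)=2$) to trade the $C_1$ contribution for $2g(C_1)$ plus Euler-characteristic/intersection terms; (4) compute the trivial-cylinder contributions explicitly from the multiplicities and rotation numbers; (5) check that the $\theta$-dependent terms cancel and that what remains organizes into $-2 + 2g(C_1) + e(C)$, using the precise definition of the ECH Euler characteristic $e(C)$ of the curve.

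The main obstacle I expect is step (4)–(5): correctly accounting for the trivial-cylinder components and confirming that all the rotation-number ($\theta$) dependence cancels between $C_1$ and $C_0$. This is a genuine bookkeeping subtlety — the Conley–Zehnder indices $CZ_\tau(\gamma^m) = \lfloor m\theta\rfloor + \lceil m\theta\rceil$ are not additive in $m$, so one must be careful about how the multiplicities of orbits in $C_0$ versus their multiplicities as ends of $C_1$ interact, and about the relative intersection number between $C_0$ and $C_1$. Getting the trivialization-dependence to drop out (as it must, since the left-hand side $J_0([C])$, $g(C_1)$, and $e(C)$ are all trivialization-independent) is the real content; once that is verified, the identification of the leftover constant as $-2$ and the remaining terms as $2g(C_1) + e(C)$ should follow from the definitions. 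I would lean on the computations in \cite{HT} and \cite{ir}, where analogous $J_0$-versus-topology identities are established, to guide the cancellation.
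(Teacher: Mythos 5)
A preliminary remark: these notes do not actually prove Proposition~\ref{prop:j0bound} --- it is quoted from \cite{CGHHL2, HT, ir} --- so your proposal has to be measured against the arguments in those references, and against whether it could work at all.

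Your step (1) is fine: using $I([C])=2$ and \eqref{eqn:diff} one may write $J_0([C]) = 2 - 2c_\tau([C]) - CZ^{top}_\tau([C])$, and $c_\tau$ vanishes on the trivial-cylinder part. But the plan collapses at steps (3)--(5). If you eliminate $c_\tau(C_1)$ using $\operatorname{ind}(C_1)=2$, what remains is $J_0([C]) = -\chi(C_1) + \bigl(CZ^{\operatorname{ind}}_\tau(C_1) - CZ^{top}_\tau([C])\bigr)$, and the parenthetical term is a sum over orbits of quantities of the form $\sum_k CZ_\tau(\gamma^{q_k}) - CZ_\tau(\gamma^{M})$, where the $q_k$ are the multiplicities of the ends of $C_1$ at $\gamma$ and $M$ is the total multiplicity of $\gamma$ in the orbit set (including the $C_0$ part). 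For an elliptic orbit this equals $2\bigl(\sum_k \lfloor q_k\theta\rfloor - \lfloor M\theta\rfloor\bigr) + (n_\gamma - 1)$, which genuinely depends on the rotation number $\theta$. Your argument that this dependence ``must'' drop out because both sides are trivialization-independent is a non sequitur: trivialization dependence cancels automatically in the combination $2c_\tau + CZ^{top}_\tau$, but the fractional rotation numbers are geometric data, not trivialization artifacts, and they do not cancel by bookkeeping. The only reason such terms are controlled is the ECH partition conditions (available because $I=\operatorname{ind}$ for curves counted with generic $J$), and even granting them the exact cancellation you hope for fails: at an orbit where both $C_0$ and $C_1$ have ends one has $e(\gamma_+)=2n_\gamma$, so your scheme would require $2\bigl(\sum_k\lfloor q_k\theta\rfloor - \lfloor M\theta\rfloor\bigr) = 1$, which is impossible for parity reasons; instead one only gets an estimate there, with the defect controlled by the partition of the total multiplicity $M$.

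This is also why the cited papers do not argue as you propose. In \cite{ir, HT, CGHP, CGHHL2} the relation between $J_0$ and the topology of $C_1$ is obtained by substituting the relative adjunction formula $c_\tau(C_1) = \chi(C_1) + Q_\tau(C_1) + w_\tau(C_1)$ for the embedded component (no Fredholm index needed) into the definition of $J_0$, and then estimating the asymptotic writhe $w_\tau$ and the Conley--Zehnder sums via Hutchings' writhe bounds and the partition conditions, together with positivity of the linking of $C_0$ with $C_1$; what comes out of that machinery is the bound $J_0([C]) \ge -2 + 2g(C_1) + e(C)$, which is exactly the direction used later in these notes to bound the genus and number of ends of $U$-map curves (the equality displayed in the statement should be read in that spirit, i.e.\ it holds only under additional conditions on the asymptotic braids). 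So to repair your proof you should replace the ``trade via $\operatorname{ind}(C_1)=2$ and cancel'' step by adjunction plus the writhe/partition estimates, and aim for the inequality rather than an identity obtained by cancellation.
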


In the above formula, $g(C_1)$ denotes the genus of $C_1$.  Meanwhile, the term $e(C)$, which is nonnegative, is a certain measurement of the ends of $C$.  Namely, for each orbit $\gamma_+$ at which $C_1$ has positive ends, define $e(\gamma_+)$ to be twice the number of ends at $\gamma_+$, minus $1$ if $C_0$ does not have any ends at $\gamma_+$.   We define $e(\gamma_-)$ analogously.  We then define $e(C)$ to be the sum, over all orbits $\gamma_+$ at which $C_1$ has positive ends, of $e(\gamma_+)$, plus the sum, over all orbits $\gamma_-$ at which $C_1$ has negative ends, of $e(\gamma_-)$.

In view of Proposition~\ref{prop:j0bound}, we would like to bound $J_0([C)])$.  Because $C$ is a $U$-map curve, we have that $I(C) = 2$.  The first principle that we will want to make precise is that 
\[ I \sim J_0.\]
To do this, we start with \eqref{eqn:diff}.  In our case, $c_1(\xi)$ is torsion, and we can conclude from this that we can essentially disregard the $c_\tau$ term on the right hand side of \eqref{eqn:diff}.  For example, when $c_1(\xi) = 0$ (which we can assume if we just want to prove Conjecture~\ref{conj:hwz}), we can choose a global trivialization $\tau$ of $\xi$, and then $c_{\tau}(Z) = 0$ by definition, for any $Z$.  A little more work is required in the case where $c_1(\xi)$ is torsion, but not zero, and we will return to this in a moment.  As for the $CZ^{top}$ term, the key observation is that for any orbit, $CZ_{\tau}(\gamma^m)$  is proportional to its length.  We will now explain a nice trick for getting a good bound from this.

\subsection{$U$-towers and first considerations}
\label{sec:utowers}
%We continue to discuss the right hand side of \eqref{eqn:diff}.
% difference \eqref{eqn:
Recall that we are trying to make precise, and prove, the principle that $I \sim J_0$.  

To extract a useful estimate from this observation, we want to use the Weyl law Theorem~\ref{thm:weyl}.  To bring this to bear, we will want to consider the map $U^N$ for a large positive power $N$.  By the discussion in \S\ref{sec:add}, this is still an isomorphism for any $N$, in sufficiently high grading.  Thus, we obtain not just a single $U$-curve, but $N$ $U$-curves $C(i)$ in a row, i.e. 
\[ C(i) \in \mathcal{M}(\alpha(i),\alpha(i-1)).\]  
Then the total ECH index of these $N$ curves --- more precisely, the ECH index of the relative homology class $S_N$ given by their sum --- is $2N$, since the ECH index is additive under sums.  On the other hand, by the Weyl law, we can assume that $\mathcal{A}(\alpha(N)) \le O(\sqrt{N}).$  
We saw in \S\ref{sec:ujo} above that the $CZ^{top}$ term in \eqref{eqn:diff} is proportional to the action, at least for a single orbit.  If we are assuming finitely many simple orbits, then the same argument holds; a similar argument bounds the Chern class term under the assumption of finitely many simple orbits. With somewhat more care, this works equally well for nondegenerate perturbations of a contact form with finitely many orbits; and in fact, after the proof of Theorem~\ref{thm:hwz} it was shown in \cite{cgp} that these bounds hold as long as $c_1(\xi)$ is torsion, without any further assumptions.
% finitely many orbits and we will return to this point in [   

From the above argument, we conclude that
\begin{equation}
\label{eqn:j0bound}
J_0(S_N) = I(S_N) + O(\sqrt{N}) = 2N + O(\sqrt{N}).
\end{equation}
%We will elaborate on this in a moment, but 
Let us comment on the significance of this.   
Recall that we seek a bound on $J_0$, to get a bound on the topology of the non-trivial component of the curve via Proposition~\ref{prop:j0bound}.  The above equation can be interpreted as saying that on average, for $N$ sufficiently large, the curves have $J_0$ essentially $2$, by additivity of $J_0$.   This is a crucial point for our analysis.

Let us collect another key point that comes out of these kind of considerations.  A $J$-holomorphic curve $C$ has an {\em action} defined by
\[ \mathcal{A}(C) = \int_C d \lambda.\]
Recall that $d\lambda$ is pointwise nonnegative along $C$, so this is always nonnegative.  One can check that $d\lambda$ vanishes at a nonsingular point on $C$ if and only if the tangent space is spanned by the Reeb vector field and $\partial_s$.
By Stokes' theorem and the Weyl law, similarly as above, the sum of the actions of the $N$ curves $C(i)$ is $O(\sqrt{N})$.  Thus, on average the curves have action $O(1/\sqrt{N})$.  We therefore obtain that when $N$ is large, most of the curves have small action.  We call such curves {\em low-action} curves.  They play a key role in our argument, and also in the proof of Theorem~\ref{thm:hwz}.

Thus, to summarize, from the Weyl law and the arguments above, we learn the following:
\begin{itemize}
\item In a $U$-tower of length $N$, on average the curves have $J_0$ about $2$ for large enough $N$.  (In the argument, this requires the assumption that $c_1(\xi)$ is torsion, and it is an interesting question whether this is necessary)
\item In a $U$-tower of length $N$, on average the curves have ``low action", more precisely action $1/\sqrt{N}$.  
%This action decreases as $N$ increases. 
\end{itemize}

Let us note the following point, which will be one of the key challenges that we need to overcome: by the above considerations, we can find $J_0(C) \le 2$ $J$-holomorphic curves $C$ counted by the $U$-map.  However, in the argument we sketched in \S\ref{sec:outline} we desired cylinders.  It does not follow from Proposition~\ref{prop:j0bound} that a $J_0(C) = 2$ curve must have $C_1$ a cylinder, and in fact there are many examples where this fails.  For example, one could have $C_1$ with genus $0$ and four punctures, and $C_0$ empty.  

\subsection{Criteria for a global surface of section}

In order to continue with our argument, we should first clarify exactly what we are seeking.  Recall that Theorem~\ref{thm:hwz} would follow, if we can find a global surface of section (GSS).  We would like to produce this GSS by projecting a $J$-holomorphic curve.  The following is a special case of a more general criteria for a GSS for a nondegenerate contact form from \cite{CGHP}; the proof in \cite{CGHP} builds on ideas from 
\cite{fast,convex},
proving closely related results.

\begin{prop} \cite[Prop. 3.3]{CGHP}
\label{prop:jsection}
Let $C \in \mathcal{M}(\gamma_+,\gamma_-)$ be a $J$-holomorphic cylinder from $\gamma_+$ to $\gamma_-$ in $\mathbb{R} \times Y$.  Assume that:
% that is not $\mathbb{R}$-invariant.    
\begin{enumerate}[(i)]
\item ind(C) = I(C) = 2
\item %Neither $\gamma_+$ nor $\gamma_-$ are negative hyperbolic
$\gamma_{\pm}$ are elliptic
\item The component $\mathcal{M}_C/\mathbb{R}$ of the moduli space of $J$-holomorphic curves containing $C$ is compact.
\end{enumerate}
Then $C$ projects to a global surface of section under the map $\mathbb{R} \times Y \to Y$.
\end{prop}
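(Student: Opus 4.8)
The plan is to verify the three defining properties of a global surface of section --- that $\operatorname{int}(C')$, the projection of $C$, is embedded and transverse to the Reeb vector field $R$; that $\partial C'$ lies on Reeb orbits; and that every Reeb trajectory hits $C'$ in both forward and backward time --- using the hypotheses (i)--(iii) together with ECH index machinery and a foliation argument. First I would record the easy structural facts: since $C$ is a cylinder asymptotic to $\gamma_\pm$ at the two ends, its projection is automatically bounded by $\gamma_+$ and $\gamma_-$, giving the boundary condition; and the hypothesis $\operatorname{ind}(C) = I(C) = 2$ places $C$ in a smooth two-dimensional moduli space $\mathcal{M}_C$ (so $\mathcal{M}_C/\mathbb{R}$ is one-dimensional), with the curve honest and somewhere-injective because for $I = 2$ curves the nontrivial component is embedded and, since $C$ is a cylinder with no trivial-cylinder components possible in the relevant configuration, $C$ itself is embedded. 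The condition that $\gamma_\pm$ are elliptic is what rules out extra negative-ends or boundary phenomena and, crucially, guarantees that nearby curves in the moduli space do not break off trivial cylinders at the ends.

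The heart of the argument is the foliation step: I would show that the curves in the component $\mathcal{M}_C/\mathbb{R}$, together with their $\mathbb{R}$-translates and the trivial cylinders $\mathbb{R}\times\gamma_\pm$, foliate $\mathbb{R}\times Y$. Compactness of $\mathcal{M}_C/\mathbb{R}$ (hypothesis (iii)) means this one-dimensional family is a circle, with no breaking; ellipticity of the asymptotic orbits, combined with automatic-transversality results for low-index curves in dimension four (the index inequality of Hutchings--Taubes forcing embeddedness and the Wendl-type automatic transversality for index-$2$ cylinders with elliptic asymptotics), forces any two curves in the family to be either disjoint or equal --- a positivity-of-intersections argument, using that intersection numbers of distinct somewhere-injective $J$-holomorphic curves in a $4$-manifold are nonnegative and that the relative intersection number here is forced to be zero by the ECH index formula. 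Then, since the family is compact and the curves sweep out a region bounded by the trivial cylinders over $\gamma_\pm$, a connectedness argument shows they exhaust all of $\mathbb{R}\times Y$ minus those cylinders. Because the foliation is $\mathbb{R}$-invariant (translation acts on it) and the leaves other than the trivial cylinders are transverse to $\partial_s$, projecting to $Y$ collapses the foliation to one whose leaves are the projections $C'$ of the curves, embedded and transverse to $R$ (transversality to $R$ being exactly the statement that $d\lambda$ does not vanish on the tangent space, which follows from the leaf being transverse to $\partial_s$ in the symplectization). Finally, every Reeb trajectory in $Y$ lifts to a flow line of $\partial_s + R$ in $\mathbb{R}\times Y$, which must cross every leaf of the $\mathbb{R}$-invariant foliation infinitely often in both time directions, hence hits $C'$ forward and backward in time; this gives the return-map property.

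I expect the main obstacle to be the foliation/disjointness step --- establishing that the curves in $\mathcal{M}_C/\mathbb{R}$ are mutually disjoint and actually sweep out the whole symplectization. This requires combining three ingredients that must be made to interlock cleanly: automatic transversality (so that the moduli space is genuinely a manifold and remains so under the hypotheses, with no obstruction bundle), the intersection-positivity computation showing the relative intersection pairing of distinct leaves vanishes (which is where ellipticity of $\gamma_\pm$ and the precise value $I(C)=2$ enter, via the adjunction/writhe formula of Hutchings), and a compactness-plus-connectedness argument (using hypothesis (iii) to exclude bubbling or breaking) to conclude the leaves cover everything. Since this proposition is cited as \cite[Prop. 3.3]{CGHP} and is stated there in greater generality, the write-up here would likely just indicate this strategy and refer to \cite{fast, convex, CGHP} for the technical heart, rather than reproving the foliation theorem from scratch.
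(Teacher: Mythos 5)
Your foliation argument tracks the paper's proof closely: immersedness/transversality of the projection via the normal Chern number computation (where ellipticity gives $h_+=0$ and $\operatorname{ind}=2$ gives $c_N=0$), embeddedness and mutual disjointness of leaves via positivity of intersections and the relative intersection bound coming from $I(C)=2$, and compactness of $\mathcal{M}_C/\mathbb{R}$ forcing the family to be a circle whose projections sweep out all of $Y\setminus\{\gamma_+,\gamma_-\}$ by an open-and-closed argument. Up to that point your proposal is essentially the argument in the paper and in \cite{CGHP}.

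The genuine gap is in your last step. You assert that because every leaf is transverse to the Reeb vector field, every Reeb trajectory ``must cross every leaf of the $\mathbb{R}$-invariant foliation infinitely often in both time directions,'' and you treat the return property as automatic. It is not: transversality of the flow to the pages gives a nowhere-vanishing derivative of the circle-valued map $f:Y\setminus\{\gamma_+,\gamma_-\}\to S^1$ along $R$, but to conclude that every orbit hits the section in finite forward and backward time you need a \emph{uniform} lower bound on the rate at which $f$ advances, i.e.\ a time $T$ such that $f$ increases by at least $1$ along every flow segment of length $T$. Away from the binding orbits this follows from compactness, but near $\gamma_\pm$ it fails for free: an orbit close to $\gamma_+$ crosses the pages at a rate governed by the rotation of the linearized Reeb flow relative to the asymptotic direction of the leaves, roughly $\theta-\lfloor\theta\rfloor$ per period, and if this were zero the orbit could take arbitrarily long (or forever) to meet the section even though the pages are transverse to $R$. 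This is exactly where the paper has to invoke the asymptotic analysis of the ends of $C$ (exponential convergence to eigenfunctions of the asymptotic operator) together with nondegeneracy and ellipticity of $\gamma_\pm$, which force $\theta\notin\mathbb{Z}$ and hence a uniform return-time bound near the binding. Without this analysis your argument proves only that the projection is a transverse foliation by annuli bounded by $\gamma_\pm$, not that it is a global surface of section; so the ellipticity hypothesis is doing essential work at this stage, not merely at the stage of ruling out positive hyperbolic ends in the index formula.
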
  

Here, $ind(C)$ denotes the index of $C$, i.e. the expected dimension of the component of the moduli space containing $C$. 

\begin{proof}[Sketch]

Let us try to give an idea for how the proof goes in order to understand the role of the criteria (i) - (iii) (though this sketch can be skipped by a reader willing to take the proposition on faith.)  The general picture we are going for is that the curves in the same moduli space component of $C$ foliate $X = \mathbb{R} \times Y$, with embedded projection to $Y$.  As explained when we introduced ECH in \S\ref{sec:echdef}, the ECH index condition in (i) guarantees that $C$ is embedded in $X$.  However, to show that its projection remains embedded we need to know more.  To start to analyze this, we use an identity due to Wendl \cite{wendl}: there is a quantity called the {\em normal Chern number} of $C$, given for (not necessarily cylindrical curves) $C$ by the formula
\[ c_N(C) = \frac{1}{2}(2g(C) - 2 + ind(C) + h_+(u)),\]
where $h_+$ is the numbers of ends of $u$ at ``positive" hyperbolic orbits (namely, those such that the linearized return map has positive eigenvalues).  
One can show, see e.g. the exposition in \cite{CGHP}, that the normal Chern number bounds the number of zeros of the projection of $\partial_s$ to the normal bundle of an embedded curve $C$ from above.  One can show that since the curve is $J$-holomorphic, these zeros must all count positively; so, when $c_N(C) = 0$, the projection of $C$ to $Y$ is immersed.  In our case, $C$ is a cylinder so $g = 0$; item (i) implies that $ind = 2$ and then item (ii) implies that $h_+ = 0$.   

Thus, we learn from the assumptions that $c_N(C) = 0$ and then that the projection of $C$ is immersed.  To show that it is embedded it remains to show that this projection is injective.  Equivalently, we want to show that in the four-manifold $X$, $C$ can not intersect a translate of itself.  For this, we use some intersection theory, exploiting the very important fact that two $J$-holomorphic curves in a four-manifold must intersect positively.

In fact, we will show the more general fact that, under the assumptions in our proposition, two different curves $C$ and $C'$ in the same moduli space component can not intersect.  The basic idea behind this is as follows.  If $C$ and $C'$ were curves in a closed four-manifold, then their number of intersections would be counted by the homological intersection number, and in particular would be independent of the specific choice of $C'$ in the component of $C$.  Our setting is non-compact, but nevertheless there is a very well-studied ``relative intersection" theory, where one takes into account and studies the behavior of $C$ and $C'$ near infinity.  This is a subject with a long history --- we will not explain much here, except to note that these kind of considerations started in  \cite{props2}, with important developments in \cite{siefring, siefring2}.  Here, we note that the condition on the ECH index in (i) guarantees that, just as in the closed case, there is still a bound on the number of intersections between $C$ and $C'$, independent of the choice of $C'$; for the details, see for example Step 2 of \cite[Lem. 3.5]{CGHP}.  Once we know this, we can take $C'$ to be a small translate of $C$.   The idea is then to use the fact that the restriction of $\partial_s$ to $N$ has no zeros to conclude that $C$ does not intersect $C'$ for a small enough translation, except possibly at very large values of $s$; however, at such values, $C$ is converging exponentially fast to a Reeb orbit, and so by some further analysis $C$ still can not intersect $C'$.  

Thus, the projection of $C$ is embedded and also disjoint from the projection of any $C'$ in the same moduli space component.  Arguing similarly as above gives that in fact the projections of the $C'$ foliate some open subset of $Y$.  We would now like to show that we can take this open subset $U$ to be $Y \setminus \lbrace \gamma_+, \gamma_- \rbrace$. Let $\pi_Y$ denote the projection to $Y$.  We have $U = \cup_{C' \in \mathcal{M}_C} \pi_Y C'$ and this is open by our above analysis.  On the other hand, it is a closed subset of $Y \ \setminus \lbrace \gamma_+, \gamma_- \rbrace$ by the compactness assumption in (iii).  At the same time, we also know that $U$ is disjoint from $\lbrace \gamma_+, \gamma_- \rbrace$ by similar reasoning as above.

By compactness, the moduli space $\mathcal{M}_C/\mathbb{R}$ must be a circle, and so by above there is a map $f: Y \ \setminus \lbrace \gamma_+, \gamma_- \rbrace \to S^1 = \mathbb{R}/\mathbb{Z}$.  Because $J \partial_s = R$, our argument from above showing that $\pi_Y(C)$ is immersed applies to show that $\pi_Y(C)$ is transverse to the Reeb vector field, hence the derivative of $f$ in the Reeb direction is nowhere vanishing and we can assume without loss of generality that it is positive.  We will now show that there exists some $T$ such that for any $y \in \pi_Y C$, the total change in $f$ along the Reeb flow is at least $1$ in time $T$ and at least $-1$ in time $-T$; it follows immediately from this that $\pi_Y(C)$ is a GSS.  To prove this final claim, the only subtlety is to examine the behavior near $\gamma_{\pm}$.  In other words, we can take a neighborhood $N$ of $\gamma_{\pm}$ such that $Y \setminus N$ is compact, and then the derivative of $f$ in the Reeb direction is uniformly bounded from below on $Y \setminus N$ since it is nowhere vanishing.  On the other hand, near $\gamma_{\pm}$, there is a large body of asymptotic analysis, as summarized in \cite[Sec. 3]{CGHP}, that we can bring to bear on the problem.  From this, one finds, see \cite{CGHP}, that if the rotation number of $\gamma_+$ is $\theta$ in some trivialization, then the Reeb flow near $\gamma_+$ rotates relative to $C$ by about $\theta - \lfloor \theta \rfloor$ as one goes around $\gamma_+$; there is an analogous equation for $\gamma_-$.  Since the contact form is nondegenerate and both orbits are elliptic, $\theta$ is not an integer, so one finds that on $N$ there exists some $T$ that one can take as well.
% the  asymptotic analysis [reference?] applies to show that the curve $C$ rotates   
%By essentially the same argument that the projection 
%Similar arguments
\end{proof}

\subsection{Warm-up: low-energy cylinders for contact forms with finitely many orbits}
\label{sec:warm}

Before continuing, let us explain how to apply Proposition~\ref{prop:jsection} in a very special but illustrative case.  Many of the ideas here will motivate much of what follows.

%\begin{lemma}

To set the stage, let us assume that $\lambda$ is nondegenerate, with finitely many simple Reeb orbits.   Assume that $C_1$ is a non-trivial $J$-holomorphic cylinder from $\gamma_+$ to $\gamma_-$, detected by the $U$-map, in a length $N$ $U$-sequence.  Let us now attempt to apply Proposition~\ref{prop:jsection}.  We need to verify that items $(i) - (iii)$ hold.  The first item holds by the definition of the $U$-map.  As for the other two, the key point is as follows.  As explained in \S\ref{sec:utowers}, if $N$ is sufficiently large, then the action of a typical member of this $U$-sequence becomes as small as we like.  Now if the action of $C_1$  is sufficiently small, we claim that the following hold:
\begin{enumerate}[(i)]
\item $\gamma_+$ and $\gamma_-$ must be elliptic
\item $\mathcal{M}_C/\mathbb{R}$ is compact.
\end{enumerate}

The first item above holds because in the ECH chain complex, we only consider multiple covers of elliptic orbits; and, if there are only finitely many orbits, then a nontrivial cylinder with sufficiently small action must go between orbits of high multiplicity.  (If an orbit is an $m$-fold cover of a simple orbit, we call $m$ its multiplicity.)

The second item requires a little more work, and some more preliminaries.  We will state what we need as a lemma and review the preliminaries in the proof.

\begin{lemma}
\label{lem:warm}
Fix a nondegenerate contact form $\lambda$, with finitely many simple Reeb orbits.  Let $C_1$ be an $I = ind = 2$ $J$-holomorphic cylinder from $\gamma_+$ to $\gamma_-$.   If the action of $C_1$ is sufficiently small, $\mathcal{M}_C/\mathbb{R}$ is compact.
\end{lemma}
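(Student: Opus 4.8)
The plan is to argue by contradiction: if $\mathcal{M}_C/\mathbb{R}$ is not compact, then by the SFT compactness theorem a sequence of curves in the component degenerates to a broken building of $J$-holomorphic curves, and I want to show that this is impossible once the action of $C_1$ is small enough. First I would set up the relevant compactness statement: a sequence $C_n \in \mathcal{M}_C/\mathbb{R}$ with no convergent subsequence in $\mathcal{M}_C/\mathbb{R}$ must converge (after passing to a subsequence) to a nontrivial broken building, i.e. a finite ordered list of levels, each a (possibly disconnected, possibly nodal) $J$-holomorphic curve in the symplectization, with matching asymptotic orbits between consecutive levels, and with total relative homology class equal to $[C_1]$ plus possibly some covers of trivial cylinders. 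The key invariant to track is the action $\mathcal{A}(\cdot) = \int d\lambda$, which is additive across the levels of the building and is nonnegative on each $J$-holomorphic curve (since $d\lambda$ is pointwise nonnegative along such curves, as recalled in \S\ref{sec:utowers}); hence each level has action at most $\mathcal{A}(C_1)$, which we are assuming is as small as we like.

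Next I would use the hypothesis of finitely many simple Reeb orbits to extract a quantitative lower bound. Since the asymptotic orbits of $C_1$ are $\gamma_\pm$, and these must be elliptic of high multiplicity (as argued just above the lemma statement), I would show that any genuinely broken configuration forces the appearance of a nonconstant somewhere-injective curve component, or a nontrivial connecting orbit, whose action is bounded below by a positive constant depending only on $\lambda$. Concretely: with finitely many simple orbits, the set of actions of all orbit sets in a bounded homology class is discrete; and a nontrivial level of the building that is not a union of trivial cylinders must have strictly positive action, bounded below by the minimal ``gap'' in this discrete action set. Choosing $\mathcal{A}(C_1)$ smaller than this gap rules out any nontrivial breaking. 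One also has to rule out bubbling off of nodal curves (spheres) and the escape of multiply-covered trivial cylinders absorbing the whole curve; for the former, a nonconstant closed $J$-holomorphic sphere in the symplectization cannot exist by positivity of $\int d\lambda$ together with exactness away from the orbits (or by an energy argument), and for the latter one observes that $C_1$ is a nontrivial cylinder so it cannot converge entirely to trivial cylinders without violating the matched-ends / homology-class bookkeeping. I would also need to note that the index constraint $\text{ind}(C_1) = I(C_1) = 2$ together with the ECH index inequality prevents the building from having a level of negative index, so the list of possible breakings is further constrained — in fact for a low-action $I=2$ cylinder between high-multiplicity elliptic orbits, additivity of the ECH index plus the ECH index inequality forces each level to be an ECH-index-bounded curve, and combined with the action bound this leaves only the trivial building.

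The main obstacle I expect is the careful bookkeeping of the limit building: SFT compactness allows levels with nodes, with components that are multiple covers, and with connector orbits, and one must show that the small-action hypothesis genuinely kills all of these and not merely the ``generic'' ones. In particular, the subtle point is a component which is a multiple cover of a low-action curve — its action is a multiple of the underlying curve's action and could in principle still be small — so I would need the discreteness of the action spectrum of orbit sets (again using finiteness of simple orbits, via Sard's theorem as in the $C^0$-continuity discussion) rather than just a naive positivity statement, to get a uniform lower bound on the action of any nontrivial level. Once that uniform gap $\delta(\lambda) > 0$ is in hand, the lemma follows by simply requiring $\mathcal{A}(C_1) < \delta(\lambda)$, which holds for all but finitely many curves in a sufficiently long $U$-sequence by the Weyl law estimate of \S\ref{sec:utowers}.
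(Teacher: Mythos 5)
There is a genuine gap at the heart of your argument: the uniform lower bound $\delta(\lambda)>0$ on the action of a nontrivial level does not exist. With finitely many simple orbits the set of actions of orbit sets, $\lbrace \sum_i m_i T_i \rbrace$, is indeed locally finite, but the action of a curve is a \emph{difference} of two such values, and these differences are not bounded away from zero: as the multiplicities grow, distinct points of the action spectrum become arbitrarily close (e.g.\ $|m T_1 - m' T_2|$ can be made as small as you like when $T_1/T_2$ is irrational). In fact your gap claim proves too much: the cylinder $C_1$ itself is a nontrivial curve of arbitrarily small action between high-multiplicity covers, so if every nontrivial curve had action at least $\delta(\lambda)$, the hypotheses of the lemma could never be satisfied. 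For the same reason, the dangerous degeneration is precisely the one your argument cannot exclude: $C_1$ breaking into two $I=1$ levels $C_\pm$ through an intermediate orbit $\gamma$, each of which has action even smaller than $\mathcal{A}(C_1)$ and is therefore perfectly compatible with any action-spectrum consideration. Your appeal to nonnegativity and additivity of the ECH index only \emph{reduces} the problem to this two-level case (up to branched covers of trivial cylinders); it does not eliminate it.

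What is missing are the inputs the paper uses to kill exactly this configuration. First, small action forces every component of $C_\pm$ to have both positive and negative ends (a plane would, by Stokes' theorem, have action at least that of the shortest Reeb orbit), so both nontrivial levels are cylinders, and the intermediate orbit $\gamma$ is a cover of very high multiplicity $m$ by the same small-action/finitely-many-orbits reasoning you quoted for $\gamma_\pm$. Second, one rules out both possibilities for $\gamma$: it cannot be elliptic, since an index-parity computation would then force $I(C_+)$ to be even, contradicting $I(C_+)=1$; and it cannot be hyperbolic, because the partition conditions for $I=\operatorname{ind}$ curves (Lemma~\ref{lem:parth}) force a somewhere injective curve with ends of total multiplicity $m\ge 3$ at a simple hyperbolic orbit to have at least two ends there, whereas a cylinder has only one. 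Without these index-parity and partition-condition arguments the small-action hypothesis alone cannot yield compactness; your side remarks about closed sphere bubbles and Sard's theorem are not where the difficulty lies.
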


\begin{proof}

First we need to review how $\mathcal{M}_C/\mathbb{R}$ could fail to be compact.  There is a variant of Gromov's compactness theorem in this context, called ``SFT compactness" \cite{sftcomp}.  The version relevant here says that if the moduli space $\mathcal{M}_C/\mathbb{R}$, consisting of cylinders from $\gamma_+$ to $\gamma_-$ (in the component of $C$) is not compact, then there is convergence to a $J$-holomorphic {\em building} from $\gamma_+$ to $\gamma_-$ with at least $2$ non-trivial levels.  A $J$-holomorphic building from $\gamma_+$ to $\gamma_-$ in a symplectization $X$ is a finite sequence of $J$-holomorphic curves, each taking values in $X$, such that the asymptotics of consecutive curves match, and such that the positive asymptotics of the top level are $\gamma_+$ and the negative asymptotics of the bottom level are $\gamma_-$, see Figure~\ref{fig:broken} and see e.g. \cite{ECHlecture} for precise definitions; the number of curves in this sequence is the number of levels, and we assume that our levels are non-trivial (in the sense that they do not consist entirely of $J$-holomorphic cylinders that are $\mathbb{R}$-invariant.)
% see [ref] for the precise definitions.)  
A reader familiar with Morse theory might find it useful to think of such buildings as somewhat analogous to the compactification of the space of flow lines by broken flow lines.  

In our situation, the curves in  $\mathcal{M}_C/\mathbb{R}$ have ECH index $2$.  A fundamental fact in the theory of ECH, see e.g. \cite{ECHlecture}, is that the ECH index in symplectizations is nonnegative, and it is $0$ only for (possibly branched) covers of trivial cylinders.  Thus, by additivity of the ECH index, to show that $\mathcal{M}_C/\mathbb{R}$ is compact, we need to show that, under our hypotheses, an $I=2$ cylinder can not break into two $I = 1$ curves.  

Let us consider some possible breakings of $C_1$ into a building containing two levels $C_+$ and $C_-$ with $I(C_{\pm}) = 1$.  If the action of $C_1$ is small enough, then no component of $C_{\pm}$ can be a disc.  This is because the action of such a disc would, by Stokes' theorem, have to be the action of some orbit; and, by making the action of $C_1$ sufficiently small, we can assume that it is smaller than the action of the shortest orbit.  Thus, since $C_{\pm}$ must glue\footnote{Strictly speaking, there could in addition be some levels with branched covers of trivial cylinders, but this does not affect the argument.} to give a cylinder, each $C_{\pm}$ must itself by an $I = 1$ $J$-holomorphic cylinder.  

%Moreover, by additivity of the action, the action of each of $C_+$ and $C_-$ must be even smaller than that of $C_1$.

Ruling out this breaking into two $I = 1$ cylinders $C_{\pm}$ requires several points.  Denote by $\gamma$ the negative asymptote of $C_+$, noting that by definition this is the same as the positive asymptote of $C_-$.  The first observation is that, by additivity of the action, the action of each of $C_+$ and $C_-$ must be even smaller than that of $C_1$.  Thus, as above, $\gamma$ must be an $m$-fold cover of some simple Reeb orbit, for a high value of $m$.  Moreover, $\gamma$ can not be elliptic: this is because the positive asymptote of $C_+$ (for example) is elliptic, and a simple index computation shows that if $\gamma$ were elliptic, $I(C_+)$ would have to be even.  

It remains to show that $\gamma$ also can not be hyperbolic.  For this we need an additional input from ECH theory.  In the basic theory of ECH, somewhere injective curves $C$ with $I(C) = ind(C)$ have special constraints on their asymptotics.  For an end (namely, the image of a small neighborhood of a puncture in the domain) of $C$ asymptotic to an orbit $\gamma$ of multiplicity $m$, call $m$ the {\em multiplicity of the end}.  It turns out that the condition $I = ind$ implies that the number and multiplicities of the positive (resp. negative) ends of $C$ at each simple Reeb orbit are determined by the total multiplicity of all positive (resp. negative) ends at the orbit, and the rotation number of that orbit.  As an example, let us state how this works in the hyperbolic case, which is all we will need.  

%Recall  {\em positive hyperbolic case}, where the eigenvalues of the linearized return map are positive, from the {\em negative hyperbolic case}, where the eigenvalues are negative.  

\begin{lemma}[Partition conditions in the hyperbolic case] \cite{ir, ECHlecture, HutJEMS}
\label{lem:parth}
Let $C$ be a somewhere injective $J$-holomorphic curve in $X$ with $I(C) = ind(C)$.  Assume that $C$ has positive ends at a simple hyperbolic orbit $\gamma$, of total multiplicity $m$.  Then:
\begin{enumerate}[(i)]
\item If $\gamma$ is positive hyperbolic, $C$ has $m$ distinct ends, each of multiplicity $1$.
\item If $\gamma$ is negative hyperbolic and $m$ is even, $C$ has $m/2$ distinct ends, each of multiplicity $2$.
\item If $\gamma$ is negative hyperbolic and $m$ is odd, $C$ has $\lfloor m/2 \rfloor$ distinct ends, each of multiplicity $2$, and a single end of multiplicity $1$.
\end{enumerate}
\end{lemma}

Similar (but more involved) formulas hold in the elliptic case, and we will return to this later.

With Lemma~\ref{lem:parth} in hand, it is now not difficult to complete the argument.   We wanted to show that $\gamma$ can not be hyperbolic.  We know that its multiplicity can be assumed as high as we like by taking the action of $C_1$ sufficiently small.  On the other hand, we know that $C_{\pm}$ are cylinders with $I = ind = 1$.  However, by Lemma~\ref{lem:parth}, any curve with $I = ind$ with ends at $\gamma$ must have at least $2$ ends when $\gamma$ is at least a $3$-fold cover.

\end{proof}

To sum up and recap the key point: in this special but very representative case, where the contact form is nondegenerate and there are finitely many simple orbits, we are done if we can find cylinders in sufficiently long $U$-sequences.  The challenge then becomes to find such cylinders.  This will be perhaps the main challenges in our proof and we will need to do this in a more general settings, namely without the nondegeneracy hypothesis, or for nondegenerate perturbations close to a degenerate form with only finitely many simple orbits.  How to do this will be the topics of the next section.

 \begin{figure}%                 use [hb] only if necceccary!
  \centering
  \includegraphics[width=6cm]{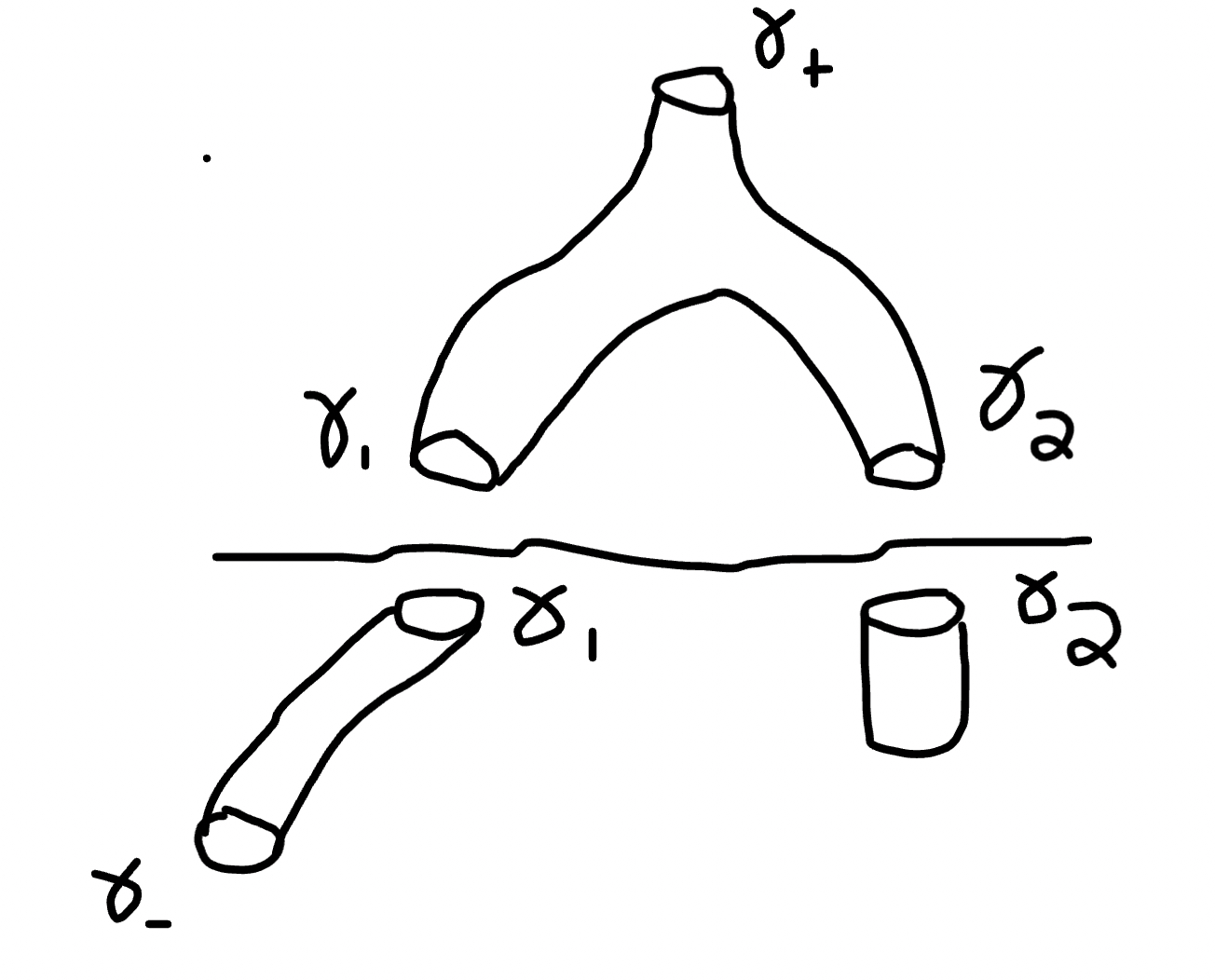}
  \def\svgwidth{,6 \textwidth}
  \caption{A two-level broken $J$-holomorphic curve.  This could be the degeneration of a cylinder from $\gamma_+$ to $\gamma_-$.}
  \label{fig:broken}
\end{figure}

\subsection{Perturbations and limits}

To go from the ``warm-up" in the previous section to Theorem~\ref{thm:hwz}, we would like to resolve the following challenges:
\begin{enumerate}[(i)]
\item Prove a reasonable analogue of Proposition~\ref{prop:jsection}
%Lemma~\ref{lem:warm} 
for a contact form $\lambda$ with finitely many simple periodic orbits, without any nondegeneracy assumption.
\item Find low action $J$-holomorphic cylinders for $\lambda$
\end{enumerate}

In this section, we will explain the general scheme for addressing item (ii) above, clarify the challenges, and start to get to work explaining how to resolve them.

Recall that ECH is not defined for $\lambda$ directly, because it is degenerate.  To find such cylinders,  the idea is to perturb $\lambda$ to nondegenerate contact forms $\lambda_n \to \lambda$ and then study $U$-towers for $\lambda_n$.  The following lemma tells us what we will want to know about the $\lambda_n$. 

\begin{lemma} \cite[Lem. 3.5]{CGHHL2}
\label{lem:pert}
Let $\lambda$ be a contact form with finitely many simple periodic orbits $\gamma_1, \ldots, \gamma_k$.    If $n$ is sufficiently large, then  the simple periodic orbits of $\lambda_n$ with action $\le n$ are as follows:
\begin{itemize}
\item Near a periodic orbit $\gamma_i$ with rotation number $p/q$, the simple periodic orbits
% for $\lambda_n$ 
are either $C^{\infty}$-close to $q$-fold covers of $\gamma_i$, or $\gamma_i$ itself.
\item Near a nondegenerate periodic orbit of $\gamma_i$, the only simple periodic orbit
% for $\lambda_n$ 
is $\gamma_i$ itself. 
\end{itemize}
\end{lemma}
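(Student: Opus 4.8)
The plan is to build the sequence $\lambda_n$ by hand, as a Morse--Bott type perturbation supported in small tubular neighborhoods of $\gamma_1,\dots,\gamma_k$, and then to localize the short Reeb orbits of $\lambda_n$ by a soft compactness argument. First I would record what the hypothesis buys us: since the $\gamma_i$ are the only simple Reeb orbits, every closed Reeb orbit of $\lambda$ is an iterate $\gamma_i^m$, so as a subset of $Y$ every closed $\lambda$-orbit lies in the compact set $K=\gamma_1\cup\dots\cup\gamma_k$; moreover, for the fixed threshold $n$ there are only finitely many closed $\lambda$-orbits of action $\le n$, namely the $\gamma_i^m$ with $m\int_{\gamma_i}\lambda\le n$. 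Next I would classify each $\gamma_i$ by the eigenvalues $\mu_i^{\pm 1}$ of its linearized return map: either these are not roots of unity (the \emph{robustly nondegenerate} case --- positive or negative hyperbolic with $\mu_i\ne\pm 1$, or elliptic with irrational rotation number), in which case every iterate $\gamma_i^m$ is nondegenerate and in fact isolated among the closed orbits of any small perturbation; or some iterate is degenerate, which forces $\gamma_i$ to be elliptic with rational rotation number $p/q$ (allowing the parabolic cases, $q=1$), and then exactly the iterates $\gamma_i^{jq}$, $j\ge 1$, are degenerate while $\gamma_i^m$ with $q\nmid m$ is nondegenerate.

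I would then construct $\lambda_n$ as follows. Fix $n$, put $M=n/\min_i\int_{\gamma_i}\lambda$ (an upper bound for the winding number of any closed orbit of action $\le n$), choose pairwise disjoint tubular neighborhoods $U_i$ of the $\gamma_i$, and take a contact form $\lambda_n$ with $\lambda_n=\lambda$ outside $\bigcup_i U_i$ and converging to $\lambda$ in $C^\infty$ at a rate we are free to prescribe. Near a robustly nondegenerate $\gamma_i$ no perturbation is needed, since $\gamma_i$ and all its iterates already persist; near an elliptic $\gamma_i$ of rotation number $p/q$ one uses the standard Morse--Bott perturbation (the device already invoked for twist maps above, going back to Bourgeois), which resolves each degenerate iterate $\gamma_i^{jq}$ into finitely many nondegenerate orbits $C^\infty$-close to it while creating no closed orbit of winding number $\le M$ in $U_i$ other than the iterates of $\gamma_i$ and of these new orbits --- concretely one arranges the orbits resolving $\gamma_i^q$ to be an elliptic $e_i$ and a hyperbolic $h_i$ with irrational transverse rotation numbers, so that their iterates are again isolated. (Only nondegeneracy of $\lambda_n$ below action $n$ is required, which is all the ECH constructions use.)

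The last step is the \emph{localization}: if the rate at which $\lambda_n\to\lambda$ is fast enough (as a function of $n$), then every closed $\lambda_n$-orbit of action $\le n$ is $C^\infty$-close, as a parametrized loop, to one of the finitely many $\gamma_i^m$ of action $\le n$, and in particular lies in $\bigcup_i U_i$; this is just Arzel\`a--Ascoli applied to the orbit equations, whose right-hand sides converge. Combining the three steps proves the lemma: a simple $\lambda_n$-orbit $\delta$ of action $\le n$ lies in some $U_i$ and winds $m\le M$ times near $\gamma_i$; if $m$ is a nondegenerate multiplicity for $\gamma_i$ (automatic in the robustly nondegenerate case, and the case $q\nmid m$ in the rational case), then the only closed $\lambda_n$-orbit winding $m$ times in $U_i$ is the $m$-fold iterate of the persisting copy of $\gamma_i$, which is simple only for $m=1$, so $\delta$ is $\gamma_i$ itself (up to a $C^\infty$-small motion); and if $\gamma_i$ has rotation number $p/q$ and $q\mid m$, then by construction $\delta$ is $C^\infty$-close to the iterate $\gamma_i^{jq}$ it winds around, i.e.\ to a cover of $\gamma_i$ of multiplicity a multiple of $q$, or again to $\gamma_i$.

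The step I expect to be the main obstacle is reconciling the localization with the \emph{growing} action threshold: a single sequence $\lambda_n\to\lambda$ with a fixed rate of convergence cannot force orbits of action up to $n$ to cluster near $K$, since the $\lambda$-Reeb flow may have recurrence far from $K$ on long time scales. The point is that the statement only demands \emph{some} approximating sequence, so we may let the convergence be arbitrarily fast as a function of $n$, and then for each individual $n$ the localization is a soft statement about the \emph{finite} orbit set of action $\le n$. The secondary technical burden --- and the place where the explicit normal form near an elliptic orbit really has to be used --- is making the Morse--Bott perturbation control \emph{all} the iterates $\gamma_i^{jq}$ with $jq\le M$ at once inside a single $U_i$, keeping the resolving orbits and their iterates isolated up to the growing multiplicity bound.
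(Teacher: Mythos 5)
The main step of your construction does not exist. By hypothesis $\lambda$ has finitely many simple orbits, so each $\gamma_i$ is an \emph{isolated} (possibly very degenerate) orbit: its degenerate iterates $\gamma_i^{jq}$ do not come in $S^1$-families. The ``standard Morse--Bott perturbation'' you invoke (the Bourgeois-type device, or the twist-map picture earlier in these notes) applies precisely when the periodic orbits form Morse--Bott submanifolds, and it is what lets one say ``the circle of orbits splits into one elliptic and one hyperbolic orbit and nothing else is created.'' For an isolated degenerate orbit there is no such device: a $C^\infty$-small perturbation of the contact form changes the (possibly highly degenerate) return map by a term you do not control well enough to prescribe its periodic points, and the claim that your $\lambda_n$ ``creates no closed orbit of winding number $\le M$ in $U_i$ other than the iterates of $\gamma_i$ and of these new orbits'' is exactly the assertion that needs proof. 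In other words, you have assumed, via an inapplicable construction, the very content of the lemma: that near a degenerate orbit of rotation number $p/q$ the only short orbits of a nearby nondegenerate form are ($C^\infty$-close to) covers of $\gamma_i$ whose multiplicity is a multiple of $q$, together with the continuation of $\gamma_i$ itself. Note also that the lemma, as used in \cite{CGHHL2}, is a statement about an (essentially arbitrary) approximating sequence $\lambda_n\to\lambda$ of nondegenerate forms --- one cannot in general both make the form nondegenerate and hand-pick its orbit catalogue, which is why the statement must be proved as a constraint on all sufficiently close forms rather than engineered.

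The paper's proof is of this latter kind: a careful local analysis in a tubular neighborhood of each $\gamma_i$, using a quantitative lemma of Bangert \cite{Bangert}, which shows that any closed orbit of bounded action of a sufficiently $C^\infty$-close form that stays near $\gamma_i$ must be close to an iterate $\gamma_i^m$, and that new orbits can only accumulate on the degenerate iterates, i.e.\ those with $q\mid m$; near the nondegenerate iterates the implicit-function-theorem uniqueness you describe rules everything out except the continuation of $\gamma_i$. The parts of your proposal that are sound --- the Arzel\`a--Ascoli localization with a convergence rate chosen diagonally in $n$, and the robustness/uniqueness argument around nondegenerate covers $\gamma_i^m$ with $q\nmid m$ --- are in fact enough of a skeleton that you do not need any explicit construction at all: take \emph{any} nondegenerate $\lambda_n$ close enough to $\lambda$ (such forms are $C^\infty$-dense) and run those two soft arguments, rather than trying to prescribe the perturbation near the degenerate orbits, which is where your argument breaks.
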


Lemma~\ref{lem:pert} is proved by a careful local analysis around the orbits, using a lemma due to Bangert \cite{Bangert}.
%see [ref, ref].

Thus, the picture one should have is that a single simple degenerate orbit can give rise to many orbits, but these are all close to $q$-fold covers, up to long orbits.  By working below some fixed action threshold, we can ignore these long orbits, and so we get a quite specific structure for nondegenerate forms near $\lambda$.  The first important point is that this structure is similar enough to the ``warm-up" case from \S\ref{sec:warm} that the arguments there work for this situation with only minor adaptations; we leave this as an exercise to the reader.  

%What is more challenging is fin

Assume now that we can find $J_n$-holomorphic cylinders $C_n$ for $\lambda_n$, such that each cylinder is asymptotic to Reeb orbits with action with an $n$-independent upper bound, and such that these cylinders project to global surfaces of section for $\lambda_n$.   (This is challenging, and we will say more about how to find such cylinders in subsequent sections.)  We would like to extract a useful cylinder for $\lambda$ in the limit.  To clarify what goes into this, it is useful to say more about the asymptotics of $J$-holomorphic curves.  We focus on the positive ends; there is a similar story for the negative ends.  As explained in e.g. \cite{CGHHL2}, by \cite{props1}, a positive end of a $J$-holomorphic curve converges exponentially fast to a Reeb orbit.  More precisely, if the end is at the $m$-fold cover of a simple Reeb orbit $\gamma$, and we use the trivialization $\tau$ to identify a neighborhood of $\gamma$ with $S^1 \times D^2$, the end is the image of the map
\[ [s,\infty) \times \mathbb{R}/m\mathbb{Z} \to \mathbb{R} \times (\mathbb{R}/\mathbb{Z}) \times D^2,\]
\[ (s,t) \to (s, \pi(t), \eta(s,t)).\]
Here, $\pi$ is the natural projection, and $\eta$ satisfies
\begin{equation}
\label{eqn:convergence}
\eta(s,t) \sim e^{\mu s} \phi(t),
\end{equation}
for a certain function $\phi(t)$ and corresponding number $\mu$.  More precisely, there is an operator associated to $\gamma^m$, called the ``asymptotic operator", $\mu < 0$ is an eigenvalue of this operator, and $\phi$ is the corresponding eigenfunction, which is nowhere vanishing.

In our case, what will be crucial is the following.  Each $C_n$ has an associated eigenvalue $\mu_n$ in \eqref{eqn:convergence}.  We will want the $\mu_n$ to stay away from $0$.  Then, arguments as in \cite{elliptic, HSW} will guarantee a well-defined exponential convergence in the limit.  Let us state this as the following lemma.  The proof, which we will also explain, makes crucial use of the previously introduced partition conditions.

\begin{lemma}
\label{lem:key}
Let $C_n$ be a sequence of $J_n$-holomorphic cylinders with $I = ind = 2$ for nondegenerate contact forms $\lambda_n$, asymptotic to elliptic orbits with an $n$-independent bound on the action.  Assume that the multiplicities of these orbits are at least $2$.  Define $\mu_n$ by \eqref{eqn:convergence}.  Then there exists $c > 0$ such that  $|\mu_n|> c$. 
\end{lemma}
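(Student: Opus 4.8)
The plan is to argue by contradiction: suppose that along a subsequence $\mu_n \to 0$. The first step is to recall that the eigenvalue $\mu_n$ in \eqref{eqn:convergence} is the winding-type eigenvalue of the asymptotic operator $A_{\gamma_n^{m_n}}$ associated to the (elliptic, multiplicity $m_n \ge 2$) positive asymptotic orbit of $C_n$, and similarly at the negative end. The key structural input is that, since $\gamma_n$ is elliptic with some rotation number $\theta_n$, the spectrum of the asymptotic operator of $\gamma_n^{m_n}$ is a discrete set whose eigenvalues carry well-defined winding numbers, with the winding number monotone (nonstrictly) in the eigenvalue, each winding number occurring for exactly two eigenvalues, and the eigenvalue $0$ occurring if and only if $m_n\theta_n \in \mathbb{Z}$ — which is excluded by nondegeneracy of $\lambda_n$. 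So for each $n$ the relevant $\mu_n$ is a genuinely nonzero eigenvalue; the content of the lemma is that it cannot limit to $0$.

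The second step is to extract from Lemma~\ref{lem:pert} (and the $n$-independent action bound, via Lemma~\ref{thm:weyl}-type control as in \S\ref{sec:utowers}) that the asymptotic orbits of $C_n$ are $C^\infty$-close to $q$-fold covers of the finitely many simple Reeb orbits of $\lambda$, and that $m_n$, the multiplicity of the end as a cover of the simple orbit $\gamma_{i(n)}$ underneath it, is bounded above independently of $n$ (the action bound caps it). Passing to a subsequence, we may assume $i(n) = i$ is fixed, $m_n = m$ is fixed, and $\lambda_n \to \lambda$ in $C^\infty$ near $\gamma_i$; then the asymptotic operators $A_{\gamma_{i,n}^{m}}$ converge in the appropriate operator topology to the asymptotic operator of the degenerate orbit $\gamma_i^{m}$ of $\lambda$. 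By continuity of the spectrum, $\mu_n \to 0$ would force $0$ to be an eigenvalue of the limiting operator, i.e. $\gamma_i^m$ is a degenerate orbit of $\lambda$ with a $1$-eigenvector — which is of course possible, since $\lambda$ is degenerate. So a direct spectral-continuity argument is not enough: the point where the real work happens is showing that the eigenfunction $\phi_n$ attached to $\mu_n$, if $\mu_n \to 0$, has winding number forcing a contradiction with the partition conditions and with $I(C_n) = \mathrm{ind}(C_n) = 2$.

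The third and decisive step is therefore an index/partition argument in the spirit of Lemma~\ref{lem:parth} and its elliptic analogue. Since $C_n$ is a somewhere injective cylinder (it is nontrivial) with $I = \mathrm{ind}$, the partition conditions pin down how many ends $C_n$ has at $\gamma_{i,n}$ and with what multiplicities; for a cylinder there is exactly one positive end and one negative end, so the multiplicity $m$ and the rotation number $\theta_n$ of $\gamma_{i,n}$ must be such that the ``top'' partition of $m$ is a single block. This is a strong constraint: it holds only for $m\theta_n$ in specific intervals mod $1$, and — crucially — it forces the winding number of the extremal eigenfunction governing that end to equal a specific value, namely $\lfloor m\theta_n\rfloor$. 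The eigenvalue realizing that winding number is, by the monotonicity of winding in the spectrum, bounded away from $0$ uniformly as long as $m\theta_n$ stays a bounded distance from $\mathbb{Z}$. But if $\mu_n \to 0$, then $m\theta_n \to \mathbb{Z}$, and the interval in which the single-block partition condition holds shrinks; one checks that in the limit the partition condition would be violated (the end would be forced to split, contradicting that $C_n$ is a cylinder), or equivalently that $\mathrm{ind}(C_n) \ne 2$ in the limit. Running the same argument at the negative end finishes the proof.

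I expect the main obstacle to be precisely this last step: making rigorous the claim that the combination ``$C_n$ is a cylinder'' + ``$I = \mathrm{ind} = 2$'' + ``asymptotic orbit is an $m$-fold cover with $m \ge 2$ bounded'' forces $m\theta_n$ into a closed sub-interval of $(\ell, \ell+1)$ uniformly in $n$, and hence $\mu_n$ uniformly away from $0$. This requires carefully invoking the elliptic partition conditions (the analogue of Lemma~\ref{lem:parth}, which the text promises to return to) together with the precise dictionary between partition blocks, Conley--Zehnder/rotation data, and the winding numbers of asymptotic eigenfunctions from \cite{props1, props2, elliptic, HSW}; the bookkeeping is where all the delicacy lies, whereas the spectral-convergence and compactness inputs are comparatively soft.
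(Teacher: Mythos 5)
Your setup (passing to a subsequence, observing that spectral continuity alone cannot work because the limit orbit may be degenerate, and isolating the case where the rotation numbers tend to an integer) matches the paper, and you correctly identify the partition conditions as the key extra input. But your decisive step fails. In the hard case $m\theta_n \to \mathbb{Z}$ you claim that ``the interval in which the single-block partition condition holds shrinks'' and that ``in the limit the partition condition would be violated (the end would be forced to split).'' This is not true: the single end of multiplicity $m\ge 2$ is compatible with the rotation number lying anywhere in the one-sided interval just \emph{below} the integer, so the partition condition is satisfied for every $n$ as $\theta_n$ tends to the integer from below, and nothing degenerates along the way. Moreover the partition conditions are statements about the nondegenerate forms $\lambda_n$; there is no limit curve or limit orbit set to which they could be applied, so no contradiction can be extracted from them ``in the limit.'' What the partition conditions actually buy --- and this is how the paper uses Lemma~\ref{lem:parte} --- is only the \emph{sign}: since the cylinder has a single end of multiplicity $m\ge2$, the rotation number cannot lie in $[0,1/m)$ mod $1$, hence (after normalizing the limit rotation number to $0$) $\theta_n$ is slightly negative and $CZ(\gamma_n)=\lfloor\theta_n\rfloor+\lceil\theta_n\rceil=-1$.

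The missing idea is the winding-number comparison with the limit operator, which is where the paper's contradiction actually comes from. From $CZ(\gamma_n)=-1$ one gets that the winding of the largest negative eigenvalue of the asymptotic operator of $\gamma_n$ is $-1$, so the eigenfunction attached to $\mu_n<0$ has negative winding by monotonicity. If $\mu_n\to 0$, these eigenfunctions converge to a kernel eigenfunction of the asymptotic operator of the degenerate limit orbit $\gamma$ with negative winding. On the other hand, $CZ(\gamma)=-1$ (defined via minimal nondegenerate perturbation) together with monotonicity of winding and the fact that each winding value is attained by exactly two eigenvalues forces every $0$-eigenfunction of the limit operator to have nonnegative winding; this is the contradiction. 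Your sketch never makes this comparison between the winding of the near-zero eigenfunctions and the spectral structure of the limit (degenerate) asymptotic operator, and the mechanism you substitute for it (a limiting violation of the partition conditions, or of $\mathrm{ind}=2$) does not hold, so as written the proof has a genuine gap precisely at the step you flagged as the main obstacle.
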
  

\begin{proof}

The idea is to look at the winding number of the corresponding eigenfunctions around $0$.  By the work in e.g. \cite{convex, props1, props3}, this is connected to the Conley-Zehnder index as follows.  In the nondegenerate case, when the Conley-Zehnder index of the trivialized orbit $\gamma$ is odd, we have that $CZ(\gamma) = 2w+1$, where $w$ is the winding number associated to the largest negative eigenvalue.  (There is a similar interpretation in the nondegenerate case when $CZ$ is even, but we will not need this.)   In the degenerate case, one can define $CZ(\gamma)$ to be the minimum Conley-Zehnder index of a nondegenerate perturbation. 

%of the eigenfunction for

Now let $\gamma_n$ denote the Reeb orbit at the positive asymptotics of $C_n$.  By the bound on the action, the $\gamma_n$ are converging to a Reeb orbit $\gamma$ for $\lambda$ (after passing to a subsequence if necessary), and the case that does not follow immediately from standard theory is the case when $\gamma$ is degenerate; this is the case we will explain.  In this case, it is convenient to assume that the rotation number for $\gamma$ is $0$.  We can guarantee this by changing our trivialization as necessary, and this induces a trivialization for $\gamma_n$.

We now claim that $CZ(\gamma_n) = -1$, which is the crucial fact for which the partition conditions are key.  The point is as follows.  Recall that   
\[ CZ(\gamma_n) = \lfloor \theta_n \rfloor + \lceil \theta_n \rceil,\]
where $\theta_n$ is the rotation number.  In our case, this rotation number is close to $0$.  So, there are two possibilities: $\theta_n$ is either slightly positive or slightly negative.  The claim follows once we show that $\theta_n$ must be slightly negative.  This is where the partition conditions come in.  Similarly to \S\ref{sec:warm}, we will need just a special case:

% that we will need is as follows:

\begin{lemma}[Partition conditions for nondegenerate elliptic perturbations] \cite{ir, ECHlecture, HutJEMS}
\label{lem:parte}
Let $C$ be a curve with $I(C) = ind(C)$.  Assume that $C$ has positive ends of total multiplicity $m$ at a simple elliptic orbit $\gamma'$ with rotation number (modulo one) $\theta' \in [0,1)$ (mod 1) satisfying 
\[ \theta' < 1/m.\] 
Then $C$ has $m$ distinct ends, each of multiplicity $1$.
%^\begin{enumerate}[(i)]
%\item If $\gamma$ is positive hyperbolic, $C$ has $m$ distinct ends, each of multiplicity $1$.
%\item If $\gamma$ is negative hyperbolic and $m$ is even, $C$ has $m/2$ distinct ends, each of multiplicity $2$.
%\item If $\gamma$ is negative hyperbolic and $m$ is odd, $C$ has $\lfloor m/2 \rfloor$ distinct ends, each of multiplicity $2$, and a single end of multiplicity $1$.
%\end{enumerate}
\end{lemma}

In our case, our curves are cylinders, so we learn from Lemma~\ref{lem:parte} that $\theta'$ can not be in $[0,1/m)$, thus must be slightly negative, as desired.

Thus $CZ(\gamma_n) = - 1$ and we learn from this that the winding number $w_n$ associated to the largest negative eigenvalue is $-1$.  We also learn that $CZ(\gamma) = -1$, and so the winding number $w$ of the largest negative eigenvalue for $\gamma$ must be $-1$.   If the $\mu_n$ were converging to $0$, then an eigenfunction with eigenvalue $0$ would have negative winding number.  However, it is also known, as in \cite{convex, props1, props3}, that the winding numbers are monotonic in the eigenvalues;  thus, since $w = -1$, the winding numbers for the $0$ eigenvalue must all be nonnegative, which is a contradiction.

%this would     	 

%lemma i
%The lemma is immediate except in the case where the $\gamma_n$ are converging 

%The idea is to look at the winding number of the corresponding eigenfunction around $0$.  

\end{proof} 

If the cylinders $C_n$ we find are found in a $U$-sequence, then along the lines of the arguments we gave in \S\ref{sec:warm}, one can guarantee that they satisfy the hypotheses of Lemma~\ref{lem:key}.  So, to summarize, to resolve item (ii) at the beginning of this section, the main point is to find such cylinders for the perturbations $\lambda_n$ from Lemma~\ref{lem:pert}.  We will explain this in \S\ref{sec:cylin}.

\subsection{Exponential weights}

Let us now discuss item (i) from the previous section: finding an analogue of Proposition~\ref{prop:jsection} for a contact form $\lambda$ with finitely many orbits.  The idea for this is to consider a moduli space $\mathcal{M}(\gamma_+,\gamma_-)$ of $J$-holomorphic cylinders from $\gamma_+$ to $\gamma_-$, with exponential convergence as in \eqref{eqn:convergence}, but without requiring $\gamma_{\pm}$ to be nondegenerate.  As explained in the previous section, to show that this moduli space is nonempty one would just need to find cylinders for perturbations $\lambda_n$ along the lines of Lemma~\ref{lem:key}.  If this is successful, we would find not just a $C \in \mathcal{M}(\gamma_+,\gamma_-)$ but one that arises as the limit of a sequence of global surfaces of section for $\lambda_n$.  

One can then attempt to mimic the arguments in Proposition~\ref{prop:jsection} and we will explain the idea behind this.  The first point is that even though $\gamma_{\pm}$ are not necessarily nondegenerate, one can use the exponential convergence built into the moduli space to nevertheless show that $\mathcal{M}(\gamma_+,\gamma_-)$ is a manifold.  This goes back to work of Hofer-Wysocki-Zehnder in \cite{props3}, who define a Fredholm theory with exponential weights that applies to such a setting.  One can similarly define a ``weighted" Conley-Zehnder index, see e.g. the exposition in \cite{CGHHL2}.  The other hypotheses in Proposition~\ref{prop:jsection} can be suitably modified, using the exponential convergence.  One can even prove an analogue of the SFT compactness statement introduced in \S\ref{sec:warm} that suffices for our purposes.  One could then try to check that a $C \in \mathcal{M}(\gamma_+,\gamma_-)$ that arises from a limit of global surfaces of section satisfies these modified assumptions.   

For example, let us explain why compactness would hold, which is perhaps the hardest point. If the moduli space $\mathcal{M}(\gamma_+,\gamma_-) / \mathbb{R}$ is not compact, then by the promised variant of SFT compactness, one finds a Reeb orbit in the breaking linking positively with the relative homology class of $C$.  However, for $C_n$ close to breaking along $C$, this would give a loop in the image of the projection of $C_n$, with positive linking number, contradicting the fact that the $C_n$ are global surfaces of section.

Roughly speaking, all of the above ideas can be made to work, and this is the content of Chapters 4 and 5 of \cite{CGHHL2}.  Some of this requires extensive technical work, so in these notes, we will focus on some other matters, and so we will not say much more about the details, referring the reader to \cite{CGHHL2}.  One point that we do want to make, though, is as follows.  In practice, some work is required to show that the promised $C \in \mathcal{M}(\gamma_+,\gamma_-)$ is somewhere injective.  This might be true, but instead of addressing this, at the very end of our argument we implement a short workaround.  Namely, we do show that there is a compact moduli space $\mathcal{M}(\gamma_+,\gamma_-) / \mathbb{R}$ mapping to $Y$, but rather than actually produce a GSS from this, we lift to the moduli space $S^1 \times S^1 \times \mathbb{R}$ via an evaluation map, and show that the lifted flow has a global surface of section instead; see Chapter 7 of \cite{CGHHL2}.  

\subsection{Finding cylinders}
\label{sec:cylin}

In view of the discussion in the previous sections, what remains to be explained is why the contact forms $\lambda_n$ have $J_n$-holomorphic cylinders projecting to global surfaces of section.  This requires quite intricate analysis of the ECH $U$-map towers, and explaining the key ideas behind this will be the topic of the remainder of the section.  We divide our explanation into several parts.  We are assuming (unless we note otherwise) that throughout we have fixed a nondegenerate contact form $\lambda_n$ as in Lemma~\ref{lem:pert}.

\subsubsection{Preliminary considerations and the basic idea for finding cylinders}

Recall from \S\ref{sec:utowers} that if we take a length $N$ $U$-sequence for $\lambda_n$, then on average we expect a curve $C$ to have $J_0(C) = 2$ and to have small action.  On the other hand, recall from Proposition~\ref{prop:j0bound} that a curve $C$ with $J_0 = 2$ need not have $C_1$ a cylinder.  (Recall the decomposition $C = C_0 \cup C_1$.)  Indeed, it is helpful to start by cataloguing the various options for a $J_0 = 2$ curve counted by the $U$-map.  If $C$ has sufficiently low action, then it must have both a positive and a negative end, and this allows us to rule out various possibilities; for example, a  curve with $C_1$ having genus $0$ and two positive ends at the same orbit, with $C_0$ also having ends at that orbit would have $J_0 = 2$, but not low action.  To warm-up, let us list some possibilities that could happen after ruling out low action curves:
%After ruling out low action curves, we are left with at least the following possibilities:    

\begin{enumerate}[(i)]
\item We could have $C_1$ having genus $1$ with two ends.
\item We could have $C_1$ having genus $0$ with three ends at distinct orbits.  In this case, $C_0$ would have to also have ends at one of these orbits.
\item We could have $C_1$ a cylinder from one orbit to another, with $C_0$ having ends at both orbits.  This is the case that we want to occur.
\item We could have $C_1$ having genus $0$ with four ends, all at distinct orbits.
\end{enumerate} 

Here is the key takeaway from this: the relationship between $C_1$ and $C_0$ is crucial.  Namely, we have the following observation that we leave as an exercise for the reader.

\begin{exe}
Let $C$ have $J_0 = 2$ and assume that for every orbit at which $C_1$ has ends, $C_0$ also has ends.  Show that if the action of $C$ is sufficiently small, then $C$ is a cylinder with one positive end and one negative end.
%from one orbit to another.
\end{exe}

In view of the above exercise, we therefore have to show that the curves in a $U$-tower have ``enough" trivial cylinders.  Remember that we require just one low action cylinder, in a length $N$ $U$-sequence, so what we can assume is that there are never enough trivial cylinders and try to derive a contradiction. To do this, we need to find something we can leverage for curves $C$ without trivial cylinders, and we return to the aforementioned partition conditions for this.  Let us now explain how this works.

\subsubsection{Partition conditions, revisited} 

To continue, we need to explain the partition conditions in more depth.  Let $C$ be a somewhere injective $J$-holomorphic curve with $I = ind$.  Assume that $C$ has positive ends at some orbit $\gamma$ of total multiplicity $m$, and assume that the rotation number of $\gamma$, after fixing some trivialization $\tau$, is $\theta$.  Then, the more general version of the partition conditions that we will need state that the number of ends, and their multiplicities, depends only on $\theta$ and $m$; we denote this number by $p^+_{\theta}(m)$ and it is determined combinatorially by $\theta$ and $m$.  There is an analogous story for the negative ends.  

Explicitly, to compute $p^+_{\theta}(m)$, we look at the line $y = m \theta$, and take the maximal piecewise linear path with vertices on integer lattice points, staying below this line, and connecting $(0,0)$ to $(m,\lfloor m \theta \rfloor)$.  Then, $p^+_{\theta}(m)$ is given by the horizontal displacements in this path.   

Here are some facts about the partition conditions that we will need:

\begin{lemma}\cite{CGHHL2, ECHlecture}
\label{lem:partrev}
\begin{enumerate}[(i)]
\item $p^+_\theta(m)$ and $p^-_\theta(m)$ are disjoint when $m > 1$.
\item $1 \in p^+_\theta(m)$ if and only if $1 \not \in p^-_\theta(m)$ when $m > 1$.  
\item $|p^+_\theta(m)| + |p^-_\theta(m)| \le 3$ only if $m \lbrace \theta \rbrace  < 2$ or $m (1 - \lbrace \theta \rbrace) < 2$.
\end{enumerate}
\end{lemma}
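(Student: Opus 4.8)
The plan is to prove Lemma~\ref{lem:partrev} as a self-contained combinatorial statement about the lattice paths that define $p^\pm_\theta(m)$, using the classical description of such paths via best rational approximations (equivalently, continued fractions / the Stern--Brocot tree).

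I would begin with two reductions. First, the shear $(x,y)\mapsto (x, y+\lfloor\theta\rfloor x)$ preserves the integer lattice, preserves the two sides of the line of slope $\theta$, sends concave paths to concave paths, and preserves horizontal displacements, so $p^\pm_\theta(m)=p^\pm_{\{\theta\}}(m)$; hence I may assume $\theta\in(0,1)$, and since only nondegenerate elliptic orbits enter the later arguments I may also assume $\theta$ irrational, so $m\theta\notin\mathbb Z$. Second, the reflection $(x,y)\mapsto(x,-y)$ carries the maximal concave path below $y=\theta x$ to the maximal concave path below $y=-\theta x$ and interchanges the defining data of $p^+$ and $p^-$; combined with the first reduction this gives the identity $p^-_\theta(m)=p^+_{1-\theta}(m)$. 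Thus all three assertions become statements about a single family of paths, with $\theta$ and $1-\theta$ playing dual roles. As the base case I would record that for $m\theta<1$ the path $\Lambda^+_\theta(m)$ is the single segment from $(0,0)$ to $(m,0)$, so $p^+_\theta(m)=(m)$, and dually one reads off $p^-_\theta(m)$ from the path to $(m,m-1)$; here $m\{\theta\}<1<2$ so (iii) holds automatically, and (i), (ii) are immediate.

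For the inductive step I would use the self-similar structure of maximal concave lattice paths: with $k=\lfloor 1/\theta\rfloor$, the path $\Lambda^+_\theta(m)$ is built from an initial run of edges of slopes close to $1/k$, followed by a rescaled copy of $\Lambda^+_{\{1/\theta\}}(m')$ with $m'<m$, and under this reduction the roles of ``$+$'' and ``$-$'' get interchanged --- this is the statement that the combinatorics of the Klein polygon below a line of slope $\theta$ is governed by the continued fraction expansion of $\theta$. Granting this, (i) and (ii) follow by induction: the entries of $p^+_\theta(m)$ are (denominators of) the best lower approximations of $\theta$ used up to ``time'' $m$ and those of $p^-_\theta(m)=p^+_{1-\theta}(m)$ are the corresponding best upper approximations, so a common denominator would produce Farey-incompatible fractions $p/q<\theta<(p+1)/q$ once $m>1$; and the entry $1$ records exactly one of the two extreme edges of the combined configuration, which the recursion shows alternates sides. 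These parts I expect to be routine once the recursion is in place.

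The \textbf{main obstacle} is part (iii), the only quantitative statement. Using $p^-_\theta(m)=p^+_{1-\theta}(m)$, I would first note the conclusion holds automatically unless $2\le\lfloor m\theta\rfloor\le m-3$, since $\lfloor m\theta\rfloor\le 1$ gives $m\{\theta\}<2$ and $\lfloor m\theta\rfloor\ge m-2$ gives $m(1-\{\theta\})<2$; so I restrict to that regime, where both endpoints $(m,\lfloor m\theta\rfloor)$ and $(m,\,m-1-\lfloor m\theta\rfloor)$ have second coordinate $\ge 2$. Two elementary facts then drive the count: the number of entries $|p^+_\theta(m)|$ is at least $\gcd(m,\lfloor m\theta\rfloor)$ (even a single non-primitive edge is split into that many ends), and $|p^+_\theta(m)|=1$ is possible only when $\gcd(m,\lfloor m\theta\rfloor)=1$ and $\{m\theta\}$ is smaller than the reciprocal of the denominator $q$ of the Farey neighbour of $\lfloor m\theta\rfloor/m$ --- which, since we are in the non-trivial regime, forces $q\ge 2$. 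Combining this with the same statement applied to $1-\theta$, and with the Farey relation between the two neighbours, one checks that $|p^+_\theta(m)|$ and $|p^+_{1-\theta}(m)|$ cannot both equal $1$, and more generally that whenever one of them equals $1$ the other has at least $3$ entries, so the sum is always $\ge 4$. Bookkeeping the precise Diophantine inequalities (and disposing of the finitely many small values of $m$ where they are tight) is the delicate part; this is where the exact convention for reading entries off the path, and the irrationality of $\theta$, are genuinely used.
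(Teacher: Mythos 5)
Your statement is one the paper does not actually prove: it is quoted from \cite{CGHHL2, ECHlecture}, so I can only assess your outline on its own terms. The reductions you make are fine (shear invariance, the identity $p^-_\theta(m)=p^+_{1-\theta}(m)$, and the observation that (iii) is vacuous unless $2\le\lfloor m\theta\rfloor\le m-3$), and your characterization of $|p^+_\theta(m)|=1$ via $\gcd(m,\lfloor m\theta\rfloor)=1$ together with $\{m\theta\}<1/d$, $c/d$ the upper Farey neighbour of $\lfloor m\theta\rfloor/m$, is correct and does rule out $|p^+|=|p^-|=1$ in the nontrivial regime. But there is a genuine gap in (iii), which you yourself identify as the only quantitative part. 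First, your other ``elementary fact'' is false: it is not true that $|p^+_\theta(m)|\ge\gcd(m,\lfloor m\theta\rfloor)$. Take $m=8$ and $\theta$ irrational in $(3/5,5/8)$, so $\lfloor m\theta\rfloor=4$ and $\gcd=4$; the maximal concave path below $y=\theta x$ to $(8,4)$ has vertices $(0,0),(5,3),(7,4),(8,4)$, hence $p^+_\theta(8)=(5,2,1)$ with only three entries (the example is consistent with the lemma, since here $p^-_\theta(8)=(8)$ and the sum is $4$). Second, with $(1,1)$ excluded, the entire content of (iii) is the exclusion of $(|p^+|,|p^-|)=(1,2)$ and $(2,1)$ in the regime $m\{\theta\}\ge 2$, $m(1-\{\theta\})\ge 2$; your text asserts ``whenever one of them equals $1$ the other has at least $3$ entries'' but defers the argument to ``bookkeeping the precise Diophantine inequalities,'' and the tool you propose to drive that count is precisely the false gcd bound. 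So the crucial step of (iii) is not proved, and the proposed mechanism for it would fail.

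Two smaller points. Your justification of (i) does not work as written: $p/q<\theta<(p+1)/q$ holds for \emph{every} $q$, so producing such a pair is no contradiction. The correct mechanism is that an entry $q\ge 2$ of $p^+_\theta(m)$ forces $\{q\theta\}<\{q'\theta\}$ for all $1\le q'<q$ (its edge is a best lower approximation in the record sense), while an entry $q\ge 2$ of $p^-_\theta(m)$ forces the opposite inequality at $q'=1$; the entry $q=1$ must then be handled separately, and that is exactly part (ii), which needs the greedy/recursive structure of the path rather than just ``best approximation.'' Relatedly, your description of the recursion is off: concavity puts the \emph{steepest} edge first (e.g.\ for $\theta\in(2/7,3/10)$ and $m=7$ the path is the single primitive edge $(7,2)$, not an initial run of shallow edges), and the self-similar step is an $SL(2,\mathbb{Z})$ change of basis rather than a rescaling. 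The continued-fraction strategy is the right one and is how such facts are established in \cite{CGHHL2} and in Hutchings' index papers, but as it stands your argument for (iii) rests on a false lemma and an unproven case analysis.
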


For our purposes, the way we can think about the above lemma is as follows.  Remember that we are interested in highlighting facts in the absence of enough trivial cylinders in a $U$-sequence, in order to get a contradiction.  The first item above says that if $C_1$ has positive ends at some orbit $\gamma$ with total multiplicity $m$ and $C_0$ has no ends there, then no other curve  in the $U$-sequence can have  $C_1$ with negative ends at $\gamma$ with multiplicity $m$ and $C_0$ having no ends there.  When there are finitely many orbits, this can be quite powerful, since a long $U$-sequence will have to have ends on a particular orbit many times.  The second item (as well as the first) will be relevant when we discuss ``the score" below.  As for the third item, one can think of it as saying that, when the partition conditions hold for $C_1$, there tend to be at least four ends except in some very highly constrained situations which one can often rule out for example by forcing $m$ to be quite large; this in turn is useful when one knows that $J_0$ is on average about $2$.

\subsubsection{The score}

To start to make the above ideas precise, it is very useful to define a quantity called ``the score".  We now explain how this works.

The first point involves Lemma~\ref{lem:partrev}.(iii).  If we knew that $\lambda_n$ had finitely many simple Reeb orbits, then for sufficiently large $m$, Lemma~\ref{lem:partrev}.(iii) would guarantee that  $|p^+_\theta(m)| + |p^-_\theta(m)| \ge 4$.  Our perturbations $\lambda_n$ do not quite have the property of finitely many Reeb orbits, but by Lemma~\ref{lem:pert} and Lemma~\ref{lem:parte} we can still assume that $|p^+_\theta(m)| + |p^-_\theta(m)| \ge 4$ for sufficiently large $m$, after disregarding ``long orbits"; this is because up to long orbits, all of the new orbits still are close to $q$-fold covers of orbits with rotation number $p/q$.  

With that in mind, let us choose $M$ such that if $m > M$, we have $|p^+_\theta(m)| + |p^-_\theta(m)| \ge 4$.  Now let $\alpha = \lbrace (\gamma_i, m_i) \rbrace$ be an orbit set.  We call a particular element $(\gamma_i,m_i)$ of this set a {\em component} of the set.  We say that a component is a {\em $p^+$-component} if $p^+_{\theta_i}(m_i) = m_i$; here $\theta_i$ is the rotation number of $\gamma_i$.  We define a $p^-$ component analogously.  Finally, we define a {\em special component} to be a $(\gamma_i,m_i)$ such that $m_i > 1$ but $1 \not \in p^+_{\theta_i}(m_i)$.  These definitions are inspired by the facts about the partition conditions in Lemma~\ref{lem:partrev}.  We let $p^+(\alpha)$ denote the number of $p^+$ components, and define $p^-(\alpha)$ and $s(\alpha)$ analogously.

We now define the {\em orbit score} of $\alpha$ by 
\begin{equation}
\label{eqn:scoredef}
S(\alpha) := p^+(\alpha) + s(\alpha) - p^-(\alpha)
\end{equation}
and we define the {\em orbit score} of a $J$-holomorphic curve $C$ counted by the $U$-map from $\alpha$ to $\beta$ by
\begin{equation}
\label{eqn:scoredefcurve}
S(C) := S(\alpha) - S(\beta). 
\end{equation}
%Lastly, we define the {\em total score} of $C$ by 

The motivation for the definition is as follows.  Recall that we are considering $N$ curves in a row counted by $U^N$ and we want to show that there are ``enough" trivial cylinders.  The idea is that when there are no trivial cylinders for some curve in this sequence, the score should go up.  For example if for such a level, say from some $\alpha'$ to some $\beta'$, $C_1$ has a positive end at some orbit but there are no trivial cylinders, then it follows from the partition conditions and Lemma~\ref{lem:partrev} that the corresponding component of $\alpha'$ is a $p^+$ component and can not be a $p^-$-component.  The other quantities play a similar role.

Finally, though on average $J_0 = 2$ by the considerations explained above, we would like to measure any possible deviation from this average.  So, we set $y(C) = J_0(C) - 2$, and we define the {\em total score} of C by
 \begin{equation}
\label{eqn:scoredefcurvet}
T(C) := S(C) + 3y.   
\end{equation}
The basic idea is that since we just want to find cylinders, curves with $J_0 \ge 3$ are less useful for our purposes than curves with $J_0 \le 1$ (which is  a stronger condition than $J_0 = 2$ for the purposes of finding cylinders), so we want curves with $J_0 > 2$ to also have a high score.

To sum up, then, the score is defined so that the ``bad" curves that do not give us what we want should have a high score.  Then, the idea is that as long as we can bound the sum of the scores of all the curves in the length $N$ $U$-sequence, we get a bound on the number of bad curves.

\subsubsection{Nonnegativity of the score}

Recall that we would like to show that ``bad" curves, i.e. curves that are not suitable for finding our desired GSS, make a positive contribution to the total score $T$.  The following lemma gives just enough for our purposes.  Recall the constant $M$ from the previous section.

\begin{lemma} \cite[Lem. 3.22]{CGHHL2}
\label{lem:totalscore}
Let $C$ be a $J_n$-holomorphic curve counted by the $U$-map and write $C = C_0 \cup C_1$ as usual.  Assume that $C$ has sufficiently small action and is not a cylinder.
Then:
\begin{enumerate}[(i)]
\item $T(C) \ge 0$.
\item If $T(C) = 0$ and $J_0(C) \ge 2 $ then all of the following occur:
\begin{itemize}
\item $J_0(C) = 2$
\item $C_1$ has genus $0$ and exactly three ends.
\item There is an orbit $\gamma_i$ at which $C_1$ has a positive end of multiplicity $\ge M$  and there is a different orbit $\gamma_j$ at which $C_1$ has a negative end of multiplicity $\ge M$.  
\item $C_0$ has ends at $\gamma_i$, of multiplicity at least $2$, and this is the only orbit at which both $C_0$ and $C_1$ have ends.
\item Either $C_1$ has an additional positive end, at an orbit different from $\gamma_i$, of multiplicity one; or, $C_1$ has an additional negative end, at an orbit different from both $\gamma_i$ and $\gamma_j$.  
\end{itemize}
\item If $T(C) = 0$ and $J_0(C)  \le 1$ then $J_0(C) = 1$. 
\end{enumerate}
\end{lemma}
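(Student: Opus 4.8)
The plan is to combine the $J_0$-index formula of Proposition~\ref{prop:j0bound} with the partition conditions of Lemmas~\ref{lem:parth},~\ref{lem:parte} and~\ref{lem:partrev} and the structural description of the orbits of $\lambda_n$ from Lemma~\ref{lem:pert}, so that the statement becomes a finite combinatorial check on the possible ``types'' of a low-action $U$-map curve. First I would record the reductions that small action buys us, along the lines of \S\ref{sec:warm}: the non-trivial component $C_1$ is embedded and somewhere injective, and since $I(C)=2$ while the covers of trivial cylinders making up $C_0$ have ECH index $0$, additivity gives $I(C_1)=\operatorname{ind}(C_1)=2$, so the partition conditions all apply to $C_1$ at each simple orbit; moreover $C_1$ has at least one positive and at least one negative end (a component with only positive, or only negative, ends would have action at least that of a Reeb orbit, exceeding the action bound), and every orbit at which $C_1$ or $C_0$ has an end is, apart from finitely many ``short'' orbits treated directly, a cover of multiplicity $>M$ of a simple orbit close to a $q$-fold cover as in Lemma~\ref{lem:pert}, so that Lemma~\ref{lem:partrev}(iii) applies to it. Using Proposition~\ref{prop:j0bound} I would then rewrite, with $\alpha,\beta$ the asymptotic orbit sets of $C$,
\[
T(C) \;=\; \bigl(S(\alpha)-S(\beta)\bigr) + 3\bigl(J_0(C)-2\bigr) \;=\; \bigl(S(\alpha)-S(\beta)\bigr) + 6\,g(C_1) + 3\,e(C) - 12,
\]
reducing (i) to the bound $\bigl(S(\alpha)-S(\beta)\bigr) + 6\,g(C_1) + 3\,e(C) \ge 12$.

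The heart of the argument is a bookkeeping, carried out orbit by orbit. For each simple orbit $\gamma$ I would record: the positive ends of $C_1$ at $\gamma$, which by the partition conditions form exactly the partition $p^+_{\theta(\gamma)}(m^+_1)$ of the $C_1$-multiplicity $m^+_1$ (and likewise on the negative side); whether $C_0$ has ends at $\gamma$; and hence the local contribution $e(\gamma_\pm)=2\,|p^\pm_{\theta(\gamma)}(m^\pm_1)|$, diminished by $1$ when $C_0$ has no end at $\gamma$. Separately I would record the contribution of the component of $\alpha$ (resp.\ of $\beta$) at $\gamma$ --- whose multiplicity is the \emph{total} $C_1$-plus-$C_0$ multiplicity --- to $p^+,p^-,s$. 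The crucial structural inputs are Lemma~\ref{lem:partrev}(i),(ii): a component of multiplicity $>1$ is never both a $p^+$- and a $p^-$-component, and it is special precisely when $1\notin p^+_\theta$; combined with the relation between $e(\gamma_\pm)$ and $|p^\pm_\theta|$ this makes each orbit's joint contribution to $\bigl(S(\alpha)-S(\beta)\bigr)+3e(C)$ bounded below --- roughly, an orbit carrying a single heavy positive end of $C_1$ with $C_0$ absent contributes a surplus because it is then a special $p^+$-component of $\alpha$, the presence of $C_0$ removes the $-1$ penalty in $e$ at the cost of possibly weakening the score there, and Lemma~\ref{lem:partrev}(iii) together with the choice of $M$ forbids a heavy $C_1$-end-orbit from carrying few ends of $C_1$ with $C_0$ absent. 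Summing the local lower bounds and adding $6\,g(C_1)\ge 0$ gives $T(C)\ge 0$, which is (i).

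For (ii) and (iii) I would rerun the same accounting under $T(C)=0$ and identify which inequalities are forced to be equalities. Each tight local estimate pins down a rigid local picture, and assembling them yields exactly the configuration of (ii): the genus term must vanish, so $g(C_1)=0$; $J_0(C)=2$; $C_1$ has precisely three ends; there is one positive end of $C_1$ of multiplicity $>M$ at some $\gamma_i$, which must be a special $p^+$-component of $\alpha$ so that $C_0$ is present at $\gamma_i$ with multiplicity $\ge 2$; one negative end of multiplicity $>M$ at a different orbit $\gamma_j$; $C_0$ present only at $\gamma_i$; and the third end is the extra positive or negative end described in the statement. Part (iii) is the residual range $J_0(C)\le 1$: since $C_1$ has a positive and a negative end one always has $e(C)\ge 2$, hence $J_0(C)=-2+2g(C_1)+e(C)\ge 0$; and $J_0(C)=0$ would force $g(C_1)=0$ and $e(C)=2$, hence $C_1$ a cylinder with $C_0$ absent at both asymptotes, contradicting the hypothesis that $C$ is not a cylinder. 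Therefore $J_0(C)=1$.

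The step I expect to be the main obstacle is the orbit-by-orbit bookkeeping itself, because the score is read off from the \emph{total} multiplicities of $\alpha$ and $\beta$ while the $e$-term is governed by the $C_1$-multiplicities and their partitions; one must control precisely how the ends of $C_0$ at an orbit alter the partition of the total multiplicity relative to that of the $C_1$-multiplicity, get every sign convention right, dispose of the finitely many low-multiplicity orbits for which Lemma~\ref{lem:partrev}(iii) does not immediately produce four ends, and make sure that no combinatorial type of low-action $U$-map curve has been overlooked. Once the correct list of types with their local contributions is in hand, verifying $T(C)\ge 0$ and extracting the equality cases is routine.
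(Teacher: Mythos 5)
Your strategy coincides with the one the paper describes: an orbit-by-orbit, case-by-case bookkeeping of how $C_0$ and $C_1$ interact, driven by Proposition~\ref{prop:j0bound}, Lemma~\ref{lem:partrev} and the partition conditions, and your reduction $T(C)=(S(\alpha)-S(\beta))+6g(C_1)+3e(C)-12$ is the right starting point. The genuine gap is precisely the point you flag as ``the main obstacle,'' and it is not a detail: the score $S$ is computed from the \emph{total} multiplicities of the components of $\alpha$ and $\beta$, whereas the partition conditions you invoke (equality in the index inequality for the somewhere injective curve $C_1$, as in Lemmas~\ref{lem:parth} and~\ref{lem:parte}) only constrain the ends of $C_1$ relative to $C_1$'s own multiplicities. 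To decide whether a component $(\gamma,m+n)$ of $\alpha$ (with $C_0$ contributing $m$ and $C_1$ contributing $n$) is a $p^+$-component or a special component, one needs the additional structural fact that the paper singles out as ``the other key input'': for curves counted by the $U$-map, $p^+_{\theta}(m+n)=p^+_{\theta}(m)\cup\lbrace a_1,\ldots,a_k\rbrace$, where the $a_i$ are the multiplicities of the ends of $C_1$ at $\gamma$ (and similarly on the negative side). Your proposal does not contain this compatibility statement, and without it the local lower bounds on each orbit's joint contribution to $(S(\alpha)-S(\beta))+3e(C)$ cannot be closed, nor can the equality analysis that produces the very specific list of conclusions in (ii) even be set up. Since the case analysis itself is only described in outline (``each tight local estimate pins down a rigid local picture''), the heart of parts (i) and (ii) remains unproved in your write-up; this is exactly the content that \cite[Sec.~3.3]{CGHHL2} supplies.

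Two smaller remarks. Your argument for (iii) is essentially correct, but note that it, like the lemma itself, must read ``$C$ is not a cylinder'' as ``$C_1$ is not a cylinder'': from $g(C_1)=0$ and $e(C)=2$ you only get that $C_1$ is a cylinder with $C_0$ absent at its two asymptotic orbits, and a priori $C_0$ could still contain trivial cylinders over other orbits, so the literal hypothesis would not be contradicted (indeed, under the literal reading such configurations would even violate (i), which shows the intended reading is the one about $C_1$). Also, your claim that $I(C_1)=\ind(C_1)=2$ ``by additivity'' really uses the standard structure theorem for low ECH index currents (nonnegativity of the index of covers of trivial cylinders and of the intersection term), not additivity alone; this is harmless here since the paper already records that $U$-map curves have this structure.
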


While the lemma is a bit technical, the following picture should be somewhat more digestible.  We wanted to, ideally, show that all ``bad" curves -- i.e. the curves that will not suffice for our purposes -- make a positive contribution to the score.  Instead what we find is that all curves at least make a nonnegative contribution to the score; and, the ones that make zero contribution are quite special.  The ones that make contribution zero that are not cylinders will not suffice for our needs regarding finding a GSS, but they are so specific that we can rule them out with some related arguments and we will say a little bit more about this in the next section.

The proof of Lemma~\ref{lem:totalscore} involves a careful case--by--case analysis of a combinatorial flavor, going orbit by orbit, of how $C_0$ and $C_1$ could interact, using the formula Proposition~\ref{prop:j0bound}. The other key input is a further fact about the partition conditions, that in the case where $C_0$ and $C_1$ both have positive ends at some orbit $\gamma$, say of multiplicity $m$ and $n$ respectively then 
\[ p^+_{\theta}(m+n)  = p^+_{\theta}(m) \cup \lbrace a_1, \ldots, a_k \rbrace,\]
where $\lbrace a_1, \ldots, a_k \rbrace$ is the set of multiplicities of the ends of $C_1$ at $\gamma$; see e.g. the summary in \cite{CGHHL2}.  

The kind of considerations mentioned when we motivated the score play a crucial role.  For example, when there are no trivial cylinders where $C_1$ has ends, one gets either a large positive contribution to $J_0$ if there are many ends, hence the $y$ term in \eqref{eqn:scoredefcurvet}, or positive contributions to $p^+$ and $p^-$.  There are many kinds of patterns that occur, but the idea is that other than a small finite set of possibilities which can be analyzed explicitly, there are good enough estimates to conclude that $T(C) > 0$.  As there are many cases, we refer the reader to \cite[Sec. 3.3]{CGHHL2} to see how this is all worked out.

\subsubsection{Putting it all together}

Let us now sketch how to use the above to find the desired cylinder for $\lambda_n$, projecting to a global surface of section. 

\begin{proof}[Sketch of proof]

Our focus is on conveying the main ideas and so we will be a bit brief with the proof.  
The detailed proof appears in \cite[Sec. 3.4]{CGHHL2}.  

Take a length $N$ $U$-sequence $C(1), \ldots, C(N)$, where $C(i)$ is a curve from $\alpha(i)$ to $\alpha(i-1)$ and assume that none of these curves are the low-action cylinders that we want.   Remember that we should think of $N$ as large.  The first point is that Lemma~\ref{lem:totalscore} only applies to curves of sufficiently low action.  Low action is also necessary for our arguments guaranteeing that the projection is a global surface of section.   As explained  in Section~\ref{sec:utowers}, in fact most curves have low action, more precisely, the number failing to have low action has an $O(\sqrt{N})$ bound.  So, we begin by noting this.

The idea is then to consider the other kinds of curves in our length $N$ sequence that would be problematic, and build similar bounds.  

%More precisely, let our $U$-sequence be indexed by $i$, such that $1 \le i \le N$, and let $G_0$ be the set of $i$ such that the curves do not have low energy.  Then, the above argument bounds the size of $G_0$.  Similarly, we let $G_1$ be the set of $i$ such that the corresponding $U$-map curve $C(i)$ has $T = 0$ and $J_0 = 1$; we define $G_2$ in the same way 

For example, let us next consider the set of curves with low energy but $T > 0$.   Because it is a telescoping sum, we have 
\[ \sum^N_{i=1} T(C(i)) = S(\alpha(N)) - S(\alpha(0)) + 3 \sum^N_{i=1} y(C(i)).\]
Now the terms with $S$ are proportional to the length of the orbit set.  Hence, both are $O(\sqrt{N})$ by the Weyl Law, Theorem~\ref{thm:weyl}.  Similarly, the argument in Section~\ref{sec:utowers} for showing that $J_0$ is on average about $2$ gives that $\sum^N_{i=1} y(C(i))$ is about $O(\sqrt{N})$ as well.  Thus, the sum on the left hand side of the above equation has an $O(\sqrt{N})$ bound.  Since we showed in Lemma~\ref{lem:totalscore} that the low action curves all have nonnegative score, and since the high action curves also have an $O(\sqrt{N})$ bound (and still have the very coarse bound that each can contribute no more than, say, $-10$ to the score), an $O(\sqrt{N})$ bound on the sum gives that not many of the $C(i)$ can have positive score.  To make this precise, one can extract an $O(N^{2/3})$ bound.  (The appearance of the exponent $2/3$ is definitely not optimal, but suffices for our purposes.)

One can similarly bound the number of curves $C(i)$ with $T = 0$ and $J_0 = 1$ and then the number of curves $C(i)$ with $T = 0$ and $J_0 = 2$, getting (for example) $O(N^{3/4})$ and $O(N^{4/5})$ bounds respectively. At each stage, one makes use of the previous bounds, just like as in the argument for $T > 0$, when we needed to use the previous bond on the number of curves that do not have low action.  Getting this to work requires replacing the total score in the telescoping sum argument with slight variants.  Specifically, for the $(T,J_0) = (0,1)$ bound, it is convenient to use the quantity $T'(C) = S(C) + 2y$, which differs from $T(C)$ by $y(C)$; this takes advantage of the fact that in this case $y$ is negative.  For the $(T,J_0) = (0,2)$ bound, we need an invariant that is well-suited to cleaning up the very specific remaining cases in Lemma~\ref{lem:totalscore}.  For this, we define the {\em $K$-invariant} for a curve $C$ from $\alpha$ to $\beta$,
\[ K(C) = K(\alpha) - K(\beta) + 2 y(C),\]
where $K(\alpha) \le 0$ is minus the number of components $(\gamma,m)$ with $m > 1.$ Then, for the specific curves $C'$ with $(T,J_0) = 2$ classified in Lemma~\ref{lem:totalscore}, one can check case by case that $K(C') \ge 1$.  

Thus, under our assumption that there are none of the curves that we wanted, we start with $N$ curves, and show that all of them have some bound of the form $O(N^{4/5})$.  This is a contradiction.
\end{proof}

Thus we have now given a sense for the key ideas behind the various ingredients for the proof of Theorem~\ref{thm:hwz}.  Before moving to the next chapter, we should make one remark about constants.  To recall, to prove Theorem~\ref{thm:hwz}, we approximate $\lambda$ by contact forms $\lambda_n$, and take global surfaces of section for $\lambda_n$ by projections of $J_n$-holomorphic curves $C_n$.  In our arguments, there are implied bounds, for example ``sufficiently low action curves", and one needs to make sure that the constants implemented to make this rigorous can be described in an $n$-independent way.  This is doable with some care.  Perhaps the most important point in this regard is the continuity of the ECH spectral invariants explained in \S\ref{sec:add}.  The idea is essentially to phrase all needed bounds in terms of the ECH spectral invariants of $\lambda$, and the length of the shortest orbit for $\lambda$.  These are both $C^{\infty}$ continuous, and so are well-suited to building bounds for the quantities that come up in the arguments for finding cylindrical global surfaces of section for $\lambda_n$.  This also guarantees that all the cylinders $C_n$ have $n$-independent upper and lower bounds on their action, which is important when we apply the SFT compactness type machinery in \S\ref{sec:warm} as explained in that section.  

%We refer the reader to [ref] for more details.

\section{Remarks on the proof of the Le Calvez - Yoccoz property}
 \label{sec:ley}
  
 Let us say a few words about the proof of the aforementioned Theorem~\ref{thm:ley}.  Recall that this says that, under the hypotheses of the theorem, the complement of a minimal set is never minimal.  Because our lectures have limited time, we will be brief about this, trying to highlight a few key points.
 
 \subsection{A new compactness theorem}

The first point is a new way to detect closed non-trivial invariant sets beyond periodic orbits via $J$-holomorphic curves.   Our theorem will work in any dimension.  We will focus on the case of Reeb flows, though this aspect of the story works much more generally.  
Recall the action $\mathcal{A}(C_k) = \int_{C_k} d \lambda$ of a $J$-holomorphic curve.  Now consider a sequence of $J$-holomorphic curves $C_k$ in $\mathbb{R} \times Y$ (we can in fact let $J$ vary as well, but for simplicity we will just fix a $J$), with actions tending to zero.  We define the {\em limit set} $\mathcal{L}$ of the sequence to be the set of all closed subsets arising as (subsequential) Hausdorff limits of height two slices of the $C_k$.  By translating these height two sets, we view the limit set as a subset of the set of closed subsets of $(-1,1) \times Y.$       

%To this point, we have been studying particular $J$-holomorphic curves, asymptotic to Reeb orbits.  The following result says that for certain sequences of $J$-holomorphic curves, we can recover more general invariant sets.

\begin{theorem}\cite[Thm. 6]{cgp}
\label{thm:limit}
Let $C_k$ be $J$-holomorphic curves in $\mathbb{R} \times Y$ with actions tending to $0$ and limit set $\mathcal{L}$.  Assume in addition that $\chi(C_k)$ has a $k$-independent lower bound.
%> - \infty.$ 
Then any $S \in \mathcal{L}$ satisfies
\[ S = (-1,1) \times K\]
for some nonempty invariant set $K \subset Y$.  
%\mathcal{L}$ 
\end{theorem}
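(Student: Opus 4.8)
The plan is to analyze the geometry of the curves $C_k$ near a fixed height-two slice, using the hypothesis that the action (i.e.\ the $d\lambda$-energy) goes to zero together with the Euler-characteristic bound to get uniform control, and then to pass to the Hausdorff limit. First I would set up the local picture: fix a slice $[s_0-1,s_0+1]\times Y$ (after translating in the $\mathbb{R}$-direction we may take $s_0=0$), and let $\Sigma_k = C_k \cap ([-1,1]\times Y)$. The key analytic input is that $d\lambda$ is pointwise nonnegative on a $J$-holomorphic curve for admissible $J$, and vanishes at a nonsingular point exactly when the tangent plane is spanned by $\partial_s$ and $R$ (this is recorded in \S\ref{sec:utowers}). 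So as $\mathcal{A}(C_k)\to 0$, the curves $\Sigma_k$ become, in an integrated sense, more and more tangent to the plane field $\langle \partial_s, R\rangle$; the goal is to upgrade this integrated statement to a $C^0$ (Hausdorff) statement about the limit.

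The heart of the argument is a mean-value / monotonicity estimate. I would argue by contradiction: if some $S\in\mathcal L$ is \emph{not} of the form $(-1,1)\times K$ with $K$ invariant, then $S$ is not a union of pieces of flow lines of $R$ in the $Y$-direction, so there is a point $(s_*,y_*)\in S$ and a small ball $B$ around it in which $S$ — hence $\Sigma_k$ for large $k$, by Hausdorff convergence — must ``turn" transversally to $R$ in a quantitative way. On such a piece of $\Sigma_k$ the integrand $d\lambda$ is bounded below by a positive constant times the area (here one uses that away from the bad plane field $d\lambda$ controls the area of a $J$-curve from below), and the monotonicity lemma for $J$-holomorphic curves gives a lower bound on the area of $\Sigma_k\cap B$ that is independent of $k$; this contradicts $\mathcal A(C_k)\to 0$. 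The Euler characteristic bound enters to rule out the alternative degeneration in which $\Sigma_k$ develops many thin necks or a growing number of components accumulating at $(s_*,y_*)$ — it gives a uniform genus/puncture bound on the slices, so one genuinely sees a definite amount of transverse area rather than a vanishing amount spread over infinitely many sheets, and it also allows one to invoke the SFT/Gromov-type compactness that makes the Hausdorff limit well-behaved.

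It then remains to check that $K$ is $R$-invariant and nonempty. Nonemptiness: the slices $\Sigma_k$ are nonempty (the curves $C_k$ are nonconstant, hence proper, hence meet every height-two slab), and the uniform area lower bound from monotonicity near any accumulation point keeps mass from escaping, so the Hausdorff limit is nonempty. Invariance: once we know $S=(-1,1)\times K$, I would show $K$ is a union of (arcs of) Reeb trajectories by observing that $S$ is a Hausdorff limit of $J$-curves whose tangent planes converge (in an $L^1$, then after the above argument $C^0_{\mathrm{loc}}$, sense) to the integrable plane field $\langle\partial_s,R\rangle$; the integral leaves of that plane field are precisely $\mathbb{R}\times(\text{Reeb trajectory})$, and a closed set that is locally a limit of such leaves is itself $R$-invariant. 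The product structure $(-1,1)\times K$ is forced because the plane field contains $\partial_s$, so no limiting piece can have nonconstant $s$-dependence in its $Y$-coordinate.

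I expect the main obstacle to be the step converting the vanishing of the \emph{integrated} quantity $\mathcal A(C_k)$ into \emph{pointwise/Hausdorff} control of the limiting set — i.e.\ making the monotonicity-versus-vanishing-area contradiction rigorous uniformly in $k$, and correctly using the $\chi$-bound to exclude the ``many sheets of vanishing area each" scenario. This is exactly where a quantitative monotonicity inequality with constants depending only on $(Y,\lambda,J)$ and on the Euler-characteristic bound is needed, and where one must be careful near the (finitely many, by the $\chi$-bound) punctures of the slices. Everything else — nonemptiness, the product structure, and invariance of $K$ — is comparatively soft once this estimate is in hand.
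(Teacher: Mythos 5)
There is a genuine gap, and it sits exactly where you suspected trouble --- but your proposed fix runs in the wrong direction. The heart of the paper's proof is an \emph{upper} bound on local area: there are constants $\epsilon_1,\epsilon_2$ such that any $J$-holomorphic curve $C$ with $\mathcal{A}(C)\le\epsilon_1$ satisfies $A_x(r)\le \epsilon_2(\chi(C)^2+1)$ (Proposition~\ref{prop:bound}, proved via the Fish--Hofer tract decomposition and exponential area bounds). This is what licenses Fish's \emph{target-local} Gromov compactness in a small ball around a point $z\in S$: one extracts a limiting holomorphic curve through $z$, which has zero action, hence is everywhere tangent to the span of $\partial_s$ and $R$, i.e.\ a piece of $\mathbb{R}\times(\text{flow trajectory})$; this simultaneously gives the product structure and the invariance of $K$. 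Your argument never produces such an upper bound. Monotonicity gives a \emph{lower} bound on area through points of the curve, and the $\chi$-bound by itself does not permit you to ``invoke SFT/Gromov-type compactness'': SFT compactness needs a uniform Hofer energy bound, which is precisely what is absent in this feral-curve setting (action tends to $0$ while Hofer energy may blow up), and Gromov/target-local compactness needs the local area bound you have not established. Without some form of compactness you cannot upgrade the integrated smallness of $\int_{C_k} d\lambda$ to any statement about tangent planes, so your claimed ``$L^1$, then $C^0_{\mathrm{loc}}$'' convergence of tangent planes to $\langle\partial_s,R\rangle$ is unjustified.

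The contradiction scheme itself also does not close. Hausdorff convergence of the slices to a non-invariant $S$ does not force the curves to ``turn transversally to $R$ in a quantitative way'' near any point: failure of invariance means the limit set fails to contain flow-translates of some of its points, which is not a statement about the tangent planes of $C_k$. The curves can pass through your bad ball while nearly tangent to $\langle\partial_s,R\rangle$, contributing negligible action, so no $k$-independent lower bound on $\int d\lambda$ is produced and no contradiction with $\mathcal{A}(C_k)\to 0$ arises. Ruling out such behavior requires knowing that the portion of $C_k$ near $z$ converges to an honest holomorphic object --- again compactness, again the missing local area bound in terms of $\chi(C_k)^2$, which is where the Euler characteristic hypothesis actually enters (not through excluding ``many thin sheets'' in a monotonicity step).
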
    
 
The proof of Theorem~\ref{thm:limit} builds on ideas from the aforementioned feral curve theory of Fish-Hofer \cite{fh}.
% we will say more about what goes into the proof below.
It does not require that $Y$ is three-dimensional.  It also works for much more general structures than stable Hamiltonian ones; for example, it works for any smooth energy level of a Hamiltonian in a symplectic manifold; see \cite{cgp} for the precise statement.

Let us now try to give a sense for how Theorem~\ref{thm:limit} is proven.  The key point is the following ``local-area" bound.  Consider a $J$-holomorphic curve $C$ in $X = \mathbb{R} \times Y$, of low action.  The contact form induces an $\mathbb{R}$-invariant Riemannian metric and we will be interested in the area of pieces of $C$ in small balls with respect to this metric.  More precisely, for any $r$, let $A_x(r)$ be the area of the connected component of $C$ containing $x$ in the ball of radius $r$ about $x$ in $X$.  Here is the promised bound:
% We have the following important bound:

\begin{proposition}
\label{prop:bound}
There exist positive constants $\epsilon_1$ and $\epsilon_2$ with the following significance.
Let $C$ be a $J$-holomorphic curve in $X$.  Then
\[ A_x(r) \le \epsilon_2(\chi(C)^2 + 1)\]
whenever $\mathcal{A}(C) \le \epsilon_1$.
\end{proposition}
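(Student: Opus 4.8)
The plan is to adapt the local area estimates from Fish--Hofer's feral curve theory \cite{fh}; throughout, $r$ is at most a fixed constant, and all constants below may depend on $(Y,\lambda)$ and on this bound. Equip $X=\mathbb{R}\times Y$ with the $\mathbb{R}$-invariant metric determined by $\lambda$ and $J$; for an admissible $J$ the area form of a $J$-holomorphic curve $u$ is pointwise nonnegative and decomposes as $u^{*}(ds\wedge\lambda)+u^{*}d\lambda$, so that the $d\lambda$-part integrates to $\mathcal{A}(C)\le\epsilon_1$ and is harmless, and the task is to bound the $ds\wedge\lambda$-part of the component through $x$ in $B(x,r)$. I would use two standard inputs. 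The first is the monotonicity lemma: there are $\rho_0,c_0>0$ so that whenever a connected piece of a $J$-holomorphic curve contains a point $y$ and its component in $B(y,\rho)$ meets $\partial B(y,\rho)$, with $\rho\le\rho_0$, that component has area at least $c_0\rho^2$; this lets one trade area for structure and back. The second is the isoperimetric inequality for $J$-holomorphic curves in a symplectization with a correction term for the topology of the piece: it bounds the area of a $J$-holomorphic piece in terms of the length of its boundary (when that lies in a small ball), its $d\lambda$-energy, and its Euler characteristic, the topological term being unavoidable since high genus or many ends buy extra area cheaply.

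The core of the argument is a decomposition at a fixed small scale. Cover $\overline{B(x,r)}$ by boundedly many balls $B_j$ of a fixed radius $\delta=\delta(Y,\lambda)\le\rho_0$ and, using the coarea formula, arrange that $C$ meets the relevant boundary spheres transversally; let $P$ denote the component through $x$. On each $B_j$ every connected piece of $P\cap B_j$ is a $J$-holomorphic surface of action $\le\epsilon_1$ with boundary on $\partial B_j$, and the Fish--Hofer thick--thin alternative applies: either the piece carries $d\lambda$-energy at least some $\epsilon(\delta)>0$ (``thick''), or it is ``thin'', i.e.\ $C^{0}$-close to a piece of a trivial cylinder $\mathbb{R}\times\gamma$ over a Reeb orbit, and hence has area at most a constant times $\delta^{2}$ times its local covering multiplicity. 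Since $\mathcal{A}(C)\le\epsilon_1$, the total number of thick pieces, over all $j$, is at most $\epsilon_1/\epsilon(\delta)$, hence bounded once $\epsilon_1$ is fixed, so their total area is $O(1)$.

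What remains is the thin part, and this is where the factor $\chi(C)^{2}$ enters. The thin pieces organise $P$ into finitely many ``tubes'' running in Reeb directions, and every place where tubes meet, branch, or close up is a handle, branch point, or extra puncture, each accounting for a definite drop in Euler characteristic. A combinatorial count --- carried out, in the spirit of \cite{fh}, by bookkeeping these features --- should show that both the number of tubes and the total covering multiplicity distributed among them are bounded by a constant times $|\chi(C)|+1$; since each tube contributes $O(\delta^{2})$ per ball it traverses and traverses boundedly many balls in $B(x,r)$ per unit multiplicity, the product of these two linear bounds produces the quadratic term, and combining with the $O(1)$ from the thick pieces and the $\mathcal{A}(C)\le\epsilon_1$ contribution gives $A_x(r)\le\epsilon_2(\chi(C)^{2}+1)$. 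I expect this last step to be the main obstacle: one must reconcile the Euler characteristics of the sub-pieces with $\chi(C)$ (cutting a surface along circles can raise the Euler characteristic of the pieces, so the accounting is not purely additive), bound how much of a long Reeb orbit can spiral inside a ball of fixed radius, and --- crucially for the application to Theorem~\ref{thm:limit} --- keep every constant $\rho_0,\delta,\epsilon(\delta)$ and the $c_i$ uniform over the whole family of curves. It is precisely because the genus is not bounded along the sequences one cares about that soft Gromov compactness does not suffice and the quantitative feral-curve machinery of \cite{fh} is needed.
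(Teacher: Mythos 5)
Your overall instinct---adapting the Fish--Hofer local area estimates---is the right one, but two central steps in your outline do not hold up, and together they carry the whole weight of the quadratic bound. First, the ``thick--thin alternative'' you invoke (a piece of $C$ in a fixed $\delta$-ball with small $d\lambda$-energy is $C^0$-close to a trivial cylinder and has area at most $O(\delta^2)$ times a covering multiplicity) is not an available tool here; it is essentially the statement being proved. Any compactness proof of such a local structure result needs a priori bounds on the area (or Hofer energy) and on the topology of the local pieces, and neither is available: the local area is exactly what Proposition~\ref{prop:bound} is trying to bound, and the genus and number of boundary components of $P\cap B_j$ are uncontrolled. Relatedly, you never control the boundary length of the pieces $P\cap B_j$, so the isoperimetric-type inequality you want to apply has no input (and monotonicity only gives \emph{lower} bounds on area, so it cannot substitute). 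Second, the combinatorial claim that ``the total covering multiplicity distributed among the tubes is bounded by a constant times $|\chi(C)|+1$'' is false: an unbranched multiple cover of a cylinder is again a union of cylinders, with Euler characteristic zero no matter how large the degree, so multiplicity is invisible to $\chi(C)$. Hence the product (number of tubes)$\times$(multiplicity), each allegedly linear in $|\chi(C)|$, cannot be the source of the $\chi(C)^2$ term, and the thin-part estimate collapses.

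For comparison, the argument in \cite{cgp} is global along the symplectization direction rather than ball-by-ball. One first confines $C$ to a slab $[a_0,a_1]\times Y$ and cuts it into \emph{tracts}: pieces whose boundary consists of horizontal components (level sets of the height function) and vertical components (gradient flow lines of the height function). The area of a tract is bounded via the Fish--Hofer \emph{exponential area bound} in terms of the length of its bottom horizontal boundary---this is the substitute for your thick--thin dichotomy, and it does not pass through closeness to trivial cylinders. The boundary-length control missing from your outline is achieved by chopping $\partial^-_h C$ into pieces of controlled length and invoking an intricate lemma of Fish--Hofer \cite{fh} producing gradient flow lines of controlled length through suitable initial conditions. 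The factor $\chi(C)^2$ then arises from combining a combinatorial bound, in terms of $\chi(C)$, on the number of tracts (most of which are rectangular, i.e.\ have two vertical boundary components) with the area-per-tract bound, together with a triangle-inequality argument showing that the part of $C$ in a small ball cannot meet both vertical boundary components of a rectangular tract. If you want to salvage your ball-by-ball approach, the step you must supply is precisely this length and area control for low-action pieces of uncontrolled topology; soft local compactness will not provide it.
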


Given Proposition~\ref{prop:bound}, Theorem~\ref{thm:limit} follows by an application of Gromov compactness.  We now sketch how this goes.

\begin{proof}[Sketch of proof of Theorem~\ref{thm:limit}, assuming Proposition~\ref{prop:bound}]

Here is a sketch of how this goes.  Fix a point $z$ in an element $S \in \mathcal{L}$.  It suffices to show that for small time, the trajectory through $z$ stays in $U$.  Look at a small ball around this point.  Then Proposition~\ref{prop:bound} implies that we are justified in applying the ``target-local" Gromov compactness of Fish \cite{fish}.  We therefore get convergence to a curve passing through $z$.  However, this curve has zero action and so must be invariant.

\end{proof}

It remains to explain the idea behind the proof of Proposition~\ref{prop:bound}.  The proof is rather technical.  

The statement of Proposition~\ref{prop:bound} is inspired by a local-area bound in the Fish-Hofer theory, and the proof builds on their estimates.
%, and inspired by, various estimates proved by Fish-Hofer in their feral curve theory.  
In the Fish-Hofer theory, they consider a single curve (the so-called ``feral" curve), with finite action but unbounded ``Hofer" energy; in our cases, we are considering a sequence of curves, with actions going to $0$ but Hofer energy that, while finite, is blowing up.  Perhaps the main complication with this is that in the Fish-Hofer setting, one can reduce to considering annular curves.  Here, we need to consider curves with much more complicated topology.   

\begin{proof}[Idea of the proof of Proposition~\ref{prop:bound}]

We will have to be very brief, and just give a flavor for the argument, referring the reader to \cite{cgp} for more details, or to the outline in \cite[Sec. 5.5]{cgp} for a briefer sketch.  

To start, one can assume that $C$ is contained in a region on the form $[a_0,a_1] \times Y$, where $a_0$ and $a_1$ are regular values of the height function; see [Sec. 5.5, paper].  

Now, in both our setting, and the Fish-Hofer setting, one starts by cutting the curve (after a slight perturbation, which we will ignore for expository simplicity) into small pieces called ``tracts".   A {\em tract} is a compact embedded surface with boundary and corners, such that the boundary decomposes into ``horizontal components", which are components that are level sets of the height function on the symplectization, and ``vertical components", which are components that are gradient flow lines of the height function on the symplectization.  These feature prominently in the Fish-Hofer setting: for example, they enjoy a crucial ``exponential area bound" which will allow us to bound the area of a particular tract from the length of its bottom horizontal components.

Now recall that Proposition~\ref{prop:bound} requires a bound on the area of the part of the curve in the radius $x$ ball.  The basic strategy, then, is to i) cut the curve into tracts, such that each boundary component is not too long; ii) bound the number of tracts covering the component of the curve in the radius $x$ ball; iii) bound the area of each such tract.    

The main ingredient for proving iii) from i) is the aforementioned exponential area bound: given this, we need only to bound the number of components of the boundary, and a combinatorial analysis of how the tracts can fit together to form $C$ accomplishes this by giving a bound in terms of $\chi(C)$.  

To prove ii) from i), the first point is that most of the tracts are in fact rectangular, i.e. have just two vertical boundary components; this follows from the same sorts of combinatorial topology arguments for deducing iii) from i).  Thus, we can restrict to rectangular tracts, and a key insight, taken from the Fish-Hofer theory, is that if $x$ is sufficiently small, the part of $C$ in the radius $x$ ball can not intersect both vertical boundary components of a single tract; this is proved by a triangle inequality argument.  Once we know that most tracts are rectangular, and the rectangular tracts do not have both vertical boundary components meeting the relevant parts of $C$, another combinatorial argument gives the desired bound.      

To prove i), recall that we can assume that $C$ is contained in $[a_0,a_1] \times Y$, and then we can define the ``lower" boundary $\partial^-_h C = \lbrace a_0 \rbrace Y \cap C$, defining the ``upper" boundary $\partial^+_h$ analogously.  Then the first point is that ``most" initial conditions on $\partial^-_h C$ have the property that the gradient flow of the height function meets $\partial^+_h C$: this parallels a fact in the Fish-Hofer theory and is proved similarly, so we will omit the proof.  Given this, to produce the desired tract decomposition, one chops up $\partial^-_h C$ into a finite number of pieces, each homeomorphic to a closed interval or a circle, with disjoint interiors and appropriate control on the lengths.  One now shows that each interval piece contains an initial condition with a flow line meeting $\partial^+_h C$; in fact, we can do better: we can arrange, by making use of an intricate lemma from the Fish-Hofer theory \cite{fh} that this flow line has a controlled length.  One now forms the desired tract decomposition by taking the connected components of the complements of these flow lines.        
 
% and the ambient geometry of $X$.   

% From here, things get quite involved, so we will only say a few more words, Once we have cut the curve into tracts, we need to bound the number of tracts covering the component of the curve in the radius $x$ ball, and the area of each tract.  For the area bound, we show that each boundary component of a tract has a controlled length, and then bound the number of components in terms of $\chi(C)$  (and the ambient geometry of $X$); from this bound on the length of the boundary, an argument using a further estimate from the Fish-Hofer theory gives a bound on the area.  The bound on the number of tracks is a kind of graph-theoretic argument, using broadly similar ideas.

\end{proof}

\subsection{Low action curves in low-dimensional symplectic dynamics}

In order to use Theorem~\ref{thm:limit}, we need a suitable sequence of curves $C_k$.  By the version of Theorem~\ref{thm:limit} mentioned above for parameters varying in a sequence, it suffices to find such curves for nondegenerate contact forms. We can find such a sequence using ideas related to those in the proof of Theorem~\ref{thm:hwz}.  Indeed, recall that we showed there that for a length $N$ $U$-sequence, the curves on average have action $O(1/\sqrt{N})$.  Thus, by taking larger and larger powers of $N$, we can similarly obtain an entire sequence of curves with actions tending to $0$.  

However, in order to apply Theorem~\ref{thm:limit}, we also need to know that the Euler characteristics of this sequence of curves does not tend to negative infinity.  In fact, we show that the Euler characteristic can be assumed bounded from below by $-2$:

\begin{proposition} \cite[Prop. 2.3]{cgp}
\label{prop:ECHcurves}
Let $\lambda$ be a nondegenerate contact form on a closed three-manifold $Y$ and assume that $\xi$ is torsion.  There always exists a sequence of $J$-holomorphic curves $C_k$ in $\mathbb{R} \times Y$ with actions tending to $0$ and Euler characteristic uniformly bounded.  In fact, the $C_k$ can be taken to pass through a marked point $(0,z)$ as long as $z$ is not on any Reeb orbit.
% from below. 

\end{proposition}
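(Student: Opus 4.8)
The plan is to realize the curves $C_k$ inside the $U$-map towers used in the proof of Theorem~\ref{thm:hwz}, and to extract from each tower a single curve which is simultaneously of low action and of small $J_0$ index; the small $J_0$ will then force the Euler characteristic bound through Proposition~\ref{prop:j0bound}. Since $\xi$ is torsion we take $\Gamma=0$; then the isomorphism of $ECH$ with Seiberg--Witten Floer cohomology and the Kronheimer--Mrowka computations show that the $U$-map $U\colon ECH_*(Y,\lambda,\xi)\to ECH_{*-2}(Y,\lambda,\xi)$ is an isomorphism for all gradings $*\ge g_0$, for some threshold $g_0$, and that $ECH_*$ is nonzero for arbitrarily large $*$. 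Fix the marked point $(0,z)$ with $z$ not on a Reeb orbit, a legitimate generic basepoint for the $U$-map, together with a generic admissible $J$. For each $N$ choose a $U$-tower of nonzero classes $\sigma(N),\sigma(N-1),\dots,\sigma(0)$ with $U\sigma(i)=\sigma(i-1)$ and $gr(\sigma(N))=g_0+2N$.

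Since $\lambda$ is nondegenerate, the minimax defining $c_{\sigma(N)}(\lambda)$ is attained, so $\sigma(N)$ is represented by a cycle of maximal action $c_{\sigma(N)}(\lambda)$; applying $U^N$ and using the standard $\mathbb{Z}_2$ telescoping argument produces $ECH$ generators $\alpha(N),\dots,\alpha(0)$ and, for each $i$, a $U$-map curve $C(i)\in\mathcal{M}^{I=2}(\alpha(i),\alpha(i-1))$ through $(0,z)$ (the same basepoint works at every stage because the $U$-map is defined with one fixed basepoint). Stokes' theorem gives $\mathcal{A}(C(i))=\mathcal{A}(\alpha(i))-\mathcal{A}(\alpha(i-1))\ge 0$, hence $\mathcal{A}(\alpha(i))\le\mathcal{A}(\alpha(N))\le c_{\sigma(N)}(\lambda)$, and by the Weyl law (Theorem~\ref{thm:weyl}) $c_{\sigma(N)}(\lambda)=O(\sqrt N)$. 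Therefore $\sum_{i=1}^N\mathcal{A}(C(i))=\mathcal{A}(\alpha(N))-\mathcal{A}(\alpha(0))=O(\sqrt N)$. Next, writing $S_N=\sum_i[C(i)]$ and using additivity of $I$, $J_0$, $c_\tau$, $CZ^{top}_\tau$ together with \eqref{eqn:diff},
\[ \sum_{i=1}^N J_0([C(i)]) = J_0(S_N) = 2N - 2c_\tau(S_N) - CZ^{top}_\tau(S_N). \]
Both terms on the right are $O(\sqrt N)$: the Conley--Zehnder term is controlled because for a fixed contact form $|CZ_\tau(\gamma^m)|$ is bounded by a constant times the action of $\gamma^m$, so it is $O(\mathcal{A}(\alpha(N))+\mathcal{A}(\alpha(0)))=O(\sqrt N)$, and the relative Chern class term is handled the same way using that the torsion hypothesis makes $c_\tau$ depend only on the endpoint orbit sets, exactly as in the estimates of \S\ref{sec:utowers}. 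Hence $\sum_{i=1}^N J_0([C(i)]) = 2N + O(\sqrt N)$. Moreover $J_0([C(i)])\ge 0$ for every $i$: by Proposition~\ref{prop:j0bound}, $J_0([C(i)])=-2+2g(C_1(i))+e(C(i))$, and $e(C(i))\ge 2$ since the embedded component $C_1(i)$, being a nonconstant finite-energy curve in the symplectization, has at least one positive and at least one negative end.

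Now run an elementary counting argument inside the $N$-tower, assuming none of the $C(i)$ is already the desired curve. The number of $i$ with $\mathcal{A}(C(i))>N^{-1/4}$ is at most $O(\sqrt N)\cdot N^{1/4}=O(N^{3/4})=o(N)$; and since each $J_0([C(i)])\ge 0$, the number of $i$ with $J_0([C(i)])\ge 3$ is at most $\tfrac13\sum_i J_0([C(i)]) = \tfrac{2N}{3}+O(\sqrt N)$. For $N$ large these two ``bad'' sets have fewer than $N$ elements in total, so there is an index $i_N$ with $\mathcal{A}(C(i_N))\le N^{-1/4}$ and $J_0([C(i_N)])\le 2$. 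For that curve, Proposition~\ref{prop:j0bound} and the inequality $e(C)\ge h(C_1)$ (the number of ends of $C_1$, which holds because each positive/negative end contributes at least $2n-1$ with $n$ the number of ends at that orbit) give $2g(C_1(i_N))+h(C_1(i_N))\le J_0([C(i_N)])+2\le 4$, hence $\chi(C_1(i_N))=2-2g-h\ge -2$. Taking $C_k:=C_1(i_{N_k})$ for any sequence $N_k\to\infty$ then yields a sequence of $J$-holomorphic curves with actions tending to $0$ and Euler characteristic bounded below by $-2$; and since $z$ lies on no Reeb orbit, the marked point $(0,z)$ cannot lie on the trivial-cylinder component $C_0$, so it lies on $C_1=C_k$.

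The main point requiring genuine work is the $O(\sqrt N)$ control of the $c_\tau(S_N)$ and $CZ^{top}_\tau(S_N)$ terms in the case where $c_1(\xi)$ is torsion but not necessarily zero (so $\xi$ need not be globally trivializable); this is exactly where the torsion hypothesis is used and follows the template of \S\ref{sec:utowers}, but needs the local control of rotation numbers of Reeb orbits by their periods and a careful choice of trivialization. Once those bounds are in hand, the remainder is bookkeeping with the Weyl law and the counting above, together with the standard structural facts about $U$-map curves recalled in \S\ref{sec:add} and Proposition~\ref{prop:j0bound}.
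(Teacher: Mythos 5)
Your overall architecture is the same as the paper's: take long $U$-towers, use the Weyl law (Theorem~\ref{thm:weyl}) and Stokes to make the total action $O(\sqrt N)$, use \eqref{eqn:diff} to convert $I(S_N)=2N$ into a bound on $\sum_i J_0([C(i)])$, and then convert small $J_0$ into a bound on genus and number of ends via Proposition~\ref{prop:j0bound}, so that $\chi\ge -2$ for some low-action curve through $(0,z)$. The bookkeeping at the end (counting bad indices, $e(C)\ge h(C_1)$, the marked point avoiding $C_0$ because $z$ is off the Reeb orbits) is fine, up to one small slip: your claim $J_0([C(i)])\ge 0$ rests on $C_1$ having both a positive and a negative end, which is not automatic for high-action levels ($C_1$ could be a plane, with all negative ends carried by $C_0$); this is harmless, since $e(C)\ge 0$ already gives $J_0\ge -2$ and your counting survives with the weaker bound, or one can restrict the count to the low-action levels first.

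The genuine gap is the step you label as ``the main point requiring genuine work'' and then dispose of in one sentence: the $O(\sqrt N)$ bounds on $CZ^{top}_\tau(S_N)$ and $c_\tau(S_N)$. These are precisely what the cited proposition has to prove, and they do \emph{not} follow from ``the template of \S\ref{sec:utowers}'': the estimates there are stated under the hypothesis of finitely many simple Reeb orbits (or perturbations thereof), and the paper explicitly notes that removing that hypothesis is the content of the result you are proving. Your justification --- ``for a fixed contact form $|CZ_\tau(\gamma^m)|$ is bounded by a constant times the action'' --- presupposes a trivialization $\tau$ over \emph{all} (possibly infinitely many) simple orbits for which the rotation numbers are uniformly controlled by the periods; for an arbitrary orbit-by-orbit choice of $\tau$ this constant need not exist, and changing $\tau$ trades $CZ_\tau$ against $c_\tau$, so the observation that torsion $c_1(\xi)$ makes $c_\tau$ depend only on the endpoints does not by itself yield an $O(\sqrt N)$ bound. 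The paper's proof supplies exactly this missing input: when $\xi$ is trivial one takes a global trivialization (killing $c_\tau$) and proves that the rotation of the linearized flow in that global frame is proportional to the period and dominates the Conley--Zehnder index; when $\xi$ is torsion but nontrivial one passes to a trivial tensor power $\xi\otimes\cdots\otimes\xi$, extends the \emph{unitary part} of the linearized flow (via polar decomposition) to that bundle, shows it still bounds the Conley--Zehnder index, and observes that the $I-J_0$ term on the tensor power is $n$ times the one on $\xi$. Without this (or an equivalent) argument, your proof assumes its key estimate rather than proving it.
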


\begin{proof}[Idea of the proof]
For brevity we will just give a sketch, referring to \cite{cgp} for the details.

The idea for this builds on the ideas mentioned in \S\ref{sec:utowers}.  Recall that there, one related the ECH index to the $J_0$ index, via the formula \eqref{eqn:diff}, and showed that the two are approximately equal, under the assumption of finitely many simple Reeb orbits.  What is required to bring this to bear in the present situation is to drop the hypothesis of finitely many Reeb orbits.   As in \S\ref{sec:utowers}, there are two terms in \eqref{eqn:diff} that need to be dealt with: a Chern class term and a Conley-Zehnder index term.  

Let us first explain how to do this when the contact structure is trivial.  Here, one can take a global trivialization, so that the Chern class term vanishes, and then the Conley-Zehnder term can be handled by showing that the rotation of the linearized flow, suitably interpreted, is proportional to the length and also estimates the Conley-Zehnder index.  One then applies the Weyl Law, Theorem~\ref{thm:weyl}.
% Thm.xx.
% is crucial for this.   %bounding the rate of change of the rotation of the linearized flow as one goes along all trajectories (not just the periodic ones). 

The torsion, but not trivial, case is harder.  We use the following trick.  The $n^{th}$ tensor power
$\xi \otimes \ldots \otimes \xi$ of the contact structure, for suitable $n$, will be trivial.  In general, the linearized flow on $\xi$ does not extend to $\xi \otimes \ldots \otimes \xi$.  However, one can take the unitary part of the linearized flow after polar decomposition.  One can show that this still bounds the Conley-Zehnder index.  On the other hand, the unitary part also extends to  $\xi \otimes \ldots \otimes \xi$.  Now, $I - J_0$, by \eqref{eqn:diff}, makes sense  for the bundle $\xi \otimes \ldots \otimes \xi$ even though it is not a contact structure.  Then the same argument in the case where the contact structure is trivial shows that this term  on the $n^{th}$ tensor power,
% $\xi \otimes \ldots \otimes \xi$ 
can be controlled by the Weyl law; on the other hand, its relation to the term on $\xi$ is just multiplication by $n$.  Thus we obtain the desired control.

\end{proof}

\subsection{Putting it all together}

Let us now explain how to combine the previous results to prove the Le Calvez - Yoccoz property.

To proceed, we need the following important observation:

\begin{lemma} \cite[Prop. 1.9]{cgp}
\label{lem:conn}
The limit set $\mathcal{L}$ is connected in the Hausdorff topology, after passing to a subsequence if necessary.  
\end{lemma}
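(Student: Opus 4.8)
The plan is to exhibit $\mathcal{L}$ as a Hausdorff limit of connected sets and then invoke the classical fact that a Hausdorff limit of connected sets is connected. Throughout I would work in the space $\mathcal{K}$ of closed subsets of $(-1,1)\times Y$ with the local Hausdorff (Chabauty--Fell) topology; since $(-1,1)\times Y$ is locally compact, separable and metrizable, $\mathcal{K}$ is compact and metrizable, and hence so is the hyperspace of compact subsets of $\mathcal{K}$.

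First I would fix a single curve $C_k$ of the sequence and study its ``slice map''. Using the reduction in the proof of Proposition~\ref{prop:bound}, one may assume $C_k$ is a compact $J$-holomorphic curve with boundary on $\{a_0\}\times Y$ and $\{a_1\}\times Y$ for regular values $a_0<a_1$ of the $\mathbb{R}$-coordinate $s$, so that the relevant heights $a$ range over a connected interval $I_k$. Let $\Phi_k\colon I_k\to\mathcal{K}$ send $a$ to the height two slice $C_k\cap([a,a+2]\times Y)$, translated into $(-1,1)\times Y$. The key claim is that $\Phi_k$ is continuous, so that $\mathcal{L}_k:=\overline{\Phi_k(I_k)}\subset\mathcal{K}$ is connected (closure of the continuous image of an interval) and compact. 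The only way $\Phi_k$ could jump would be through a discontinuous entry of a piece of $C_k$ across the top face $\{s=a+2\}$, or a discontinuous exit across the bottom face $\{s=a\}$, of the sliding window. Here one uses that $s$ is subharmonic on any $J$-holomorphic curve in a symplectization (the maximum principle: no interior local maximum of $s$), so that every connected component of a slice of $C_k$ must reach the top face of its window --- there are no ``floating'' sub-pieces that could suddenly disappear; combined with the local-area bound of Proposition~\ref{prop:bound} and the usual monotonicity lemma for $J$-holomorphic curves, which for a fixed $k$ control how fast a slice can move in the Hausdorff metric, this gives continuity of $\Phi_k$.

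Next I would assemble the $\mathcal{L}_k$. Unwinding the definition of the limit set, $S\in\mathcal{L}$ exactly when $S$ is a subsequential Hausdorff limit of height two slices $S_{k_j,a_j}$ with $k_j\to\infty$; since each such slice lies in $\mathcal{L}_{k_j}$, the set $\mathcal{L}$ is the Kuratowski upper limit of the sequence $(\mathcal{L}_k)$. Applying the Blaschke selection theorem in the hyperspace of $\mathcal{K}$, I would pass to a subsequence along which $\mathcal{L}_{k_j}$ converges, in the Hausdorff metric on $\mathcal{K}$, to a compact set $\mathcal{L}_\infty$; a short diagonal argument (each element of $\mathcal{L}_{k_j}$ is itself a limit of genuine slices of $C_{k_j}$) identifies the limit set of this subsequence with $\mathcal{L}_\infty$. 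Finally $\mathcal{L}_\infty$ is connected: if it split as $A\sqcup B$ with $A,B$ closed, nonempty and a distance $3\delta>0$ apart, then for $j$ large the connected set $\mathcal{L}_{k_j}$ would be contained in the disjoint open union $N_\delta(A)\sqcup N_\delta(B)$ while meeting both, a contradiction. Hence $\mathcal{L}=\mathcal{L}_\infty$ is connected.

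The main obstacle is the continuity of the slice map $\Phi_k$: controlling precisely how a height two slice of a fixed low-action $J$-holomorphic curve changes as the window slides, and in particular ruling out a slice that ``jumps'' when a sub-piece of the curve leaves the window. This is exactly the step where one must combine the maximum principle for the $\mathbb{R}$-coordinate with the local-area bound of Proposition~\ref{prop:bound} (and the monotonicity lemma); everything afterward is soft point-set topology --- Blaschke selection, Kuratowski limits, and connectedness of Hausdorff limits of connected sets. The details are carried out in \cite{cgp}.
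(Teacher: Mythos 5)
The paper itself gives no proof of Lemma~\ref{lem:conn} --- it only cites \cite[Prop.\ 1.9]{cgp} and records the remark that height-two (rather than height-zero) slices are what make connectedness true --- so there is nothing to compare line by line; judged on its own, your architecture (for each $k$ a connected, compact family $\mathcal{L}_k$ of slices of $C_k$, then Blaschke selection in the hyperspace and the classical fact that a Hausdorff limit of connected compacta is connected, which is exactly where the ``after passing to a subsequence'' enters) is the natural route and is consistent with the paper's remark. The soft part at the end (Kuratowski upper limit, diagonal identification of $\mathcal{L}$ with $\mathcal{L}_\infty$, connectedness of the limit) is fine.

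The place where your justification is off target is the crucial continuity of the slice map $\Phi_k$. In the Chabauty--Fell topology you fixed at the outset, continuity is essentially automatic and needs no geometric input: on any compact subset of $(-1,1)\times Y$, which stays a definite distance from the ends $s=\pm 1$, the translated slices for nearby window positions are literally small translates of one another, and the two defining conditions of Fell convergence follow from $C_k$ being a closed subset of $\mathbb{R}\times Y$. Conversely, the tools you invoke do not address the genuine danger, and would not rescue continuity if the ``Hausdorff topology'' of \cite{cgp} were a metric that sees the ends of the window uniformly: subharmonicity of $s$ (no interior local maximum) does show every component of a slice reaches the top face, so nothing ``floats away'' through the interior, but it does \emph{not} forbid a new component entering through the top of the window at a local \emph{minimum} of $s$ located far away in $Y$ from the rest of the slice; such a cap makes the slice jump by a definite Hausdorff distance, and neither monotonicity nor the local-area bound of Proposition~\ref{prop:bound} prevents this. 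What neutralizes it is precisely that the window is open and the convergence is local near $s=\pm1$ --- which is the content of the remark following the lemma about why fixed-level slices would give a disconnected limit set. So you should pin down that your topology agrees with the one used in \cite{cgp}, and attribute the continuity to that mechanism rather than to the maximum principle. Two smaller points: do not truncate $C_k$ to $[a_0,a_1]\times Y$ --- that reduction belongs to the local-area estimate, whereas here the limit set is built from slices of the full proper curves at \emph{all} heights (the slices at large $|s|$, which limit to products over Reeb orbits, are exactly what is used later in the proof of Theorem~\ref{thm:nontriv}); just take $I_k=\mathbb{R}$, where the same soft argument gives continuity. And your identification of $\mathcal{L}$ with the upper Kuratowski limit of the $\mathcal{L}_k$ tacitly reads the definition as requiring $k_j\to\infty$ in the approximating slices, which is the intended reading and worth stating.
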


The lemma is the motivation for taking slices of the curves of non-zero height.  In a first attempt, one might try to take slices at fixed values of the $\mathbb{R}$ co-ordinate in defining the limit set.  However, then the analogous limit set would not be connected.

The idea then is as follows.  Say that a closed non-trivial invariant set $S$ is {\em not isolated} if for every open set $U$ containing $S$, there is a point $p \in U \setminus S$ whose orbit closure stays in $S$.  (Otherwise, we say that $S$ is isolated.)  Then Theorem~\ref{thm:ley} follows from the following result of independent interest:

\begin{theorem}
\label{thm:nontriv}
Let $(Y,\lambda)$ be a closed three-manifold with contact form and assume that $\xi$ is torsion.  Any closed non-trivial invariant set containing the union of the periodic orbits is not isolated.
\end{theorem}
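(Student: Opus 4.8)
The plan is to argue by contradiction, assuming $K$ is isolated, and to use the $J$-holomorphic curves of Proposition~\ref{prop:ECHcurves} together with the compactness Theorem~\ref{thm:limit} and the connectedness Lemma~\ref{lem:conn} to derive a contradiction by a separation argument. So suppose there is an open set $U$ with $K\subseteq U$ for which $K$ is the maximal invariant set of the Reeb flow inside $U$; this property passes to any smaller neighbourhood of $K$, so I may shrink $U$ whenever convenient. Since $K$ is non-trivial and $Y$ is connected, $U\setminus K$ is open and nonempty; fix $z\in U\setminus K$.

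First I would build a connected continuum of $\lambda$-invariant sets near $z$. As $\lambda$ may be degenerate, pick nondegenerate $\lambda_n\to\lambda$ in $C^{\infty}$; for each $n$, Proposition~\ref{prop:ECHcurves} gives $J$-holomorphic curves for $\lambda_n$ with action tending to $0$, Euler characteristic at least $-2$, and through a marked point which (the points on $\lambda_n$-Reeb orbits forming a meagre set) may be taken in $U\setminus K$ and converging to $z$. A diagonal choice produces a single sequence $C_k$ of $J_k$-holomorphic curves for $\lambda_{n(k)}\to\lambda$ with $\mathcal A(C_k)\to 0$, $\chi(C_k)\geq -2$, and $(0,z_k)\in C_k$, $z_k\to z$. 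By the parameter-dependent form of Theorem~\ref{thm:limit}, every element of the limit set $\mathcal L$ of the $C_k$ has the form $(-1,1)\times K_S$ with $K_S\subseteq Y$ a nonempty compact Reeb-invariant set for $\lambda$; and by Lemma~\ref{lem:conn}, after passing to a subsequence the family $\{K_S : S\in\mathcal L\}$ is connected in the Hausdorff topology on compact subsets of $Y$. Taking Hausdorff limits (along that subsequence) of the height-two slices of $C_k$ centred at height $0$, which all contain $z_k$, yields a member $K_{S_z}$ of this family with $z\in K_{S_z}$; since $K_{S_z}$ is invariant and $z\notin K$, maximality of $K$ in $U$ forces $K_{S_z}\not\subseteq U$.

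The topological core is then a short separation argument. Inside the connected family $\{K_S : S\in\mathcal L\}$ set $A=\{K_S : K_S\subseteq U\}$ and $B=\{K_S : K_S\cap(Y\setminus K)\neq\emptyset\}$. Both are open (being contained in the open set $U$, respectively meeting the open set $Y\setminus K$, are open conditions on compact sets). They are disjoint: if $K_S\subseteq U$ then, $K_S$ being invariant, maximality gives $K_S\subseteq K$, incompatible with meeting $Y\setminus K$. They cover the family: if $K_S\notin B$ then $K_S\subseteq K\subseteq U$, so $K_S\in A$. And $B\neq\emptyset$ because $K_{S_z}\in B$. Hence, by connectedness, it suffices to show $A\neq\emptyset$, i.e.\ that $\mathcal L$ contains some $(-1,1)\times K_{S_0}$ with $K_{S_0}\subseteq K$: that contradicts the separation and proves the theorem.

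Producing such an $S_0$ is the step I expect to be the main obstacle, and it is where the hypothesis that $K$ contains every periodic orbit is used. The idea is to choose the $U$-towers underlying Proposition~\ref{prop:ECHcurves} so that the relevant curves $C_k$ have an end — say a negative end — at an orbit set whose action is bounded uniformly in $k$; one can arrange this using a minimal-action representative of a fixed ECH class $\sigma_0$ together with the fact that the ECH spectral invariant $c_{\sigma_0}(\lambda_{n(k)})$ stays bounded as $\lambda_{n(k)}\to\lambda$, by the $C^{0}$-continuity recalled in \S\ref{sec:add}. Sliding the height-two window of $C_k$ far into that end, the slice becomes exponentially close to $(-1,1)$ times a tubular neighbourhood of a $\lambda_{n(k)}$-Reeb orbit set of bounded action; by Arzel\`a--Ascoli these Reeb orbit sets subconverge, as $\lambda_{n(k)}\to\lambda$, to honest periodic orbits of $\lambda$, hence into $K$. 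Choosing the window positions suitably along $k$, the corresponding Hausdorff limit is the desired $S_0$. The delicate point — the reason I flag this as the crux — is that curves with action tending to $0$ typically have long asymptotic orbits whose Hausdorff limits are merely Reeb flow lines, not periodic orbits; so one must genuinely engineer the towers (or argue more indirectly that a connected continuum of $\lambda$-invariant sets joining $K_{S_z}$ to the short periodic orbits cannot avoid $K$) so as to keep one end anchored in $K$, and do so uniformly and without any nondegeneracy or finiteness hypothesis on $\lambda$.
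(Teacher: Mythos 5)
Your skeleton is the paper's: curves from Proposition~\ref{prop:ECHcurves} for nondegenerate approximations $\lambda_{n(k)}\to\lambda$, through marked points off the (measure-zero) periodic-orbit set and converging to a point $z\in U\setminus K$; the parametric form of Theorem~\ref{thm:limit} to see that every element of the limit set is $(-1,1)\times(\text{invariant set})$; Lemma~\ref{lem:conn} for connectedness; and then a separation argument between an element through $z$ and an element contained in $K$. Your clopen-partition phrasing of that last step is fine and is equivalent to the paper's ``intermediate elements arbitrarily close to $\Lambda$'' conclusion. The genuine gap is exactly the step you flag: you never construct the anchor element $S_0$ with $K_{S_0}\subseteq K$ (i.e.\ you never show $A\neq\emptyset$), and the mechanism you propose for it does not work as stated. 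The low-action, bounded-$\chi$ curve through the marked point is extracted from a length-$N_k$ $U$-tower by the averaging argument of \S\ref{sec:utowers}; it sits at an uncontrolled level of that tower, so the only a priori bound on the actions of its asymptotic orbit sets is the Weyl-law bound $O(\sqrt{N_k})$ from Theorem~\ref{thm:weyl}, which diverges. Fixing a bottom class $\sigma_0$ does not repair this: $c_{\sigma_0}(\lambda_{n(k)})$ is only a \emph{lower} bound for the maximal action of a generator in any cycle representing $\sigma_0$, and the cycle obtained by pushing a minimal representative of $U^{-N_k}\sigma_0$ down the tower need not be anywhere near action-minimizing, so even the generator reached at the bottom --- let alone the ends of the particular curve you selected, which is generally not adjacent to the bottom --- carries no uniform action bound. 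Since you yourself leave this ``engineering'' unresolved, the proof is incomplete at its decisive point.

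Note also that you made this step harder than the paper does, and in doing so missed where the hypothesis actually does the work. The paper's anchor needs no action bound at all: every end of every $C_k$ is asymptotic to a union of \emph{periodic orbits}, and by hypothesis all periodic orbits lie in $K$; sliding the height-two window far into an end (depth depending on $k$), the slices Hausdorff-converge to $(-1,1)$ times a limit of unions of closed orbits, which --- when the curves are holomorphic for $\lambda$ itself, e.g.\ for nondegenerate $\lambda$ --- is automatically a closed subset of $K$, period bounds being irrelevant. So your worry about ``long asymptotic orbits'' is vacuous in that case, and you could at least have closed the nondegenerate case cleanly. The worry does have content when the asymptotic orbits belong to $\lambda_{n(k)}$ rather than $\lambda$; the survey's sketch passes over this and refers to \cite{cgp} for the details, but the justification you offer (bounded-action ends via a fixed bottom ECH class and a minimal-action representative) is not supported by anything you write and, for the reasons above, fails as stated. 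In short: right architecture and a correct identification of where the hypothesis enters, but a genuine gap at the anchor step $A\neq\emptyset$.
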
  

The picture one should have given Theorem~\ref{thm:nontriv} is then that the invariant sets radiate outward from the periodic orbits.  

Here is the basic idea for how to prove the theorem, given Theorem~\ref{thm:limit}, Proposition~\ref{prop:ECHcurves} and Lemma~\ref{lem:conn}; for more details, see \cite[Sec. 2]{cgp}.

\begin{proof}[Idea of the proof.]

Let $\Lambda$ be a closed non-trivial invariant set containing the union of the periodic orbits.  Each individual curve $C_k$ is asymptotic to periodic orbits.  So, if we look at height $2$ slices with sufficiently large $s$, these are very close to periodic orbits.   Thus, some element of the limit set projects to a subset of the closure of the set of periodic orbits.  On the other hand, the limit set is connected; and, we can place the marked point $(0,z)$ anywhere not on a periodic orbit.  In this way, we obtain elements of the projection of the limit set to $Y$ arbitrarily close to $\Lambda$.   
\end{proof}

%\subsubsection*{The key estimate}

\section{Bonus: Seiberg-Witten equations and Weyl laws}
\label{sec:weyl}

We now explain a little bit\footnote{During the lectures, we did not have time to go into this material 
at all, though there was a question about it during the question and answer session.} of the story behind the two Weyl laws that figured prominently in the proofs of our two main theorems.  (Recall that to prove the Simplicity Conjecture, one just needed a special case of the surface Weyl law, which as we discussed can be established by direct computation; however, to prove the Closing Lemma, we do need the more general statement that we discuss here.)

\subsection{The contact case}

We start by explaining the main ideas behind the proof of Theorem~\ref{thm:weyl}. 

\subsubsection*{Preliminaries}

The proof uses the Seiberg-Witten invariants.   In our case, we are interested in applying the Seiberg-Witten equations to better understand $(Y,\lambda)$. More precisely, we will be interested in the following system of equations:
 \begin{equation}
 \label{eqn:3dsw}
 \star F_A = r( \langle \rho(\cdot) \psi, \psi \rangle - i \lambda) + i \bar{\omega}, \quad \quad D_A \psi = 0.
\end{equation}
These are equations for a pair $(A,\psi)$, which we call a {\em configuration}.  Let us explain the notation.  (We will be a bit brief, referring to \cite{ECHasymptotics} for more detail.)  Here, $A$ is a Hermitian connection on a Hermitian line bundle $E$ and $F_A$ denotes its curvature.  The line bundle $E$ induces a spin-c structure as follows:
  %The {\em three-dimensional Seiberg-Witten equations} are equations for a pair $(A,\Psi)$, where $\Psi$ is a section of a (complex) rank $2$ vector bundle $S$ underlying a spin-c struucture, and $A$ is a Hermitian connection on the determinant line bundle $\Lambda^2 S$.  
after a choice of compatible metric, the contact form $\lambda$ determines a spin-c structure 
\[ s_{\xi} = \mathbb{C} \oplus \xi\]
and then $E$ determines a spin-c structure via tensor product with $s_{\xi}$.  The Hermitian connection $A$ then induces a spin-c connection, and $D_A$ denotes the Dirac operator associated to this spin-c connected; $\psi$ is a section of this spin-c structure and $\rho$ denotes Clifford multiplication.  Finally, $r \ge 1$ is a positive real number, and $\bar{\omega}$ is a harmonic $1$-form whose curvature represents $\pi \star c_1(\xi)$.  (One also might need to take a further small perturbation of the equations to obtain transversality, but we have omitted this for expository simplicity.)

The reason we are interested in \eqref{eqn:3dsw} is as follows.  There is an {\em energy} of a configuration
\begin{equation}
\label{eqn:energy}
E(A,\Psi) := i\int_Y \lambda \wedge F_A 
\end{equation}
For large $r$, it turns out that solutions of bounded energy concentrate along sets of Reeb orbits.  In fact, we can do better:

\begin{prop}
\label{prop:echfromsw}
Let $\sigma$ be a nonzero class in $ECH(Y,\lambda)$ and let $c_{\sigma}(\lambda)$ denote the corresponding spectral invariant.  Then there exists a family of solutions $(A(r),\psi(r))$ to \eqref{eqn:3dsw} such that
\[ \lim_{r \to \infty} \frac{E(A(r))}{2 \pi} = c_{\sigma}(\lambda).\]
%ith the following properties:
%\begin{itemize}
%\item $(A(r),\psi(r))$ is piecewise smooth in $r$.
%\item 
%\end{itemize}
\end{prop}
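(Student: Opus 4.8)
The plan is to deduce Proposition~\ref{prop:echfromsw} from a \emph{filtered} refinement of the isomorphism \eqref{eqn:echsw}, combined with the concentration properties of solutions of \eqref{eqn:3dsw} for large $r$. Recall that $c_\sigma(\lambda)$ is a min--max over the action filtration of $ECH$: it is the least $L$ with $\sigma$ in the image of $ECH^L(Y,\lambda,\Gamma) \to ECH(Y,\lambda,\Gamma)$. On the Seiberg--Witten side there is an analogous filtration of $\widehat{HM}$ by the value of the perturbed Chern--Simons--Dirac functional $\mathbf{a}_r$ whose critical points are the solutions of \eqref{eqn:3dsw}; at a solution, this value is controlled, up to lower order terms, by the energy $E$ of \eqref{eqn:energy}. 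The key point is that \eqref{eqn:echsw} carries the one filtration to the other up to an error tending to $0$ as $r \to \infty$; granting this, a min--max representative on the $\widehat{HM}$ side produces the desired family $(A(r),\psi(r))$.

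More precisely, I would argue as follows. First, recall from Taubes' proof of \eqref{eqn:echsw}, and its sharpening by Hutchings--Taubes, that for $r$ large (depending on an action cutoff $L$), after a small abstract perturbation of \eqref{eqn:3dsw} for transversality, the irreducible solutions of energy $\le 2\pi L$ are in bijection with the ECH generators of action $\le L$, and a solution $(A,\psi)$ corresponding to the orbit set $\alpha$ satisfies
\[ \left| \frac{E(A)}{2\pi} - \mathcal{A}(\alpha) \right| \le \delta(r), \qquad \delta(r) \to 0 \ \text{as}\ r\to\infty. \]
This is the analytic heart of ``Seiberg--Witten $\Rightarrow$ Gromov'' in dimension three: bounded-energy solutions concentrate, as $r\to\infty$, on a set of Reeb orbits, $\tfrac{i}{2\pi}F_A$ converges as a current to the class dual to that orbit set, and hence $E(A) = i\int_Y \lambda \wedge F_A \to 2\pi \int_\alpha \lambda$. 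One checks in addition that the chain-level differentials, and the chain homotopies witnessing the invariance of \eqref{eqn:echsw}, may be taken to decrease, respectively preserve up to $\delta(r)$, the filtration level. This yields filtered isomorphisms $\widehat{HM}^{\,2\pi L + \delta(r)}_r(Y,s_\Gamma) \cong ECH^L(Y,\lambda,\Gamma)$ compatible with the inclusion-induced maps.

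Next, set $c^{SW}_\sigma(r) := \inf\{ L : \sigma \in \operatorname{im}(\widehat{HM}^{L}_r \to \widehat{HM}_r)\}$, where $\sigma$ is transported via \eqref{eqn:echsw}. Comparing this min--max with the one defining $c_\sigma(\lambda)$ and using the filtered isomorphism above gives $c^{SW}_\sigma(r)/(2\pi) \to c_\sigma(\lambda)$ as $r\to\infty$. Finally, an Arzel\`a--Ascoli argument for the $\mathbf{a}_r$-gradient flow, using the a priori energy bounds for Seiberg--Witten solutions, shows that this infimum is attained and is a critical value of $\mathbf{a}_r$; hence there is an honest solution $(A(r),\psi(r))$ of \eqref{eqn:3dsw} realizing the filtration level $c^{SW}_\sigma(r)$, with $E(A(r))/(2\pi) = c^{SW}_\sigma(r)/(2\pi) + o(1)$ (the $o(1)$ absorbing the gap between $\mathbf{a}_r$ and $E$ and the effect of the perturbation). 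Combining with the previous sentence gives $E(A(r))/(2\pi) \to c_\sigma(\lambda)$, as required.

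The main obstacle is the filtered refinement of \eqref{eqn:echsw}: upgrading Taubes' isomorphism to one of filtered groups whose filtrations match asymptotically in $r$. This needs (i) quantitative concentration estimates bounding $|E(A)/(2\pi) - \mathcal{A}(\alpha)|$ uniformly over solutions of bounded energy; (ii) control of the transversality perturbations so that energies move by at most $\delta(r)$; and (iii) a check that the cobordism maps used to prove invariance of $\widehat{HM}$ respect the filtrations up to the same error. These are precisely the ingredients developed in the Hutchings--Taubes ``filtered'' series, and supplying them is the substantial part of the work; the min--max bookkeeping and the extraction of an actual solution at the min--max level are, by comparison, formal.
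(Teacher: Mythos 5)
Your plan is essentially the approach the paper takes: transport $\sigma$ across the isomorphism \eqref{eqn:echsw}, run a min--max on the Seiberg--Witten side, and use Taubes' concentration estimates (for large $r$ the curvature of a bounded-energy solution concentrates along an orbit set of action approximately $E/2\pi$) together with a filtered refinement of Taubes' isomorphism to identify the limiting energy with $c_\sigma(\lambda)$; the ``filtration fiddling'' you defer to the Hutchings--Taubes filtered theory is exactly what the paper invokes via \cite[Prop. 2.6]{ECHasymptotics}. The one substantive organizational difference is that you filter $\widehat{HM}_r$ by the energy $E$ at each fixed $r$, whereas the paper runs the min--max with the perturbed Chern--Simons--Dirac functional $F$, viewed as a one-parameter family in $r$. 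Be aware that the Seiberg--Witten differential decreases $F$, not $E$, so your energy-filtered group $\widehat{HM}^L_r$ is only well defined for $r$ large relative to $L$, via the comparison of $E$, $cs$ and $F$ (together with the spectral-flow bound on $cs$); this is precisely what the filtered Hutchings--Taubes technology supplies, so it is a citation rather than a gap, but it should be stated. The paper's choice of $F$ has an additional payoff: the min--max critical values are piecewise smooth and continuous in $r$, which is the content of Proposition~\ref{prop:piecewise} and is what the Weyl-law argument uses downstream (since $E$ itself need not vary continuously in $r$); your version yields Proposition~\ref{prop:echfromsw} but would have to be rerun with $F$ to obtain that continuity.
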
 

Proposition~\ref{prop:echfromsw} is a kind of quantitative refinement of a piece of Taubes' isomorphism \cite{Taubes-I}.  We will say a few words about the proof below and first discuss its significance.  To use Proposition~\ref{prop:echfromsw}, the basic idea is as follows:
\begin{enumerate}[(a)]
\item Show that for some small enough $r_0$, we can determine $E(A(r_0),\psi(r_0))$ up to some small error.
\item Show that $E$ does not change too much as $r$ varies.
\end{enumerate}
To implement this, a basic challenge is that $E(A(r),\psi(r))$ can not in general be assumed continuous as $r$ varies.  To get around this, it is helpful to introduce the {\em perturbed Chern-Simons-Dirac functional} 
\[ F(A,\Psi) := \frac{1}{2}( cs(A) - r E(A)) + r \int_Y \langle D_A \psi, \psi \rangle ,\]
where $cs$ denotes the {\em Chern-Simons} functional 
\[ cs(A): = - \int_Y (A - A_E) \wedge (F_A + F_{A_E} - 2 i \star \bar{\omega}),\]
which requires a choice of reference connection $A_E$ on $E$.   To motivate the definition of $F$, we remark that solutions to \eqref{eqn:3dsw} are critical points of $F$.   In fact, it is quite useful in what follows to think of $F$ as a one-parameter family of functions, depending on the parameter $r$.

We then have the following refinement of Proposition~\ref{prop:echfromsw}:

\begin{prop}
\label{prop:piecewise}
We can assume that the solutions $(A(r),\psi(r))$ in Proposition~\ref{prop:echfromsw} have the property that $ f(r): = F(A(r),\psi(r))$ is piecewise smooth and continuous.  
\end{prop}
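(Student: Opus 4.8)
The plan is to upgrade the family $(A(r),\psi(r))$ constructed in Proposition~\ref{prop:echfromsw} so that it varies piecewise-smoothly in $r$, and then to verify that $f(r) = F(A(r),\psi(r))$ is piecewise smooth and continuous. The starting observation is that solutions of \eqref{eqn:3dsw} are exactly the critical points of the one-parameter family $F=F_r$, so what we are really asking for is a continuous selection of critical points. The natural tool is the implicit function theorem applied to the Seiberg-Witten map, viewed as a section of an appropriate Hilbert bundle over the configuration space, with $r$ as a parameter. Wherever the relevant solution is nondegenerate (i.e. the linearized Seiberg-Witten operator, suitably gauge-fixed, is invertible), the IFT produces a unique smooth local branch $r\mapsto (A(r),\psi(r))$; since $F_r$ depends smoothly on $r$ and on the configuration, $f(r)$ is then smooth on that interval.

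First I would recall, from Taubes' work and the construction underlying Proposition~\ref{prop:echfromsw}, that for generic perturbations the solutions relevant to a fixed ECH class are nondegenerate for all but finitely many (or at worst a measure-zero, closed) set of values of $r$ in any compact interval $[r_0,\infty)\cap[r_0,R]$. At a nondegenerate $r$, run the IFT to get a smooth branch; these branches patch together over the complement of the bad set. The second step is to handle the bad values $r_1 < r_2 < \cdots$: here the family need only be \emph{continuous}, not smooth, so it suffices to show that as $r\to r_i$ from either side the solutions converge (after gauge transformation) to a solution at $r_i$, and that the $F$-values converge. Convergence of solutions of bounded energy is exactly the compactness built into Seiberg-Witten Floer theory (Kronheimer--Mrowka \cite{KMbook}): a bounded-energy family has subsequential limits which are solutions, and the energy bound is uniform here because $E(A(r))/2\pi \to c_\sigma(\lambda)$ along the whole family. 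The only subtlety is to pass from convergence of \emph{some} subsequence to convergence of the chosen branch; one argues that the two one-sided limits are solutions at $r_i$ lying in the same component/grading, and then reselects the branch through whichever limit keeps the map continuous. Continuity of $f$ then follows because $F_r$ is continuous jointly in $r$ and the configuration (in the relevant Sobolev topology) and is gauge-invariant up to the period terms, which are locally constant along a continuous path.

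The main obstacle I anticipate is precisely the behavior at the degenerate parameter values: in principle the number of solutions (in the fixed spin-c structure and within the action window tracking $\sigma$) could jump, branches could collide, and one must make a coherent choice. The way to control this is to exploit that we are tracking a \emph{homology class} $\sigma$, not an individual solution — the spectral number $c_\sigma(\lambda)$ is defined by a min-max, so at a crossing one can re-define the branch to follow the min-max optimizer, and a standard argument (continuity of min-max values of a continuous family of functionals over a fixed filtration) shows the resulting $f(r)$ stays continuous even if the underlying solution changes. A secondary technical point is ensuring uniform elliptic/energy estimates so that the whole branch lives in one Sobolev ball, which is where the uniform energy bound from Proposition~\ref{prop:echfromsw} and Taubes' a priori estimates \cite{Taubes-I, ECHasymptotics} are used. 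Modulo these inputs — all of which are available in the literature cited — the piecewise-smooth continuous structure of $f$ follows, and this is all that is needed for steps (a) and (b) of the Weyl law argument outlined above.
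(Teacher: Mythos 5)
Your proposal is essentially the paper's argument: the paper also treats Seiberg--Witten Floer cohomology as the formal Morse homology of the one-parameter family $F_r$ and obtains $(A(r),\psi(r))$ by the min-max selection over representatives of the class corresponding to $\sigma$, which gives smooth branches of critical points away from a discrete set of non-Morse values of $r$ and continuity of the critical value across them. The only difference is one of emphasis: rather than first building branches by the implicit function theorem and then ``upgrading'' a pre-given family, the paper defines the family by min-max from the outset (this is how Propositions~\ref{prop:echfromsw} and~\ref{prop:piecewise} are proved together), with the compactness and a priori estimates of Taubes and \cite{KMbook} playing the same role you assign them.
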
 

\begin{proof}[Idea of the proofs of Proposition~\ref{prop:echfromsw} and Proposition~\ref{prop:piecewise} ] 

The proofs build on Taubes' work establishing the isomorphism \eqref{eqn:echsw}.   The basic idea is to regard the Seiberg-Witten Floer cohomology as the formal Morse homology of $F$, and try to argue in analogy with the finite dimensional case.  

In the finite dimensional case, if we are given a one parameter family of functions that are Morse except at a discrete set of values, then a choice of nonzero class in the Morse homology gives rise to a one parameter family of critical points, via a minimax construction: at each parameter value $r$, we look at the minimum value of the Morse function required to represent the class, and take a critical point realizing this value; then, one can show that this gives rise to a family of critical 
%points 
values
%such that the family of functions evaluated on these critical points is 
that are piecewise smooth and continuous. 

In our case, the class $\sigma$ gives rise to a Seiberg-Witten class via the isomorphism \eqref{eqn:echsw}.  Roughly speaking, we then mimic the construction from the previous paragraph to get the family of solutions $(A(r),\psi(r))$; this will have the desired properties for $f$.  The work of Taubes (see e.g. \cite{Taubes-I}) then implies that (after applying a gauge transformation if necessary) the curvature of $A(r)$ concentrates along an orbit set as $r \to \infty$, with action approximately $E/2\pi$.   Some further fiddling with the filtrations in Taubes' isomorphism shows that in fact the action of this orbit set represents the spectral invariant; see \cite[Prop. 2.6]{ECHasymptotics}.
%Since 
\end{proof}

\subsubsection*{The grading and the Chern-Simons functional}

Before proceeding, it is also very useful to note that  solutions to \eqref{eqn:3dsw} have a grading $gr$, defined by spectral flow to some fixed connection.   The following crucial estimate due to Taubes bounds this in terms of the Chern-Simons functional:

\begin{prop} [Taubes]
\label{prop:gradchsimons}
There is a constant $C$ such that 
\[ |gr(A,\psi) + \frac{1}{4 \pi^2} cs(A,\psi)| \le C r^{31/16}\]
for any solution $(A,\psi)$ to \eqref{eqn:3dsw}.
\end{prop}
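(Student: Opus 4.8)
Here is how I would approach Proposition~\ref{prop:gradchsimons}; the plan is to follow Taubes' analysis of the spectral flow for the Seiberg--Witten equations. Recall first that $gr(A,\psi)$ is defined, up to an overall additive constant fixed by a choice of reference configuration, as (minus) the spectral flow of the family of extended Hessian operators $\mathcal{H}_c$ along a path of configurations $c$ running from the reference to $(A,\psi)$. The operator $\mathcal{H}_c$ is a first--order self--adjoint operator assembled from $d$, $d^{*}$ and the Dirac operator $D_A$ twisted by $A$, and its $0$--eigenvalue crossings are what the spectral flow records. So the first step is to pick a convenient path --- the linear interpolation $s\mapsto (A_E + s(A-A_E),\, s\psi)$, $s\in[0,1]$, out of the reference connection $A_E$ --- and to reduce the proposition to a bound on the spectral flow $\mathrm{sf}$ of $\mathcal{H}$ along this path, the contribution of the reference endpoint being $r$--independent.

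The second step is to convert this spectral flow into a four--dimensional index. By the Atiyah--Patodi--Singer relation between spectral flow and indices of suspended operators, $\mathrm{sf}$ equals the index of $\partial_s + \mathcal{H}_{c(s)}$ on $[0,1]\times Y$ with APS boundary conditions, corrected by the eta invariants and kernel dimensions of the two endpoint operators $\mathcal{H}_{c(0)}$ and $\mathcal{H}_{(A,\psi)}$. The index of the suspended operator is the integral over $[0,1]\times Y$ of a local index density; for the relevant operator this density is, up to terms controlled by the topology of $Y$ and the reference data, a fixed multiple of $F_{\mathbb{A}}\wedge F_{\mathbb{A}}$, where $\mathbb{A}$ is the interpolating connection. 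Since $F_{\mathbb{A}}$ interpolates between $F_{A_E}$ and $F_A$, Stokes' theorem turns this four--dimensional integral into a boundary term that is precisely $\mp\tfrac{1}{4\pi^2}cs(A,\psi)$ (the sign matching the convention in the definition of $gr$) plus an $r$--independent constant. After this step the proposition is reduced to the claim that the eta--invariant and kernel contributions coming from the endpoint Hessian $\mathcal{H}_{(A,\psi)}$, which are the only remaining $r$--dependent terms, are bounded by $Cr^{31/16}$.

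The third and main step is to prove this bound, and here one invokes Taubes' a priori control on solutions of \eqref{eqn:3dsw} for large $r$: the pointwise bound $|\psi|^{2}\le 1 + O(r^{-1})$, the curvature bound $|F_A| = O(r)$ with concentration of $F_A$ near a set of short Reeb orbits, and the accompanying estimates on $\nabla_A\psi$. From these one bounds the operator norm and the rate of change of $\mathcal{H}_{c(s)}$ along the path, and --- crucially --- one controls the number of eigenvalues of $\mathcal{H}_{(A,\psi)}$ in a shrinking window about $0$ via a resolvent/heat--kernel expansion together with the Weitzenb\"ock formula for $D_A^{2}$. Feeding these into the eta--invariant estimates --- realising the spinorial part as the eta invariant of a Dirac operator twisted by a connection of curvature $\sim r\lambda$, and estimating it using the APS formula and the concentration structure --- yields a bound of the form $Cr^{\alpha}$, and $\alpha = 31/16$ is exactly what falls out of Taubes' optimisation of the resolvent estimates (it is certainly not claimed to be sharp; the naive bound is $O(r^{2})$). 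This last step is the main obstacle: everything before it is essentially formal index theory and Stokes' theorem, whereas pushing the error below the trivial $O(r^{2})$ down to $O(r^{31/16})$ requires the delicate large--$r$ spectral analysis of the Seiberg--Witten Hessian that constitutes the technical core of Taubes' work. Assembling the three steps gives the stated inequality.
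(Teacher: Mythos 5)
Your outline follows essentially the same route the paper indicates (the paper itself gives no proof, deferring to Taubes and to the treatment in \cite{cgsavale}): compute the grading as spectral flow, convert it via the Atiyah--Patodi--Singer index theorem so that the index integrand is identified with $\frac{1}{4\pi^2}cs$ up to controlled terms, and then bound the remaining $\eta$-invariant/endpoint contributions using the large-$r$ a priori estimates on solutions together with heat-kernel/resolvent analysis. The genuinely hard step---pushing the remainder below the trivial $O(r^{2})$ to $O(r^{31/16})$---is deferred to Taubes' spectral estimates in your sketch exactly as it is in the paper, so there is nothing further to flag.
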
 

The proof of Proposition~\ref{prop:gradchsimons} is beyond the scope of these notes.  For one treatment, giving an improved bound on the remainder term, see \cite{cgsavale}: the basic idea in this argument is to apply the Atiyah-Patodi-Singer index theorem to the compute the grading, after which one finds that that Atiyah-Singer integrand can be identified with $cs$; the bound on the difference then arises from estimating the ``$\eta$-invariant in the APS theorem, via a heat kernel expansion.

\subsubsection*{The key computation}

Let us now try to implement the scheme from the previous section, i.e. elaborate on items (a) and (b) there.  Recall that these were as follows.

\vspace{1 mm} {\em (a): Show that for some small enough $r_0$, we can determine $E(A(r_0),\psi(r_0))$ up to some small error.}

\begin{proof}[Idea of the proof]
The idea for this is to use the fact that \eqref{eqn:3dsw} admits reducible solutions.  These are solutions with $\psi = 0$.  They can be written down rather explicitly.   Choose some connection $A_0$ with $F_{A_0} = i \star \bar{\omega}$.  Then it is easy to check that
\begin{equation}
\label{eqn:red}
(A,\psi) = (A_0 - \frac{1}{2} i r \lambda, 0 )
\end{equation}
is a solution to \eqref{eqn:3dsw}.  Every other reducible solution differs from this one by the addition of a closed one-form.    Plugging in, one therefore gets that the energy of a reducible solution is approximately $\frac{r}{2} vol(Y,\lambda)$.  Note the appearance of the volume term!     

To start to organize this into a useful form, it is crucial to note that as $r$ varies, the grading of the reducible solutions to \eqref{eqn:red} increases.  Indeed, in view of Proposition~\ref{prop:gradchsimons}, one can get a good estimate by applying \eqref{eqn:red},  More precisely, one can compute that for any solution of \eqref{eqn:red}, 
\[ cs(A) = \frac{1}{4} r^2 vol(Y,\lambda) + O(r).\]

Ignoring lower order terms in $r$, we therefore see that for reducibles the energy $E$ and the grading $gr$ satisfy
\[ E^2 = vol(Y,\lambda) gr ,\]
which is exactly the relation we want in the Weyl law.

This motivates the choice of $r_0$.   As we will see, we will choose for each relevant grading $j$ a  different value of $r_0$.  That is, for each relevant grading $j$, we should look at a value of $r_j$ such that for the family $(A(r),\psi(r))$ in Proposition~\ref{prop:piecewise} at $r = r_j$ is reducible.   Then, a successful implementation of (b) would establish the theorem.   Choosing such an $r_j$ is the basic idea behind (a).
% (which is approximately the action) of the reducible 
% fact, by making use of an important  
\end{proof}

\vspace{1 mm} {\em (b): Show that $E$ does not change too much as $r$ varies..}

\begin{proof}[Idea of the proof]

The idea for this builds on Taubes' proof of the Weinstein conjecture.  The functional $E$ itself does not have particularly good continuity properties as $r$ varies.  However, as asserted by Proposition~\ref{prop:piecewise}, the functional $F$ does.  Moreover, the Dirac term in $F$ vanishes for solutions, so we can write
\[ F = \frac{1}{2}(cs - r E).\]
The idea then is to estimate $E$  by estimating $F$.  In other words, we have
\[E = - 2 F/r + cs/2r .\]
One can in addition show that for a family as in Proposition~\ref{prop:piecewise}, the $cs$ term is $o(r)$ as $r$ tends to $+\infty$, see e.g. the survey of Taubes' work in \cite{tw}.  Thus, if we define
\[ v = - 2 F/r,\]
then $v$ and $E$ have the same limit as $r \to \infty$.  We therefore want to estimate $v$.

To estimate $v$, the idea is to compute that
\[ v' = cs/r^2.\]
Thus, the estimate Proposition~\ref{prop:gradchsimons} gives a certain amount of control over the size of $v'$.  A quick examination of what kind of estimate this would give shows that with that estimate, alone, the change in $v$ as $r$ varies from $r_j$ to $\infty$ could in fact be quite large.  However, as we said above, $cs$ is actually $o(r)$ as $r \to \infty$, which is a much stronger estimate.  The basic idea, then, is to estimate the change in $v$ in two stages: one first isolates a value $r_j^*$ of $r$ after which $cs$ has a $o(r)$ bound --- from here, $v$ does not change much at all --- and then estimates the change in $v$ from $r_j$ to $r^*_j$, using the estimate from Proposition~\ref{prop:gradchsimons}.
%This does not in general suffice 

\end{proof}

\subsection{Some remarks to give a sense for the mapping torus case}

The Weyl law in the PFH case is more complicated.  We will only say a few words.  Our goal is to explain the crucial differences from the ECH case, and give some idea about how to overcome them.

Since, as we explained, PFH is also isomorphic to a version of Seiberg-Witten Floer cohomology, it is natural to attempt to prove the Weyl law using Seiberg-Witten equations.  This can indeed be made to work, but there are several challenges that need to be overcome.  Here are the main ones:

\begin{enumerate}[(a)]
\item ECH is known to always be non-trivial trivial.  However, PFH can certainly vanish.  For example, an irrational translation of a two-torus has no periodic points at all, and so the PFH vanishes in every positive degree.
\item As we saw above, in the proof of the contact case, reducible solutions played a key role.  However, the version of the Seiberg-Witten equations studied by Lee-Taubes, in establishing their isomorphism, admit no reducible solutions at all.
\item As we saw above, in the proof of the contact case, the energy played a key role.  However, in the PFH case, the most obviously analogous functional $A \to \int_Y dt \wedge F_A$ depends only on the spin-c structure, and so does not vary as $A$ varies with $r$.   
\item In the ECH case, we fix a homology class in $H_1(Y)$, and this corresponds to a single spin-c structure.  In the PFH case, we need to consider a sequence of degrees, and these correspond to a sequence of spin-c structures.  All estimates that we use therefore need to isolate any dependence on the choice of spin-c structure.
\item Continuing in the spirit of the previous item: in the ECH case, one needs the estimate Proposition~\ref{prop:gradchsimons}.  In that estimate, $C$ is a constant depending on a fixed spin-c structure.  In the PFH case, one needs to consider a sequence of spin-c structures and prove analogous estimate, understanding the role of $C$.  
\item Quantitative ideas play a key role in isomorphism between ECH and Seiberg-Witten.  However, in the PFH case, these kind of considerations are not particularly necessary (essentially because $dt$ is closed, in contrast to $\lambda$.)  Thus, a lot of this structure needs to be built up.
\end{enumerate}  

We resolve all of these issues in \cite{cgpz}.  Let us try to explain a bit about how each point is resolved.  

Concerning (a), we prove a new nontriviality result: we show that when the cohomology class of the canonical two-form on the mapping torus is rational, the PFH is always non-trivial in infinitely many degrees.  This is proved by using the isomorphism with Seiberg-Witten Floer cohomology.  On the Seiberg-Witten side, one needs to prove the corresponding non-triviality result about a version of the theory, where one takes nonexact perturbations.  %the nonexactly perturbed theory.  
There is a variant of the Seiberg-Witten cohomology generated entirely by reducible solutions and by known properties from \cite{KMbook}, it suffices to prove the needed nontriviality result for this reducible variant.  This reducible variant in turn can be computed solely in terms of classical topology, and some wading through some spectral sequences gives the desired result.  

As for (b), we formulate a variant of the equations that admit reducible solutions and still give the same Floer cohomology.  This introduces a new complication that has no analog in the contact case: how to show that these reducibles contribute at some point to the spectral invariant; the argument involves considerations of properties of the ``completed" version of the theory.  

Concerning (c), the key point is that after a choice of reference connection $A_0$, the data encoded in the energy in the contact case can be thought of instead in terms of the integral $\int_Y d \lambda \wedge a$, where $A = A_0 + a$.  This formulation of the energy has an analog in the PFH case that is suitable for our purposes -- though, we do pay a price in that the choice of base connection introduces some further complications, requiring us to keep track of this non-canonical, particular when we work with sequences of spin-c structures, for which one has to make a whole sequence of choices; nevertheless, these issues can be overcome.  

As for (d) - (f), this requires new estimates and forms the content of \cite[Sec. 6]{cgpz}; for brevity, we will not say much more about how these estimates work, referring the reader instead to \cite{cgpz}.

%   Challenge (e) also requires a new esti%to first formulate 

%Morse function

%In fact, it is qu  

%to try to vary $r$ and 

%The basic idea is as follows:

%\begin{proof}[Proof sketch]

%\end{proof}  

%given an ECH spectral invariant $c_{\sigma}(\lambda)$, we can find a family of solutions 

%More precisely,  

%any complex line bundle $E$ on $Y$ determines another spin-c structure $s_E$ by tensor product.    

%\section{Other developments, discussion, and open questions}

%\subsection{Other developments}

\section{Other developments}

Let us now highlight a few other important developments in the area of low-dimensional symplectic dynamics over the last few years.    It is of course not possible to survey all the exciting recent progress in the area, so we focus on a few topics that the author finds particularly interesting.  %We also do not mention exciting developments in 

%of particular interest to the author.  

\subsubsection*{Global surfaces of section}

Recall the notion of a global surface of section.  This allows one to reduce to considering two-dimensional dynamics and it played a key role in the proof of Theorem~\ref{thm:hwz}.  

Remarkably, such global surfaces of section seem to be a central feature of three-dimensional Reeb flows:
% result:

\begin{theorem}\cite{CDHR,cm}
\label{thm:gss}
Let $(Y,\lambda)$ be a closed three-manifold with a contact form.  For generic choice of $\lambda$, the Reeb flow always has a global surface of section.  
\end{theorem}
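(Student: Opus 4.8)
The plan is to reduce to a $C^\infty$-generic \emph{nondegenerate} contact form and then produce the surface of section by combining pseudoholomorphic-curve techniques with a dynamical cut-and-paste argument; the structural input that makes everything run is the existence of a supporting \emph{broken book decomposition}. First I would fix the genericity class: by the appropriate Kupka--Smale theorem for Reeb flows, a $C^\infty$-generic contact form $\lambda$ is nondegenerate \emph{and} has the property that the stable and unstable manifolds of all hyperbolic Reeb orbits intersect transversally. One gets this by a standard Baire-category argument — nondegeneracy is generic by the perturbation lemma already recalled in the discussion of Conjecture~\ref{conj:hwz}, and transversality of invariant manifolds is obtained by a further perturbation supported away from the (finitely many, in each action window) relevant orbits. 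So it suffices to prove the theorem for such ``Kupka--Smale'' $\lambda$.

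Next I would invoke the existence of a supporting broken book decomposition for a nondegenerate Reeb flow: $Y$ decomposes into a finite \emph{binding} link of Reeb orbits together with a complement foliated by compact surfaces (\emph{pages}) transverse to $R$ with boundary on the binding, where finitely many ``broken'' pages are allowed to accumulate onto certain hyperbolic binding orbits. This decomposition is assembled out of families of finite-energy $J$-holomorphic curves in $\mathbb{R}\times Y$, in the spirit of the curves in \S\ref{sec:echdef}--\S\ref{sec:cylin}, via the theory of Hryniewicz--Salom\~ao--Wysocki--Zehnder and of Colin--Dehornoy--Rechtman. If there were no broken pages, a single (rational) page would already be a global surface of section and we would be done: every orbit other than the binding meets a page, and the binding orbits can be absorbed into the boundary.

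The main work — and the step I expect to be the hardest — is to upgrade the broken book to a genuine global surface of section in the presence of broken pages. This is exactly where the Kupka--Smale hypothesis is used. Each broken page accumulates on a hyperbolic orbit $\gamma$ whose stable and unstable manifolds are now embedded surfaces meeting transversally, so the connecting orbits between such hyperbolic orbits form a controlled $1$-dimensional set. Following a cut-and-paste scheme in the spirit of Fried's constructions of cross-sections of flows, one surgers the broken pages along strips contained in these invariant manifolds, gluing them across the offending hyperbolic orbits into honest compact surfaces transverse to $R$ with boundary on Reeb orbits. One then checks that: (a) the process terminates, since only finitely many hyperbolic orbits and connecting orbits are involved; (b) the resulting surface can be taken embedded on its interior after a small isotopy removing the self-intersections introduced by the patching; (c) every Reeb orbit hits it both forwards and backwards — each orbit is either one of the finitely many boundary orbits, or a connecting orbit now crossing the patched region, or an orbit already meeting a page of the broken book; and (d) passing to a suitable cabling of a connected component if necessary, one obtains a \emph{connected} global surface of section. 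Throughout, orientations must be tracked so that $d\lambda$ restricts to a genuine area form, as in \S\ref{sec:outline}.

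Finally, I would assemble these pieces: the Kupka--Smale reduction, the broken book decomposition, the surgery along invariant manifolds, and the bookkeeping of (a)--(d), together prove that the Reeb flow of a $C^\infty$-generic $\lambda$ admits a global surface of section. The subtle points worth flagging in a full write-up are the compatibility of the broken-book surgery with keeping the interior embedded, and verifying the ``global'' property (c) for the connecting orbits of the broken locus — these are the places where transversality of the invariant manifolds is genuinely indispensable and where the argument would break down for a general nondegenerate (non-Kupka--Smale) contact form.
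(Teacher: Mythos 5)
Your proposal follows essentially the same route as the cited works \cite{CDHR,cm} that the paper sketches: reduce by genericity to a Kupka--Smale (strongly nondegenerate) contact form, take the supporting broken book decomposition of \cite{CDR} built from the $U$-map holomorphic curves through every point, and then upgrade it to a global surface of section by a Fried-type surgery along the stable and unstable manifolds of the broken hyperbolic binding orbits, which is exactly where the transversality hypothesis is used. So the plan is correct in outline and matches the paper's account; the remaining work is the technical execution of the broken-book construction and the surgery, which you correctly identify as the hard steps.
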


The paper \cite{cm} also shows this for a generic Riemannian metrics; the argument in \cite{CDHR} can be modified \cite{hprivate}, without much difficulty, to prove this statement as well.

In both of these papers, the ECH $U$-map curves are a key input, though in the arguments one does not need the kind of control on the genus or the action as in the proof of Theorem~\ref{thm:hwz}.  A starting point for both works is that there is a $U$-map curve through any point $(0,z)$ in $\mathbb{R} \times Y$.  (This is itself a rather remarkable property of four-dimensional symplectizations, that one could call ``uniruledness" in analogy with similar properties in algebraic geometry and the symplectic geometry of closed manifolds.)  Then, there is a kind of surgery procedure building on work of Fried for assembling the curves into a GSS, possibly with quite large genus, from the projections of the $U$-map curves.

As a very interesting corollary, the authors find that a generic Reeb flow on a closed three-manifold has positive topological entropy.  In fact, they show that set of contact forms whose associated Reeb flow has positive topological entropy is open and dense.   For more about entropy, see our discussion below.  

In a different direction, it is also interesting to understand more about what kind of orbits bound global surfaces of section.  A longstanding question \cite{convex} asks whether for strictly convex domains (i.e. the boundary is a compact regular energy level of a smooth Hamiltonian with positive definite Hessian) in $\mathbb{R}^4$, the periodic orbit of shortest period bounds a global surface of section.  A proof establishing an affirmative answer to this question was recently announced \cite{olivertalk}.  This has an interesting consequence for the theory of ``symplectic capacities."  Symplectic capacities are measurements of symplectic size.  An example is the ``Gromov width", which is the size of the largest symplectically embedded ball.  It is interesting to ask to what degree they are unique, after some kind of normalization.  As explained in \cite{olivertalk}, the fact that the periodic orbit of shortest period bounds a global surface of section, for strictly convex domains, combined with a previous work \cite{egafa}, implies that for such domains there are at most two different normalized symplectic capacities that we know of.  In fact, another recent result shows that these two must be different; we discuss this further in our discussion of Viterbo's conjecture below. The papers \cite{egafa, hhr} prove related interesting results, relating the area of the smallest global surface of section to other symplectic capacities.  Additionally, the paper \cite{hhr} requires only a weaker notion of ``dynamical convexity"; we return to this in our discussion of the role of convexity in symplectic geometry below. 

\subsubsection*{Global systems of surfaces of section and two or infinity beyond the torsion case}

In Theorem~\ref{thm:gss}, one requires genericity on $\lambda$.  If one only asks that $\lambda$ is nondegenerate (but imposes no further conditions) then we do not know that there is a global surface of section, but \cite{CDR} proves the result that there is a related notion called a {\em broken book decomposition}: roughly speaking, this is like a system of surfaces of section, where it might not be the case that every flow line hits a single surface, but the flow maps one surface (a ``page") to another.  In fact, this result predates and was an input to the results \cite{CDHR,cm} on the existence of a GSS above; it also takes as a starting point the existence of a $U$-map through every point.  Using this broken book decomposition, one can also extend the two or infinity result to all nondegenerate contact forms:

\begin{theorem}\cite{CDR}
\label{thm:gss}
Let $(Y,\lambda)$ be a closed connected three-manifold with a nondegenerate contact form.  Then there are always two or infinitely many simple Reeb orbits.
%For generic choice of $\lambda$, the Reeb flow always has a global surface of section.  
\end{theorem}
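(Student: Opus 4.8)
The plan is to sidestep the ECH score machinery used for Theorem~\ref{thm:hwz} and instead reduce the dichotomy to two-dimensional surface dynamics, via a broken book decomposition. First I would invoke the main theorem of \cite{CDR}: any nondegenerate contact form $\lambda$ on a closed three-manifold $Y$ admits a \emph{supporting broken book decomposition}. Concretely this produces a finite link of Reeb orbits, the \emph{binding}, each orbit elliptic or hyperbolic, together with a family of compact surfaces transverse to the Reeb flow, the \emph{pages}, such that each page is bounded by binding orbits, and such that a hyperbolic binding orbit is either an honest boundary of pages or a \emph{broken} binding orbit onto whose stable and unstable manifolds neighbouring pages accumulate; every Reeb trajectory that is not itself a binding orbit crosses the interiors of pages in forward and backward time. (This construction rests on the existence of an $I=2$ ECH $U$-map curve through every point of $\mathbb{R}\times Y$, plus a surgery procedure assembling the projections of these curves.) I would then argue by contradiction: suppose there are finitely many simple Reeb orbits and that their number is not $2$; the goal is to force infinitely many.

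The first case is when the broken book has no broken binding orbit, so it is a genuine open book and a single page $S$ is an honest global surface of section in the sense of \S\ref{sec:outline}. Then $d\lambda$ restricts to a finite-area form on $\mathrm{int}(S)$ preserved by the first return map $P\colon \mathrm{int}(S)\to\mathrm{int}(S)$, and simple Reeb orbits correspond to the components of $\partial S$ together with the periodic points of $P$. If $S$ is a disk, then $\partial S$ is one orbit; an area-preserving homeomorphism of the open disk has a fixed point (Brouwer's plane translation theorem together with finiteness of the area), giving a second orbit, and if $P$ has at least two periodic points then puncturing $\mathrm{int}(S)$ at one of them produces an area-preserving homeomorphism of an open annulus with a periodic point, so Franks' theorem gives infinitely many orbits. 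If $S$ is an annulus with two boundary orbits, Franks' theorem applies directly: either $P$ has no periodic point, and there are exactly two Reeb orbits, or it has one, and there are infinitely many. In the remaining subcases ($\partial S$ has at least three components, or $S$ has positive genus) one already has at least three Reeb orbits, and I would argue that $P$ must then have a periodic point --- using that its flux vanishes, as recalled after Theorem~\ref{thm:hwz} --- after which the forcing theory for area-preserving surface homeomorphisms (Franks--Handel, Le~Calvez, Le~Calvez--Tal) upgrades ``finitely many'' to ``infinitely many''. In every subcase the count of simple orbits lies in $\{2,\infty\}$.

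The second, harder, case is when the broken book genuinely has a broken binding orbit $\gamma$; such a $\gamma$ is hyperbolic. Now a page adjacent to $\gamma$ is no longer a global surface of section for the whole flow: its closure accumulates on the stable and unstable manifolds of $\gamma$, and the induced return map is an area-preserving homeomorphism of an open subsurface whose behaviour near the ``broken'' puncture is modelled on the spiralling of a hyperbolic fixed point. The point I would establish is that this cannot coexist with only finitely many periodic orbits: the stable and unstable branches of $\gamma$ must recur, producing homoclinic or heteroclinic behaviour which, by two-dimensional forcing results valid without any transversality hypothesis, forces infinitely many periodic points and hence infinitely many Reeb orbits.

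I expect the main obstacle to be precisely this broken case, in two respects: (i) controlling the non-compact first return maps near broken binding orbits, where the dynamics is genuinely hyperbolic and the relevant surface is only an open subsurface; and (ii) invoking surface forcing theory in enough generality to cover the non-transverse, possibly degenerate homoclinic pictures that arise. A secondary point, also requiring sufficiently sharp forcing statements, is confirming that the ``higher complexity'' honest open book subcases (at least three binding orbits, or positive genus) really do force infinitely many orbits rather than leaving a gap at some intermediate finite value. The soft reduction to surface dynamics, by contrast, is essentially bookkeeping once the broken book of \cite{CDR} is granted.
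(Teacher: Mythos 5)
Your overall route is the same one the paper attributes to \cite{CDR}: take the supporting broken book decomposition for a nondegenerate contact form and reduce the dichotomy to two-dimensional area-preserving dynamics, with Franks' theorem \cite{Franks} handling the annulus/disk pages and a zero-flux/forcing argument handling pages of higher complexity. Your treatment of the honest open book case is essentially correct and matches the remarks already made in the paper (genus zero via Franks, higher genus via vanishing flux), so that part is fine, if somewhat compressed on the ``at least three ends or positive genus'' subcase.

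The genuine gap is the broken-binding case, which you yourself identify as the crux and then dispose of with the assertion that the stable and unstable branches of a broken hyperbolic binding orbit ``must recur, producing homoclinic or heteroclinic behaviour which, by two-dimensional forcing results valid without any transversality hypothesis, forces infinitely many periodic points.'' This is not an argument, and the two obstacles you flag are precisely where it breaks down as stated. First, near a broken binding orbit there is no area-preserving homeomorphism of a surface to which Franks-type or Le~Calvez--Tal-type forcing theorems apply off the shelf: the return map is only partially defined on a non-compact subsurface, and points on the separatrices never return, so one cannot simply ``puncture and force.'' Second, without transversality a homoclinic or heteroclinic connection does not by itself produce a horseshoe or infinitely many periodic orbits; this is exactly why the original Hofer--Wysocki--Zehnder result \cite{fols}, recalled in the text around Conjecture~\ref{conj:hwz}, had to assume transverse intersections of stable and unstable manifolds, and why removing that hypothesis is nontrivial. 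Establishing that a broken binding component is incompatible with finitely many periodic orbits is the real content of the deduction in \cite{CDR} (beyond the existence of the broken book itself), and your proposal leaves that step as a hope rather than a proof. As written, the argument is complete only in the case where the broken book is a genuine rational open book.
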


It is also interesting to try to prove results like Theorem~\ref{thm:gss}, beyond the contact case.  For example, there is a generalization of a contact form called a ``stable Hamiltonian structure"; this has an associated vector field which includes both mapping torii and contact Reeb flows.  The paper \cite{stabham} classifies all non-degenerate stable Hamiltonian Reeb flows with finitely many simple periodic orbits.

\subsubsection*{Stability}

Let us now explain a beautiful application of three-dimensional contact geometry to geodesic flows.  A starting point for the arguments used to prove it are the broken book decompositions mentioned above.

Recall that a diffeomorphism $\psi$ is called $C^k$-{\em stable} if there is an open neighborhood of $\psi$ in the $C^k$-topology, such that any $h$ in this neighborhood is conjugate to $\psi$ by homeomorphisms.   There is an analogous notion of stability for geodesic flows.  It is natural to ask what kind of flows are stable and there is a celebrated conjecture of Palis and Smale about this: conjecturally, the $C^k$-stable flows are ``Axiom A".  For $k = 1$ this is known by famous work of Ma\~{n}e, but in higher regularity seems wide open.  We will not review the definition of Axiom A here, but in the geodesic context it is equivalent to the flow being Anosov, meaning that it is hyperbolic.  The following result confirms the Palis-Smale conjecture for $k = 2$ on surfaces:

\begin{theorem}\cite{cm2}
The $C^2$-structurally stable geodesic flows are precisely the Anosov flows.
\end{theorem}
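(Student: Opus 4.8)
\textbf{Anosov $\Rightarrow$ $C^2$-structurally stable.} This is the soft direction. Writing $Y := T^1 S$, the map $g \mapsto X_g$ from Riemannian metrics to geodesic vector fields on $Y$ uses only one derivative of $g$, so it is continuous from the $C^2$-topology on metrics to the $C^1$-topology on vector fields. If the geodesic flow of $g$ is Anosov, Anosov's structural stability theorem gives a $C^1$-neighborhood of $X_g$ inside which every flow is orbit equivalent to that of $g$; pulling this back yields a $C^2$-neighborhood of $g$ with the same property, so $g$ is $C^2$-structurally stable within the class of geodesic flows.

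\textbf{Converse: setup.} I would argue by contraposition: assuming the geodesic flow $\varphi^t$ of $g$ on $Y$ is not Anosov, produce a $C^2$-small perturbation $g'$ of $g$ whose geodesic flow is not orbit equivalent to $\varphi^t$. Two structural facts are used throughout. First, $\varphi^t$ is the Reeb flow of a contact form $\lambda_g$ with $c_1(\xi)\in H^2(Y;\mathbb Z)$ torsion (see \cite[Appendix A]{cgp}), so Theorem~\ref{thm:hwz} and Theorem~\ref{thm:ley} apply; in particular there are $2$ or infinitely many simple closed geodesics and the union of closed non-trivial invariant sets is dense. Second, $\varphi^t$ preserves the Liouville volume, so by Poincar\'e recurrence its non-wandering set is all of $Y$; hence ``Axiom A'' here coincides with ``Anosov'' ($Y$ itself a single hyperbolic basic set, strong transversality automatic), and it suffices to rule out the failure of Axiom A.

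\textbf{Converse: the two cases.} If some closed geodesic $\gamma$ is not hyperbolic, a Klingenberg--Takens type perturbation lemma for metrics (in the form developed for geodesic flows by Contreras and collaborators) produces $C^2$-small metric perturbations, supported in a tube about $\gamma$, realizing genuinely inequivalent local dynamics near $\gamma$ --- for instance one making $\gamma$ hyperbolic and another making it generic elliptic, surrounded by infinitely many closed geodesics. Since $C^2$-structural stability would force all sufficiently small perturbations of $g$ to be mutually orbit equivalent, and the number of closed orbits in a region is an orbit-equivalence invariant, this is a contradiction. We may therefore assume all closed geodesics are hyperbolic, and after a further harmless $C^2$-small perturbation that $g$ is nondegenerate. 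Now the structure theory enters: \cite{CDR} gives a broken book decomposition adapted to $\varphi^t$ whose broken pages are controlled by the hyperbolicity of the binding orbits, and together with the $U$-map curves underlying the global-surface-of-section results of \cite{CDHR,cm} one reduces the dynamics to an area-preserving surface diffeomorphism $P$ with all periodic points hyperbolic and full non-wandering set. On a surface such a $P$ is either Anosov or, under an arbitrarily small perturbation, acquires a homoclinic tangency or a non-hyperbolic periodic point; transporting this back to a $C^2$-small perturbation of $g$ --- via a connecting-lemma-type metric perturbation in the first case --- again changes the orbit-equivalence class. So $\varphi^t$ is Anosov. (Alternatively one may run a Ma\~n\'e-style argument directly on $Y$: structural stability, recurrence everywhere, and the absence of homoclinic tangencies force a dominated splitting on $TY/\langle X_g\rangle$, which together with volume preservation and hyperbolicity of all periodic orbits upgrades to a uniform hyperbolic splitting.)

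\textbf{Main obstacle.} The difficulty is not the structural-stability bookkeeping but the perturbation theory inside the rigid class of Riemannian metrics: realizing prescribed modifications of linearized Poincar\'e return maps, and creating homoclinic tangencies (or otherwise breaking closing obstructions), by $C^2$-small perturbations \emph{of the metric} rather than of an arbitrary flow. Closely tied to this is the core of the second case, the implication ``Kupka--Smale geodesic flow on a surface $\Rightarrow$ Anosov'', which is precisely the instance of the stability conjecture being resolved and where the ECH / broken-book technology of \cite{CDR,CDHR,cm} and Theorem~\ref{thm:hwz} is indispensable.
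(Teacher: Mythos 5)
First, a remark on scope: these notes do not actually prove this theorem. It is quoted from \cite{cm2}, with only the comment that broken book decompositions are a starting point for the argument, so there is no in-paper proof to compare against; what follows measures your sketch against what the actual argument of \cite{cm2} has to supply.

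Your easy direction (Anosov $\Rightarrow$ $C^2$-stable) is fine, and the hard direction has the right skeleton in broad strokes: use a Franks-type lemma for geodesic flows to show that stability forces all closed geodesics to be hyperbolic (the star property), then bring in the broken-book/Birkhoff-section technology of \cite{CDR,CDHR,cm} to reduce to an area-preserving return map. But the two steps that carry the real weight are gapped. First, your dichotomy for the return map $P$ (``either Anosov or an arbitrarily small perturbation creates a homoclinic tangency or a non-hyperbolic periodic point, which we transport back to a $C^2$-small perturbation of $g$'') presupposes a connecting-lemma/tangency-creation mechanism realizable by perturbations \emph{of the metric}. No such tool exists at $C^2$ regularity: the only perturbation device available in this class is the Contreras--Paternain $C^2$ Franks lemma, which modifies the linearized Poincar\'e map along an already existing closed geodesic and nothing more; arbitrary perturbations of a Birkhoff-section return map are not induced by metric perturbations, and the Ma\~n\'e-style alternative you sketch is intrinsically a $C^1$ argument. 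This is precisely why the $C^2$-stability conjecture was hard, and the proof in \cite{cm2} does not create tangencies at all. Second, your passage from ``all periodic points hyperbolic and full non-wandering set'' to ``Anosov'' silently requires the closed orbits to be dense in $T^1S$ (equivalently, that the hyperbolic closure of the periodic set is everything). Poincar\'e recurrence gives a full non-wandering set, but with no $C^2$ closing lemma for metric perturbations --- stated as an open question in these very notes --- recurrence does not produce periodic orbits. Supplying exactly this density/maximality step, via the Kupka--Smale Birkhoff section theorem of \cite{cm} together with genuinely two-dimensional arguments for the area-preserving return map, is the core of \cite{cm2}, and it is the part your sketch leaves unproved. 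Minor points: Theorems~\ref{thm:hwz} and~\ref{thm:ley} play no role here; hyperbolic closed orbits are automatically nondegenerate, so the ``further harmless perturbation'' is vacuous as stated, while the Kupka--Smale hypothesis needed for \cite{cm} also demands transverse invariant manifolds, which you cannot simply impose on the flow you are trying to prove is Anosov --- one must instead transport structure from nearby generic flows through the assumed orbit equivalence.
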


\subsubsection*{Elementary invariants and dynamical interpretations of spectral invariants}

Another very interesting development involves defining spectral invariants like the PFH ones, but without using Floer theory in their definition.  Let us illustrate with an example.  As we explained, in our proof of the Simplicity Conjecture, the PFH spectral invariants play a starring role.  It turns out that one can define similar invariants, called  {\em elementary PFH invariants}, without using any kind of Floer theory; see \cite{e}.  These elementary invariants are just defined in terms of pseudholomorphic curves: roughly speaking, they measure the amount of area required so that there are always $J$-holomorphic curves of no bigger area through various collections of points, no matter where one puts the points and varies $J$.  The idea of replacing spectral invariants defined via Floer theory with these kind of optimizations over classes of pseudoholomorphic curves originates with work of McDuff-Seigel in the context of a different kind of Floer theory called symplectic field theory.    
%y measure the minimum area 
It is shown in \cite{e} that these elementary PFH spectral invariants satisfy the same axioms as the usual ones, and so also suffice for proving the Simplicity Conjecture.  Thus, one can prove the Simplicity Conjecture without using Floer theoretic ideas at all.

 As the technical details of Floer theory are quite substantial, the elementary invariants streamline many constructions.  However, in addition, they sometimes give results that stronger than what is currently known via the Floer theory methods.  For example, in the context of closing lemmas, one can prove a quantitative refinement, giving a bound on the first time in which an orbit appears in terms of the ambient data; see \cite{eh}.  On the other hand, currently they can not recover certain important results proved via Floer theory, for example the fully general $C^{\infty}$ closing lemma for area-preserving diffeomorphisms.
 
In a different direction, an interesting project, initiated in \cite{hs}, seeks to give purely topological and dynamical formulations of spectral invariants for Hamiltonians on surfaces.  A recent result \cite{dustin} gives exciting progress on this program and some further related results.

%Exciting progress on this program, and     

\subsubsection*{Feral curves, invariant sets and quantitative almost existence}
 
As we briefly mentioned above, the groundbreaking work of Fish-Hofer \cite{fh, fh3} introduces a new class of pseudoholomorphic curve, the ``feral" cuves, for which one can prove meaningful compactness statements without requiring a ``Hofer energy" bound.  Fish-Hofer use these curves to show that a compact energy hypersurface of a Hamitonian on $\mathbb{R}^4$ is never minimal (i.e. has a nontrivial invariant set), answering the four-dimensional case of an old question due to Herman \cite{herman}.  As was previously mentioned, Prasad has further developed the theory (e.g. in \cite{prasadnew}) and has recently shown that in fact such an energy level has infinitely many proper compact invariant sets whose union is dense in the manifold; ideas from feral curve theory play a key role in this work as well.
%a dense existence of proper compact invariant sets.
The theory of feral curves also plays a key role in the discussion of the Le Calvez - Yoccoz property in \ref{sec:ley}.  

To explain a different sort of result related to the theory, note first that an old result shows that on $\mathbb{R}^{2n}$, when the Hamiltonian is proper and smooth, almost every energy level has a periodic orbit.  The article \cite{fh2} gives a proof of this result using the new feral curve theory.   A very recent work \cite{prasadnew} shows that when $n = 2$, in fact one can improve this to show the existence of at least two simple periodic orbits on almost every energy level.     The same work also shows that in the $n = 2$ case, almost every energy hypersurface satisfies the Le Calvez - Yoccoz property.   %Prasad raises the interesting question of which results that hold for contact type hypersurfaces hold for almost every energy level of a Hamiltonian.   

% In fact, as was previously mentioned,
%t is also important in the work on quantitative almost existence and dense existence of invariant sets below.  

All of these applications are low-dimensional, but we should stress that the compactness theory underlying the applications above have no such dimensional restriction.
%feral curves are the Fish-Hofer theory is applicable in higher dimensions as well  

\subsubsection*{Entropy}

Recent years have also seen considerable growth in our understanding of entropy in low-dimensional conservative systems, via Floer theoretic or pseudoholomorphic curve techniques.  

One series of interesting works have focused on recovering entropy via Floer theory.    For example, in \cite{entropy1} the authors define the {\em barcode entropy} of a Hamiltonian diffeomorphism, a quantity extracted from its Floer theory.  For closed surfaces, they show that this agrees with the topological entropy.  A later paper \cite{entropy3} extends this to Reeb flows on boundaries of any four-dimensional Liouville domain.  Another paper \cite{entropy2} proves this for geodesic flows on surfaces.   On the other hand, \cite{cineli} gives examples in dimension $\ge 6$ of Hamiltonian diffeomorphisms with zero barcode entropy and positive topological entropy; hence, low-dimensionality seems to be a crucial feature in these works.  Indeed, in low-dimensions, topological entropy is closely connected to the growth rate of periodic orbits, but in higher dimensions there are examples of maps with positive topological entropy and no periodic orbits at all; see e.g. the discussion in \cite{cineli}.

In a different direction, \cite{am} proves the interesting result that for surfaces, the topological entropy is lower semi-continuous with respect to Hofer's metric for Hamiltonan diffeomorphisms; \cite{hutchingsentropy} then proves a version of this for some area-preserving diffeomorphisms that are not necessarily Hamiltonian.  These are very subtle results, as Hofer closeness does not imply $C^0$-closeness.  A similar result was then proved for Reeb flows on closed three-manifolds \cite{entropyreeb}; this is again quite subtle, since a $C^0$-perturbation of the contact form can have a very strong effect on the dynamics of the associated Reeb vector field, which is defined via a derivative of the contact form.    
A related direction involves structures that ``force" positive topological entropy.  For example, \cite{al1} shows that if a Reeb flow on a contact $3$-manifold has a link of Reeb orbits, transverse to the contact structure and satisfying certain Floer theoretic conditions, then the flow must have positive topological entropy.  
% A later work in progress [al2] applies this in a very concrete situation: 
Another work \cite{al2} shows that if a Reeb flow on a three-manifold is Anosov, then every Reeb flow associated to the same contact structure has positive topological entropy.

Although a little bit beyond the (very recent) time window we are focusing on in these notes, we feel based on the importance of the result that we should also mention a classic work of Bramham \cite{br1} here, regarding an old question of Katok about conservative zero entropy low-dimensional systems.  This question of Katok asks whether such systems are limits of integrable ones.  Specializing now to the case of the two-dimensional disc (with its standard area form), a very important class of such systems are ``pseudorotations", namely systems with exactly one periodic point.  All known ergodic disc maps with zero entropy are pseudorotations.  The main result of \cite{br1} is that pseudorotations of the two-disc are $C^0$-limits of conjugates of rotations.  In fact, for two-dimensional annuli one can analogously define pseudorotations, and a more recent result \cite{bramz} shows that such pseudorotations 
are $C^{\infty}$ limits of conjugates of rotations, when the pseudorotation has a generic rotation number.
%of closed annuli  

We discuss some more recent results about 
pseudorotations
next.
%pseudorotations below.

\subsubsection*{Pseudorotations}

We introduced this important class of dynamical system above and explained their significance in connection with conservative zero entropy low-dimensional dynamical systems.   For a beautiful survey about the topic, see \cite{pseudosurvey}.  We mention here just a few results.

For maps of the (closed) two-dimensional disk, a pseudorotation by definition has just one periodic point, which is a fixed point, and it is interesting to understand what kind of information is carried by this fixed point.  A work by Hutchings \cite{calabipseudo}, under some hypotheses that were removed in \cite{abror}, shows that for a pseudorotation the action of this fixed point is equal to the Calabi invariant discussed above.  It is also shown in \cite{abror} that the Calabi invariant agrees with the boundary rotation number.  Another interesting quantity for such disk maps is the ``asymptotic mean action", defined for (almost every) point $x$ in the disc as the Birkhoff average of the average action of the orbit through $x$.  It is shown in \cite{abrorlec} that for pseudorotations, the asymptotic mean action agrees with the boundary rotation number.    Hutchings' proof in \cite{calabipseudo} uses embedded contact homology, but a later work \cite{calabipseudocalvez} gives another proof using more dynamical techniques.  

An old conjecture due to Birkhoff, foundational to the subject, asks about analytic pseudorotations, specifically whether such pseudorotations have to be conjugate to true rotations, in the case of the two-sphere or the two-dimensional cylinder.  A famous construction due to Anosov-Katok \cite{anosovkatok}, published in 1970, allows one to construct smooth counterexamples to this conjecture, as well as smooth pseudorotations of the closed two-dimensional disk that are not conjugate to true rotations.    
%from the 70s allows 
In fact, the existence of analytic counterexamples to Birkhoff's conjectures by a technique that builds on the Anosov-Katok method, and a solution to some related problems, have recently been presented in \cite{b1,b2}.  These are constructive works, and so methodologically very far from the bulk of what is discussed in these notes.  Returning now to the case of the disk, it is interesting to understand to what degree the situation changes if one assumes in addition some knowledge about the behavior on the behavior.  In particular, a famous conjecture due to Herman \cite{herman}
asserts that a pseudorotation is in fact conjugate to a rotation, if the rotation number on the boundary is Diophantine.  On the other hand, it is known that every Liouvillean number is the rotation number of a weakly mixing pseudorotation \cite{fayadk}.
% then 
 %has conjectured [ref] that a pseudo-rotation of the disk is conj 

%The the argument in \cite{b1} involves constructing an analytic map of the two-dimensional disk 

It is an old question whether a zero entropy conservative map of the two-disk can be strongly mixing.  
An important work of Bramham \cite{br2} (slightly older than the timeframe we are focusing on here) shows that a pseudorotation of the two-disk can not be strongly mixing, as long as the boundary rotation number lies in a certain dense set of Liouville numbers.   The aforementioned \cite{bramz} shows that for pseudo-rotations of the two-dimensional annulus, if the rotation number lies in a generic set, then $C^{\infty}$-generically the pseudorotation 
%can not be 
is
weakly mixing.  

%The same work proves a rigidity result related to the $C^0$-rigidity for pseudorotations of the disc discussed above: in the annulus case, the authors show $C^{\infty}$-rigidity, for a generic set of rotation numbers.

% pseudo-rotation of the two-dimensional annulus can not be weakly mixing, as long as its rotation number 

%This has been   

%there are weakly mixing 

For Reeb flows on closed three-manifolds, one can define a pseudorotation to be a flow with exactly two periodic orbits.   (Two is the minimal number of periodic orbits by \cite{CGH}.)  It is shown in \cite{CGHHL} that such flows have a close relationship to two-dimensional pseudorotations: such a flow always admits a global surface of section, with the topology of an (open) disc.  As the methods for proving this are related to many of the themes of these notes, let us take a moment to give a sense for them.  A crucial point is that the existence of exactly two periodic orbits forces the contact form to be nondegenerate; this is proved through an application of the Weyl law Theorem~\ref{thm:weyl}, after relating the rotation numbers of these orbits to the asymptotics of the ECH grading.     
Once one knows the contact form is nondegenerate, one can bring ECH to bear in a very strong form, and the same work shows a number of results relating these periodic orbits to information about the flow, by again using the Weyl law; for example, in the case of the three-sphere, it is shown that the product of the periods of the orbits is equal to the volume of the manifold; these kind of relations are used to show that the form is dynamically convex, which in turn is used to deduce the existence of the global surface of section.  Going in the other direction, it is shown in \cite{pseudoreeb} through completely different, constructive, techniques that many pseudorotations of the two-disk can be suspended to Reeb flows.    

Pseudorotations are interesting in higher dimensions as well, though that is not the focus of these notes.  For more, we refer the reader to the foundational paper \cite{gginvent} in this direction.  The authors prove, for example, a variant of Bramham's $C^0$-rigidity theorem for Hamiltonian diffeomorphisms of $\mathbb{C}P^{n}$ with exactly $n+1$ periodic points.

\subsubsection*{Systolic phenomena and Viterbo's conjecture}

Let us now say a few words about a beautiful story that is not actually unique to low-dimensions, but in which some insights in low-dimensions played a key role. 

In situations where periodic orbits are known to exist, it is also interesting to ask about the period $T_{min}$ of the shortest periodic orbit.  For Reeb flows, one can always scale the contact form by a constant to change the period, but it is interesting to search for bounds on the ``systolic ratio" $T_{min}/\sqrt{vol(Y,\lambda)}$.  (There is an analogous scale-invariant systolic ratio defined in any dimension.)  For example, the longstanding ``Viterbo" conjecture asserts that for convex domains, the systolic ratio is maximized for the standard ball (more precisely, one requires the interior to be symplectomorphic to the standard ball).  
This is known to imply the famous Mahler conjecture from convex geometry \cite{mahler}.

 A foundational result \cite{systolic} from 2015 shows that without convexity, Viterbo's conjecture is not true: in fact, \cite{systolic} shows that for star-shaped domains the systolic ratio can be arbitrarily large.  The same work also shows that, on the other hand, Viterbo's conjecture does hold in a neighborhood of the standard ball, or more generally in the neighborhood of any Reeb flow with all orbits periodic.  The paper \cite{systolic} is about dynamics on the three-sphere,
 %three-manifolds, 
but a later series of works \cite{a1, a2, a3, a5, a6,b} prove more general results, and in arbitrary dimension.   One such more general result involves the uniqueness question for symplectic capacities introduced above: it is shown that, $C^2$ close to the standard ball, all normalized capacities do agree.  
 %This work 

Returning to Viterbo's conjecture itself, a very recent result shows that the conjecture is in fact false! See \cite{viterbocounter}.   The construction of a counter example involves a convex domain in $\mathbb{R}^4$, whose flow encodes a kind of billiard dynamics, called Minkowski billiards.  The authors explain how to extract counter examples in any dimension from this.  

For more about systolic phenomena, see the beautiful survey \cite{benedetti} and the references therein.

\subsubsection*{The symplectic meaning of convexity}

Here is another example of a beautiful story for which some key insights first occurred in low-dimensions.

In various developments mentioned above, convexity plays a key role.  However, its symplectic meaning still remains unclear in some ways.  
%In particular, a symplectomorphism of $\mathbb{R}^{2n}$ need not map convex sets to convex sets.  
One might hope for a more symplectic replacement.  For example, an old result \cite{convex} implies that smooth convex domains enjoy a ``dynamical convexity" for the Reeb flow on the boundary; in the case of domains in $\mathbb{R}^4$, this is equivalent to requiring that the Conley-Zehnder index from above is at least $3$ for the global trivialization.  It had been a longstanding question whether the converse holds: does dynamical convexity, say for a star-shaped domain in $\mathbb{R}^4$, imply that the interior is symplectomorphic to something convex?  The recent work \cite{chaidezedtmair} establishes a subtle inequality for convex domains, which implies in particular that dynamical convex is not equivalent to convexity.  Later, \cite{chaidezedtmair2} the authors prove an analogous result in any dimension.  Returning to dimension $4$, the more recent works \cite{cghind,ramosetal} give different proofs that dynamical convexity and convexity are not the same, using the theory of ECH capacities; ECH capacities are closely related to the ECH spectral invariants that we discussed previously in these notes.  In all of these works, a crucial input from convexity is the existence of ``John's ellipsoid": this guarantees that any convex domain can be sandwiched between an ellipsoid and a rescaled copy of this ellipsoid, with the volume of all three of these objects close together.   The works show, through various techniques, that this forces a certain amount of rigidity.
% and construct dynamically convex domains 

%Bramham shows that 

%zero entropy systems are the soThe 
%Bramham shows [ref]        

%is Anosov.  

%A related work [al2] involves a kind of contact homology which is a cousin of the embedded contact homology that has already been discussed: if the ``cylindrical contact homology" satisfies an exponential growth rate condition for a contact form with no contractible Reeb orbits, then every contact form giving the same contact structure must have positive topological entropy.    

%\subsubsection*{Quantitative almost existence and the dense existence of compact invariant sets}

%\subsubsection*{Short orbits and global surfaces of section}

\subsubsection*{More about the structure of homeomorphism groups}

We also have a better understanding of the structure of $\Homeo_c(D^2,\mu_{std})$ and its companion group $\Homeo_c(S^2,\mu_{std})$.  Theorem~\ref{thm:simpconjecture} states that these groups are not simple, but one would like to know more about their structure.   One would also like to know the situation for surfaces with genus.

The paper \cite{cgetal} proves an analogue of Theorem~\ref{thm:simpconjecture} for any compact surface.

\begin{theorem} \cite{cgetal}
Let $(S,\mu)$ be a compact surface with suitable measure.  The kernel of Fathi's mass-flow homomorphism is not simple.
\end{theorem}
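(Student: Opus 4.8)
The plan is to mimic, at the level of the formal argument, the proof of Theorem~\ref{thm:simpconjecture} from \S\ref{sec:proof}, with $\Homeo_c(D^2,\mu_{std})$ replaced by $G:=\ker(\mathrm{mf})$, where $\mathrm{mf}\colon \Homeo_0(S,\mu)\to H^1(S;\mathbb{R})/\Gamma_S$ denotes Fathi's mass-flow homomorphism, and to exhibit inside $G$ a proper nontrivial normal subgroup $\FHomeo(S,\mu)$ of ``finite Hofer energy'' homeomorphisms. Everything rests on two ingredients: a family of PFH spectral invariants $c_d$ for area-preserving diffeomorphisms of $S$ satisfying the five axioms of \S\ref{sec:calpfh}, and an ``infinite twist'' lying in $G$. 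Once these are in hand, the three soft steps below finish the argument exactly as for the disc.

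The first, and hardest, task is to construct the spectral invariants. As noted in \S\ref{sec:pfh}, periodic Floer homology and its twisted, filtered variants make sense for any closed orientable surface, so one obtains invariants $c_d(\varphi)$; I expect these to be cleanest to set up on diffeomorphisms supported in a fixed embedded open disc $D\subset S$ (or on a suitable cover of $\Diffeo_0(S,\mu)$, to avoid the fact that for closed $S$ of positive genus the genuine Calabi homomorphism lives only on $\widetilde{\mathrm{Ham}}$), and to satisfy: $C^0$-continuity with extension to $\Homeo_0(S,\mu)$, Monotonicity, Identity, the Hofer--Lipschitz bound with constant $d$, and a Weyl law whose specialization to $\varphi$ supported in $D$ reads $\lim_d c_d(\varphi)/d = \Cal_D(\varphi)$, where $\Cal_D$ is the honest Calabi homomorphism of $\mathrm{Ham}_c(D)$. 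For $S=S^2$ this is precisely the package of \S\ref{sec:pfh}. The new difficulties for higher genus are exactly those flagged in \S\ref{sec:weyl} for the mapping-torus Weyl law: first, $\widetilde{PFH}$ can vanish, so one needs the nonvanishing result --- proved via the isomorphism with Seiberg--Witten Floer cohomology --- guaranteeing $\widetilde{PFH}\ne 0$ in infinitely many degrees, so that the limit defining the Weyl law is taken along a nonempty set of $d$; and second, the Weyl law itself must be established by the Seiberg--Witten route outlined in \S\ref{sec:weyl}, carefully tracking the dependence of all estimates (in particular the Taubes-type bound of Proposition~\ref{prop:gradchsimons}) on the sequence of spin-c structures arising as $d\to\infty$. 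This is the technical heart of the matter; I expect it to be the main obstacle.

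Granting the package, the next step is the normal subgroup, which runs exactly as in \S\ref{sec:proof} using only bi-invariance of Hofer's metric: set $\FHomeo(S,\mu)$ to be the set of $h\in\Homeo_0(S,\mu)$ admitting smooth $g_n\in\mathrm{Ham}(S,\mu)$ with $g_n\to_{C^0}h$ and $\sup_n d(g_n,\mathrm{id})<\infty$. Since $\mathrm{mf}$ is $C^0$-continuous and vanishes on $\mathrm{Ham}(S,\mu)$, one automatically has $\FHomeo(S,\mu)\subseteq G$; it contains all of $\mathrm{Ham}(S,\mu)$, hence is nontrivial; and it is normal in $G$ by the diagonal approximation trick --- approximate both the conjugating homeomorphism (by area-preserving diffeomorphisms, using $C^0$-density) and the element of $\FHomeo$, noting that the conjugate of a Hamiltonian diffeomorphism by any area-preserving diffeomorphism is again Hamiltonian with the same Hofer norm by conjugation-invariance.

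Finally, one chooses an embedded closed disc $D\subset S$ and takes $T$ to be the extension by the identity of the infinite twist of \S\ref{sec:proof} built inside $D$. Since $T$ is supported in $D$ it is a $C^0$-limit of disc-supported Hamiltonian diffeomorphisms, so $\mathrm{mf}(T)=0$ and $T\in G$. Approximating $T$ from below by smooth monotone twists $T_{f_i}$ supported in $D$ with $f_i\le f_{i+1}$ and $\Cal_D(T_{f_i})\to\infty$, the Monotonicity and $C^0$-continuity axioms give $c_d(T)/d\ge c_d(T_{f_i})/d$ for all $i$, whence the Weyl law forces $c_d(T)/d\to\infty$. Conversely, for $g\in\FHomeo(S,\mu)$ with approximants $g_n=\psi^1_{H_n}$, $\|H_n\|_{1,\infty}\le 2C$, the Hofer--Lipschitz and Identity axioms give $|c_d(g_n)|\le 2dC$, hence $|c_d(g)|\le 3dC$ by continuity, so $c_d(g)/d$ stays bounded. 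Therefore $T\in G\setminus\FHomeo(S,\mu)$, so $\FHomeo(S,\mu)$ is a proper nontrivial normal subgroup of $G=\ker(\mathrm{mf})$, and $G$ is not simple; the argument is uniform in the genus of $S$ and whether or not $S$ has boundary, since in all cases one works with the disc-supported infinite twist.
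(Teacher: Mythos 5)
Your three ``soft'' steps do faithfully reproduce the formal skeleton of the disc argument from \S\ref{sec:proof}, and that skeleton is indeed the right shape: a finite-Hofer-energy normal subgroup sitting inside $\ker(\mathrm{mf})$ (normality from bi-invariance plus $C^0$-approximation by area-preserving diffeomorphisms, containment from continuity of the mass flow), made proper by a disc-supported infinite twist detected by spectral invariants. But this is not how the theorem is proved in \cite{cgetal}, and the step you explicitly defer --- the construction of a PFH spectral invariant package on an arbitrary compact surface --- is precisely the reason a different route was taken there. The proof in \cite{cgetal} does not use PFH at all: it introduces \emph{link spectral invariants}, defined via a quantitative variant of Heegaard Floer (Lagrangian Floer) homology of Lagrangian links, which come with Hofer--Lipschitz, $C^0$-continuity and an asymptotic Calabi property uniformly for every compact surface, with or without boundary, and whose Calabi asymptotics are proved by a mechanism (links with many parallel circles) quite different from the Seiberg--Witten/Weyl-law analysis of \S\ref{sec:weyl}.

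The genuine gap in your proposal is therefore concentrated in your ``black box'', and several of the statements you make about it are not innocuous. First, on a closed surface of positive genus $\mathrm{Ham}(S,\mu)$ is simple, so there is no Calabi homomorphism on it, and --- contrary to your parenthetical --- none on $\widetilde{\mathrm{Ham}}$ either, since without a normalization $\int_0^1\int_S H\,\mu\,dt$ is not well defined even for isotopies; any PFH Weyl law must be phrased with reference-cycle/normalization data, and your specialization ``$c_d(\varphi)/d\to \Cal_D(\varphi)$ for $\varphi$ supported in $D$'' is exactly the statement that needs proof, not a routine restriction of an existing axiom. Second, twisted PFH can vanish (the torus translation example in \S\ref{sec:weyl}), so even the existence of the classes used to define $c_d$ is a theorem, established in \cite{cgpz} under a rationality hypothesis and only later freed of it in \cite{cgppz}; the remaining axioms --- in particular $C^0$-continuity together with the extension to homeomorphisms, which is what your Step~2 and the bound $|c_d(g)|\le 3dC$ actually use --- are known for $S^2$ \cite{simp,simp2} but are not available in the literature in the generality you need, and all of this postdates \cite{cgetal}. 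Third, PFH as set up here is attached to mapping tori of \emph{closed} surfaces, so your closing claim that the argument is ``uniform \ldots whether or not $S$ has boundary'' is unsupported: treating boundary requires capping or doubling and re-verifying every axiom (including the interaction with the boundary-relative mass-flow kernel), which is one of the things the link invariants of \cite{cgetal} handle painlessly. So as written your argument is conditional on an unproven package; filled in with the later PFH literature it could plausibly be completed for closed surfaces, but it is genuinely different from, and at present strictly weaker than, the quantitative Heegaard Floer route actually used to prove the theorem.
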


The proof of this theorem also used a different kind of spectral invariant called ``link spectral invariants"  \cite{cgetal, ps}; in the context of \cite{cgetal}, this is defined via a quantitative version of a kind of variant of Heegaard Floer homology.

Here are some other results in these works.  The paper \cite{cgetal} constructs quasimorphisms on $\Homeo_c(S^2,\mu_{std})$; it follows from this that the commutator length in these groups is unbounded.   In \cite{cgetal2} some new normal subgroups  are constructed in the case of genus $0$ surfaces; for example, a normal subgroup is constructed via demanding $O(1)$ subleading asymptotics in an analogous Weyl law.     The paper \cite{cgetal2} also explains how to extend the Calabi homomorphism (non-canonically) to the full group in the genus 
$0$ case, answering one of Fathi's questions that we mentioned in the introduction.  The paper \cite{sobhan1} generalizes these results from \cite{cgetal2} to arbitrary genus.  

\subsubsection*{Hofer geometry}

The PFH spectral invariants can also be used to understand questions about the geometry of Hofer's metric; it turns out that many basic questions about the large-scale geometry are not known.   For example, in the case of $S^2$, an old question had asked whether or not the group of Hamiltonian diffeomorphisms is quasi-isometric to $\mathbb{R}$.  It is proved in \cite{simp2}, using PFH spectral invariants, that, in fact, this space admits quasi-isometric embeddings of $\mathbb{R}^N$, for any $N$.  This was proved simultaneously and independently (in fact, the stronger statement that embeddings of $C^{\infty}_c(0,1)$ exist was proved), with a different kind of invariant, in \cite{ps}.   One can also define a variant of Hofer's metric on braid groups, and  the aforementioned link spectral invariants can be used to obtain interesting results about this Hofer geometry of braids on surfaces \cite{sobhan2, sobhan3}.

\subsubsection*{The group of area-preserving diffeomorphisms of an open disc}

There are also interesting questions about subgroups of homeomorphism groups of the disc, beyond the compactly supported case.  For example, fix a volume form $\mu$ on the open $n$-disc and consider the group of diffeomorphisms preserving $\mu$.  When $n \ge 3$, it was shown by McDuff \cite{mcduff80s} in the 80s that this group is perfect.  However, it has been an open question what occurs in the case $n = 2$.  It turns out that there are two cases: the case where $\mu$ has finite area, and the case where $\mu$ has infinite area.  (Any choice of $\mu$ gives a group that is isomorphic to one of these two cases.)  In the finite area case, it was recently shown in \cite{simp2} that, in contrast to the higher-dimensional case the group is not perfect.  The proof uses PFH spectral invariants: the idea is to view the disc as the complement of the north pole $p_+$ in the two-sphere, and then any diffeomorphism extends to an area-preserving homeomorphism by fixing $p_+$; one can then apply the aforementioned PFH spectral invariants to these homeomorphisms.  In particular, to show that the group is not perfect, one can consider the restriction of an infinite twist 
from above
%(see [sec]) 
about $p_+$: if the restriction was a product of commutators, then this infinite twist would be as well, however it is known that every normal subgroup of the group of area-preserving homeomorphisms of the two-sphere contains the commutator subgroup.   

\subsubsection*{Spectral recognition}

It is also interesting to ask how much the periods of the periodic orbits for the Reeb flow know about a manifold with a contact form.  This is analogous to a much studied topic in Riemannian geometry, called ``length spectrum rigidity" and one can call this set of periods the {\em action spectrum}.  We discussed a very special case of this in our discussion of contact pseudorotations above, but one can consider other cases.  For example, inspired by known spectral rigidity results for other quantities (e.g. the Laplace spectrum), one can ask about the case of the ball.  For the contact form on the boundary of a ball, all the simple Reeb orbits have the same period.  The paper \cite{withmarco} uses techniques related to the ideas in these notes to show the converse: if a contact form on $S^3$ has the property that all of its simple Reeb orbits have the same period, then the contact form is actually isomorphic to the contact from on the boundary of the ball; \cite{withmarco} also proves more general results of a similar flavor, classifying contact forms, on any three-manifold, such that all simple Reeb orbits have a common period for the associated Reeb flow.  These results also relate to an old question of Eliashberg and Hofer \cite{eliashbergh}, asking to what degree the interior of a Liouville domain determines its boundary.  For example, the ECH spectral invariants of such a four-dimensional domain depend only on its interior, so from this the results in \cite{withmarco} can be used to give a positive answer to this kind of Eliashberg-Hofer question, in the special case of a ball.  For more about the relationship between the interiors of Liouville domains and the spectrum of the Reeb flow on the boundary, we refer the reader to \cite{stabilityaction}.

%bout        

%a natural starting case is the case of the ball    

%a conservative map of a two-dimensional disk, a pseudorotation is defined to be a map with exactly one  

%There has a

%One can extract counter examples 
%The counter example   
%``Minkowski" billiards 

%from [ref] shos that     

% Low-dimensionality   

% introduced 

%, cgetal2} construct many new normal 

% for example.  

\section{Discussion and remarks on higher dimensions}

To start to wrap up these notes, let me share a few ruminations about the content of these notes from a more bird-eyed view. 

I hope I have conveyed with these notes and the accompanying lectures that for these low-dimensional dynamical problems of a symplectic nature, there is quite a large amount of structure.  Here are a few themes that I consider important that I hope I have illustrated about low-dimensional symplectic dynamics:
\begin{enumerate}
\item Symplectic Weyl laws are very powerful
\item There is an abundance of $J$-holomorphic curves and this is very useful.  One can leverage Seiberg-Witten invariants to produce such curves.  
\item Global surface of sections methods have great reach 
\end{enumerate}

To put this in context, perhaps the following question is useful: {\em does any of this work in higher dimensions?}

To start to say something about this, let us note that the questions we have raised in these notes certainly have analogues in higher dimensions.  For example, the Weinstein conjecture still remains open (though certain special cases have been worked out.)  One could ask various kinds of {\em n or infinity} questions.  For example, it is a very old problem whether a star-shaped hypersurface in $\mathbb{R}^{2n}$ has $n$ or infinitely many simple Reeb orbits.  One can also ask, in analogy with the Simplicity Conjecture, whether the group of (compactly supported) {\em symplectic homeomorphisms} of the disc $D^{2n}$ is simple; the $n= 1$ case of this is precisely the simplicity conjecture, but in higher dimensions it is completely open.  (A symplectic homeomorphism is a $C^0$-limit of symplectic diffeomorphisms.)  One could similarly ask about the Le Calvez - Yoccoz property, the closing lemma, etc.   There are some very interesting partial results (see e.g. \cite{cinelietal, cetal}) about these topics in higher dimensions, but for the most part these questions remain wide open.

It seems conceivable that some of the above tools have analogues in higher dimensions.  For example, perhaps there are analogous Weyl laws yet to be discovered.  In our notes, we made heavy use of Seiberg-Witten theory, but as we explained in the previous section, it is now the case that one can prove the simplicity conjecture, for example, solely with pseudoholomorphic curves and without using any Floer theory at all.  On the other hand, it also seems conceivable that there is just more flexibility in higher dimensions.  Or, perhaps there is still much rigidity but one needs new tools beyond pseudoholomorphic curves.  For example, as far as the author is aware, it is hard to imagine recovering the volume constraint for embeddings of $2n$-dimensional symplectic ellipsoids, using (nonnegative dimensional families) of pseudoholomorphic curves in standard setups.  %the standard pse 
 Much remains mysterious.

\subsection{Open questions}

Let us close with a few open questions about these kind of low-dimensional questions that I find interesting.  There are many, so I tried to choose a few representative ones.

The first is natural in view of Theorem~\ref{thm:hwz}.

\begin{question}
\label{que:twoinf}
Does every contact form on a closed connected three-manifold have two or infinitely many simple Reeb orbits?
\end{question}

In other words, one would like to remove the torsion assumption from Theorem~\ref{thm:hwz}.  If one wants to use the approach used to prove Theorem~\ref{thm:hwz}, one has to better understand what kind of control we have over the topology of ECH $U$-map curves without the assumption on the Chern class.  The only place where this assumption is used in the proof is to handle the Chern class term in the difference \eqref{eqn:diff} between $I$ and $J_0$.        
Presumably, success here would also allow one to extend the Le Calvez - Yoccoz property to arbitrary Reeb flows on three-manifolds.  As we explained, in the nondegenerate case the two or infinity dichotomy does hold without any torsion assumption, see Theorem~\ref{thm:ley}.

For that matter, one can ask the following:

\begin{question}
Does the Reeb flow associated to a contact form on a closed three-manifold always have a global surface of section? 
\end{question}

One expects that this should imply a positive answer to Question~\ref{que:twoinf}.  Evidence that this could hold is furnished by \cite{CDHR,cm}.  A very recent interesting result proves this for geodesic flows on positive genus surfaces \cite{ma}.

When there are infinitely many orbits, it is also natural to try to understand the growth rate.  For example, here is one question, asked by U. Hryniewicz; Hryniewicz conjectures that it has a positive answer.

\begin{question}
Let $\lambda$ be a contact form on $S^3$ with infinitely many simple Reeb orbits.  Let $f(T)$ denote the number of Reeb orbits with action $\le T$.  Does this grow at least quadratically in $T$?
%Does every contact form on a closed three-manifold have two or infinitely many simple Reeb orbits?
\end{question}

It may be that the question is not only true for $S^3$ but for any three-manifold.  For geodesic flows on $S^2$, the question was recently answered in the affirmative by the doctoral thesis of B. Albach. 
%appearance of $S^3$ here is not e

Another interesting question involves the kind of periodic orbits that can be appear.  For example, as far as I know the following is open:

\begin{question}
Does every contact form on $S^3$ have an elliptic orbit?
%Let $\lambda$ be a contact form on $S^3$ with infinitely many simple Reeb orbits.  Let $f(T)$ denote the number of Reeb orbits with action $\le T$.  Does this grow at least quadratically in $T$?
%Does every contact form on a closed three-manifold have two or infinitely many simple Reeb orbits?
\end{question}

Here, we are counting a degenerate orbit as elliptic (or else the question would certainly have a negative answer.)  For more about the dynamical implications of an elliptic orbit, including relationships to structural stability, see \cite{N}.
%Progress on questions like the previous one can in turn have implications for the stability conjecture mentioned above, as elliptic orbits can obstruct stability.

There are also interesting further questions about generic dynamics.  Here are two samples:

\begin{question}
\begin{enumerate}[(1)]
\item Does a $C^{\infty}$-generic Riemannian metric on a surface have the property that its closed geodesics are dense in the unit tangent bundle?
% dense closed geodesics in its tangent bundle?
%the property that its closed geodesics are dense in the unit tangent bundle?
%a dense set of closed every contact form on $S^3$ have an elliptic orbit?
\item Let $f$ be an area-preserving diffeomorphism of a closed surface $S$ and let $p$ and $q$ be such that the closure of the forward orbit of $p$ meets the closure of the backwards orbit of $q$.  Can $f$ be perturbed in $C^{\infty}$ such that $p$ and $q$ lie on the same orbit?
%\item
\end{enumerate}
%Let $\lambda$ be a contact form on $S^3$ with infinitely many simple Reeb orbits.  Let $f(T)$ denote the number of Reeb orbits with action $\le T$.  Does this grow at least quadratically in $T$?
%Does every contact form on a closed three-manifold have two or infinitely many simple Reeb orbits?
\end{question}

The first question above can be thought of as the $C^{\infty}$ closing lemma for geodesic flows.  The challenge here is that genericity is demanded in the space of Riemannian metrics; the question does not follow from the $C^{\infty}$ closing lemma for Reeb flows, because if one takes a contact form on the unit cotangent bundle coming from a Riemannian metric and perturbs it locally, it will generally not come from a Riemannian metric anymore.   The second question is the $C^{\infty}$ version of the``Connecting Lemma" (in the conservative surface setting); in the $C^1$-topology this is a fundamental result of Hayashi \cite{h}.
%  w
 
 Much also remains unknown about the structure of the various groups of homeomorphisms that we have discussed.  For example, let $N$ denote the $O(1)$ subleading asymptotics subgroup of $\Homeo_c(D^2,\mu_{std})$ described above.
 % previous section.
 
 \begin{question}
 Is the ``subleading asymptotics" subgroup $N$ of $\Homeo_c(D^2,\mu_{std})$ simple?
  \end{question}
 
 If it is, it would follow from known results (see e.g. \cite{simp}) that it is equal to the commutator subgroup of $\Homeo_c(D^2,\mu_{std})$.

%In the   (or alternatively the nondegeneracy assumption from Theorem.xx).  
%\To me 

% to consider   

%the questions that we have addressed in 

%Moreover, for the problems that I discussed in depth

%rich 

% For example, there is a variant of the PFH spectral invariants that we used to prove the Simplicity Conjecture called the {\em elementary PFH invariants}, defined by Edtmair in [ref].    

%dividing the boundary of the tracts into vertical and horizontal components as in Fish-Hofer and showing that 

% ensures that   

%any non-trivial invariant set   

%   is that if the closure of the set of periodic orbits is not everything, we can place the marked point  .  

% the idea   

%to prove   

% We reduce to the trivial case           

% that have 

%also in terms of the length of the This guarantees that 
%We need to make sure with all of this  

%simple  
%quantity 

%Thus, since we have 

%without much difficulty 

% we ex

%$S(\alpha(N))$ has 

%We will be quite loose

%\end{proof}

%positive ends at some orbit there are no trivial cylinders then it follows from Le

%We now define the {\em orbit score} as follows.

%For example, 

%\subsubsection{The partition conditions revisited}

%it turns out that the partition conditions are important for this.  

%explain the key 

% lifting to   

%by considering cylinders for perturbations $\lambda_n$ 

.

\bigskip

{\footnotesize

{\sc Dan Cristofaro-Gardiner}

University of Maryland, College Park

{\em dcristof@umd.edu}

%\medskip

\end{document}

\bibliographystyle{alpha}
\bibliography{main}

\end{document}